\theoremstyle{plain}
\newtheorem{theorem}{Theorem}
\newtheorem{lemma}[theorem]{Lemma}
\newtheorem{corollary}[theorem]{Corollary}
\newtheorem{proposition}[theorem]{Proposition}
\theoremstyle{definition}
\newtheorem{definition}{Definition}
\theoremstyle{remark}
\newtheorem{remark}{Remark}
\def\poi#1#2#3#4#5#6#7{\def\un{#5#6#7}\def\deux{#6#7}
\def\trois{#2#4} \def\cinq{#3#4#5}
\ifx\un\empty {#1}_{#2}{#3}{#1}_{#4} \else
\ifx\deux\empty {#5}(#1_{#2}){#3}{#5}(#1_{#4}) \else
\ifx\trois\empty {#5}_{#6}(#1){#3}{#5}_{#7}(#1) \else
{#5_{#6}}(#1_{#2}){#3}{#5_{#7}}(#1_{#4}) \fi \fi \fi}
\def\rond{\raisebox{.3mm}{\scriptsize$\circ$}}
\def\dv#1#2{\langle {#1},{#2}\rangle}
\def\tk#1#2{{#2}\otimes _{#1}}
\def\ec#1#2#3#4#5{\def\un{#3#4#5}\def\deux{#3#5}\def\trois{#3}
\def\four{#2#4#5}\def\five{#2#5}\def\six{#2}\def\seven{#3#4}
\def\eight{#2#4} \def\nine{#2#3#4}
\ifx\nine\empty {\rm #1}_{#5} \else
\ifx\un\empty {\rm #1}({\goth #2}) \else
\ifx\deux\empty {\rm #1}({\goth #2}_{#4}) \else
\ifx\trois\empty {\rm #1}_{#5}({\goth #2}_{#4}) \else
\ifx\four\empty {\rm #1}(#3) \else
\ifx\five\empty {\rm #1}(#3_{#4}) \else
\ifx\six\empty {\rm #1}_{#5}(#3_{#4}) \else
\ifx\seven\empty {\rm #1}_{#5} ({\goth#2})\else
\ifx\eight\empty {\rm #1}_{#5}({#3})
\fi \fi \fi \fi \fi \fi \fi \fi \fi}
\def\hec#1#2#3#4#5{\def\un{#3#4#5}\def\deux{#3#5}\def\trois{#3}
\def\four{#2#4#5}\def\five{#2#5}\def\six{#2}\def\seven{#3#4}
\def\eight{#2#4} \def\nine{#2#3#4}
\ifx\nine\empty \hat{{\rm #1}}_{#5} \else
\ifx\un\empty \hat{{\rm #1}}({\goth #2}) \else
\ifx\deux\empty \hat{{\rm #1}}({\goth #2}_{#4}) \else
\ifx\trois\empty \hat{{\rm #1}}_{#5}({\goth #2}_{#4}) \else
\ifx\four\empty \hat{{\rm #1}}(#3) \else
\ifx\five\empty \hat{{\rm #1}}(#3_{#4}) \else
\ifx\six\empty \hat{{\rm #1}}_{#5}(#3_{#4}) \else
\ifx\seven\empty \hat{{\rm #1}}_{#5} ({\goth#2})  \else
\ifx\eight\empty \hat{{\rm #1}}_{#5}({#3})
\fi \fi \fi \fi \fi \fi \fi \fi \fi}
\def\e#1#2{\ec {#1}#2{}{}{}}
\def\es#1#2{\ec {#1}{}{#2}{}{}}
\def\Bb{\mathbb}
\def\goth{\mathfrak}
\def\cali{\mathcal}
\def\ran#1#2{\def\deux{#2} \ifx\deux\empty {\rm rk}\hskip .005em{{\goth #1}} \else {\rm rk}\hskip .005em{{\goth #1}_{#2}} \fi}
\def\bor#1#2{\def\deux{#2} \ifx\deux\empty {\rm b}_{{\goth #1}} \else {\rm b}_{{\goth #1}_{#2}} \fi}
\def\ai#1#2#3{\def\deux{#2#3} \def\trois{#3} \def\quatre{#2}
\ifx\deux\empty \es S{{\goth #1}}^{{\goth #1}} \else
\ifx\trois\empty \es S{{\goth #1}(#2)}^{{\goth #1}(#2)} \else
\ifx\quatre\empty \es S{{\goth #1}_{#3}}^{{\goth #1}_{#3}} \else
\es S{{\goth #1}_{#3}(#2)}^{{\goth #1}_{#3}(#2)} \fi \fi \fi}
\def\an#1#2{\def\deux{#2} \ifx\deux\empty {\cali O}_{#1} \else {\cali O}_{#1,#2} \fi }
\def\han#1#2{\def\deux{#2} \ifx\deux\empty {\hat{{\cali O}}}_{#1} \else {\hat{{\cali O}}}_{#1,#2} \fi }
\def\reg#1#2#3{\relax \def\un{#3} \relax \ifx\un\empty #1_{{\goth #2}} \else #1_{{\goth #2},{\goth #3}} \fi}
\def\dim{{\rm dim}\hskip .125em}
\def\dd{{\rm d}}
\def\rk{{\rm rk}\hskip .1em}
\def\ad{{\rm ad}\hskip .1em}
\def\reg{{\rm reg}}
\def\gtg#1{{\goth #1}\times {\goth #1}}
\def\cg#1{\tk {{\Bb C}}{\e S#1}\e S#1}
\def\pol#1#2#3{\def\deux{#2} \ifx\deux\empty {\rm pol}_{#3}{\rm S}({\goth #1})^{{\goth #1}} \else {\rm pol}_{#3}({\rm S}({\goth #1}_{#2}))^{{\goth #1}_{#2}} \fi}
\def\sqg#1{G\times _{{\bf B}}{\goth b}_{#1}}
\def\ord#1{{\rm ord}(#1)}
\begin{document}

\title[Nilpotent bicone]{Nilpotent bicone and characteristic submodule of a reductive
Lie algebra.}
\author[J-Y Charbonnel]{Jean-Yves Charbonnel}
\address{Universit\'e Paris 7 - CNRS \\
Institut de Math\'ematiques de Jussieu \\
Th\'eorie des groupes \\
Case 7012 \\ 2 Place Jussieu \\
75251 Paris Cedex 05, France}
\email{jyc@math.jussieu.fr}
\author[A. Moreau]{Anne Moreau}
\address{ETH Z\"urich\\
Departement Mathematik \\
HG G66.4\\
R\"amistrasse 101\\
8092 Z\"urich, Switzerland}
\email{anne.moreau@math.ethz.ch}

\subjclass{17B20, 14B05, 14E15}

\keywords{nilpotent cone, nilpotent bicone, polarizations, nullcone, rational
singularities, jet scheme, motivic integration}

\vspace{-2.5cm}
\maketitle

\vspace{-1.5cm}
\begin{abstract}Let ${\goth g}$ be a finite dimensional complex reductive Lie algebra and $\mathrm{S}({\goth g})$ its
symmetric algebra. The {\it nilpotent bicone} of ${\goth g}$ is the subset of elements
$(x,y)$ of $\gtg g{}$ whose subspace generated by $x$ and $y$ is contained in the
nilpotent cone. The nilpotent bicone is naturally endowed with a scheme structure, as
nullvariety  of the augmentation ideal of the subalgebra of $\cg g{}$ generated by the
$2$-order polarizations of invariants of $\mathrm{S}({\goth g})$. The main result of this note is that
the nilpotent bicone is a complete intersection of dimension $3(\bor g{}-\rk\mathfrak{g})$, where
$\bor g{}$ and $\rk\mathfrak{g}$ are the dimension of Borel subalgebras and the rank of
${\goth g}$ respectively. This  affirmatively answers a conjecture of Kraft-Wallach
 concerning the nullcone \cite{KrW2}. In addition, we introduce and study in
this note the {\it characteristic submodule of ${\goth g}$}. The properties of the
nilpotent bicone and the characteristic submodule are known to be very important for the
understanding of the commuting variety and its ideal of definition. The main difficulty
encountered for this work is that the nilpotent bicone is not reduced. To deal with this
problem, we introduce an auxiliary reduced variety, {\it the principal bicone}. The
nilpotent bicone, as well as the principal bicone, are linked to jet schemes.  We study
their dimensions using arguments from motivic integration. Namely, we follow methods
developed by M. Musta{\c{t}}{\v{a}} in \cite{Mu}. At last, we give applications of our
results to invariant theory.

\end{abstract}

\vspace{-.8cm}
\tableofcontents
\section*{Introduction}

\renewcommand\thesubsection{\arabic{subsection}}
\subsection{Main results} Let ${\goth g}$ be a finite dimensional complex reductive Lie algebra, let
$G$ be its adjoint group and let ${\goth N}_{{\goth g}}$ be its nilpotent cone. For
$(x,y)$ in $\gtg g{}$, we denote by $P_{x,y}$ the subspace generated by $x$ and $y$. A
subset of $\gtg g{}$ is called {\it bicone} if it is stable under the maps \sloppy
\hbox{$(x,y)\mapsto (sx,ty)$} where $s$ and $t$ are in ${\Bb C}^*$. The
{\it  nilpotent bicone} of ${\goth g}$ is the subset
\begin{eqnarray*}
{\mathcal{N}}_{{\goth g}} & := & \{(x,y) \in \gtg g{}  \ \vert \ P_{x,y}  \subset
{\goth N}_{{\goth g}}\} \mbox{ .}
\end{eqnarray*}
Since ${\goth N}_{{\goth g}}$ is a $G$-invariant closed cone of ${\goth g}$, the subset
${\cali N}_{{\goth g}}$ is a closed bicone of $\gtg g{}$, invariant under the diagonal
action of $G$ on $\gtg g{}$.

Let $\e Sg$ be the symmetric algebra of ${\goth g}$ and let $\ai g{}{}$ be the subalgebra
of $G$-invariant elements of $\e Sg$. According to a Chevalley's result, the algebra
$\ai g{}{}$ is polynomial in $\rk\mathfrak{g}$ variables, where $\rk\mathfrak{g}$ is the rank of
${\goth g}$. Let $\poi p1{,\ldots,}{\ran g{}}{}{}{}$ be homogeneous generators of
$\ai g{}{}$ of degrees $\poi d1{,\ldots,}{\ran g{}}{}{}{}$ respectively. We can suppose that
the sequence $\poi d1{,\ldots,}{\ran g{}}{}{}{}$ is weakly increasing. For
$i=1,\ldots,\rk\mathfrak{g}$, the {\it $2$-order polarizations} $p_{i,m,n}$ of $p_i$ are the
unique elements of $(\cg g{})^{{\goth g}}$ satisfying the following relation:
\begin{eqnarray}\label{pi}
p_i(ax+by)&=&\sum\limits_{m+n=d_i} a^m b^n p_{i,m,n}(x,y) \mbox{ , }
\end{eqnarray}
for all $(a,b)$ in ${\Bb C}^2$ and $(x,y)$ in $\gtg g{}$. The nilpotent cone ${\goth N}_{{\goth g}}$ is the nullvariety in ${\goth g}$ of the ideal
generated by the polynomials $\poi p1{,\ldots,}{\ran g{}}{}{}{}$. Therefore, according to
Relation (\ref{pi}), ${\cali N}_{{\goth g}}$ is
the nullvariety in $\gtg g{}$ of the ideal of $\cg g{}$ generated by the polynomials
$p_{i,m,n}$, for $i=1,\ldots,\rk \mathfrak{g}$ and $m+n=d_i$. Thus, ${\cali N}_{{\goth g}}$ is
naturally endowed with a scheme structure. From here, we study ${\cali N}_{{\goth g}}$
as the subscheme of $\gtg g{}$ corresponding to the ideal generated by the polynomials
$p_{i,m,n}$, for $i=1,\ldots,\rk \mathfrak{g}$ and $m+n=d_i$.

Let $\bor g{}$ be the dimension of a Borel subalgebra of ${\goth g}$. By a classical
result \cite{Bou}, we have $d_1 + \cdots +d_{\ran g{}}=\bor g{}$. So ${\cali N}_{{\goth g}}$ is
the nullvariety in $\gtg g{}$ of $\bor g{}+\rk \mathfrak{g}$ polynomial functions. As a result, the
dimension of any irreducible component of ${\cali N}_{{\goth g}}$ is at least
$3(\bor g{}-\rk \mathfrak{g})$, since $\gtg g{}$ has dimension $2(2\bor g{}-\rk \mathfrak{g})$. The main result of
this note is the following theorem (see Theorem \ref{td}, (iv), Proposition \ref{pd2},
(ii), and Theorem \ref{tp1}):

\begin{theorem}\label{tint} The nilpotent bicone is a nonreduced complete intersection in
$\gtg{g}$ of dimension $3(\bor g{}-\rk \mathfrak{g})$. Moreover, the images of any irreducible
component of ${\cali N}_{{\goth g}}$ by the first and second projections from $\gtg{g}$ to
${\goth g}$ are equal to ${\goth N}_{{\goth g}}$.
\end{theorem}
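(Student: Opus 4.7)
The dimension bound $\dim {\cali N}_{{\goth g}} \geq 3(\bor g{} - \rk \mathfrak{g})$ is immediate from the definition: ${\cali N}_{{\goth g}}$ is the zero locus in the $2(2\bor g{} - \rk \mathfrak{g})$-dimensional space $\gtg g{}$ of the $\bor g{} + \rk \mathfrak{g}$ polynomials $p_{i,m,n}$, since $\sum_i (d_i+1) = \bor g{} + \rk \mathfrak{g}$. Thus the heart of the theorem is the matching upper bound for every irreducible component; once this is established, the complete-intersection property is automatic, and the image and non-reducedness assertions can be handled separately.

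The plan is to transfer the upper-bound question to a \emph{reduced} auxiliary variety, the principal bicone ${\cali P}_{{\goth g}}$ announced in the abstract, which I would construct as the closure of the $G$-saturate of pairs $(x,y)$ whose span $P_{x,y}$ meets the regular nilpotent orbit. Using Jacobson--Morozov and the Kostant slice, one can parametrize a dense open subset of ${\cali P}_{{\goth g}}$ explicitly and, via a fibre-dimension count on the first projection ${\cali P}_{{\goth g}} \to {\goth N}_{{\goth g}}$ (generic fibre of dimension $\bor g{} - \rk \mathfrak{g}$ over the open orbit of dimension $2(\bor g{} - \rk \mathfrak{g})$), establish $\dim {\cali P}_{{\goth g}} = 3(\bor g{} - \rk \mathfrak{g})$. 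By construction ${\cali P}_{{\goth g}} \subseteq {\cali N}_{{\goth g}}^{\mathrm{red}}$, so the main task becomes the reverse inclusion.

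For that reverse inclusion, I would exploit the observation that $(x,y) \in {\cali N}_{{\goth g}}$ iff the linear arc $t \mapsto x + ty$ factors through ${\goth N}_{{\goth g}}$, which realises ${\cali N}_{{\goth g}}$ as a closed subscheme of the jet scheme $\mathrm{J}_d({\goth N}_{{\goth g}})$ with $d = \max_i d_i$. Following Musta\c{t}\v{a}'s motivic-integration arguments, the fact that ${\goth N}_{{\goth g}}$ is a complete intersection with rational singularities forces each $\mathrm{J}_m({\goth N}_{{\goth g}})$ to be irreducible of the expected dimension $(m+1)\dim{\goth N}_{{\goth g}}$, with each irreducible component dominating ${\goth N}_{{\goth g}}$. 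A $G$-equivariant stratification of ${\cali N}_{{\goth g}}^{\mathrm{red}}$ by the nilpotent orbit of a generic element of $P_{x,y}$ then shows that only the stratum lying over the regular orbit can saturate to a component of maximal dimension; all other strata contribute strictly smaller closures, hence fall into ${\cali P}_{{\goth g}}$.

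The image statement follows by a $G$-equivariance argument: any irreducible component $X$ of ${\cali N}_{{\goth g}}$ is $G$-stable, so each $\pi_i(X)$ is a $G$-invariant closed cone in ${\goth N}_{{\goth g}}$, and a centraliser-based estimate on generic fibres (bounded by $\bor g{} - \rk \mathfrak{g}$) combined with $\dim X = 3(\bor g{} - \rk \mathfrak{g})$ forces $\pi_i(X) = {\goth N}_{{\goth g}}$. Non-reducedness I expect to check by computing the Zariski tangent space at a carefully chosen base point and showing that its dimension strictly exceeds $3(\bor g{} - \rk \mathfrak{g})$. The main obstacle will be the inclusion ${\cali N}_{{\goth g}}^{\mathrm{red}} \subseteq {\cali P}_{{\goth g}}$: because ${\cali N}_{{\goth g}}$ is non-reduced, classical reduced-scheme dimension techniques do not apply, and the Musta\c{t}\v{a}-style jet-scheme estimates must be combined delicately with the observation that ${\cali N}_{{\goth g}}$ only sees \emph{linear} arcs rather than arbitrary arcs in ${\goth N}_{{\goth g}}$.
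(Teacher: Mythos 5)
Your proposal correctly isolates the lower bound and the fact that the whole difficulty is the matching upper bound, and you rightly sense that the obstruction to a naive jet-scheme argument is that ${\cali N}_{{\goth g}}$ corresponds only to \emph{linear} arcs, which are nowhere dense in $J_m({\goth N}_{{\goth g}})$. But the step that is supposed to carry the theorem --- the inclusion ${\cali N}_{{\goth g}}^{\mathrm{red}} \subseteq {\cali P}_{{\goth g}}$, argued by stratifying by the nilpotent orbit of a generic element of $P_{x,y}$ and asserting that ``all other strata contribute strictly smaller closures'' --- is exactly the hard content, and no mechanism is given that proves it. Irreducibility and equidimensionality of $J_m({\goth N}_{{\goth g}})$ (Musta\c{t}\v{a}) say nothing about the locus of linear arcs, for precisely the reason you flag at the end. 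The paper's way around this is not a stratification but an inductive construction of subsets $C_m \subseteq J_m(X)$ obtained by repeatedly saturating the linear-arc locus inside higher jet spaces, together with a comparison of highest-degree terms of Hodge realizations of motivic integrals twisted by ${\Bb L}^{-kF_Z}$; the conclusion (Propositions \ref{pjm4} and \ref{pd2}) is that every component of maximal dimension of the bicone \emph{surjects onto the base cone under $\varpi_1$} --- the image statement comes first, not as a consequence of the dimension count. Your derivation of the image statement is in fact circular: to rule out $\varpi_1(X)=\overline{G.x_0}$ with $x_0$ non-regular you need an upper bound $\bor g{}-\rk\mathfrak{g}$ on fibres over non-regular nilpotents, which is not available and is essentially what is being proved. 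Likewise your fibre count $\dim{\cali P}_{{\goth g}}=3(\bor g{}-\rk\mathfrak{g})$ presupposes $\dim{\cali N}_{{\goth g},e}=\bor g{}-\rk\mathfrak{g}$, which in the paper is a \emph{corollary} of the main theorem.

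A second, structural divergence: your ${\cali P}_{{\goth g}}$ (built on the regular nilpotent orbit, sitting inside ${\cali N}_{{\goth g}}^{\mathrm{red}}$) is not the paper's principal bicone. The paper's ${\cali X}_{{\goth g}}$ is built on the principal cone $\overline{{\Bb C}^{*}G.h}$ generated by the regular \emph{semisimple} element $h$; it \emph{contains} ${\cali N}_{{\goth g}}$, is itself a reduced complete intersection with rational-singularity base, and --- crucially --- every component of it meets the smooth locus $\Omega_{{\goth g}}$ of the defining equations. That detour through semisimple elements is forced: the nullcone ${\cali V}_{{\goth g}}$ is a maximal-dimensional component of ${\cali N}_{{\goth g}}$ that misses $\Omega_{{\goth g}}$ entirely (Theorem \ref{tp1}), so no smooth-point or Kostant-type criterion can be run directly on ${\cali N}_{{\goth g}}$; one descends from ${\cali X}_{{\goth g}}$ through the intermediate nullvarieties ${\cali Y}_{{\goth g}}$, ${\cali Z}_{{\goth g}}$ of $p_{1,1,1}$ and $p_{1,0,2}$, losing one dimension at each step. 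Finally, your non-reducedness test is insufficient as stated: a large Zariski tangent space at one point only exhibits a singular point of a possibly reduced scheme. Since ${\cali N}_{{\goth g}}$ is a complete intersection, hence Cohen--Macaulay, it is reduced iff generically reduced, so you must produce an entire component containing no smooth point of the scheme; the paper does this by showing ${\cali V}_{{\goth g}}\cap\Omega_{{\goth g}}=\emptyset$, using that for $(x,y)\in\Omega_{{\goth g}}$ with $x,y$ in a common nilradical the space ${\goth V}(x,y)$ would have dimension $\bor g{}>\dim{\goth u}$.
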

This result affirmatively answers a conjecture \cite{KrW2}(Section \ref{sc}) of H. Kraft and
N. Wallach concerning the {\it nullcone} of the $G$-module $\gtg{g}$
(see Theorem \ref{ti2}). Clearly, the images of ${\cali N}_{{\goth g}}$ by the first and
second projections from $\gtg{g}$ to ${\goth g}$ are equal to ${\goth N}_{{\goth g}}$.
Theorem \ref{tint} specifies that this is true for any irreducible component of
${\cali N}_{{\goth g}}$. Notice that this statement is a much stronger result.

We introduce in addition in this note the {\it characteristic submodule}  of ${\goth g}$.
It is a $\cg g{}$-submodule of $\tk {{\Bb C}}{\cg g{}}{\goth g}$. The properties of the
nilpotent bicone and the characteristic submodule are known to be very important for the
understanding of the commuting variety. Recall that the {\it  commuting variety }
${\cali C}_{{\goth g}}$ of ${\goth g}$ is the set of elements $(x,y)$ of $\gtg g{}$ such
that  $[x,y]=0$. The commuting variety has been studied for many years. According to a
result of R. W. Richardson \cite{Ri1}, ${\cali C}_{{\goth g}}$ is irreducible. Moreover,
${\cali C}_{{\goth g}}$ is the nullvariety of the ideal  generated by the elements
$(x,y) \mapsto \dv v{[x,y]}$, where $v$ runs through ${\goth g}$.  An old unsolved
question is to know whether this ideal is prime \cite{LS}. In other words, we want to
know if this ideal is the ideal of definition of ${\cali C}_{{\goth g}}$ since
${\cali C}_{{\goth g}}$ is irreducible.  The study of the commuting variety and of its
ideal of definition is a main motivation for our work.

\subsection{Description of the paper} We denote by ${\goth g}_{\reg}$ the subset of regular elements of
${\goth g}$ and we set
$$\Omega_{{\goth g}} := \{ (x,y) \in {\goth g} \times {\goth g} \ \vert \
P_{x,y} \setminus \{0\} \subset {\goth g}_{\reg} \ , \ \dim P_{x,y} = 2 \} \mbox{ .}$$
The properties of the subset $\Omega_{{\goth g}}$
were studied by A. V. Bolsinov in \cite{Bol} and recently by D. I. Panyushev and O.
Yakimova in \cite{PYa}. The set of smooth points of ${\cali N}_{{\goth g}}$ with respect
to its scheme structure is the set of elements $(x,y) \in {\goth g} \times {\goth g}$ at
which the differentials of the $p_{i,m,n}$, for $i=1,\ldots,\rk\mathfrak{g}$ and $m+n=d_i$, are
linearly independent. The set $\Omega _{{\goth g}}$ is an open subset of $\gtg g{}$
and its intersection with the nilpotent bicone turns out to be the set of smooth points
of ${\cali N}_{{\goth g}}$ (see Proposition \ref{psc4} and Remark \ref{rsc4}). The
description of this subset is therefore very important for us. We give various properties
of the subset $\Omega_{{\goth g}}$ in Section 1. Next, we introduce in this section
the {\it characteristic submodule} $\mathrm{B}_{{\goth g}}$ of ${\goth g}$ (see Definition
\ref{dsc3}) whose study is heavily related to $\Omega_{{\goth g}}$. The
characteristic submodule was introduced by the first author a few years ago in a work on
the commuting variety. We prove here that B$_{{\goth g}}$ is a free $\cg g{}$-module
of rank $\bor g{}$, and we provide an explicit basis for B$_{{\goth g}}$
(see Theorem \ref{tsc3}). Even if this result is not directly useful for the nilpotent
bicone, it could be interesting in itself.

The results obtained in Section \ref{sc} concerning $\Omega_{{\goth g}}$ do not enable to provide
``enough" smooth points of ${\cali N}_{{\goth g}}$ to apply the criterion of Kostant
\cite{Kos2} (see Remark \ref{rsc4}). Actually, the intersection of ${\cali N}_{{\goth g}}$
and $\Omega_{{\goth g}}$ is an empty set in many cases. This observation
makes the study of the scheme ${\cali N}_{{\goth g}}$ very difficult. In order to deal
with this problem, we introduce an auxiliary reduced scheme, the {\it principal bicone}
of ${\goth g}$
\begin{eqnarray*}
{\cali X}_{{\goth g}} & := & \{(x,y) \in \gtg g{}  \ \vert \ P_{x,y}  \subset
{\goth X}_{{\goth g}} \} \mbox{ ,}
\end{eqnarray*}
where ${\goth X}_{{\goth g}}$ is the {\it principal cone}
 of ${\goth g}$, that is the Zariski closure of the set of principal semisimple
elements (see Definitions \ref{dsu1} and \ref{dsu2}). The study of the varieties
${\goth X}_{{\goth g}}$ and ${\cali X}_{{\goth g}}$ is the main topic of Section \ref{su}.

We observe that ${\cali N}_{{\goth g}}$ and ${\cali X}_{{\goth g}}$ can be identified with
constructible subsets of jet schemes of ${\goth N}_{{\goth g}}$ and
${\goth X}_{{\goth g}}$ respectively. In \cite{Mu}, M. Musta{\c{t}}{\v{a}} uses the
theory of motivic integration, as developed by M. Kontsevich \cite{Kon}, J. Denef and
F. Loeser \cite{DL2}, and V. Batyrev \cite{Ba}, to prove a result concerning the jet
schemes of locally complete intersections. In particular, his result can be applied to
${\goth N}_{{\goth g}}$ and ${\goth X}_{{\goth g}}$. Thus, in the appendix of \cite{Mu},
D. Eisenbud and E. Frenkel extend results of Kostant concerning the nilpotent cone of a
reductive Lie algebra in the setting of jet schemes. In Section \ref{jm}, after reviewing
some facts about motivic integration, we state technical results concerning motivic
integrals, following methods developed by M. Musta{\c{t}}{\v{a}} in \cite{Mu}, which
will be useful for Section \ref{d}. Using all of this, we prove in
Section \ref{d} the following theorem  (see Theorem \ref{td}, (i), and Proposition
\ref{pd2}, (i)):

\begin{theorem}\label{t2int}
The principal bicone of ${\goth g}$ is a reduced complete intersection of
dimension \sloppy \hbox{$3(\bor g{}-\rk \mathfrak{g}+1)$}. Moreover, the images of any irreducible
component of ${\cali X}_{{\goth g}}$ by the first and second projection from $\gtg{g}$ to
${\goth g}$ are equal to the principal cone ${\goth X}_{{\goth g}}$.
\end{theorem}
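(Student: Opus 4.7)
The plan is to combine the polarization description of $\cali X_{\goth g}$ with jet-scheme dimension estimates from Section~\ref{jm}. By the results of Section~\ref{su} I take as given that the principal cone $\goth X_{\goth g}$ is a reduced, irreducible complete intersection in $\goth g$ of codimension $\ran g{}-1$, cut out by the explicit homogeneous invariants of positive degree other than the quadratic one, whose degrees sum to $\bor g{}-2$. Since $\goth X_{\goth g}$ is $\Bb C^*$-stable, the inclusion $P_{x,y}\subset \goth X_{\goth g}$ is equivalent, via the polarization identity~(\ref{pi}), to the vanishing of all two-order polarizations of those defining equations. Counting yields exactly $\bor g{}+\ran g{}-3$ equations for $\cali X_{\goth g}$ in $\gtg g{}$, so every irreducible component has dimension at least $3(\bor g{}-\ran g{}+1)$.

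For the matching upper bound I would identify $\cali X_{\goth g}$ with a constructible subset of a suitable truncated jet scheme of $\goth X_{\goth g}$: any $(x,y)\in\cali X_{\goth g}$ yields a linear formal arc $x+ty$ lying in $\goth X_{\goth g}$, and the polarization equations are exactly those cutting out such linear arcs. Because $\goth X_{\goth g}$ is a reduced locally complete intersection, one is in the setting of Musta{\c{t}}{\v{a}}'s theorem on the dimension of jet schemes; the technical results of Section~\ref{jm}, adapting his motivic-integration arguments to the linear-arc situation, then provide the upper bound $\dim\cali X_{\goth g}\leq 3(\bor g{}-\ran g{}+1)$. Combined with the preceding paragraph, this shows $\cali X_{\goth g}$ is a complete intersection of the asserted dimension and, in particular, equidimensional.

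Reducedness is then inherited from that of $\goth X_{\goth g}$: for an equidimensional complete intersection it suffices to exhibit a dense open subset of smooth points in each irreducible component. In contrast with the nilpotent case, the open set $\Omega_{\goth g}\cap\cali X_{\goth g}$ is nonempty, since one can produce regular $x,y\in\goth g$ with $P_{x,y}\subset \goth X_{\goth g}$; $G$-equivariance together with the bi-scaling $\Bb C^*\times \Bb C^*$-action then spreads these smooth points into a dense smooth locus of every component. For the image statement, $\goth X_{\goth g}$ is irreducible (closure of a connected $G$-stable family), the diagonal $x\mapsto(x,x)$ shows $\pi_1(\cali X_{\goth g})=\goth X_{\goth g}$, and equidimensionality of $\cali X_{\goth g}$ combined with constancy of the fiber dimension of $\pi_1$ over the dense open subset $\goth X_{\goth g}\cap \goth g_{\reg}$ forces every irreducible component of $\cali X_{\goth g}$ to dominate $\goth X_{\goth g}$; the argument for $\pi_2$ is symmetric.

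The main technical obstacle is the jet-scheme upper bound: one must verify that $\goth X_{\goth g}$ has singularities mild enough (e.g.\ rational, or satisfying the relevant log-discrepancy bounds) for Musta{\c{t}}{\v{a}}'s motivic-integration estimate to yield the sharp dimension bound, and must further refine his theorem so as to apply to the linear-arc subscheme cut out by two-order polarizations rather than to a general truncated jet scheme. This refinement is precisely the goal of Section~\ref{jm}.
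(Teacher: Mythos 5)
Your lower bound (counting the $\bor g{}+\rk\mathfrak{g}-3$ polarizations $q_{i,m,n}$) is correct and is Lemma \ref{l2su2}, (i). But the three steps that carry the real weight of the theorem all have gaps. First, the motivic-integration machinery of Section \ref{jm} does not yield a dimension upper bound for ${\cali X}_{{\goth g}}$: what Proposition \ref{pjm4} delivers (applied with $T={\mathrm T}{\goth X}_{{\goth g}}$, whose irreducibility rests on Theorem \ref{tjm1} and the rational singularities of ${\goth X}_{{\goth g}}$ from Corollary \ref{csu1}) is only that every irreducible component of \emph{maximal} dimension of ${\cali X}_{{\goth g}}$ has $\varpi_{1}$-image dense in ${\goth X}_{{\goth g}}$ (Proposition \ref{pd2}, (i)). The dimension bound is obtained afterwards by a separate geometric argument: one descends through the auxiliary varieties ${\cali Y}_{{\goth g}}$ and ${\cali Z}_{{\goth g}}$ (Proposition \ref{p2d2}) to produce, inside each maximal component, a subvariety whose two projections hit ${\goth X}_{{\goth g}}$ and ${\goth N}_{{\goth g}}$ respectively; Lemma \ref{lsu4} then exhibits a point of $\Omega_{{\goth g}}$ in that component, and smoothness of the defining morphism at such a point (Proposition \ref{psc4}, Lemma \ref{l2su3}) pins the dimension at $3(\bor g{}-\rk\mathfrak{g}+1)$. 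There is no ``linear-arc refinement of Musta{\c{t}}{\v{a}}'s theorem'' that bounds $\dim {\cali X}_{{\goth g}}$ directly --- the $m$-jet schemes have dimension growing with $m$ and impose no such bound on the locus of linear arcs.

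Second, your reducedness argument fails: producing one point of $\Omega_{{\goth g}}\cap{\cali X}_{{\goth g}}$ and ``spreading it by $G$ and ${\rm GL}_{2}({\Bb C})$'' cannot reach every irreducible component, because these groups are connected and hence preserve each component; and ${\cali X}_{{\goth g}}$ genuinely has many components (cf. Proposition \ref{psu3} and the remark following it). That \emph{every} component meets $\Omega_{{\goth g}}$ is exactly Corollary \ref{c2d2}, (i), and requires the full chain above. Third, the image statement is the central difficulty, not a consequence of equidimensionality: a component could project into a proper closed subcone of ${\goth X}_{{\goth g}}$ (necessarily contained in ${\goth N}_{{\goth g}}$) and compensate with larger fibers there; constancy of the fiber dimension over ${\goth X}_{{\goth g}}\cap {\goth g}_{\reg}$ says nothing about a component whose $\varpi_{1}$-image avoids the regular locus. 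Ruling this out is precisely what the motivic-integration argument of Proposition \ref{pd2} is for; once all components are known to have the same (maximal) dimension and each is ${\rm GL}_{2}({\Bb C})$-stable, the statement for $\varpi_{2}$ follows from that for $\varpi_{1}$.
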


Then, we deduce Theorem \ref{tint} from Theorem \ref{t2int}. We give in Section \ref{i}
applications of our results to invariant theory, mainly in relation with the
nullcone of the $G$-module ${\goth g}\times {\goth g}$. In Section \ref{p}, we obtain
additional properties about the irreducible components of the nilpotent bicone. We prove
that ${\cali N}_{{\goth g}}$ is a nonreduced scheme (see Theorem \ref{tp1}) and we give a lower
bound for the number of irreducible components of ${\cali N}_{{\goth g}}$
(see Proposition \ref{pp2}).\\

We precise that Section \ref{jm} is entirely independent on Sections \ref{sc} and \ref{su}
and does not deal with Lie algebras.

\subsection{Additional notations and conventions}\label{notation} In this note, the
ground field is ${\Bb C}$. All topological terms refer to the Zariski topology. If $X$
is an algebraic variety, an open subset of $X$ whose complementary in $X$ has
codimension at least $2$ is called {\it a big open subset} of $X$.\\

The set of integers, the set of nonnegative integers and the set of positive integers are
denoted by ${\Bb Z}$, ${\Bb N}$ and ${\Bb N}^{*}$ respectively. As usual, the subset
of non zero elements of ${\Bb C}$ is denoted by ${\Bb C}^{*}$.\\

If $E$ is a finite set, its cardinality is denoted by $\vert E \vert$.\\

For $x$ in ${\goth g}$, we denote by ${\goth g}(x)$ the centralizer of $x$ in
${\goth g}$. Thus $x$ belongs to ${\goth g}_{\reg}$ if and only if ${\goth g}(x)$ has
dimension $\ran g{}$.\\

The action of $G$ in $\gtg g{}$ will be always the diagonal action. For $x$ in ${\goth g}$
or in $\gtg g{}$, we denote by $G.x$ its $G$-orbit under the corresponding $G$-action.\\

If ${\goth g}$ is commutative, then ${\cali N}_{{\goth g}}$ is reduced to $\{(0,0)\}$. Henceforth, we suppose that ${\goth g}$ is not commutative.\\

Let $\poi {{\goth g}}1{,\ldots,}{m}{}{}{}$ be the simple factors of ${\goth g}$. Since
the nilpotent cone ${\goth N}_{{\goth g}}$ is the product of the nilpotent cones
$\poi {{\goth N}}{{\goth g}_{1}}{,\ldots,}{{\goth g}_{m}}{}{}{}$,
${\cali N}_{{\goth g}}$ is the product of the nilpotent bicones
$\poi {{\cali N}}{{\goth g}_{1}}{,\ldots,}{{\goth g}_{m}}{}{}{}$. Furthermore, as the
equality
$$3(\bor g{}-\rk \mathfrak{g})=3(\bor g{1}-\rk \mathfrak{g}_1) + \cdots +3(\bor g{m}-\rk \mathfrak{g}_{m}) $$
holds, it suffices to prove Theorem \ref{tint} in the case where
${\goth g}$ is simple. It will be sometimes useful to make the assumption that
${\goth g}$ is simple (see e.g. Sections \ref{su}, \ref{i} and \ref{p}).\\

The first and second projections from $\gtg g{}$ to ${\goth g}$ are denoted
by $\varpi_1$ and $\varpi_2$ respectively. The following lemma will be very handy
throughout the note.

\begin{lemma}\label{lint} Let $Y$ be a $G$-invariant closed bicone in $\gtg{g}$. Then the
subsets $\varpi_{1}(Y)$ and $\varpi _{2}(Y)$ are $G$-invariant closed cones of
${\goth g}$.
\end{lemma}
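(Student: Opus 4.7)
The plan is to decompose the conclusion into three claims and verify each separately: (a) $\varpi_1(Y)$ and $\varpi_2(Y)$ are $G$-stable, (b) they are stable under scalar multiplication, and (c) they are closed in $\mathfrak{g}$. Only the last of these will require any real argument, and the trick there will be to show that each projection coincides with a manifestly closed subset, using the bicone hypothesis together with the fact that $Y$ is closed.

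For $G$-invariance, I would simply note that if $x \in \varpi_1(Y)$, then there is some $y \in \mathfrak{g}$ with $(x,y) \in Y$; for $g \in G$ the diagonal invariance gives $(g{\cdot}x, g{\cdot}y) \in Y$, hence $g{\cdot}x \in \varpi_1(Y)$. The argument for $\varpi_2$ is symmetric. For the cone property, given $x \in \varpi_1(Y)$ and $s \in \mathbb{C}^{*}$, the bicone hypothesis applied with $t=1$ yields $(sx,y) \in Y$, so $sx \in \varpi_1(Y)$; passing to the limit $s \to 0$ inside the closed set $Y$ (once closedness is established, or using the closure argument of the next step directly) puts $0$ in $\varpi_1(Y)$ as well.

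The main step, and the only nontrivial one, is closedness. Here the key observation is the identity
\begin{eqnarray*}
\varpi_1(Y) & = & \{ x \in \mathfrak{g} \ \vert \ (x,0) \in Y \} \mbox{ .}
\end{eqnarray*}
The inclusion from right to left is immediate. For the reverse inclusion, take $x \in \varpi_1(Y)$ and choose $y \in \mathfrak{g}$ with $(x,y) \in Y$. By the bicone hypothesis, $(x, ty) \in Y$ for every $t \in \mathbb{C}^{*}$; since $Y$ is closed in $\gtg{g}$, letting $t \to 0$ shows $(x,0) \in Y$. The right-hand side of the displayed identity is the preimage of the closed set $Y$ under the closed embedding $\mathfrak{g} \to \gtg{g}$, $x \mapsto (x,0)$, hence it is closed in $\mathfrak{g}$. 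The same argument, interchanging the roles of the two factors, works for $\varpi_2(Y)$, and this completes the proof.

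The only real obstacle is that projections of closed sets are not in general closed; the bicone property is precisely what lets us bypass this obstacle by reducing each projection to a slice of $Y$, which is visibly closed.
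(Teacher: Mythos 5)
Your proof is correct and follows essentially the same route as the paper: the paper also deduces closedness by observing that $\varpi_{1}(Y)\times \{0\}$ and $\{0\}\times \varpi _{2}(Y)$ are the intersections of $Y$ with ${\goth g}\times \{0\}$ and $\{0\}\times {\goth g}$, which is exactly your slice identity, with the bicone-plus-closedness degeneration $t\to 0$ supplying the nontrivial inclusion.
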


\begin{proof}
As $Y$ is a $G$-invariant bicone, $\varpi_{1}(Y)$ and $\varpi _{2}(Y)$ are $G$-invariant
cones of ${\goth g}$. Moreover, $\varpi_{1}(Y)\times \{0\}$ and
$\{0\}\times \varpi _{2}(Y)$ are the intersections of $Y$ with
${\goth g}\times \{0\}$ and $\{0\}\times {\goth g}$ respectively, since $Y$ is a closed
bicone. So $\varpi_{1}(Y)$ and $\varpi _{2}(Y)$ are closed subsets of ${\goth g}$.
\end{proof}

We fix a principal ${\goth {sl}}_2$-triple $(e,h,f)$ of ${\goth g}$. Thus, the
following relations
$$[h,e]=2e \mbox{ , } \hspace{.5cm} [e,f]=h \mbox{ , } \hspace{.5cm} [h,f]=-2f \mbox{ ,}$$
are satisfied, $e$ and $f$ are regular nilpotent elements and $h$ is a regular
semisimple element of ${\goth g}$. If $\ad$ is the adjoint representation of ${\goth g}$,
then $\ad h$ induces a ${\Bb Z}$-grading on ${\goth g}$ and we have
${\goth g}=\sum\limits_{i \in {\Bb Z}} {\goth g}_i$, where ${\goth g}_{i}$ is the
$i$-eigenspace of $\ad h$. Moreover, all the eigenvalues of $\ad h$ are
even integers. The centralizer $\mathfrak{g}(h)={\goth g}_{0}$ of $h$ in ${\goth g}$ is a
Cartan subalgebra, that we will also denote by ${\goth h}$. The unique Borel subalgebra
containing $e$ is
$${\goth b}:= {\goth h} \oplus  \sum\limits_{i > 0} {\goth g}_i \mbox{ , }$$
and the nilpotent radical of ${\goth b}$ is
${\goth u} := \sum\limits_{i > 0} {\goth g}_i$. We additionally set
$${\goth b}_{-} \ := \ \sum\limits_{i \leq 0} {\goth g}_i \mbox{ , } \
{\goth u}_- \ := \ \sum\limits_{i < 0} {\goth g}_i \mbox{ .}$$
Let ${\bf B}$ and ${\bf B}_{-}$ be the normalizers of
${\goth b}$ and ${\goth b}_{-}$ in $G$ and let ${\bf H}$ and $N_{G}({\goth h})$ be the
centralizer and the normalizer of ${\goth h}$ in $G$. Then the quotient $W_G({\goth h})$
of $N_{G}({\goth h})$ by ${\bf H}$ is the Weyl group of ${\goth g}$ with respect to
${\goth h}$.

Let ${\cali R}$ be the root system of $({\goth g},{\goth h})$, let ${\cali R}_+$ be the
positive root system of ${\cali R}$ defined by ${\goth b}$, and let $\Pi$ be the basis of
${\cali R}_+$. For any $\alpha $ in ${\cali R}$, we denote by ${\goth g}^{\alpha}$ the
corresponding root subspace. Denote by $w_0$ the longest element of $W_{G}({\goth h})$
with respect to $\Pi$. Then $w_{0}(\Pi )$ is equal to $-\Pi $ and for any representative
$g_{0}$ of $w_{0}$ in $N_{G}({\goth h})$, $g_{0}({\goth b})$ is equal to
${\goth b}_{-}$.\\

Let $\dv ..$ be a nondegenerate $G$-invariant bilinear form on
$\gtg g{}$ which extends the Killing form of the semisimple part of ${\goth g}$. In the
remainder of this note, the orthogonality will refer to $\dv ..$.\\

\subsection*{Acknowledgment} We are grateful to Michel Duflo for bringing \cite{Mu} to
our attention. We would like also to thank Fran\c{c}ois
Loeser for his e-mails concerning motivic integration questions, and Karin Baur for her
comments. In addition, we thank Vladimir Popov for his support and the referees for their
numerous and judicious advices and their careful attention to our note.

\section{Characteristic submodule}\label{sc}
In this section, we introduce the characteristic submodule of ${\goth g}$ and we describe
some of its properties.

\subsection{Preliminaries} Let us recall that $\poi p1{,\ldots,}{\ran g{}}{}{}{}$ are homogeneous
generators of $\ai g{}{}$ of degrees $\poi d1{,\ldots,}{\ran g{}}{}{}{}$ respectively such
that the sequence $\poi d1{,\ldots,}{\ran g{}}{}{}{}$ is weakly increasing. For
$i=1,\ldots,\rk \mathfrak{g}$, let $\varepsilon_i$ be the element of
$\tk {{\Bb C}}{\e Sg}{\goth g}$ defined by the following relation:
$$\dv {\varepsilon_i(x)}v = p_{i}'(x)(v) \mbox{ ,}$$
for $x,v$ in ${\goth g}$, where $p_{i}'(x)$ is the differential of $p_i$ at $x$. The
first statement of the following lemma comes from \cite{Ri3}(Lemma 2.1) while the second
statement comes from \cite{Kos2}(Theorem 9):

\begin{lemma}\label{lsc1}
Let $x$ be in ${\goth g}$.

{\rm i)} For $i=1,\ldots,\rk \mathfrak{g}$, $\varepsilon _{i}(x)$ belongs to the center of
${\goth g}(x)$.

{\rm ii)} The elements $\poi x{}{,\ldots,}{}{\varepsilon }{1}{\ran g{}}$ are linearly
independent in ${\goth g}$ if and only if $x$ is regular. Moreover, if so,
$\poi x{}{,\ldots,}{}{\varepsilon }{1}{\ran g{}}$ is a basis of ${\goth g}(x)$.
\end{lemma}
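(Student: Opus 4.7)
The plan for (i) is to exploit the $G$-invariance of each $p_i$. Combined with the nondegeneracy and $G$-invariance of $\dv{\cdot}{\cdot}$, the defining relation $\dv{\varepsilon_i(x)}{v} = p_i'(x)(v)$ forces the polynomial map $\varepsilon_i : \mathfrak{g} \to \mathfrak{g}$ to be $G$-equivariant: $\varepsilon_i(\Ad(g)x) = \Ad(g)\varepsilon_i(x)$. For any $v \in \mathfrak{g}(x)$ the one-parameter subgroup $\exp(tv)$ fixes $x$, so equivariance yields $\varepsilon_i(x) = \Ad(\exp tv)\varepsilon_i(x)$ for all $t$, and differentiating at $t=0$ gives $[v,\varepsilon_i(x)] = 0$. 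Taking $v = x$ shows $\varepsilon_i(x) \in \mathfrak{g}(x)$, and the identity for arbitrary $v \in \mathfrak{g}(x)$ shows that $\varepsilon_i(x)$ commutes with every element of $\mathfrak{g}(x)$, hence lies in its center.

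For (ii), by nondegeneracy of $\dv{\cdot}{\cdot}$ the rank at $x$ of the differential of the Chevalley quotient map $\psi := (p_1,\ldots,p_{\rk \mathfrak{g}}) : \mathfrak{g} \to \mathbb{C}^{\rk \mathfrak{g}}$ equals $\dim \mathrm{span}(\varepsilon_1(x),\ldots,\varepsilon_{\rk \mathfrak{g}}(x))$. Thus the $\varepsilon_i(x)$ are linearly independent if and only if $d\psi_x$ is surjective, and the problem reduces to characterizing this surjectivity by the regularity of $x$. For the implication ``$x$ regular $\Rightarrow d\psi_x$ surjective'' I would reduce by $G$-equivariance to the case of a regular semisimple $h \in \mathfrak{h}$, where $\varepsilon_i(h) \in \mathfrak{g}(h) = \mathfrak{h}$. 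The Chevalley restriction theorem identifies the $p_i|_\mathfrak{h}$ with a system of algebraically independent generators of the $W_G(\mathfrak{h})$-invariant polynomials on $\mathfrak{h}$, and the classical Shephard--Todd--Chevalley theory then yields that the Jacobian of such generators is nonvanishing precisely off the reflection hyperplanes, giving linear independence of the $\varepsilon_i(h)$. Regular orbits outside the semisimple locus are reached via Kostant's transversal slice $e + \mathfrak{g}(f)$, which meets every regular $G$-orbit and on which $\psi$ restricts to an isomorphism onto $\mathbb{C}^{\rk \mathfrak{g}}$.

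For the converse, the key input is that each fiber $F := \psi^{-1}(\psi(x))$ is an irreducible variety of dimension $\dim\mathfrak{g} - \rk \mathfrak{g}$ whose smooth locus coincides with the unique dense regular $G$-orbit it contains. Combined with the identification $T_xF = \ker d\psi_x$, a non-regular $x$ is therefore a singular point of $F$, which forces $\dim \ker d\psi_x > \dim\mathfrak{g} - \rk \mathfrak{g}$ and hence linear dependence of the $\varepsilon_i(x)$. The ``basis'' statement then follows at once: for regular $x$, $\varepsilon_i(x) \in \mathfrak{g}(x)$ and $\dim \mathfrak{g}(x) = \rk \mathfrak{g}$, so linear independence of $\rk \mathfrak{g}$ vectors in a space of that dimension gives a basis. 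The main obstacle is precisely this converse step -- identifying the ramification locus of $\psi$ with the non-regular set -- which is the substantive content of Kostant's Theorem~9 and relies on the transversal slice analysis cited above.
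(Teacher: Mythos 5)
The paper does not actually prove this lemma: it is quoted from the literature, with part (i) attributed to Richardson (\cite{Ri3}, Lemma 2.1) and part (ii) to Kostant (\cite{Kos2}, Theorem 9). Measured against that, your proposal is sound. Your argument for (i) is complete and is exactly the standard one: $G$-equivariance of $\varepsilon_i$ (from invariance of $p_i$ and of the form), applied to the one-parameter subgroups $\exp(t\,\ad v)$ for $v\in{\goth g}(x)$, gives $[v,\varepsilon_i(x)]=0$, and $v=x$ places $\varepsilon_i(x)$ in ${\goth g}(x)$. For (ii), the reduction of linear independence of the $\varepsilon_i(x)$ to surjectivity of $d\psi_x$ via nondegeneracy of the form is correct, and the forward direction is complete modulo two classical inputs you name explicitly: the Shephard--Todd--Chevalley Jacobian criterion on the regular semisimple locus, and Kostant's slice $e+{\goth g}(f)$ for the remaining regular orbits (surjectivity of $d\psi$ being a $G$-invariant condition). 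The converse is, as you say, the substantive content of Kostant's theorem; as written it skirts circularity, since ``the smooth locus of the fiber is the regular orbit'' is essentially the statement being proved unless one is careful to distinguish the reduced fiber (whose singular locus must be identified with the non-regular set by an independent argument, e.g.\ Kostant's) from the scheme-theoretic fiber (whose Zariski tangent space is $\ker d\psi_x$). Since you flag this step as deferred to \cite{Kos2}, which is precisely the paper's own source, there is no gap relative to what the paper does.
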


Let $i$ be in $\{1,\ldots,\rk \mathfrak{g}\}$. The $2$-order polarizations $p_{i,m,n}$ of $p_i$ are
defined by the following relation:
$$p_i(ax+by)=\sum\limits_{m+n=d_i} a^m b^n p_{i,m,n}(x,y) \mbox{ ,}$$
for any $(a,b)$ in ${\Bb C}^2$ and any $(x,y)$ in $\gtg g{}$. For $(m,n)\in{\Bb N}^2$
such that $m+n=d_i$, let $\varepsilon_{i,m,n}$ be the element of
$\tk {{\Bb C}}{\cg g{}}{\goth g}$ such that the linear functional
$v\mapsto \dv {\varepsilon_{i,m,n}(x,y)}v$ on ${\goth g}$ is the differential at $x$ of
the function $x\mapsto p_{i,m,n}(x,y)$ for $(x,y)$ in $\gtg g{}$. In particular,
$\varepsilon _{i,0,d_{i}}=0$ for $i=1,\ldots, \rk \mathfrak{g}$. In addition, since
$\poi d1{+\cdots +}{\ran g{}}{}{}{}$ is equal to $\bor g{}$, the cardinality of the family
$\{\varepsilon_{i,1,d_i-1},\ldots,\varepsilon_{i,d_i,0}\mbox{ , }i=1,\ldots,\rk \mathfrak{g}\}$ is
equal to $\bor g{}$.

\begin{lemma}\label{l2sc1} For $i=1,\ldots,\rk \mathfrak{g}$ and $(x,y)\in \gtg g{}$, we have
$$ \varepsilon _{i}(ax+by) =  \sum_{m=1}^{d_{i}}
a^{m-1}b^{d_{i}-m}\varepsilon _{i,m,d_{i}-m}(x,y)\mbox{ ,}$$
for any $(a,b)$ in ${\Bb C}^{2}$. In particular,
$\varepsilon _{i,1,d_{i}-1}(x,y)=\varepsilon _{i}(y)$
and \sloppy \hbox{$\varepsilon _{i,d_{i},0}(x,y)=\varepsilon _{i}(x)$}.
\end{lemma}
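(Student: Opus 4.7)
The plan is to differentiate the defining polarization identity \eqref{pi} with respect to the first argument and then translate the resulting relation through the definitions of $\varepsilon_i$ and $\varepsilon_{i,m,n}$. No geometric or Lie-theoretic input is needed; the statement is a purely formal consequence of how the $p_{i,m,n}$ and $\varepsilon_{i,m,n}$ were introduced.

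First, I would fix $(x,y) \in \gtg g{}$ and $(a,b) \in {\Bb C}^2$, and consider the two sides of the identity
$$p_i(ax+by)=\sum_{m+n=d_i} a^m b^n p_{i,m,n}(x,y)$$
as functions of $x \in {\goth g}$. Differentiating the left-hand side at $x$ in direction $v \in {\goth g}$ via the chain rule gives $a\, p_i'(ax+by)(v) = a\, \dv{\varepsilon_i(ax+by)}{v}$. Differentiating the right-hand side at $x$ in direction $v$ gives $\sum_{m+n=d_i} a^m b^n \dv{\varepsilon_{i,m,n}(x,y)}{v}$ by the defining property of the $\varepsilon_{i,m,n}$. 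Because $\varepsilon_{i,0,d_i}=0$, the $m=0$ term vanishes, and after reindexing the remaining terms by $m=1,\ldots,d_i$ with $n=d_i-m$, I obtain the identity
$$a\,\dv{\varepsilon_i(ax+by)}{v}=\sum_{m=1}^{d_i} a^m b^{d_i-m}\dv{\varepsilon_{i,m,d_i-m}(x,y)}{v}.$$

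Since this holds for all $v \in {\goth g}$ and $\dv{\cdot}{\cdot}$ is nondegenerate, it yields a polynomial identity in $a$:
$$a\,\varepsilon_i(ax+by)=\sum_{m=1}^{d_i} a^m b^{d_i-m}\varepsilon_{i,m,d_i-m}(x,y).$$
Both sides are divisible by $a$ as polynomials in $a$; dividing (or equivalently comparing coefficients) gives the desired formula
$$\varepsilon_i(ax+by)=\sum_{m=1}^{d_i} a^{m-1} b^{d_i-m}\varepsilon_{i,m,d_i-m}(x,y),$$
which now holds for all $(a,b)\in{\Bb C}^2$.

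Finally, for the two ``in particular'' statements I would simply specialize. Taking $(a,b)=(0,1)$ kills every term with $m \geq 2$, leaving $\varepsilon_i(y)=\varepsilon_{i,1,d_i-1}(x,y)$; taking $(a,b)=(1,0)$ kills every term with $m \leq d_i-1$, leaving $\varepsilon_i(x)=\varepsilon_{i,d_i,0}(x,y)$. There is no real obstacle here: the only subtlety worth flagging is to make sure the $m=0$ term drops out before dividing by $a$, which is what the vanishing $\varepsilon_{i,0,d_i}=0$ noted just above the statement is for.
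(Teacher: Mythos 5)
Your proof is correct and follows essentially the same route as the paper: differentiate the polarization identity in the first variable, use the definitions of $\varepsilon_i$ and $\varepsilon_{i,m,n}$ together with nondegeneracy of $\dv ..$, and observe that the $m=0$ term drops out because $\varepsilon_{i,0,d_i}=0$. The only (cosmetic) difference is that the paper absorbs the chain-rule factor $a$ by substituting $v\mapsto a^{-1}v$ (so it works with $a\in{\Bb C}^*$ and extends implicitly), whereas you keep the factor and cancel it as a polynomial identity in $a$, which handles $a=0$ directly.
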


\begin{proof} Let $v$ be in ${\goth g}$. For any $(a,b)$ in ${\Bb C}^*\times {\Bb C}$,
we have
\begin{eqnarray*} \dv {\varepsilon _{i}(ax+by)}v &=&
\frac{\dd }{\dd t}p_{i}(ax+by+tv) \left \vert \right. _{t=0}\\
&= &\sum_{m+n=d_i}
a^{m}b^{n} \frac{\dd }{\dd t}p_{i,m,n}(x+ta^{-1}v,y) \left \vert \right.
_{t=0} \\
&= &\sum_{m+n=d_i}
a^{m}b^{n} \dv {\varepsilon _{i,m,n}(x,y)}{a^{-1}v} \mbox{ , }
\end{eqnarray*}
whence the lemma, since $\dv ..$ is a nondegenerate $G$-invariant bilinear form on $\gtg g{}$.
\end{proof}

Recall that we have introduced the subset
$$\Omega_{{\goth g}} := \{ (x,y) \in {\goth g} \times {\goth g} \ \vert \
P_{x,y} \setminus \{0\} \subset {\goth g}_{\reg} \ , \ \dim P_{x,y} = 2 \} \mbox{ , }$$
which is clearly $G$-invariant. We denote by ${\goth h}'$,
${\goth g}_{2}'$ and ${\goth g}_{-2}'$ the intersections of ${\goth g}_{\reg}$ with
${\goth h}$, ${\goth g}_{2}$ and ${\goth g}_{-2}$ respectively. Set:
$${\goth u}_{+} \ := \ \sum\limits_{i \geq 4} {\goth g}_i \mbox{ .}$$

In the following lemma, we explicitly provide elements of $\Omega_{{\goth g}}$ .

\begin{lemma}\label{l3sc1} {\rm i)} If $x\in {\goth h}'+{\goth u}$ and
$y\in {\goth g}_{2}'+{\goth u}_+$, then $(x,y) \in \Omega_{{\goth g}}$.

{\rm ii)} If $x \in {\goth g}_{-2}'$ and $y \in {\goth g}_{2}'$,
then $(x,y) \in \Omega_{{\goth g}}$.
\end{lemma}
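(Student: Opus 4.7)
The plan is, for each of (i) and (ii), to verify the two conditions defining $\Omega_{{\goth g}}$: that $\dim P_{x,y}=2$ and that every combination $ax+by$ with $(a,b)\neq (0,0)$ is regular. The linear independence of $x$ and $y$ is immediate by projecting onto graded components: in (i) only $x$ has a nonzero ${\goth g}_0={\goth h}$-part, while in (ii) the vectors $x\in {\goth g}_{-2}$ and $y\in {\goth g}_2$ lie in distinct nonzero eigenspaces of $\ad h$. So the substance of the lemma is the regularity of $ax+by$ for all $(a,b)\neq (0,0)$.

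I would rely on two standard tools. First, the classical fact ${\goth h}'+{\goth u}\subset {\goth g}_{\reg}$: for $h\in {\goth h}'$, $\ad h$ is invertible on ${\goth u}$ (its eigenvalues $\alpha(h)$ on the root spaces are all nonzero), and a routine iterative exponential argument produces, for every $n\in {\goth u}$, some $v\in {\goth u}$ with $\exp(\ad v)(h)=h+n$, so that $h+n$ is $U$-conjugate to $h$ and thus regular. Second, because every eigenvalue of $\ad h$ is an even integer, there is an algebraic cocharacter $\mu\mapsto h(\mu)\in {\bf H}$ with $\Ad(h(\mu))v=\mu^i v$ for $v\in {\goth g}_i$; since conjugation by $h(\mu)$ and scalar multiplication both preserve regularity, I can rescale graded components freely when testing regularity.

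For part (i), if $a\neq 0$ then $ax+by=ah_0+(au_0+by_2+bu_+)$ lies in ${\goth h}'+{\goth u}$ and the first tool applies. If $a=0$, then $ax+by$ is a nonzero scalar multiple of $y_2+u_+$; writing $u_+=\sum_{i\geq 4}u_{+,i}$ with $u_{+,i}\in {\goth g}_i$, the second tool yields
\[
\mu^{-2}\Ad(h(\mu))(y_2+u_+)=y_2+\sum_{i\geq 4}\mu^{i-2}u_{+,i},
\]
which tends to $y_2\in {\goth g}_{\reg}$ as $\mu\to 0$. As ${\goth g}_{\reg}$ is open and the left-hand side has the same regularity status as $y_2+u_+$ for every $\mu\neq 0$, we conclude that $y_2+u_+$ is regular.

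For part (ii), the cases $a=0$ or $b=0$ are immediate since $ax+by$ becomes a nonzero scalar multiple of a regular element. When $ab\neq 0$, the same cocharacter gives
\[
\mu^2\Ad(h(\mu))(ax+by)=ax+\mu^4 by,
\]
which tends to $ax\in {\goth g}_{\reg}$ as $\mu\to 0$, and openness of ${\goth g}_{\reg}$ again transfers regularity back to $ax+by$. The only delicate point is the existence of the cocharacter $h(\mu)$ in ${\bf H}$, which is guaranteed by the evenness of the spectrum of $\ad h$ noted in the introduction.
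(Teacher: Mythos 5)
Your proof is correct and follows essentially the same route as the paper: conjugacy of $\mathfrak{h}'+\mathfrak{u}$-elements to their semisimple part handles the directions with nonzero $\mathfrak{g}_0$-component, and the one-parameter subgroup of $\mathbf{H}$ generated by $\operatorname{ad}h$ together with openness of $\mathfrak{g}_{\mathrm{reg}}$ handles the rest. The only difference is that you spell out the regularity of $y\in\mathfrak{g}_2'+\mathfrak{u}_+$ via the same contraction argument, a point the paper simply asserts.
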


\begin{proof}
i) The elements $x$ and $y$ are clearly linearly independent. In addition, for any $t$ in
${\Bb C}$, $x+ty$ is conjugate to $x$ under $G$. So $x+ty$ is regular for any $t$ in
${\Bb C}$. Since $y$ is regular, $(x,y)$ belongs to $\Omega_{{\goth g}}$.

ii) The elements $x$ and $y$ are clearly linearly independent.
As $x$ is regular, $x+sy$ is regular for any $s$ in an open subset of ${\Bb C}$
containing $0$. Let $t \mapsto g(t)$ be the one-parameter subgroup of ${\bf H}$ generated
by $\ad h$. As ${\goth g}_{\reg}$ is $G$-invariant, for any $t$ in
${\Bb C}^{*}$, $g(t)(x+sy)$ is regular for any $s$ in an open subset of
${\Bb C}$ containing $0$. So, from the relations $[h,x]=-2x$ and $[h,y]=2y$, we deduce
that for any $t$ in ${\Bb C}^{*}$, $t^{-2}x+st^2 y$ is regular for any $s$
in an open subset of ${\Bb C}$ containing $0$. As ${\goth g}_{\reg}$ is an open cone,
$x+st^{4}y$ is consequently regular for any $s$ in an open subset of ${\Bb C}$
containing $\{0\}$. So $x+ty$ is regular for any $t$ in ${\Bb C}$. As $y$ is regular, we
deduce that $(x,y)$ belongs to $\Omega_{{\goth g}}$.
\end{proof}

\begin{remark}\label{rsc1} The subset $\Omega_{{\goth g}}$ is stable under the
involution \sloppy \hbox{$(x,y) \mapsto (y,x)$}. So, by Lemma \ref{l3sc1}, the images of
the subsets $({\goth h}'+{\goth u}) \times ({\goth g}_{2}'+{\goth u}_+)$ and
${\goth g}_{-2}' \times {\goth g}_{2}'$ by this involution are contained in
$\Omega_{{\goth g}}$. For example, the elements $(e,h)$, $(f,h)$ and $(e,f)$ are in
$\Omega_{{\goth g}}$.\\
\end{remark}

The following lemma is well-known:

\begin{lemma}\label{l4sc1}
Let $x$ be in ${\goth g}_{\reg}$.

{\rm i)} If $x$ belongs to ${\goth b}$, then ${\goth g}(x)$ is contained in ${\goth b}$.

{\rm ii)} If $x$ belongs to ${\goth u}$, then ${\goth g}(x)$ is contained in ${\goth u}$.
\end{lemma}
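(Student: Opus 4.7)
The plan is to combine Lemma~\ref{lsc1}(ii) with the $G$-equivariance of the maps $\varepsilon_i$ and a density argument. Since $x$ is regular, Lemma~\ref{lsc1}(ii) identifies $\mathfrak{g}(x)$ with the span of $\varepsilon_1(x),\dots,\varepsilon_{\rk\mathfrak{g}}(x)$, so the task reduces to verifying $\varepsilon_i(x)\in\mathfrak{b}$ in~(i) and $\varepsilon_i(x)\in\mathfrak{u}$ in~(ii) for every $i$. The $G$-invariance of $p_i$ together with the nondegeneracy and $G$-invariance of $\langle\cdot,\cdot\rangle$ produces the equivariance $\varepsilon_i(g.y)=g.\varepsilon_i(y)$ for $g\in G$ and $y\in\mathfrak{g}$, which is the key tool throughout.

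For~(i), for any $h\in\mathfrak{h}\cap\mathfrak{g}_{\reg}$ one has $\varepsilon_i(h)\in\mathfrak{g}(h)=\mathfrak{h}\subset\mathfrak{b}$, so by equivariance and the $\mathbf{B}$-stability of $\mathfrak{b}$, $\varepsilon_i(\mathbf{B}.h)\subset\mathfrak{b}$. Regularity of $h$ makes $\ad h$ invertible on $\mathfrak{u}$, so $\mathbf{U}.h=h+\mathfrak{u}$ and therefore $\mathbf{B}.(\mathfrak{h}\cap\mathfrak{g}_{\reg})=(\mathfrak{h}\cap\mathfrak{g}_{\reg})+\mathfrak{u}$ is a dense open subset of $\mathfrak{b}$. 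Polynomiality of $\varepsilon_i$ and Zariski closedness of $\mathfrak{b}$ then force $\varepsilon_i(\mathfrak{b})\subset\mathfrak{b}$, proving~(i).

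For~(ii), I would first verify $\mathfrak{g}(e)\subset\mathfrak{u}$ for the principal nilpotent $e$ by a weight computation: $p_i$ is homogeneous of degree $d_i$, so $\varepsilon_i$ is homogeneous of degree $d_i-1$; combined with equivariance under the one-parameter subgroup of $\mathbf{H}$ generated by $h$ and the relation $[h,e]=2e$, this pins down $\varepsilon_i(e)\in\mathfrak{g}_{2(d_i-1)}\subset\mathfrak{u}$ whenever $d_i\geq 2$. To pass from $e$ to an arbitrary regular $x\in\mathfrak{u}$, I use that (i) gives $G(e)\subset\mathbf{B}$, whence $\dim\mathbf{B}.e=\dim\mathbf{B}-\rk\mathfrak{g}=\dim\mathfrak{u}$; the same count for any regular $x\in\mathfrak{u}$ yields $\dim\mathbf{B}.x=\dim\mathfrak{u}$, and two such full-dimensional $\mathbf{B}$-orbits in the irreducible variety $\mathfrak{u}$ must coincide. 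Writing $x=b.e$ with $b\in\mathbf{B}$ and using that $\mathbf{B}$ normalizes $\mathfrak{u}$, we obtain $\mathfrak{g}(x)=b.\mathfrak{g}(e)\subset\mathfrak{u}$.

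The main technicality is the reductive-versus-semisimple distinction in~(ii): if $\mathfrak{g}$ has nonzero center $\mathfrak{z}$, then $\mathfrak{z}\subset\mathfrak{g}(x)$ while $\mathfrak{z}\cap\mathfrak{u}=0$, so the statement must be read after a preliminary reduction to $[\mathfrak{g},\mathfrak{g}]$, in which $\mathfrak{u}$ already lies and each $d_i\geq 2$. Beyond this bookkeeping the argument is routine, as (i) and~(ii) are both immediate from Lemma~\ref{lsc1}(ii) once the equivariance/density/weight ingredients are assembled.
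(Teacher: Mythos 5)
Your argument is correct. Note that the paper offers no proof of this lemma at all -- it is stated as ``well-known'' -- so there is nothing to compare against; what you have written is a complete and valid justification. Both halves are sound: in (i), the identification $\mathfrak{g}(x)=\mathrm{span}\,(\varepsilon_1(x),\dots,\varepsilon_{\mathrm{rk}\,\mathfrak{g}}(x))$ from Lemma \ref{lsc1}(ii), the $G$-equivariance of the $\varepsilon_i$, and the density of $(\mathfrak{h}\cap\mathfrak{g}_{\reg})+\mathfrak{u}=\mathbf{B}.(\mathfrak{h}\cap\mathfrak{g}_{\reg})$ in $\mathfrak{b}$ do give $\varepsilon_i(\mathfrak{b})\subset\mathfrak{b}$; in (ii), the weight computation correctly places $\varepsilon_i(e)$ in $\mathfrak{g}_{2(d_i-1)}\subset\mathfrak{u}$, and the orbit-dimension count (which uses (i) to compute the stabilizer of a regular $x\in\mathfrak{u}$ in $\mathbf{B}$) shows that all regular elements of $\mathfrak{u}$ lie in the single dense $\mathbf{B}$-orbit of $e$, so conjugating $\mathfrak{g}(e)\subset\mathfrak{u}$ by $\mathbf{B}$ finishes the proof. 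Your caveat about the reductive case is also well taken: for $\mathfrak{g}=\mathfrak{gl}_n$ a regular nilpotent $x$ is regular in $\mathfrak{g}$ and $\mathfrak{g}(x)$ contains the center, so (ii) as literally stated requires restricting to the semisimple part (where every $d_i\geq 2$, as your weight argument needs); the paper only invokes (ii) in Theorem \ref{tp1}, where $\mathfrak{g}$ is assumed simple, so this is a harmless imprecision in the statement rather than in your proof.
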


For $(x,y)$ in $\gtg g{}$, we denote by ${\goth V}(x,y)$ the subspace generated by the
elements
$$\poi {x,y}{}{,\ldots,}{}{\varepsilon }{i,1,d_{i}-1}{i,d_{i},0} \mbox{ , }
i=1,\ldots,\rk \mathfrak{g} \mbox{ ,}$$
and we set:
$${\goth V}'(x,y) := \sum\limits_{(a,b) \in {\Bb C}^2\setminus{(0,0)}} {\goth g}(ax+by)
\mbox{ .}$$
We collect in the following lemma some results concerning the subset
$\Omega_{{\goth g}}$ in part obtained by A. V. Bolsinov in \cite{Bol} and recently by
D. I. Panyushev and O. Yakimova in \cite{PYa}:

\begin{lemma}\label{l5sc1}
Let $(x,y)$ be in $\gtg g{}$.

{\rm i)} The subspace ${\goth V}(x,y)$ has dimension at most $\bor g{}$. Moreover, it has
dimension $\bor g{}$ if and only if $(x,y)$ belongs to $\Omega _{{\goth g}}$.

{\rm ii)} The subspace ${\goth V}(x,y)$ is contained in ${\goth V}'(x,y)$. Moreover,
the equality occurs when $(x,y)$ belongs to $\Omega _{{\goth g}}$.

{\rm iii)} If $(x,y)$ belongs to $\gtg b{}$, then ${\goth V}(x,y)$ is contained in
${\goth b}$.
\end{lemma}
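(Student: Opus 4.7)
The plan is to treat the three assertions in the order (iii), (ii), (i), with the bulk of the work lying in (i).

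For (iii), I would first establish that $\varepsilon_i({\goth b})\subset{\goth b}$ for every $i$. The subset ${\goth h}'+{\goth u}$ is open and dense in ${\goth b}$ and, by a classical result of Kostant, lies in ${\goth g}_{\reg}$; on it, Lemma \ref{l4sc1}(i) combined with Lemma \ref{lsc1}(i) gives $\varepsilon_i(x)\in{\goth g}(x)\subset{\goth b}$, and the inclusion then extends to all of ${\goth b}$ by polynomial continuity (the projection of $\varepsilon_i$ onto any complement of ${\goth b}$ is a polynomial map vanishing on a dense set). Consequently, for $(x,y)\in\gtg b{}$ and any $(a,b)\in{\Bb C}^2$ the element $\varepsilon_i(ax+by)$ lies in ${\goth b}$; Lemma \ref{l2sc1} together with Vandermonde interpolation then exhibits each $\varepsilon_{i,m,d_i-m}(x,y)$ as a ${\Bb C}$-linear combination of such values, yielding ${\goth V}(x,y)\subset{\goth b}$.

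For the inclusion in (ii), the same Vandermonde argument applies: by Lemma \ref{lsc1}(i), each $\varepsilon_i(ax+by)$ lies in ${\goth g}(ax+by)\subset{\goth V}'(x,y)$ for $(a,b)\ne(0,0)$, and varying $(a,b)$ recovers each generator $\varepsilon_{i,m,d_i-m}(x,y)$ of ${\goth V}(x,y)$, proving ${\goth V}(x,y)\subset{\goth V}'(x,y)$. For the reverse inclusion when $(x,y)\in\Omega_{\goth g}$, every nonzero $ax+by$ is regular, so Lemma \ref{lsc1}(ii) says ${\goth g}(ax+by)$ is spanned by $\varepsilon_1(ax+by),\ldots,\varepsilon_{\rk \mathfrak{g}}(ax+by)$, all of which lie in ${\goth V}(x,y)$ by Lemma \ref{l2sc1}; summing over $(a,b)$ gives ${\goth V}'(x,y)\subset{\goth V}(x,y)$.

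The upper bound in (i) is immediate since ${\goth V}(x,y)$ has exactly $d_1+\cdots+d_{\rk \mathfrak{g}}=\bor g{}$ explicit generators. For the easy direction of the characterization, assume $(x,y)\notin\Omega_{\goth g}$. If $\dim P_{x,y}<2$, all generators are scalar multiples of the $\varepsilon_i(z)$ for a single $z$ spanning the line, so $\dim{\goth V}(x,y)\le\rk \mathfrak{g}<\bor g{}$ since ${\goth g}$ is non-commutative. Otherwise $\dim P_{x,y}=2$ but some nonzero $a_0x+b_0y$ is non-regular. The Vandermonde argument from (ii) shows that ${\goth V}(x,y)$ equals the span of all $\varepsilon_i(z)$ with $z\in P_{x,y}$ and $i$ varying, hence depends only on the plane $P_{x,y}$; a $GL_2$-change of basis of $P_{x,y}$ thus reduces us to the case where $x$ itself is non-regular. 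Then Lemma \ref{lsc1}(ii) provides a nontrivial linear dependence among $\varepsilon_1(x),\ldots,\varepsilon_{\rk \mathfrak{g}}(x)$, and by Lemma \ref{l2sc1} these are precisely the generators $\varepsilon_{i,d_i,0}(x,y)$. This forces a nontrivial dependence among the $\bor g{}$ generators of ${\goth V}(x,y)$, so $\dim{\goth V}(x,y)<\bor g{}$.

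The main obstacle is the converse direction of (i): if $(x,y)\in\Omega_{\goth g}$, one must show $\dim{\goth V}(x,y)=\bor g{}$. By (ii) this is equivalent to $\dim{\goth V}'(x,y)=\bor g{}$, which is precisely the content of Bolsinov's theorem \cite{Bol} (revisited by Panyushev--Yakimova \cite{PYa}) asserting that the sum $\sum_{(a,b)\ne(0,0)}{\goth g}(ax+by)$ is a commutative subalgebra of maximal dimension $\bor g{}$. Invoking this result concludes the proof.
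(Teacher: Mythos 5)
Your proof is correct and follows essentially the same route as the paper's: the inclusion $\mathfrak{V}(x,y)\subset\mathfrak{V}'(x,y)$ via Lemma \ref{l2sc1} and Vandermonde interpolation, the converse on $\Omega_{\mathfrak{g}}$ via Lemma \ref{lsc1}(ii), the $\mathrm{GL}_2$-invariance of $\mathfrak{V}(x,y)$ for the "only if" part of (i), the citation of Bolsinov/Panyushev--Yakimova for the "if" part, and a density-plus-continuity argument for (iii). The only differences are organizational: you make explicit the degenerate case $\dim P_{x,y}<2$ and the polynomial-continuity step in (iii), both of which the paper leaves implicit.
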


\begin{proof}
i) As the sum of $\poi d1{,\ldots,}{\ran g{}}{}{}{}$ is equal to $\bor g{}$, ${\goth V}(x,y)$
has dimension at most $\bor g{}$. Let us suppose that $(x,y)$ belongs
to $\Omega _{{\goth g}}$. Then by \cite{PYa}(Theorem 2.4), the subspace ${\goth V}(x,y)$
has dimension $\bor g{}$. Conversely, let us suppose that ${\goth V}(x,y)$ has dimension
$\bor g{}$. In particular, $\poi x{}{,\ldots,}{}{\varepsilon }{1}{\ran g{}}$ are linearly
independent. Hence by Lemma \ref{lsc1}, (ii), $x$ is regular. By Lemma \ref{l2sc1}, for
any $(a,b)$ in ${\Bb C}^{2}\backslash \{(0,0)\}$, ${\goth V}(x,y)$ is equal to
${\goth V}(ax+by,y)$. Hence $(x,y)$ belongs to $\Omega _{{\goth g}}$.

ii) By Lemma \ref{lsc1}, ${\goth V}'(x,y)$ contains $\varepsilon _{i}(ax+by)$ for
$i=1,\ldots,\rk \mathfrak{g}$ and any $(a,b)$ in ${\Bb C}^{2}$. Hence by Lemma \ref{l2sc1},
${\goth V}'(x,y)$ contains ${\goth V}(x,y)$. Moreover, by (i) and Lemma \ref{lsc1},
${\goth V}(x,y)$ is equal to ${\goth V}'(x,y)$ when $(x,y)$ belongs to
$\Omega _{{\goth g}}$.

iii) Let us suppose that $(x,y)$ belongs to $\gtg b{}$. By Lemma \ref{l3sc1} and Remark
\ref{rsc1}, the intersection of $\Omega _{{\goth g}}$ and $\gtg b{}$ is a nonempty open
subset. Moreover, when $(x,y)$ belongs to this intersection, for any $(a,b)$ in
${\Bb C}^{2}\backslash \{(0,0)\}$, ${\goth g}(ax+by)$ is contained in ${\goth b}$, by
Lemmas \ref{lsc1}, (ii) and \ref{l4sc1}, (i). Hence
for $i=1,\ldots,\rk \mathfrak{g}$ and $m=1,\ldots,d_{i}$, $\varepsilon _{i,m,d_{i}-m}(x,y)$ belongs
to ${\goth b}$, by (ii). So ${\goth V}(x,y)$ is contained in ${\goth b}$.
\end{proof}

\subsection{Closed irreducible subsets of $\gtg s{}$ invariant under the actions of
${\bf S}$ and ${\rm GL}_{2}({\Bb C})$} The following automorphisms give an action of ${\rm GL}_{2}({\Bb C})$ in
$\gtg g{}$:
$$(x,y) \longmapsto (ax+by,cx+dy), \textrm{ where }
g=\left[\begin{array}{cc}a &b\\ c & d\end{array}\right] \mbox{ .}$$
Let ${\goth s}$ be the subspace of ${\goth g}$ generated by $e$, $h$, $f$ so that
${\goth s}$ is isomorphic to ${\goth {sl}}_2({\Bb C})$. Let ${\bf S}$ be the closed
connected subgroup of $G$ whose Lie algebra is $\ad {\goth s}$. We start this subsection
by describing the closed irreducible subsets of $\gtg s{}$ invariant under the actions of
${\bf S}$ and ${\rm GL}_{2}({\Bb C})$. This will be used in Corollary \ref{csc2} and
Lemma \ref{l2su2}.

Let $T_{3}$ be the closed bicone of $\gtg s{}$ generated by the diagonal of
${\goth N}_{{\goth s}}\times {\goth N}_{{\goth s}}$, let $T_{4}$ be the closed bicone of
$\gtg s{}$ generated by the diagonal of ${\goth s} \times {\goth s}$ and let $T_{5}$ be
the subset of elements $(x,y)$ of $\gtg{s}$ such that $x$ and $y$ belong to the same Borel
subalgebra of ${\goth s}$. As defined, the sets $T_{3}$, $T_{4}$ and $T_{5}$ are
irreducible closed subsets of $\gtg s{}$, invariant under the actions of ${\bf S}$ and
${\rm GL}_{2}({\Bb C})$. Moreover, they have dimension $3$, $4$ and $5$ respectively. The
verification of these claims is left to the reader.

\begin{lemma}\label{lsc2} The subsets $\{0\}$, $T_{3}$, $T_{4}$, $T_{5}$ and $\gtg{s}$ are
the only nonempty irreducible closed subsets of $\gtg s{}$, invariant under the actions
of ${\bf S}$ and ${\rm GL}_{2}({\Bb C})$.
\end{lemma}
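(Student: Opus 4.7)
The plan is to classify $\mathbf{S}\times {\rm GL}_{2}({\Bb C})$-orbits on $\gtg{s}$ and identify their closures, using the fact that any nonempty irreducible closed invariant subset $X$ is the closure of the orbit of a generic point. I stratify by $d:=\dim P_{x,y}\in\{0,1,2\}$.

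The case $d=0$ gives $X=\{0\}$. When $d=1$, applying ${\rm GL}_{2}({\Bb C})$ reduces to $(x,y)=(x,0)$ with $x\neq 0$, and the rank-one algebra ${\goth s}$ forces $x$ to be nilpotent or regular semisimple. In the former case the orbit takes the explicit form $\{(au,cu)\mid u\in {\goth N}_{{\goth s}}\setminus\{0\},\ (a,c)\in{\Bb C}^{2}\setminus\{(0,0)\}\}$, whose closure is $T_{3}$. In the latter case, the closure of ${\Bb C}^{*}\cdot\mathbf{S}\cdot x$ is all of ${\goth s}$ by density of regular semisimple elements, and the analogous parametrization gives closure $T_{4}$.

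When $d=2$, the transitivity of ${\rm GL}_{2}({\Bb C})$ on ordered bases of a fixed $2$-plane reduces the orbit to the $\mathbf{S}$-orbit of $P_{x,y}$ in ${\rm Gr}(2,{\goth s})$. Via the Killing form duality $V\mapsto V^{\perp}$, I identify ${\rm Gr}(2,{\goth s})$ with ${\Bb P}({\goth s})\cong {\Bb P}^{2}$, under which Borel subalgebras correspond exactly to the nilpotent lines (a direct computation since the Borel containing $e$ is $e^{\perp}$). The $\mathbf{S}$-orbits on ${\Bb P}({\goth s})$ are therefore just two: the closed conic of nilpotent lines and its open $2$-dimensional complement of semisimple lines. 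Correspondingly, if $P_{x,y}$ is a Borel the orbit closure is $T_{5}$; otherwise $P_{x,y}$ lies in the open $\mathbf{S}$-orbit of $2$-planes, so the orbit is open and dense in $\gtg{s}$, with closure $\gtg{s}$ itself.

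The main obstacle is verifying that the orbit closures produced above coincide with the sets $T_{3}$, $T_{4}$, $T_{5}$ as defined in the excerpt. This reduces to matching the explicit parametrizations of the orbits with the description of $T_{3}$, $T_{4}$, $T_{5}$ as closed bicones generated by diagonals, and uses the standard identifications $\overline{\mathbf{S}\cdot e}={\goth N}_{{\goth s}}$ and $\overline{{\Bb C}^{*}\cdot\mathbf{S}\cdot h}={\goth s}$, together with the fact that a pair $(x,y)$ lies in a common Borel if and only if it lies in the closure of the orbit of $(e,h)$.
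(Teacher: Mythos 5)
Your proof is correct, but it takes a genuinely different route from the paper's. You classify the orbits of $\mathbf{S}\times\mathrm{GL}_{2}(\mathbb{C})$ on $\mathfrak{s}\times\mathfrak{s}$ outright: stratifying by $\dim P_{x,y}$, splitting the rank-one case by the type of the generator, and reducing the rank-two case to the two $\mathbf{S}$-orbits on $\mathrm{Gr}(2,\mathfrak{s})\cong\mathbb{P}(\mathfrak{s})$ via orthogonality. This produces exactly five orbits whose closures form the chain $\{0\}\subset T_{3}\subset T_{4}\subset T_{5}\subset\mathfrak{s}\times\mathfrak{s}$. The paper instead argues by increasing dimension: it combines Lemma \ref{lint} with the observation that $\mathfrak{N}_{\mathfrak{s}}$ is the only nonzero proper $\mathbf{S}$-invariant closed cone of $\mathfrak{s}$ to pin down $\varpi_{1}(T)$, then produces explicit elements $(e,bf+ch)$ and $(h,ae+bf+ch)$ in $T$ which, moved around by $\mathrm{GL}_{2}(\mathbb{C})$ and the one-parameter subgroup generated by $\mathrm{ad}\,h$, force $T$ to contain $T_{3}$, $T_{4}$, $T_{5}$ or all of $\mathfrak{s}\times\mathfrak{s}$ according to $\dim T$. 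Your approach yields strictly more information (the full orbit poset); the paper's is more elementary in that it never invokes the Grassmannian or any transitivity statements. One point you must make explicit: your opening ``fact'' that a nonempty irreducible closed invariant subset is an orbit closure is false for general actions (consider $\mathbb{C}^{*}$ scaling $\mathbb{C}^{2}$); it is legitimate here only because the classification itself shows the action has finitely many orbits, so any closed invariant subset is a finite union of irreducible orbit closures and irreducibility selects one. Since your stratification does exhaust $\mathfrak{s}\times\mathfrak{s}$ in five orbits, this is a matter of ordering the argument rather than a gap, but as written the justification is circular.
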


\begin{proof} Let us first remark that ${\goth N}_{{\goth s}}$ is the only proper
${\bf S}$-invariant closed cone of ${\goth s}$. Let $T$ be a nonempty irreducible closed
subset of $\gtg s{}$, invariant under the actions of ${\bf S}$ and
${\rm GL}_{2}({\Bb C})$.  By Lemma \ref{lint} and the preceding remark, if
$0 < \dim T < 4$, then $\varpi _{1}(T)$ is equal to ${\goth N}_{{\goth s}}$, whence
$T=T_{3}$ by ${\rm GL}_{2}({\Bb C})$-invariance. We assume now $\dim T \geq 4$.
As $\varpi _{1}(T) \supset {\goth N}_{{\goth s}}$ and $\dim T \geq 4$, $T$ contains an
element $(e,ae+bf+ch)$, with $b$ or $c$ different from $0$. Hence $T$ contains
$(e,bf+ch)$. If $c\not= 0$, then $\varpi _{1}(T)$ contains $bf+ch$ which is semisimple.
Otherwise, $T$ contains $(e,f)$ and $\varpi _{1}(T)$ contains $e+f$ which is semisimple,
too. So, in any case, $\varpi _{1}(T)={\goth s}$. In particular, $T\supset T_{4}$ and the
equality holds as soon as $\dim T = 4$. At last, let us suppose $\dim T \geq 5$. As
$\varpi _{1}(T)={\goth s}$, there exist $a$, $b$, $c$ in ${\Bb C}$ such that
$(h,ae+bf+ch)$ belongs to $T$ with $a$ or $b$ different from $0$. If $ab=0$, using the
invariance of $T$ under ${\bf S}$ and ${\mathrm {GL}}_{2}({\Bb C})$, we deduce that
$\{h\}\times {\goth b}_{{\goth s}}$ is contained in $T$.
In this case, $T$ contains $T_{5}$. If $ab\neq 0$, using the invariance of $T$ under
${\mathrm {GL}}_{2}({\Bb C})$ and the one-parameter subgroup of ${\bf H}$ generated by
$\ad h$, we deduce that $(h,at^{2}e+bt^{-2}f)$ belongs to $T$ for any $t$ in
${\Bb C}^{*}$. So $\{h\}\times {\goth s}$ is contained in $T$ and $T$ has dimension $6$,
whence the lemma.
\end{proof}

\begin{corollary}\label{csc2} The subset $\Omega _{{\goth g}}$ is a big open subset of
$\gtg g{}$.
\end{corollary}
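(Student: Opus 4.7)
The plan is to show that $\Omega_{{\goth g}}$ is open in $\gtg{g}$ and that its complement has codimension at least two. For openness I would invoke Lemma~\ref{l5sc1}~(i), which identifies $\Omega_{{\goth g}}$ with the locus of $(x,y)$ at which the $\bor g{}$ vectors $\varepsilon_{i,m,d_i-m}(x,y)$ (for $1\leq i\leq \rk\mathfrak{g}$ and $1\leq m\leq d_i$) span a subspace of the maximal possible dimension $\bor g{}$; this is cut out by the nonvanishing of some $\bor g{}\times \bor g{}$ minor and is therefore open.

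Set $C:=\gtg{g}\setminus\Omega_{{\goth g}}$ and decompose $C=C_1\cup C_2$, where $C_1$ is the closed locus of $(x,y)$ with $\dim P_{x,y}<2$ and $C_2$ the closed locus with $\dim P_{x,y}=2$ but $P_{x,y}\setminus\{0\}\not\subset{\goth g}_{\reg}$. The piece $C_1$ is the union of $\{0\}\times{\goth g}$ with the image of the morphism ${\Bb C}\times{\goth g}\to\gtg{g}$, $(\lambda,x)\mapsto(x,\lambda x)$, so $\dim C_1\leq \dim{\goth g}+1$. Since ${\goth g}$ is noncommutative, $\dim{\goth g}\geq 3$, and this already yields $\mathrm{codim}_{\gtg{g}}\,C_1\geq 2$.

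For $C_2$ I would introduce the incidence variety
\[
I:=\bigl\{([a:b],(x,y))\in{\Bb P}^1\times\gtg{g} \ \vert \ ax+by\in{\goth g}\setminus{\goth g}_{\reg}\bigr\},
\]
whose second projection is proper and whose image contains $C_2$, and whose fibers over ${\Bb P}^1$ under the first projection are isomorphic to ${\goth g}\times({\goth g}\setminus{\goth g}_{\reg})$. Hence $\dim I\leq 1+(2\dim{\goth g}-3)=2\dim{\goth g}-2$, so $\mathrm{codim}_{\gtg{g}}\,C_2\geq 2$. The main (and essentially only nonelementary) ingredient is Kostant's classical codimension-three estimate for the nonregular locus ${\goth g}\setminus{\goth g}_{\reg}$ in the reductive Lie algebra ${\goth g}$; granting this, the openness of $\Omega_{{\goth g}}$ together with the two codimension bounds on $C_1$ and $C_2$ immediately combine to show that $\Omega_{{\goth g}}$ is a big open subset of $\gtg{g}$.
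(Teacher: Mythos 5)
Your proof is correct, but it follows a genuinely different route from the paper's. The paper argues by contradiction: if the complement of $\Omega _{{\goth g}}$ had a component $\Sigma $ of codimension $1$, then $\Sigma $ would be invariant under $G$ and ${\rm GL}_{2}({\Bb C})$, and its trace on $\gtg{s}$ (with ${\goth s}$ the principal ${\goth {sl}}_{2}$) would be a nonempty invariant closed cone of codimension at most $1$; the classification of such subsets in Lemma \ref{lsc2} forces this trace to contain $(h,e)$, contradicting Lemma \ref{l3sc1}, (i). That argument is self-contained modulo Lemma \ref{lsc2} and never needs the codimension of ${\goth g}\setminus {\goth g}_{\reg}$. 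You instead stratify the complement by $\dim P_{x,y}$ and bound each stratum directly: the dependent locus $C_{1}$ has codimension $\dim {\goth g}-1\geq 2$, and the incidence variety over ${\Bb P}^{1}$ gives $\dim C_{2}\leq 1+(2\dim {\goth g}-3)$, which is exactly where Kostant's estimate $\mathrm{codim}({\goth g}\setminus {\goth g}_{\reg})\geq 3$ enters as the one nonelementary input. Your openness argument via Lemma \ref{l5sc1}, (i), is also fine (the paper simply asserts openness). Your route is essentially the alternative the authors point to in Remark \ref{rsc2} via \cite{PYa}(Lemma 3.1): more direct and quantitative, at the price of importing the codimension-three theorem, whereas the paper's proof trades that input for the small classification of ${\bf S}\times {\rm GL}_{2}({\Bb C})$-invariant subsets of $\gtg{s}$, which it reuses elsewhere (e.g.\ in Lemma \ref{l2su2}).
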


\begin{proof} Suppose that $\Omega _{{\goth g}}$ is not a big open subset of
$\gtg{g}$. Then, the complementary of
$\Omega _{{\goth g}}$ in $\gtg{g}$ has an irreducible component $\Sigma $ of codimension
$1$ in $\gtg{g}$. As $\Omega _{{\goth g}}$ is invariant under the action of $G$
and GL$_{2}({\Bb C})$, $\Sigma $ is invariant under these actions too. The intersection
$T$ of $\Sigma $ and $\gtg{s}$ contains $(0,0)$. So $T$ is a nonempty closed cone of
$\gtg s{}$, invariant under the actions of ${\bf S}$ and GL$_{2}({\Bb C})$. As $\Sigma $
is an hypersurface of $\gtg{g}$, $T$ has codimension  at most $1$ in $\gtg{s}$. Hence
Lemma \ref{lsc2}, (ii), implies that $(h,e)$ belongs to $T$. But by Lemma \ref{l3sc1},
(i), $(h,e)$ belongs to $\Omega _{{\goth g}}$, whence the expected contradiction.
\end{proof}

\begin{remark}\label{rsc2} We can also deduce Corollary \ref{csc2} from
\cite{PYa}. Indeed, according to \cite{PYa}(Lemma 3.1), for any $x \in {\goth g}_{\reg}$,
the set \sloppy \hbox{$\{y \in {\goth g} \ \vert \ (x,y) \in \Omega_{{\goth g}}\}$} is a
big open subset of ${\goth g}$, whence we can readily deduce Corollary \ref{csc2}.
\end{remark}

\subsection{Characteristic submodule} By a result of J. Dixmier \cite{Di}(\S 2), the $\e Sg$-submodule of
elements $\varphi $ in $\tk {{\Bb C}}{\e Sg}{\goth g}$ such that $\varphi(x)$ belongs to
${\goth g}(x)$ for all $x$ in ${\goth g}$ is a free module of basis
$\poi {\varepsilon }1{,\ldots,}{\ran g{}}{}{}{}$.

\begin{definition}\label{dsc3} The {\it characteristic  submodule} $\mathrm{B}_{{\goth g}}$ of
${\goth g}$ is the $\cg g{}$-submodule of elements $\varphi $ in
$\tk {{\Bb C}}{\cg g{}}{\goth g}$ such that $\varphi(x,y)$ belongs to ${\goth V}'(x,y)$,
for any $(x,y)$ in a nonempty open subset of $\gtg g{}$.
\end{definition}

The following result can be viewed as a generalization of the previous result of
J. Dixmier.

\begin{theorem}\label{tsc3} The submodule $\mathrm{B}_{{\goth g}}$ of
$\tk {{\Bb C}}{\cg g{}}{\goth g}$ is a free \sloppy \hbox{$\cg g{}$}-module of rank
$\bor g{}$. Moreover, the family
$\{ \poi {\varepsilon }{i,1,d_{i}-1}{,\ldots,}{i,d_{i},0}{}{}{} $, \sloppy
\hbox{$i=1,\ldots,\rk\mathfrak{g}\}$} is a basis of $\mathrm{B}_{{\goth g}}$.
\end{theorem}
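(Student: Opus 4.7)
The strategy is to reduce the claim to pointwise linear algebra on the open subset $\Omega_{{\goth g}}$, and then exploit normality of $\gtg{g}$ to extend rational coefficients across the complement of $\Omega_{{\goth g}}$. Two preliminary observations are immediate from Lemma \ref{l5sc1}: first, at every $(x,y)\in \Omega_{{\goth g}}$ the subspaces ${\goth V}(x,y)$ and ${\goth V}'(x,y)$ coincide and have dimension $\bor g{} = \sum_{i} d_{i}$, so the $\bor g{}$ elements $\varepsilon_{i,m,d_{i}-m}$ of the claimed basis all belong to $\mathrm{B}_{{\goth g}}$; second, they are linearly independent over $\cg g{}$, for a polynomial relation $\sum a_{i,m}\varepsilon_{i,m,d_{i}-m}=0$ evaluated at $(x,y)\in \Omega_{{\goth g}}$ is a vanishing linear combination among a basis of ${\goth V}(x,y)$, forcing the $a_{i,m}$ to vanish on the dense open set $\Omega_{{\goth g}}$, hence identically.

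The core of the proof is then to show that the family generates $\mathrm{B}_{{\goth g}}$. Let $\varphi\in \mathrm{B}_{{\goth g}}$, and let $U\subset \gtg{g}$ be a nonempty open subset on which $\varphi(x,y)\in {\goth V}'(x,y)$. Intersecting $U$ with $\Omega_{{\goth g}}$ (still nonempty, since $\Omega_{{\goth g}}$ is dense in the irreducible variety $\gtg g{}$) and invoking Lemma \ref{l5sc1}(ii) yields $\varphi(x,y)\in {\goth V}(x,y)$ on a nonempty open subset of $\Omega_{{\goth g}}$. Because the vectors $\varepsilon_{i,m,d_{i}-m}(x,y)$ form a basis of ${\goth V}(x,y)$ at every point of $\Omega_{{\goth g}}$, unique scalar functions $a_{i,m}$ on $\Omega_{{\goth g}}$ are determined by the equation $\varphi(x,y) = \sum a_{i,m}(x,y)\varepsilon_{i,m,d_{i}-m}(x,y)$; a local Cramer-rule argument, applied to a suitable $\bor g{}\times\bor g{}$ minor of the matrix whose rows are the coordinates of the $\varepsilon_{i,m,d_{i}-m}$, shows that each $a_{i,m}$ is in fact a regular function on all of $\Omega_{{\goth g}}$, and the identity propagates from $U\cap \Omega_{{\goth g}}$ to all of $\Omega_{{\goth g}}$ by uniqueness.

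The main obstacle, and the final step, is to upgrade each $a_{i,m}$ from a regular function on $\Omega_{{\goth g}}$ to an element of $\cg g{}$. This is precisely where Corollary \ref{csc2} is used: $\Omega_{{\goth g}}$ is a big open subset of $\gtg{g}$, which is a smooth, hence normal, affine variety, so the algebraic Hartogs theorem guarantees that any regular function on $\Omega_{{\goth g}}$ extends uniquely to a regular function on $\gtg g{}$, that is to an element of $\cg g{}$. The identity $\varphi = \sum a_{i,m}\varepsilon_{i,m,d_{i}-m}$ then holds globally on $\gtg{g}$ by density of $\Omega_{{\goth g}}$, which completes the proof.
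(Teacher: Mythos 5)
Your proposal is correct and follows essentially the same route as the paper: membership and linear independence of the $\varepsilon_{i,m,d_i-m}$ via Lemma \ref{l5sc1}, regular coefficient functions on $\Omega_{\goth g}$ obtained pointwise from the basis property of ${\goth V}(x,y)$, and extension of those coefficients to all of $\gtg g{}$ via Corollary \ref{csc2} together with normality (algebraic Hartogs). Your extra care in propagating the identity from the defining open set $U$ of $\varphi$ to all of $\Omega_{\goth g}$, and the explicit local Cramer-rule justification of regularity, merely fill in details the paper leaves implicit.
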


\begin{proof} By Lemma \ref{l5sc1},
(ii), for $i=1,\ldots,\rk \mathfrak{g}$ and $m=1,\ldots,d_{i}$,
$\varepsilon_{i,m,d_{i}-m}$ belongs to $\mathrm{B}_{{\goth g}}$. Moreover, by Lemma
\ref{l5sc1}, (i), these elements are linearly independent over $\cg g{}$. It remains to
prove that they generate $\mathrm{B}_{{\goth g}}$ as $\cg g{}$-module. Let $\varphi$ be in
$\mathrm{B}_{{\goth g}}$. By Lemma \ref{l5sc1}, (i) and (ii), $\varphi(x,y)$ belongs to
${\goth V}(x,y)$ for any $(x,y)$ in $\Omega_{{\goth g}}$. So there exist regular
functions $\psi_{i,m,d_i-m}$ on $\Omega _{{\goth g}}$ for  $i=1,\ldots,\rk \mathfrak{g}$ and
$m=1,\ldots,d_i$, such that:
$$\varphi(x,y)=\sum\limits_{i=1,\ldots, \ran g{} \atop m=1,\ldots,d_i }
\psi_{i,m,d_i-m}(x,y) \varepsilon_{i,m,d_i-m}(x,y) \mbox{ ,}$$
for any $(x,y)$ in $\Omega_{{\goth g}}$. By Corollary \ref{csc2},
$\Omega_{{\goth g}}$ is a big open subset of $\gtg g{}$. Hence the regular functions
$\psi_{i,m,d_i-m}$ have regular extensions to $\gtg g{}$ since $\gtg g{}$ is normal. As
a result, the family $\{ \poi {\varepsilon }{i,1,d_{i}-1}{,\ldots,}{i,d_{i},0}{}{}{}
\mbox{ , }i=1,\ldots,\rk \mathfrak{g}\}$ is a basis of $\mathrm{B}_{{\goth g}}$.
\end{proof}

\subsection{Smooth points of ${\cali N}_{{\goth g}}$}\label{sc4} In this subsection, we establish a link between the open subset
$\Omega_{{\goth g}}$ and the nilpotent bicone. For $i=1,\ldots,\rk\mathfrak{g}$, we denote by
$\sigma _{i}$ the map
$$ \gtg{g} \longrightarrow {\Bb C}^{d_{i}+1} \mbox{ , }
(x,y) \mapsto (\poi {x,y}{}{,\ldots,}{}{p}{i,0,d_{i}}{i,d_{i},0}) \mbox{ .}$$

\begin{lemma}\label{lsc4} For $(x,y)$ in $\gtg{g}$ and $i=1,\ldots,\rk\mathfrak{g}$, the
differential of $\sigma _{i}$ at $(x,y)$ is the linear map
$$\begin{array}{lcl} (v,w) &\longmapsto &(\ \dv {\varepsilon _{i,1,d_{i}-1}(x,y)}w\ , \
\dv {\varepsilon _{i,1,d_{i}-1}(x,y)}v + \dv {\varepsilon _{i,2,d_{i}-2}(x,y)}w \ ,\\
&&\\
&&\cdots\ ,\  \dv {\varepsilon _{i,d_{i}-1,1}(x,y)}v +
\dv {\varepsilon _{i,d_i,0}(x)}w \ ,\ \dv {\varepsilon _{i}(x)}v \ ) \mbox{ . }
\end{array} $$
\end{lemma}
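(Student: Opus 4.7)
The plan is to compute the differential of $\sigma_i$ component by component, where the $k$-th component is $p_{i,k,d_i-k}$ for $k=0,\ldots,d_i$. At a point $(x,y)$ with tangent vector $(v,w)$, the differential of each $p_{i,m,n}$ decomposes as the partial derivative in $x$ applied to $v$, plus the partial derivative in $y$ applied to $w$. The partial in $x$ is by definition of $\varepsilon_{i,m,n}$, namely $\dv{\varepsilon_{i,m,n}(x,y)}{v}$. The only real task, then, is to identify the $y$-partial in the same language.

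To do this, I would differentiate the polarization identity (\ref{pi}) with respect to $y$ in direction $w$. The left-hand side $p_i(ax+by)$ yields $b\dv{\varepsilon_i(ax+by)}{w}$, and Lemma \ref{l2sc1} expands $\varepsilon_i(ax+by)$ as $\sum_{k=1}^{d_i} a^{k-1} b^{d_i-k} \varepsilon_{i,k,d_i-k}(x,y)$, so altogether the left-hand side becomes
\[
\sum_{k=1}^{d_i} a^{k-1} b^{d_i-k+1} \dv{\varepsilon_{i,k,d_i-k}(x,y)}{w}.
\]
Differentiating the right-hand side of (\ref{pi}) in the same way gives $\sum_{m+n=d_i} a^m b^n (\partial_y p_{i,m,n}(x,y)\cdot w)$. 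Substituting $m=k-1$ and $n=d_i-k+1$ and matching coefficients of $a^m b^n$ yields the identity
\[
\partial_y p_{i,m,n}(x,y)\cdot w \;=\; \dv{\varepsilon_{i,m+1,n-1}(x,y)}{w},
\]
valid for $n\geq 1$, and zero when $n=0$ (since no matching term appears in the expansion).

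Combining the $x$- and $y$-partials produces exactly the formula in the statement. The boundary coordinates fit because $\varepsilon_{i,0,d_i}(x,y)=0$ kills the $x$-partial of $p_{i,0,d_i}$ (giving the first entry $\dv{\varepsilon_{i,1,d_i-1}(x,y)}{w}$), and $\varepsilon_{i,d_i+1,-1}=0$ kills the $y$-partial of $p_{i,d_i,0}$ (giving the last entry $\dv{\varepsilon_i(x)}{v}$). The middle identifications $\varepsilon_{i,1,d_i-1}(x,y)=\varepsilon_i(y)$ and $\varepsilon_{i,d_i,0}(x,y)=\varepsilon_i(x)$ recorded in Lemma \ref{l2sc1} match the explicit boundary terms displayed in the statement. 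There is no genuine obstacle here; the only delicate point is the index reshuffling in the polarization identity, and in particular keeping track of the shift $(m,n)\leadsto(m+1,n-1)$ in the $y$-derivative.
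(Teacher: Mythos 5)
Your proposal is correct and is essentially the paper's argument: the paper likewise differentiates the polarization identity (in the one-parameter form $p_i(tx+y)=\sum_m t^m p_{i,m,d_i-m}(x,y)$), invokes Lemma \ref{l2sc1} to expand $\varepsilon_i(tx+y)$, and matches coefficients of $t^m$ to obtain $p'_{i,m,d_i-m}(x,y)(v,w)=\dv{\varepsilon_{i,m,d_i-m}(x,y)}{v}+\dv{\varepsilon_{i,m+1,d_i-m-1}(x,y)}{w}$, with the same boundary cases. Your splitting into $x$- and $y$-partials (the former being the definition of $\varepsilon_{i,m,n}$) is only a cosmetic reorganization of the same computation.
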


\begin{proof} For $(x,y)$ in $\gtg{g}$, we denote by $p'_{i,m,d_{i}-m}(x,y)$ the
differential of $p_{i,m,d_{i}-m}$ at $(x,y)$. From Lemma \ref{l2sc1} and the equality
$$p_{i}(tx+y) = \sum_{m=0}^{d_{i}} t^{m}p_{i,m,d_{i}-m}(x,y) $$
for $(x,y)$ in $\gtg{g}$, we deduce the equality
\begin{eqnarray*}
\sum_{m=1}^{d_{i}} t^{m} \dv {\varepsilon _{i,m,d_{i}-m}(x,y)}{v+t^{-1}w}&=&
\sum_{m=0}^{d_{i}} t^{m} p'_{i,m,d_{i}-m}(x,y)(v,w) \mbox{ ,}
\end{eqnarray*}
for $x,y,v,w$ in ${\goth g}$ and $t$ in ${\Bb C}^{*}$, since
$p_{i,0,d_{i}}$ is the map $(x,y)\mapsto p_{i}(y)$. Hence we get, for
$m=0,\ldots,d_{i}-1$,
$$p'_{i,m,d_{i}-m}(x,y)(v,w) = \dv {\varepsilon _{i,m,d_{i}-m}(x,y)}v +
\dv {\varepsilon _{i,m+1,d_{i}-m-1}(x,y)}w \mbox{ .}$$
In addition $p'_{i,d_{i},0}(x,y)$ is the linear functional
$(v,w) \mapsto \dv {\varepsilon _{i}(x)}v$.
\end{proof}

Let $\sigma$ be the map:
$$ \gtg{g} \longrightarrow {\Bb C}^{\bor g{}+\ran g{}} \mbox{ , }
(x,y) \mapsto (\poi {x,y}{}{,\ldots,}{}{\sigma }{1}{\ran g{}}) \mbox{ .}$$

\begin{proposition}\label{psc4}
Let $(x,y)$ be in $\gtg g{}$. Then $\sigma $ is smooth at $(x,y)$ if and only if  $(x,y)$
belongs to $\Omega_{{\goth g}}$.
\end{proposition}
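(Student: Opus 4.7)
The plan is to rewrite smoothness of $\sigma$ at $(x,y)$ as the linear independence of the $\bor g{}+\rk{\goth g}$ functionals $\tau_{i,m}$ on $\gtg g{}$ given by Lemma \ref{lsc4}, and then convert the question into one about the tuples $\bigl(\varepsilon_{1}(ax+by),\ldots,\varepsilon_{\rk{\goth g}}(ax+by)\bigr)$ by means of the polarization identity of Lemma \ref{l2sc1}. Writing $E_{i,m}:=\varepsilon_{i,m,d_{i}-m}(x,y)$ and separating a combination $\sum_{i,m} c_{i,m}\tau_{i,m}$ into its $v$- and $w$-contributions, Lemma \ref{lsc4} shows that such a combination vanishes on $\gtg g{}$ if and only if
\[
\sum_{i=1}^{\rk{\goth g}}\sum_{m=1}^{d_{i}} c_{i,m}E_{i,m}=0\qquad\text{and}\qquad\sum_{i=1}^{\rk{\goth g}}\sum_{m=0}^{d_{i}-1} c_{i,m}E_{i,m+1}=0
\]
in ${\goth g}$.

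For the easy direction, assume $(x,y)\in\Omega_{{\goth g}}$. Then Lemma \ref{l5sc1}(i) gives $\dim{\goth V}(x,y)=\bor g{}$, so the $E_{i,m}$ with $1\le m\le d_{i}$ and $1\le i\le\rk{\goth g}$ are linearly independent in ${\goth g}$. The first equality then forces $c_{i,m}=0$ for $m\ge 1$ and the second forces $c_{i,m}=0$ for $m\le d_{i}-1$; together all $c_{i,m}$ vanish and $\sigma$ is smooth at $(x,y)$.

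The converse proceeds via a single uniform construction. If $(x,y)\notin\Omega_{{\goth g}}$, I first exhibit $(a,b)\in{\Bb C}^{2}\setminus\{(0,0)\}$ and scalars $(\mu_{i})_{1\le i\le\rk{\goth g}}$, not all zero, with $\sum_{i}\mu_{i}\varepsilon_{i}(ax+by)=0$: either $\dim P_{x,y}<2$, in which case $(a,b)\ne(0,0)$ can be chosen with $ax+by=0$ (so every $\varepsilon_{i}(ax+by)$ vanishes) and any nonzero $\mu$ works; or $\dim P_{x,y}=2$ and $P_{x,y}\setminus\{0\}$ contains a nonregular element $ax+by$, and Lemma \ref{lsc1}(ii) supplies a nonzero $\mu$. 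Setting $c_{i,m}:=\mu_{i}\,a^{m}b^{d_{i}-m}$ yields a nonzero tuple, and the two identities $a\varepsilon_{i}(ax+by)=\sum_{m=1}^{d_{i}}a^{m}b^{d_{i}-m}E_{i,m}$ and $b\varepsilon_{i}(ax+by)=\sum_{m=1}^{d_{i}}a^{m-1}b^{d_{i}-m+1}E_{i,m}$ obtained from Lemma \ref{l2sc1} show that the $v$-part of $\sum c_{i,m}\tau_{i,m}$ equals $a\sum_{i}\mu_{i}\varepsilon_{i}(ax+by)=0$ and its $w$-part equals $b\sum_{i}\mu_{i}\varepsilon_{i}(ax+by)=0$. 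This nontrivial linear dependence among the $\tau_{i,m}$ contradicts smoothness of $\sigma$ at $(x,y)$.

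The main obstacle is finding the ansatz $c_{i,m}=\mu_{i}a^{m}b^{d_{i}-m}$ in the converse: once it is written down, the symmetric roles of the factors $a$ and $b$ in the polarization identity of Lemma \ref{l2sc1} make the cancellation in both the $v$- and $w$-parts automatic, but seeing that a single construction simultaneously disposes of the degenerate-pencil case ($\dim P_{x,y}<2$) and the case of a nonregular nonzero element of $P_{x,y}$ is the key insight.
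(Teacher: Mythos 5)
Your proof is correct and follows essentially the same route as the paper's: the forward direction rests on Lemma \ref{l5sc1}, (i), exactly as in the paper (you verify linear independence of the $\bor g{}+\rk\mathfrak{g}$ differentials directly, where the paper equivalently bounds the dimension of the kernel of $\sigma'(x,y)$), and your ansatz $c_{i,m}=\mu_i a^m b^{d_i-m}$ in the converse is precisely the transpose of the paper's argument, which composes $\sigma$ with the surjective projections $\pi_{a,b}$ and then invokes Lemma \ref{lsc1}, (ii). One small caveat: in the degenerate case $ax+by=0$ your parenthetical claim that every $\varepsilon_i(0)$ vanishes holds only when all $d_i\geq 2$ (i.e.\ when ${\goth g}$ is semisimple); for reductive ${\goth g}$ with nontrivial center some $\varepsilon _i$ is a nonzero constant, but the nonzero tuple $\mu$ you need still exists by Lemma \ref{lsc1}, (ii), since $0$ is not regular, so the argument survives unchanged.
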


\begin{proof}
We denote by $\sigma '(x,y)$ the differential of $\sigma $ at $(x,y)$ and we denote
by $\ker \sigma'(x,y)$ its kernel in $\gtg g{}$. Let us suppose that $(x,y)$ belongs to
$\Omega_{{\goth g}}$. For $v$ in ${\goth g}$, we denote by $X_{v}$ the subset of
elements $w$ in ${\goth g}$ such that $(v,w)$ is in $\ker \sigma'(x,y)$. By Lemmas
\ref{lsc4} and \ref{lsc1}, (ii), $v$ belongs to the orthogonal complement of
${\goth g}(x)$ in ${\goth g}$. In addition, $X_{v}$ is an  affine subspace whose tangent
space is equal to the orthogonal complement of ${\goth V}(x,y)$ in ${\goth g}$. By
Lemma \ref{l5sc1}, (i), ${\goth V}(x,y)$ has dimension $\bor g{}$ since $(x,y)$ belongs to
$\Omega _{{\goth g}}$. Consequently, $\ker \sigma '(x,y)$ has dimension at most
$\bor g{}-\rk \mathfrak{g}+2(\bor g{}-\rk\mathfrak{g})=3(\bor g{}-\rk\mathfrak{g})$, since $x$ is regular. Hence the image
of $\sigma '(x,y)$ has dimension at least $\bor g{}+\rk\mathfrak{g}$. So $\sigma '(x,y)$ is
surjective and $\sigma $ is smooth at $(x,y)$.

Conversely, suppose that $\sigma$ is smooth at $(x,y)$. For $(a,b)$ in ${\Bb C}^2$, we
denote by $\pi_{a,b}$ the linear map
$$(\poi z{i,0,d_{i}}{,\ldots,}{i,d_{i},0}{}{}{},\ i=1,\ldots,\rk\mathfrak{g}) \longmapsto
( \mbox{ } \sum\limits_{m+n=d_i} a^m b^{n} z_{i,m,n},\ i=1,\ldots,\rk\mathfrak{g}\mbox{ } )
\mbox{ ,}$$
from ${\Bb C}^{\bor g{}+\ran g{}}$ to ${\Bb C}^{\ran g{}}$. The linear map $\pi_{a,b}$ is
surjective as soon as $(a,b) \not=(0,0)$. Since $\sigma$ is smooth at
$(x,y)$, we deduce that the compound map $\sigma_{a,b} := \pi_{a,b}\rond \sigma$ is smooth
at $(x,y)$, for any $(a,b)$ in ${\Bb C}^2 \setminus \{(0,0)\}$. As
$$\sum\limits_{m+n=d_i} a^m b^{n} p_{i,m,n}(x',y')=p_i(ax'+by') \mbox{ ,}$$
for $i=1,\ldots,\rk\mathfrak{g}$ and $(x',y')$ in $\gtg g{}$, $\sigma_{a,b}$ maps
$(x',y')$ to \hbox{$(\poi {ax'+by'}{}{,\ldots,}{}{p}{1}{\ran g{}})$}. Moreover, it is the
compound map of the two following maps:
\begin{eqnarray*}
&\gtg g{} \longrightarrow {\goth g} \mbox{ , } (x',y') \mapsto (ax'+by')\\
&{\goth g} \longrightarrow {\Bb C}^{\ran g{}} \mbox{ , }
z\mapsto (\poi z{}{,\ldots,}{}{p}{1}{\ran g{}}) \mbox{ .}
\end{eqnarray*}
Therefore, the second map is smooth at $(ax+by)$, for any $(a,b)$
in ${\Bb C}^2 \setminus \{(0,0)\}$. So, by Lemma \ref{lsc1}, (ii), $(x,y)$
belongs to $\Omega_{{\goth g}}$.
\end{proof}

\begin{remark}\label{rsc4} Recall that ${\cali N}_{{\goth g}}$ is the subscheme of
$\gtg g{}$ defined by the ideal generated by the polynomials $p_{i,m,n}$. Therefore, by
Proposition \ref{psc4}, the intersection of ${\cali N}_{{\goth g}}$ and
$\Omega_{{\goth g}}$ is nothing but the set of smooth points of ${\cali N}_{{\goth g}}$.
The elements of $\Omega_{{\goth g}}$ provided by Lemma \ref{l3sc1}, (i), do not belong to
${\cali N}_{{\goth g}}$. Consider the elements described in Lemma \ref{l3sc1},
(ii). If ${\goth g}$ is equal to ${\goth {sl}}_3$, the subset
${\goth g}_{-2}' \times {\goth g}_{2}'$ has a nonempty intersection with
${\cali N}_{{\goth g}}$. Indeed, if we set
$$e:= \left[\begin{array}{ccc} 0 & 1 & 0\\
0 & 0 & 1 \\
0 & 0 & 0\end{array}\right]\mbox{, } \mbox{ }  h:= \left[\begin{array}{ccc} 2 & 0 & 0\\
0 & 0 & 0 \\
0 & 0 & -2\end{array}\right]\mbox{,  } \mbox{ }  x:= \left[\begin{array}{ccc} 0 & 0 & 0\\
1 & 0 & 0 \\
0 & -1 & 0\end{array}\right]\mbox{,  }$$
then $(x,e) \in {\cali N}_{{\goth g}} \cap ({\goth g}_{-2}' \times {\goth g}_{2}')$. But
in general this intersection can be empty. For
instance it is an empty set if ${\goth g}={\goth {sl}}_{n}$ for many $n>3$. Whatever
the  case, we will see a posteriori that there exists at least one irreducible component
of ${\cali N}_{{\goth g}}$ which has an empty intersection with $\Omega_{{\goth g}}$
(see Theorem \ref{tp1}). So we cannot hope to apply the criterion of Kostant
\cite{Kos2}. This observation was originally the reason why we introduced an other
subscheme (see the following section).
\end{remark}

\section{Principal cone and principal bicone}\label{su}
In this section, we suppose that ${\goth g}$ is simple. Then we can suppose that $p_1$ is
the Casimir element of $\ai g{}{}$, that is $p_1(x)=\dv xx$, for any $x$ in
${\goth g}$. Recall that $(e,h,f)$ is a principal ${\goth {sl}}_2$-triple of
${\goth g}$ and use the notations of the Introduction, \S\ref{notation}.

\subsection{Principal cone}\label{su1} Since $e$ is a regular nilpotent element of ${\goth g}$, the
nilpotent cone ${\goth N}_{{\goth g}}$ is the $G$-invariant closed cone generated by $e$.
According to Kostant's results \cite{Kos2}, the nilpotent cone is a complete intersection
of codimension $\rk\mathfrak{g}$. Moreover, it is proved in \cite{He} that it has rational
singularities. In this subsection, we intend to prove analogous properties for
{\it the principal cone} introduced in:

\begin{definition}\label{dsu1}
The {\it principal cone} ${\goth X}_{{\goth g}}$ of ${\goth g}$ is the
$G$-invariant closed cone generated by $h$.
\end{definition}

Recall that $w_{0}$ is the longest element of the Weyl group $W_{G}({\goth h})$. The
simple following well-known lemma turns out to be useful.

\begin{lemma}\label{lsu1} The element $w_{0}(h)$ is equal to $-h$. Moreover, there exists
a representative $g_{0}$ of $w_{0}$ in $N_{G}({\goth h})$ such that $g_{0}(e)$ is equal to
$f$.
\end{lemma}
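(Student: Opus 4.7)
The plan is to establish the two claims separately, both by elementary manipulations in the Cartan subalgebra $\mathfrak{h}$. For the first assertion I would use that the element $h$ of a principal $\mathfrak{sl}_2$-triple is characterized among elements of $\mathfrak{h}$ by the relations $\alpha(h) = 2$ for every $\alpha \in \Pi$ (these are the eigenvalues of $\mathrm{ad}(h)$ on the simple root vectors appearing in $e$). Since $w_0^{-1}$ also sends $\Pi$ to $-\Pi$, for any $\alpha \in \Pi$ one computes $\alpha(w_0(h)) = w_0^{-1}(\alpha)(h) = -2$. Because $\Pi$ is a basis of $\mathfrak{h}^*$, this determines $w_0(h)$ uniquely, yielding $w_0(h) = -h$.

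For the second assertion I would start from an arbitrary representative $g \in N_G(\mathfrak{h})$ of $w_0$; since $\mathbf{H}$ centralizes $\mathfrak{h}$, we have $g(h) = w_0(h) = -h$. Applying $g$ to the relation $[h,e] = 2e$ then yields $[h, g(e)] = -2\, g(e)$, so $g(e) \in \mathfrak{g}_{-2}$, and $g(e)$ is regular because regularity is preserved under the adjoint action. The crucial structural fact is that $\mathfrak{g}_{-2} = \bigoplus_{\alpha \in \Pi} \mathfrak{g}^{-\alpha}$, since the only roots $\beta$ taking the value $-2$ on $h$ are the negatives of the simple roots. An element of $\mathfrak{g}_{-2}$ is then regular (nilpotent) if and only if its component in every $\mathfrak{g}^{-\alpha}$ is nonzero, a condition satisfied by both $g(e)$ and $f$.

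To conclude I would exploit the freedom in the choice of representative: the set of representatives of $w_0$ in $N_G(\mathfrak{h})$ is the coset $\mathbf{H} g$. The torus $\mathbf{H}$ acts on $\mathfrak{g}^{-\alpha}$ through the character $-\alpha$, and since the simple roots are linearly independent as characters of $\mathbf{H}$, this torus acts transitively on the set of elements of $\mathfrak{g}_{-2}$ whose components in every $\mathfrak{g}^{-\alpha}$ are nonzero. Hence some $t \in \mathbf{H}$ sends $g(e)$ to $f$, and $g_0 := tg$ is the sought representative. There is no real obstacle in this argument, in keeping with the authors' own description of the lemma as simple and well known; the only mildly nontrivial step is the transitivity of $\mathbf{H}$ on the regular locus of $\mathfrak{g}_{-2}$, which is an immediate consequence of the linear independence of the simple roots.
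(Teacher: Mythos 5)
Your argument is correct. Note that the paper itself offers no proof of this lemma at all — it is simply declared ``well-known'' — so there is no argument to compare yours against; what you have written is the standard verification. Both halves check out: the identity $w_0(h)=-h$ follows from $\alpha(h)=2$ for all $\alpha\in\Pi$ (valid because the regular nilpotent $e$ has nonzero component on every simple root space, so $[h,e]=2e$ forces each $\alpha(h)=2$) together with $w_0(\Pi)=-\Pi$; and the adjustment of an arbitrary representative $g$ of $w_0$ by an element of $\mathbf{H}$ works because $g(e)$ and $f$ are both regular nilpotent elements of $\mathfrak{g}_{-2}=\bigoplus_{\alpha\in\Pi}\mathfrak{g}^{-\alpha}$, hence have all components nonzero, and $\mathbf{H}$ (a maximal torus of the adjoint group $G$) acts transitively on such elements since the simple roots are independent characters. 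The only step worth flagging as genuinely load-bearing is exactly the one you identified: the transitivity of $\mathbf{H}$ on the regular locus of $\mathfrak{g}_{-2}$, which you justify correctly.
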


For $i=2,\ldots,\rk\mathfrak{g}$, we define the element $q_{i}$ of $\ai g{}{}$ as follows:
$$q_i = \left\{
          \begin{array}{ll}
            p_i\mbox{ , }&\hbox{if } d_i \hbox{\textrm{ is odd }}; \\
            p_{1}(h)^{d_{i}/2}p_{i} - p_{i}(h)p_{1}^{d_{i}/2}\mbox{ , }&\hbox{otherwise.}
          \end{array}
        \right.$$

The polynomial $q_i$ is homogeneous of degree $d_i$. As the eigenvalues of
$\ad h$ are integers, not all equal to zero, $p_{1}(h) \not= 0$. In addition, as
$$p_i(h)=p_i(w_0(h))=p_i(-h)=(-1)^{d_i} p_i(h) \mbox{ ,}$$
$p_{i}(h)=0$ as soon as $d_i$ is odd. This forces $q_{i}(h)=0$, for any
$i=2,\ldots,\rk\mathfrak{g}$. So ${\goth X}_{{\goth g}}$ is contained in the
nullvariety of the functions $\poi q2{,\ldots,}{\ran g{}}{}{}{}$.

\begin{lemma}\label{l2su1}
{\rm i)} The nullvariety of $p_{1}$ in ${\goth X}_{{\goth g}}$ is equal to
${\goth N}_{{\goth g}}$.

{\rm ii)} The principal cone ${\goth X}_{{\goth g}}$ is the nullvariety of the functions
$\poi q2{,\ldots,}{\ran g{}}{}{}{}$.

{\rm iii)} The subset $({\goth X}_{{\goth g}} \setminus {\goth g}_{\reg})$ is equal to
$({\goth N}_{{\goth g}} \setminus G.e)$ and the codimension of
$({\goth X}_{{\goth g}} \setminus {\goth g}_{\reg})$ in ${\goth X}_{{\goth g}}$ is equal to
$3$.
\end{lemma}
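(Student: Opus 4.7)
My plan is to prove (ii) first, since (i) and (iii) then fall out of the same case analysis. The strategy is to analyze, fiber by fiber, the nullvariety $V$ of $q_{2},\ldots,q_{\rk\mathfrak{g}}$ under the adjoint quotient $\chi:\mathfrak{g}\to \mathbb{C}^{\rk\mathfrak{g}}$, $x\mapsto (p_{1}(x),\ldots,p_{\rk\mathfrak{g}}(x))$.

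Before tackling (ii), I would first record the inclusion ${\goth N}_{{\goth g}}\subset {\goth X}_{{\goth g}}$, which will be used several times. Since $[e,h]=-2e$, one has $\Ad(\exp(s\hskip .125em\ad e))(h)=h-2se$, so $-\frac{1}{2s}(h-2se)$ belongs to $G\cdot \mathbb{C}h\subset {\goth X}_{{\goth g}}$ and tends to $e$ as $s\to\infty$; hence the closed $G$-invariant cone ${\goth X}_{{\goth g}}$ contains the principal nilpotent $e$, and therefore $\overline{G.e}={\goth N}_{{\goth g}}$.

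For (ii), the inclusion ${\goth X}_{{\goth g}}\subset V$ was already observed in the text before the statement. For the converse, Lemma \ref{lsu1} gives $w_{0}(h)=-h$, so $p_{i}(h)=(-1)^{d_{i}}p_{i}(h)$, whence $p_{i}(h)=0$ whenever $d_{i}$ is odd; a direct computation then shows that the equations $q_{i}=0$ cut out in $\mathbb{C}^{\rk\mathfrak{g}}$ precisely the rational curve $C:=\{(t^{d_{i}}p_{i}(h))_{i}:t\in \mathbb{C}\}$, so $V=\chi^{-1}(C)$. The crux is a Jordan-decomposition case analysis for $x\in V$: if $p_{1}(x)\neq 0$, the semisimple part $x_{s}$ has the same invariants as $x$, hence is $G$-conjugate to $th$ for some $t\neq 0$; since $th$ is regular semisimple, $\mathfrak{g}(x_{s})$ is a Cartan subalgebra, forcing $x_{n}=0$ and $x\in G\cdot \mathbb{C}h\subset {\goth X}_{{\goth g}}$. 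If $p_{1}(x)=0$, the curve $C$ meets $\{z_{1}=0\}$ only at the origin, so $x_{s}=0$, whence $x$ is nilpotent and lies in ${\goth X}_{{\goth g}}$ by the previous paragraph. This proves (ii).

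Both remaining parts then follow quickly. For (i), an $x\in {\goth X}_{{\goth g}}$ with $p_{1}(x)=0$ satisfies $q_{i}(x)=p_{1}(h)^{d_{i}/2}p_{i}(x)=0$ for even $d_{i}$ (using $p_{1}(h)\neq 0$) and $q_{i}(x)=p_{i}(x)=0$ for odd $d_{i}$; all invariants vanish, so $x\in {\goth N}_{{\goth g}}$, and the reverse inclusion is clear. For (iii), the case analysis above shows that every non-nilpotent element of ${\goth X}_{{\goth g}}$ is regular semisimple, so ${\goth X}_{{\goth g}}\setminus \mathfrak{g}_{\reg}={\goth N}_{{\goth g}}\setminus G.e$. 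A direct computation gives $\dim {\goth X}_{{\goth g}}=\dim G.h+1=2\bor g{}-2\rk\mathfrak{g}+1$, while ${\goth N}_{{\goth g}}\setminus G.e$ coincides with the closure of the subregular nilpotent orbit, of dimension $\dim {\goth N}_{{\goth g}}-2=2\bor g{}-2\rk\mathfrak{g}-2$; the difference is $3$. The only delicate step is the Jordan-decomposition case analysis for $V$, which hinges on the fact that the centralizer of a regular semisimple element is an abelian Cartan subalgebra; once that is in place, (i) and (iii) are formal verifications.
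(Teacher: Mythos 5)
Your proof is correct and follows essentially the same route as the paper's: both establish ${\goth N}_{{\goth g}}\subset{\goth X}_{{\goth g}}$ via the one-parameter subgroup $\exp(t\,\ad e)$ applied to $h$, both reduce the equations $q_i=0$ on $\{p_1=0\}$ to $p_i=0$, and both identify a non-nilpotent element of the nullvariety, after rescaling, as $G$-conjugate to $h$ by regular semisimplicity (your Jordan-decomposition unpacking is just the standard proof of the conjugacy fact the paper invokes directly). The order of (i) and (ii) is swapped and the codimension count in (iii) is done by computing both dimensions rather than stacking codimensions, but these are cosmetic differences.
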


\begin{proof} i) Prove first that ${\goth N}_{{\goth g}}$ is contained in
${\goth X}_{{\goth g}}$. Since ${\goth X}_{{\goth g}}$ is a $G$-invariant closed cone,
the relation $\exp(-t\ad e)(h)=h+2te$ for
any $t\in {\Bb C}$, implies $e \in {\goth X}_{{\goth g}}$. So ${\goth N}_{{\goth g}}$ is
contained in ${\goth X}_{{\goth g}}$ as closure of $G.e$. Let $X$ be the nullvariety of
the functions $\poi q2{,\ldots,}{\ran g{}}{}{}{}$. As $p_1(h)\not=0$, the
nullvariety of $p_{1}$ in $X$ is the nullvariety of $\poi p1{,\ldots,}{\ran g{}}{}{}{}$. So
${\goth N}_{{\goth g}}$ is the nullvariety of $p_{1}$ in $X$. Moreover,
${\goth N}_{{\goth g}}$ is also the nullvariety of $p_{1}$ in ${\goth X}_{{\goth g}}$
since ${\goth X}_{{\goth g}}$ is a subset of $X$ which contains ${\goth N}_{{\goth g}}$.

ii) Let $X$ be as in (i). We need to prove that $X$ is contained in
${\goth X}_{{\goth g}}$. Let $x$ be an element of $X$ which is not nilpotent. By (i),
$p_{1}(x)\not=0$. So there exists $t\in {\Bb C}^*$ such that $p_{1}(tx)=p_{1}(h)$. Then
$p_{i}(tx)=p_{i}(h)$, for $i=1,\ldots,\rk\mathfrak{g}$ since $x$ is in $X$. As $h$ is regular and
semisimple, $tx$ and $h$ are $G$-conjugate. Hence $x$ belongs to ${\goth X}_{{\goth g}}$.

iii) By (i) and (ii), the subset of elements of ${\goth X}_{{\goth g}}$ which are not
regular in ${\goth g}$ is equal to $({\goth N}_{{\goth g}}\backslash G.e)$. As this
subset has codimension $2$ in ${\goth N}_{{\goth g}}$, it has codimension $3$ in
${\goth X}_{{\goth g}}$ by (i).
\end{proof}

Recall that ${\goth b}_{-}$ is the Borel subalgebra containing ${\goth h}$ and
``opposite'' to ${\goth b}$, with nilpotent radical ${\goth u}_{-}$. Let ${\goth b}_{0}$
be the subspace of ${\goth b}$ generated by $h$ and ${\goth u}$ and let ${\goth b}_{0,-}$
be the subspace of ${\goth b}_{-}$ generated by $h$ and ${\goth u}_{-}$. The following
results are partially proved in \cite{Ri2}(Proposition 10.3) as pointed out in Remark
\ref{rsu1} below.

\begin{corollary}\label{csu1}
The principal cone of ${\goth g}$ is normal and it is a complete intersection of
codimension $\rk\mathfrak{g}-1$ in ${\goth g}$. The regular elements of ${\goth g}$
which belong to ${\goth X}_{{\goth g}}$ are smooth points of ${\goth X}_{{\goth g}}$. At
last, ${\goth X}_{{\goth g}}$ has rational singularities.
\end{corollary}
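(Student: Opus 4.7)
The four statements will be established in order, each relying on the information from Lemma \ref{l2su1}.

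\emph{Complete intersection.} Since $h$ is regular semisimple with centralizer ${\goth h}$, the orbit $G.h$ has dimension $\dim {\goth g}-\rk\mathfrak{g}$, and the irreducible cone ${\goth X}_{{\goth g}}$ has dimension $\dim {\goth g}-\rk\mathfrak{g}+1$. Lemma \ref{l2su1}(ii) exhibits it as the nullvariety of the $\rk\mathfrak{g}-1$ functions $q_2,\ldots,q_{\rk\mathfrak{g}}$. As the codimension and the number of defining equations coincide, these functions form a regular sequence in the Cohen-Macaulay ring $\e Sg$, so ${\goth X}_{{\goth g}}$ is a scheme-theoretic complete intersection of codimension $\rk\mathfrak{g}-1$ in ${\goth g}$.

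\emph{Smoothness at regular points.} For $x\in{\goth X}_{{\goth g}}\cap{\goth g}_{\reg}$, a direct computation from the definition of $q_i$ gives
\[
dq_i(x) = \begin{cases} dp_i(x), & d_i \text{ odd,} \\ p_1(h)^{d_i/2}dp_i(x) - \frac{d_i}{2} p_i(h)\, p_1(x)^{d_i/2-1}\, dp_1(x), & d_i \text{ even.} \end{cases}
\]
Since $p_1(h)\neq 0$ and since Kostant's theorem provides linear independence of $dp_1(x),\ldots,dp_{\rk\mathfrak{g}}(x)$ at regular $x$, any nontrivial relation among the $dq_i(x)$, $i\geq 2$, would produce a nontrivial relation among $dp_2(x),\ldots,dp_{\rk\mathfrak{g}}(x)$ modulo $dp_1(x)$. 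This forces the $dq_i(x)$ to be independent, and hence $x$ is a smooth point of the scheme-theoretic intersection ${\goth X}_{{\goth g}}$.

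\emph{Normality.} The complete intersection property in the smooth ambient ${\goth g}$ makes ${\goth X}_{{\goth g}}$ Cohen-Macaulay, so Serre's condition $S_2$ holds. By Lemma \ref{l2su1}(iii) combined with the previous paragraph, the singular locus of ${\goth X}_{{\goth g}}$ has codimension at least $3$, and in particular $R_1$ holds. Serre's criterion then yields normality.

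\emph{Rational singularities.} This is the main obstacle. The plan is to construct a Grothendieck-Springer style resolution. Set $Y := G \times_{{\bf B}} ({\Bb C} h + {\goth u})$, which is a smooth vector bundle over $G/{\bf B}$ whose dimension equals $\dim {\goth X}_{{\goth g}}$. The proper morphism $\pi:Y\to{\goth g}$, $[g,y]\mapsto\Ad(g)y$, has image ${\goth X}_{{\goth g}}$, since every nilpotent element is $G$-conjugate to an element of ${\goth u}$ and every non-nilpotent element of ${\goth X}_{{\goth g}}$ is $G$-conjugate to some $th$ with $t\in{\Bb C}^*$. By Lemma \ref{lsu1}, $w_0$ sends $h$ to $-h$, and this is responsible for $\pi$ being generically finite of degree $2$. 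Quotienting $Y$ by the corresponding ${\Bb Z}/2$-action, or taking the Stein factorization, produces a proper birational morphism $\tilde\pi:\tilde Y\to{\goth X}_{{\goth g}}$ with $\tilde Y$ having rational singularities (since finite group quotients of smooth varieties do, in characteristic zero). The required vanishing $R^i\tilde\pi_*\mathcal{O}_{\tilde Y}=0$ for $i>0$ then follows from cohomological vanishing arguments on $G/{\bf B}$ applied to the associated fibre bundle, in the spirit of Hesselink's proof of rational singularities for the nilpotent cone \cite{He}. Combined with the normality from the previous step, this establishes rational singularities for ${\goth X}_{{\goth g}}$.
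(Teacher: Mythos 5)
Your treatment of the first three assertions (complete intersection from Lemma \ref{l2su1} and the dimension count, smoothness at regular points via the Jacobian criterion and Kostant's theorem, normality from Cohen--Macaulay plus $R_1$ and Serre's criterion) is correct and essentially identical to the paper's. The gap is in the rational-singularities step, specifically in the descent from $Y=G\times_{{\bf B}}({\Bb C}h+{\goth u})$ to ${\goth X}_{{\goth g}}$. You assert a regular ${\Bb Z}/2{\Bb Z}$-action on $Y$ over ${\goth X}_{{\goth g}}$ without constructing it; the natural candidate $[g,y]\mapsto[gg_{0}^{-1},g_{0}(y)]$ does not preserve ${\Bb C}h+{\goth u}$ (it sends ${\goth u}$ into ${\goth u}_{-}$), and the birational involution obtained by first diagonalizing $y$ degenerates as $y$ approaches the nilpotent locus --- just as the Weyl group action on the Grothendieck--Springer space $G\times_{{\bf B}}{\goth b}$ is regular only over the regular locus. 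Moreover the alternative you offer is not equivalent: the Stein factorization of $\pi$ yields a \emph{finite degree $2$} morphism $W\to{\goth X}_{{\goth g}}$, not a proper birational one, so the object $\tilde Y$ you work with is not well defined by either recipe. Finally, the vanishing $R^{i}\tilde{\pi}_{*}\mathcal{O}_{\tilde Y}=0$ is asserted ``in the spirit of Hesselink'' without the mechanism that produces it.

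The paper repairs exactly these points by reversing the order of operations. It lets $W={\goth X}'_{{\goth g}}$ be the normalization of ${\goth X}_{{\goth g}}$ in the function field $K$ of $Y$ (the finite part of the Stein factorization); the Galois group of the degree $2$ extension $K/K_{0}$ acts on $W$ by construction, with quotient ${\goth X}_{{\goth g}}$ thanks to the normality already proved. Then $Y\to W$ is proper and birational from a smooth variety, hence a resolution of $W$; the corollary of Hesselink's Theorem B gives ${\rm H}^{i}(Y,\mathcal{O}_{Y})=0$ for $i>0$, and since $W$ is affine this converts into $R^{i}(Y\to W)_{*}\mathcal{O}_{Y}=0$, so $W$ has rational singularities. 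Elkik's lemma (rational singularities descend to quotients by finite groups in characteristic zero) then transfers the conclusion to ${\goth X}_{{\goth g}}=W/({\Bb Z}/2{\Bb Z})$. Your outline contains all the ingredients --- the resolution $Y$, the role of $w_{0}(h)=-h$, the Hesselink vanishing, the finite-quotient principle --- but as written the descent step does not go through.
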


\begin{proof}
By Lemma \ref{l2su1}, (i) and (ii), ${\goth X}_{{\goth g}}$ is a complete intersection of
codimension $\ran g{}-1$. By Lemma \ref{lsc1}, (ii), the differentials at $x$ of
$\poi p1{,\ldots,}{\ran g{}}{}{}{}$ are linearly independent as soon as $x$ is a regular
element of ${\goth g}$. Hence the same goes for the differentials at $x$ of
$\poi q2{,\ldots,}{\ran g{}}{}{}{}$. Then, any $x$ in the union of $G. e$ and
$({\Bb C}^*)G. h$ is a smooth point of ${\goth X}_{{\goth g}}$. So by
Lemma \ref{l2su1}, (ii), ${\goth X}_{{\goth g}}$ is regular in codimension $1$. By
Serre's normality criterion \cite{Ma}(Ch. 8, Theorem 23.8), ${\goth X}_{{\goth g}}$ is a
normal subvariety.

The subalgebra ${\goth b}_{0}$ is an ideal of ${\goth b}$. The contracted product
$\sqg{0}$ is defined as the quotient of $G\times {\goth b}_{0}$ under the right action of
${\bf B}$ given by $(g,x).b=(gb,b^{-1}(x))$. The map $(g,x)\mapsto g(x)$ from
$G\times {\goth b}_{0}$ to ${\goth g}$ factorizes through the canonical map from
$G\times {\goth b}_{0}$ to $\sqg{0}$. Since $G.{\goth b}_{0}$ is equal to
${\goth X}_{{\goth g}}$, we get a surjective morphism $\pi $ from $\sqg{0}$ to
${\goth X}_{{\goth g}}$. Moreover, the morphism $\pi$ is proper since $G/{\bf B}$ is a
projective variety. Let $K$ be the field of rational
functions on $\sqg{0}$ and let $K_{0}$ be the field of rational functions on
${\goth X}_{{\goth g}}$. Let $g_{0}$ be as in Lemma \ref{lsu1}. By Lemma \ref{lsu1},
$g_{0}(h)$ is equal to $-h$. In particular, the fiber of $\pi $ at $h$ has at least two
elements. Let $(g,x)$ be an element of $G\times {\goth b}_{0}$ such that $g(x)$ is equal
to $h$. In particular, $x$ is a regular semisimple element of ${\goth g}$ which belongs
to ${\goth b}_0$ Hence there exists $b$ in ${\bf B}$ such that $b(x)$ belongs to
the line generated by $h$. As $\ad h$ and $\ad b(x)$ have the same eigenvalues, $b(x)$ is
equal to $h$ or $-h$, since the eigenvalues of $\ad h$ are integers, not all equal to
zero. If $b(x)=h$, then $(g,x)$ and $({\bf 1}_{{\goth g}},h)$ are equal
in $\sqg{0}$. Otherwise, $(g,x)$ and $(g_{0},-h)$ are equal in $\sqg{0}$.
Hence the fiber of $\pi $ at $h$ has two elements. So the same applies for the fiber of
$\pi $ at any element of the $G$-invariant cone generated by $h$ since $\pi $ is
$G$-equivariant. Hence $K$ is an algebraic extension of degree $2$ of $K_{0}$. In
particular, it is a Galois extension. Let $A$ be the integral closure of
${\Bb C}[{\goth X}_{{\goth g}}]$ in $K$ and let ${\goth X}'_{{\goth g}}$ be an affine
algebraic variety such that ${\Bb C}[{\goth X}'_{{\goth g}}]$ is equal to $A$. Then $A$
is stable under the Galois group of the extension $K$ of $K_{0}$ and
${\Bb C}[{\goth X}_{{\goth g}}]$ is the subalgebra of invariant elements of this action
since ${\goth X}_{{\goth g}}$ is normal. Hence by \cite{El2}(Lemma 1), it is enough to
prove that ${\goth X}'_{{\goth g}}$ has rational singularities.

The variety $\sqg{0}$ is smooth as a vector bundle over $G/{\bf B}$. Hence the morphism
$\pi $ factors through the canonical morphism from ${\goth X}'_{{\goth g}}$ to
${\goth X}_{{\goth g}}$. Let $\pi '$ be the morphism from $\sqg{0}$ to
${\goth X}'_{{\goth g}}$ such that $\pi $ is the compound map of $\pi '$ with
the canonical morphism from ${\goth X}'_{{\goth g}}$ to ${\goth X}_{{\goth g}}$. Then
$(\sqg{0},\pi ')$ is a desingularization of ${\goth X}'_{{\goth g}}$ since $\pi '$ is
proper and birational. By the corollary of \cite{He}(Theorem B), for
$i \in {\Bb N}^*$, the $i$-th cohomology group H$^{i}(\sqg{0},\an {\sqg{0}}{})$ is equal
to zero. Hence by \cite{Ha}(Ch. III, Proposition 8.5), for $i \in {\Bb N}^*$,
R$^{i}\pi '_{*}(\an {\sqg{0}}{})=0$ and ${\goth X}'_{{\goth g}}$ has
rational singularities since ${\goth X}'_{{\goth g}}$ is normal.
\end{proof}

\begin{remark}\label{rsu1} The $G$-invariant closed cone generated by a
semisimple element of ${\goth g}$ is not a normal variety in general
\cite{Ri2}(Proposition 10.1). Nevertheless, whenever $x$ is the semisimple element
of a ${\goth {sl}}_2$-triple of ${\goth g}$, the closed cone generated by the regular
orbit whose closure contains $x$ is normal and Cohen-Macaulay
\cite{Ri2}(Proposition 10.3).
\end{remark}

\subsection{Principal bicone} We study in this subsection various properties of the
{\it principal bicone}.

\begin{definition}\label{dsu2} The {\it principal bicone}  of
$\mathfrak{g}$ is the subset
\begin{eqnarray*}
{\cali X}_{{\goth g}} & := & \{(x,y) \in \gtg g{}  \ \vert \ P_{x,y}  \subset
{\goth X}_{{\goth g}}\}\\
& = & \{(x,y) \in \gtg g{}  \ \vert \ ax+by  \in {\goth X}_{{\goth g}}, \ \forall (a,b)
\in {\Bb C}^2\} \mbox{ . }
\end{eqnarray*}
\end{definition}

As ${\cali N}_{{\goth g}}$, the subset ${\cali X}_{{\goth g}}$ is a closed bicone of
$\gtg g{}$, and it is invariant under the actions of $G$ and GL$_{2}({\Bb C})$. For
$i=2,\ldots,\rk\mathfrak{g}$, we define the elements $q_{i,m,n}$ of $(\cg g{})^{{\goth g}}$ by the
following relation:
\begin{eqnarray}\label{qi}
q_i(ax+by)&=&\sum\limits_{m+n=d_i} a^m b^n q_{i,m,n}(x,y) \mbox{ , }
\end{eqnarray}
for $(a,b)$ in ${\Bb C}^2$ and $(x,y)$ in $\gtg g{}$. For $w$ in $W_G({\goth h})$, we
denote by ${\goth b}_{0,w}$ the subspace generated by $w(h)$ and ${\goth u}$. It is worth recalling the
following well-known result for the understanding of the principal
bicone.

\begin{lemma}\label{lsu2} If $x$ is in ${\goth h}$, then the intersection of $G.x$
and ${\goth h}$ is the orbit of $x$ under $W_G({\goth h})$.
\end{lemma}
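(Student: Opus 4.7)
\medskip

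The plan is to prove the two inclusions separately, the first being essentially tautological and the second relying on the conjugacy of Cartan subalgebras applied inside the centralizer of a semisimple element.

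For the easy inclusion $W_G({\goth h}).x \subset G.x \cap {\goth h}$: by definition $W_G({\goth h})=N_G({\goth h})/{\bf H}$, and any element of $W_G({\goth h})$ is realized by some $n\in N_G({\goth h})\subset G$, whose action on ${\goth h}$ does not depend on the chosen representative since ${\bf H}$ acts trivially on ${\goth h}$. Hence $w.x$ lies both in $G.x$ and in ${\goth h}$.

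For the reverse inclusion, let $y=g(x)$ with $g\in G$ and $y\in {\goth h}$. Since ${\goth g}$ is reductive and ${\goth h}$ is a Cartan subalgebra, every element of ${\goth h}$ is semisimple; in particular $x$ and $y$ are semisimple, and ${\goth g}(y)=g({\goth g}(x))$ is a reductive (Levi-type) subalgebra of ${\goth g}$. Observe that ${\goth h}$ is contained in ${\goth g}(y)$ because $y\in {\goth h}$ and ${\goth h}$ is abelian; similarly, $g({\goth h})$ is an abelian subalgebra of ${\goth g}$ containing $g(x)=y$, so $g({\goth h})\subset {\goth g}(y)$ as well. Moreover, both ${\goth h}$ and $g({\goth h})$ are maximal among toral subalgebras of ${\goth g}(y)$, hence are Cartan subalgebras of the reductive Lie algebra ${\goth g}(y)$.

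The core step is then to apply conjugacy of Cartan subalgebras inside ${\goth g}(y)$. Let $G(y)^{\circ}$ be the identity component of the centralizer of $y$ in $G$; its Lie algebra is $\ad {\goth g}(y)$. By the classical conjugacy theorem for Cartan subalgebras in a connected Lie group, there exists $g'\in G(y)^{\circ}$ with $g'(g({\goth h}))={\goth h}$. Then $g'g$ belongs to $N_G({\goth h})$, and
\[
(g'g)(x) \;=\; g'(y) \;=\; y,
\]
since $g'$ fixes $y$. If $w\in W_G({\goth h})$ denotes the class of $g'g$ modulo ${\bf H}$, we conclude that $w.x=y$, which completes the proof.

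The only potentially delicate point is the conjugacy step: verifying that ${\goth h}$ and $g({\goth h})$ are indeed Cartan subalgebras of the (possibly non-semisimple) reductive Lie algebra ${\goth g}(y)$, and invoking their conjugacy under the connected centralizer $G(y)^{\circ}$. Everything else is formal.
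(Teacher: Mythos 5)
Your proof is correct. The paper states this lemma as well-known and gives no proof, so there is nothing to compare against; your argument is the standard one (both ${\goth h}$ and $g({\goth h})$ are Cartan subalgebras of the reductive centralizer ${\goth g}(y)$, conjugate them by the connected centralizer $G(y)^{\circ}$, and observe the resulting element normalizes ${\goth h}$ while sending $x$ to $y$), and each step, including the identification of $\mathrm{Lie}(G(y)^{\circ})$ with $\ad{\goth g}(y)$ for the semisimple element $y$, is justified.
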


Our purpose is to prove that ${\cali X}_{{\goth g}}$ is a complete intersection of
dimension \sloppy \hbox{$3(\bor g{}-\rk\mathfrak{g}+1)$}, what we will do in Section \ref{d}.

\begin{lemma}\label{l2su2} {\rm i)} The subset ${\cali X}_{{\goth g}}$ is the nullvariety
of the polynomial functions $q_{i,m,n}$, for $i=2,\ldots,\rk\mathfrak{g}$ and $m+n=d_i$. In
particular, any irreducible component of ${\cali X}_{{\goth g}}$ has dimension at least
$3(\bor g{}-\rk\mathfrak{g}+1)$.

{\rm ii)} The subset $\gtg s{}$ is contained in ${\cali X}_{{\goth g}}$.

{\rm iii)} For $x$ in ${\goth b}$, $(w(h),x)$ belongs to
${\cali X}_{{\goth g}}$ if and only if $x$ is in ${\goth b}_{0,w}$.
\end{lemma}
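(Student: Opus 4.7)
The plan is to combine the characterization of ${\goth X}_{{\goth g}}$ from Lemma~\ref{l2su1} with the polarization identity (\ref{qi}) and standard facts about regular semisimple elements. For part~(i), Lemma~\ref{l2su1}(ii) tells us that $(x,y) \in {\cali X}_{{\goth g}}$ if and only if $q_i(ax+by)=0$ for every $(a,b) \in {\Bb C}^2$ and every $i \in \{2,\ldots,\rk\mathfrak{g}\}$, and by~(\ref{qi}) this is equivalent to the simultaneous vanishing of all $q_{i,m,n}(x,y)$. Since $d_1 = 2$ (the Casimir), the total number of such equations is $\sum_{i=2}^{\rk\mathfrak{g}}(d_i+1) = (\bor g{}-2)+(\rk\mathfrak{g}-1) = \bor g{}+\rk\mathfrak{g}-3$, so every irreducible component of ${\cali X}_{{\goth g}}$ has dimension at least $\dim \gtg g{} - (\bor g{}+\rk\mathfrak{g}-3) = 3(\bor g{}-\rk\mathfrak{g}+1)$.

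For part~(ii), it is enough to show ${\goth s} \subset {\goth X}_{{\goth g}}$: every $z \in {\goth s}$ is either nilpotent in ${\goth s}$, hence nilpotent in ${\goth g}$ and therefore in ${\goth N}_{{\goth g}} \subset {\goth X}_{{\goth g}}$ by Lemma~\ref{l2su1}(i), or semisimple in ${\goth s}$, in which case $z$ is ${\bf S}$-conjugate to a scalar multiple of $h$; since ${\bf S}\subset G$ and ${\goth X}_{{\goth g}}$ is a $G$-invariant cone containing $h$, this puts $z$ in ${\goth X}_{{\goth g}}$. For the direction $(\Leftarrow)$ of~(iii), write $x = \lambda w(h) + u$ with $u \in {\goth u}$, so that $aw(h)+bx = (a+b\lambda)w(h)+bu$; if $a+b\lambda=0$ this lies in ${\goth u} \subset {\goth N}_{{\goth g}} \subset {\goth X}_{{\goth g}}$, while otherwise $\eta := (a+b\lambda)w(h)$ is regular semisimple with $\ad\eta$ invertible on ${\goth u}$, so a dimension count shows that the unipotent radical of ${\bf B}$ acts transitively on $\eta+{\goth u}$ via the adjoint action, placing $aw(h)+bx$ in $G\cdot\eta \subset {\goth X}_{{\goth g}}$.

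For the direction $(\Rightarrow)$ of~(iii), decompose $x = \mu + n$ with $\mu \in {\goth h}$ and $n \in {\goth u}$. For generic $(a,b)$ the element $aw(h)+b\mu$ is regular semisimple in ${\goth h}$, so the same $U$-conjugation argument shows that $aw(h)+bx$ is $G$-conjugate to $aw(h)+b\mu$; by closedness of ${\goth X}_{{\goth g}}$, the entire subspace ${\Bb C} w(h) + {\Bb C}\mu \subset {\goth h}$ then lies in ${\goth X}_{{\goth g}}$. To conclude I would identify ${\goth X}_{{\goth g}} \cap {\goth h}$ with the finite union $\bigcup_{w' \in W_G({\goth h})} {\Bb C} w'(h)$ of lines through the origin, combining Lemma~\ref{lsu2} with Lemma~\ref{l2su1}(ii) and the Chevalley restriction theorem applied to the $q_i$. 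Since a two-plane cannot fit inside a one-dimensional variety, $\mu \in {\Bb C} w(h)$, hence $x \in {\goth b}_{0,w}$. The main technical obstacle is the standard $U$-orbit argument placing $\eta+{\goth u}$ inside $G\cdot\eta$ for regular semisimple $\eta \in {\goth h}$, which is classical but drives both directions of~(iii).
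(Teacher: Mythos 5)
Your proof is correct, and while part (i) matches the paper's argument verbatim (note the paper's own count of the number of equations, ``$\bor g{}-\rk\mathfrak{g}-3$'', is a typo for $\bor g{}+\rk\mathfrak{g}-3$; your count is the right one), parts (ii) and the converse of (iii) take genuinely different routes. For (ii), the paper does not prove ${\goth s}\subset {\goth X}_{{\goth g}}$ directly; instead it exhibits $(h,e)$ and $(e,f)$ in $T={\cali X}_{{\goth g}}\cap \gtg s{}$ and invokes the classification (Lemma \ref{lsc2}) of closed ${\bf S}$- and ${\rm GL}_2({\Bb C})$-invariant subsets of $\gtg s{}$ to force $T=\gtg s{}$. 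Your observation that $P_{x,y}\subset{\goth s}$ for $(x,y)\in\gtg s{}$ reduces the claim to ${\goth s}\subset{\goth X}_{{\goth g}}$, which then follows from the dichotomy nilpotent/semisimple in ${\goth {sl}}_2$; this is shorter and does not need Lemma \ref{lsc2} at all. For the converse of (iii), the paper argues infinitesimally: $x$ must be tangent to the cone ${\goth X}_{{\goth g}}$ at the smooth point $w(h)$, and the tangent space ${\Bb C}w(h)+[w(h),{\goth g}]$ meets ${\goth h}$ only in ${\Bb C}w(h)$. You argue globally: after killing the nilpotent part by the $U$-conjugation lemma, the plane ${\Bb C}w(h)+{\Bb C}\mu$ would have to sit inside ${\goth X}_{{\goth g}}\cap{\goth h}=\bigcup_{w'\in W_G({\goth h})}{\Bb C}w'(h)$ (which follows from Lemma \ref{lsu2} and Lemma \ref{l2su1}, (ii)), a finite union of lines. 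Both are sound; the paper's tangent-space argument avoids the $U$-orbit lemma in this direction, while yours avoids knowing that $w(h)$ is a smooth point of ${\goth X}_{{\goth g}}$ and computing its tangent space.
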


\begin{proof} i) As ${\goth X}_{{\goth g}}$ is the nullvariety of the $q_i$ by Lemma
\ref{l2su1}, (ii), ${\cali X}_{{\goth g}}$ is the nullvariety of the polynomial functions
$q_{i,m,n}$, for $i=2,\ldots,\rk\mathfrak{g}$ and $m+n=d_i$. As $d_1=2$,
${\cali X}_{{\goth g}}$ is the nullvariety in $\gtg g{}$ of $\bor g{}-\rk\mathfrak{g}-3$ regular
functions, whence the second statement.

ii) Let $T$ be the intersection of ${\cali X}_{{\goth g}}$ and $\gtg s{}$. Then $T$ is a
closed, subset of  $\gtg s{}$ invariant under GL$_{2}({\Bb C})$ and ${\bf S}$. For any
$t$ in ${\Bb C}$, $h+te$ belongs to $G. h$. So $T$ contains $(h,e)$. Hence, by Lemma
\ref{lsc2}, $\dim T \geq 5$ and, $T=T_{5}$ if and
only if $\dim T=5$. As $th$ and $e+t^{2}f$ are in the same $G$-orbit
for any $t \in {\Bb C}^{*}$, $(e,f)$ is in ${\cali X}_{{\goth g}}$. So $T$ is
equal to $\gtg s{}$ since $(e,f)$ is not in $T_{5}$.

iii) For any $t$ in ${\Bb C}^{*}$ and any $x$ in ${\goth u}$, $tw(h)+x$
belongs to the $G$-orbit of $tw(h)$. Hence $(w(h),tw(h)+x)$ belongs to
${\cali X}_{{\goth g}}$. So $\{w(h)\}\times {\goth b}_{0,w}$ is contained in
${\cali X}_{{\goth g}}$. Let $(x_{1},x_2)$ be in ${\goth h}\times {\goth u}$.
We suppose that $(w(h),x_{1}+x_{2})$ is in ${\cali X}_{{\goth g}}$. Then for any $t$ in
${\Bb C}$, $w(h)+tx_{1}+tx_{2}$ is in ${\goth X}_{{\goth g}}$. In particular,
$x_{1}+x_{2}$ is tangent at $w(h)$ to the cone ${\goth X}_{{\goth g}}$. The tangent
space at $w(h)$ of the cone ${\goth X}_{{\goth g}}$ is generated by $w(h)$ and
$[w(h),{\goth g}]$. So $x_{1}$ and $w(h)$ are collinear.
\end{proof}

\subsection{Smooths points of ${\cali X}_P$}\label{su3} As the nilpotent cone ${\goth N}_{{\goth g}}$ is contained in
the principal cone ${\goth X}_{{\goth g}}$, the nilpotent bicone ${\cali N}_{{\goth g}}$
is contained in the principal bicone ${\cali X}_{{\goth g}}$. We introduce now two other
varieties ``squeezed in between'' ${\cali N}_{{\goth g}}$ and ${\cali X}_{{\goth g}}$.
Set
\begin{eqnarray*}
{\cali Y}_{{\goth g}}&:=&\{\ (x,y) \in {\cali X}_{{\goth g}} \ \vert \ \dv xy =0  \ \}
\mbox{ , }\\
{\cali Z}_{{\goth g}}&:=&\{\  (x,y) \in {\cali Y}_{{\goth g}} \ \vert \ y \in
{\goth N}_{{\goth g}}\ \} \mbox{ . }
\end{eqnarray*}

\begin{lemma}\label{lsu3}
{\rm i)} The nilpotent bicone is the nullvariety in ${\cali X}_{{\goth g}}$ of
\sloppy \hbox{$p_{1,2,0},p_{1,1,1},p_{1,0,2}$}.

{\rm ii)} The subset ${\cali Y}_{{\goth g}}$ is the nullvariety in ${\cali X}_{{\goth g}}$
of $p_{1,1,1}$. Moreover, ${\cali Y}_{{\goth g}}$ is a $G$-invariant closed bicone and
any irreducible component of ${\cali Y}_{{\goth g}}$ has dimension at least
$3(\bor g{}-\rk\mathfrak{g})+2$.

{\rm iii)} The subset ${\cali Z}_{{\goth g}}$ is the nullvariety in ${\cali Y}_{{\goth g}}$
of $p_{1,0,2}$. Moreover, ${\cali Z}_{{\goth g}}$ is a $G$-invariant closed bicone and
any irreducible component of ${\cali Z}_{{\goth g}}$ has dimension at least
$3(\bor g{}-\rk\mathfrak{g})+1$.
\end{lemma}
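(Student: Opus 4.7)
Since $p_{1}$ is the Casimir element, $p_{1}(x)=\langle x,x\rangle$, and expanding
$p_{1}(ax+by)=a^{2}\langle x,x\rangle+2ab\langle x,y\rangle+b^{2}\langle y,y\rangle$ via
relation (\ref{pi}) shows that $p_{1,2,0}(x,y)=\langle x,x\rangle$,
$p_{1,1,1}(x,y)=2\langle x,y\rangle$ and $p_{1,0,2}(x,y)=\langle y,y\rangle$. The plan is
to use this identification together with Lemma \ref{l2su1}, (i), which says that the
nullvariety of $p_{1}$ inside ${\goth X}_{{\goth g}}$ is exactly
${\goth N}_{{\goth g}}$, and then apply Krull's principal ideal theorem for the
dimension bounds.

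For (i), I would argue by pointwise application. A pair $(x,y)$ lies in
${\cali N}_{{\goth g}}$ if and only if $ax+by$ is nilpotent for every $(a,b)\in{\Bb C}^{2}$.
If $(x,y)\in{\cali X}_{{\goth g}}$, then each $ax+by$ lies in ${\goth X}_{{\goth g}}$, so
by Lemma \ref{l2su1}, (i), $ax+by$ is nilpotent if and only if $p_{1}(ax+by)=0$.
Expanding, this is precisely
$a^{2}p_{1,2,0}(x,y)+ab\,p_{1,1,1}(x,y)+b^{2}p_{1,0,2}(x,y)=0$ for all $(a,b)$, which is
equivalent to the simultaneous vanishing of $p_{1,2,0}$, $p_{1,1,1}$, $p_{1,0,2}$ at
$(x,y)$. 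Conversely, if $(x,y)\in{\cali N}_{{\goth g}}$, then $ax+by$ is nilpotent,
hence in ${\goth X}_{{\goth g}}$, for every $(a,b)$, showing $(x,y)\in{\cali X}_{{\goth g}}$.

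For (ii), the identification $p_{1,1,1}(x,y)=2\langle x,y\rangle$ immediately gives the
scheme-theoretic description of ${\cali Y}_{{\goth g}}$. $G$-invariance follows from the
$G$-invariance of $\langle\cdot,\cdot\rangle$, and the bicone property from the
bilinearity $\langle sx,ty\rangle=st\langle x,y\rangle$ combined with the fact that
${\cali X}_{{\goth g}}$ is itself a bicone. Since by Lemma \ref{l2su2}, (i), every
irreducible component of ${\cali X}_{{\goth g}}$ has dimension at least
$3(\bor g{}-\rk\mathfrak{g}+1)=3(\bor g{}-\rk\mathfrak{g})+3$, Krull's Hauptidealsatz applied to the
hypersurface $p_{1,1,1}=0$ yields the asserted bound $3(\bor g{}-\rk\mathfrak{g})+2$ on each
component of ${\cali Y}_{{\goth g}}$.

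For (iii), for $(x,y)\in{\cali X}_{{\goth g}}$ the second projection $y=0\cdot x+1\cdot y$
already lies in ${\goth X}_{{\goth g}}$, so Lemma \ref{l2su1}, (i), gives
$y\in{\goth N}_{{\goth g}}$ iff $p_{1}(y)=0$, i.e.\ iff $p_{1,0,2}(x,y)=0$; this
identifies ${\cali Z}_{{\goth g}}$ as the nullvariety of $p_{1,0,2}$ in
${\cali Y}_{{\goth g}}$. $G$-invariance and the bicone property are immediate from the
corresponding properties of ${\cali Y}_{{\goth g}}$ and ${\goth N}_{{\goth g}}$. The
dimension bound then follows from (ii) by a second application of Krull's principal
ideal theorem. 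No step is really an obstacle here: once one writes down the polarization
of $p_{1}$ explicitly, everything reduces to Lemma \ref{l2su1}, (i), and a standard
codimension estimate.
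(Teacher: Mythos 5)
Your proposal is correct and follows essentially the same route as the paper: identify the $2$-order polarizations of the Casimir $p_1$ explicitly, reduce everything to Lemma \ref{l2su1}, (i), and obtain the dimension bounds from Lemma \ref{l2su2}, (i), via the standard codimension estimate for cutting by one more equation. The only difference is that you spell out the pointwise equivalence in (i) and the bihomogeneity argument for the bicone property, which the paper leaves implicit.
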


\begin{proof} i) As ${\goth N}_{{\goth g}}$ is the nullvariety of $p_1$ in
${\goth X}_{{\goth g}}$, by Lemma \ref{l2su1} (i), ${\cali N}_{{\goth g}}$ is the
nullvariety in ${\cali X}_{{\goth g}}$ of the $2$-order polarizations of $p_1$, that is
$p_{1,2,0},p_{1,1,1},p_{1,0,2}$.

ii) As $p_{1,1,1}$ is the Killing form, ${\cali Y}_{{\goth g}}$ is the nullvariety of
$p_{1,1,1}$ in ${\cali X}_{{\goth g}}$. Moreover, ${\cali Y}_{{\goth g}}$ is a
$G$-invariant closed bicone since $p_{1,1,1}$ is $G$-invariant and bihomogeneous. By
Lemma \ref{l2su2}, (i), any irreducible component of ${\cali Y}_{{\goth g}}$ has
dimension at least $3(\bor g{}-\rk\mathfrak{g})+2$.

iii) By Lemma \ref{l2su1}, (i), ${\goth N}_{{\goth g}}$ is the nullvariety of $p_{1}$ in
${\goth X}_{{\goth g}}$. Hence ${\cali Z}_{{\goth g}}$ is the nullvariety of
$p_{1,0,2}$ in ${\cali Y}_{{\goth g}}$. Moreover, ${\cali Z}_{{\goth g}}$ is a
$G$-invariant closed bicone since $p_{1,0,2}$ is $G$-invariant and bihomogeneous. By
(ii), any irreducible component of ${\cali Z}_{{\goth g}}$ has dimension at least
$3(\bor g{}-\rk\mathfrak{g})+1$.
\end{proof}

\begin{remark}\label{rsu3}
By Lemma \ref{lsu3}, the subsets ${\cali Y}_{{\goth g}}$ and ${\cali Z}_{{\goth g}}$
inherit a natural structure of scheme.\\
\end{remark}

Let $P$ be a subset of the set of polynomials
$$\{p_{1,2,0},p_{1,1,1},p_{1,0,2}\mbox{ , } q_{i,m,d_{i}-m} \mbox{ , } i=2,\ldots,\rk\mathfrak{g}
\mbox{ , } m=0,\ldots,d_{i} \} \mbox{ .}$$
We denote by ${\Bb C}[P]$ the subalgebra of $\cg g{}$ generated by $P$, we denote by
$\sigma_{P}$ the morphism of affine varieties whose comorphism is the canonical injection
from ${\Bb C}[P]$ to $\cg{g}$ and we denote by ${\cali X}_{P}$ the nullvariety of $P$ in
$\gtg{g}$.

\begin{lemma}\label{l2su3}
Let $(x,y)$ be in $\Omega_{{\goth g}}$.

{\rm i)} The morphism $\sigma_{P}$ is smooth at $(x,y)$.

{\rm ii)} If $(x,y)$ is in ${\cali X}_{P}$, then $(x,y)$ is a smooth point of
${\cali X}_{P}$. Moreover, the unique irreducible component of ${\cali X}_{P}$ which
contains $(x,y)$ has dimension \sloppy \hbox{$2\,\dim {\goth g}-\vert P \vert$}.
\end{lemma}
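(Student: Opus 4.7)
The plan is to deduce everything from Proposition \ref{psc4}, which tells us that for $(x,y) \in \Omega_{\goth g}$ the differentials $\{dp_{i,m,n}(x,y) : i=1,\ldots,\rk{\goth g},\ m+n=d_i\}$ are linearly independent in the cotangent space $T^{*}_{(x,y)}(\gtg{g})$, since the map $\sigma$ of that proposition is smooth at $(x,y)$.

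To transfer this to families built from the $q_{i,m,n}$'s, I would consider the enlarged family
$$Q := \{p_{1,2,0},\, p_{1,1,1},\, p_{1,0,2}\} \cup \{q_{i,m,n} : i=2,\ldots,\rk{\goth g},\ m+n=d_i\},$$
which contains any admissible $P$. Since $d_1=2$, the cardinality of $Q$ equals $3+\sum_{i=2}^{\rk{\goth g}}(d_i+1) = \bor g{}+\rk{\goth g}$, the same as that of the $p_{i,m,n}$-family. Next I would compare the two families using the definition of $q_i$: when $d_i$ is odd, $q_{i,m,n}=p_{i,m,n}$, while when $d_i$ is even, polarizing $q_i = p_1(h)^{d_i/2}p_i - p_i(h)p_1^{d_i/2}$ gives
$$q_{i,m,n} = p_1(h)^{d_i/2}\,p_{i,m,n} \;-\; p_i(h)\,\Phi_{i,m,n}(p_{1,2,0},p_{1,1,1},p_{1,0,2}),$$
where $\Phi_{i,m,n}$ is the homogeneous component of bidegree $(m,n)$ in the expansion of $p_1(ax+by)^{d_i/2}$. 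Since $p_1(h)\neq 0$, this identity can be inverted to express each $p_{i,m,n}$ as a polynomial in the elements of $Q$. Evaluating differentials at $(x,y)$, the $\Bb C$-linear span of $\{dq(x,y)\mid q\in Q\}$ coincides with that of $\{dp_{i,m,n}(x,y)\}$.

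Since both families have the same cardinality $\bor g{}+\rk{\goth g}$ and span the same subspace at $(x,y)$, the linear independence furnished by Proposition \ref{psc4} passes to the family $Q$. Consequently the differentials of any subfamily $P\subseteq Q$ are linearly independent at $(x,y)$, so the Jacobian of $\sigma_P$ at $(x,y)$ has rank $|P|$; this surjectivity of the differential gives assertion (i). Assertion (ii) then follows from the Jacobian criterion: $(x,y)$ is a smooth point of ${\cali X}_P$ of local codimension $|P|$, and combined with the lower bound $\dim\geq 2\,\dim{\goth g}-|P|$ coming from ${\cali X}_P$ being cut out by $|P|$ equations, the irreducible component of ${\cali X}_P$ through $(x,y)$ has dimension exactly $2\,\dim{\goth g}-|P|$.

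The only nontrivial step is the bookkeeping identity relating the $q_{i,m,n}$ to the $p_{i,m,n}$ via the polarizations of $p_1$; once that is in place, everything reduces to counting and the Jacobian criterion.
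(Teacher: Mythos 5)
Your proposal is correct and follows essentially the same route as the paper: both arguments rest on the observation that each $q_{i,m,n}$ equals $p_{1}(h)^{d_i/2}p_{i,m,n}$ minus a polynomial in $p_{1,2,0},p_{1,1,1},p_{1,0,2}$ (with $p_1(h)\neq 0$ making the relation invertible), combined with the smoothness of $\sigma$ at points of $\Omega_{\goth g}$ from Proposition \ref{psc4}; the paper phrases this as a factorization $\sigma_P=\rho\circ\sigma$ with $\rho$ smooth, while you carry out the equivalent computation at the level of differentials and the Jacobian criterion.
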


\begin{proof}
i) Let $\rho $ be the morphism whose comorphism is the canonical injection from
${\Bb C}[P]$ to the subalgebra generated by the polynomials $p_{i,m,d_{i}-m}$ where
$i=1,\ldots,\rk\mathfrak{g}$ and $m=0,\ldots,d_{i}$. Then $\sigma_{P}$ is equal to
$\rho \rond \sigma $, where $\sigma $ is the morphism introduced in Subsection
\ref{sc4}. As $q_{i,m,d_{i}-m}$ is a linear combination of $p_{i,m,d_{i}-m}$ and
homogeneous elements in the subalgebra generated by $p_{1,2,0}$, $p_{1,1,1}$,
$p_{1,0,2}$, for $i=2,\ldots,\rk\mathfrak{g}$ and $m=0,\ldots,d_{i}$, the morphism $\rho $ is
smooth. Then by Proposition \ref{psc4}, $\sigma_{P}$ is a smooth morphism at $(x,y)$.

ii) We suppose that $(x,y)$ is in ${\cali X}_{P}$. By definition, ${\cali X}_{P}$ is the
fiber at $0$ of the morphism $\sigma_{P}$. So by (i), $(x,y)$ is a smooth point of
${\cali X}_{P}$ and the codimension in $\gtg{g}$ of the irreducible component of
${\cali X}_{P}$ which contains $(x,y)$ is equal to the dimension of ${\Bb C}[P]$. By
Proposition \ref{psc4}, the polynomials $p_{i,m,d_{i}-m}$ are algebraically independent
for \sloppy \hbox{$i=1,\ldots,\rk\mathfrak{g}$} and $m=0,\ldots,d_{i}$. So the elements of $P$ are
algebraically independent since $q_{i,m,d_{i}-m}$ is a linear combination of
$p_{i,m,d_{i}-m}$ and homogeneous elements in the subalgebra generated by
$p_{1,2,0}$, $p_{1,1,1}$, $p_{1,0,2}$, for \sloppy \hbox{$i=2,\ldots,\rk\mathfrak{g}$} and
$m=0,\ldots,d_{i}$. Hence the irreducible component of ${\cali X}_{P}$
containing $(x,y)$ has dimension $2\,\dim {\goth g}-\vert P \vert$.
\end{proof}

The principal bicone $ {\cali X}_{{\goth g}}$ has irreducible
components of the expected dimension as the following proposition shows. Actually this proposition will be not used in the
other sections.

\begin{proposition}\label{psu3} Let $w$ be in $W_{G}({\goth h})$. There is an unique
irreducible component $\chi(w)$ of ${\cali X}_{{\goth g}}$ containing $(w(h),e)$ and
$\chi(w)$ satisfies the following properties:
\begin{list}{}{}
\item {\rm 1)} $\chi (w)$ has dimension $3(\bor g{}-\rk\mathfrak{g}+1)$,
\item {\rm 2)} $\chi (ww_{0})$ is equal to $\chi (w)$,
\item {\rm 3)} $\chi (w)$ contains $(w_{0}w(h),f)$, $(e,w(h))$, $(f,w_{0}w(h))$.
\end{list}
Moreover, $\chi (w)$ contains $\gtg{s}$ if $w=1$ or $w=w_{0}$.
\end{proposition}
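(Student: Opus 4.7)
The plan is to exhibit $(w(h),e)$ as a smooth point of ${\cali X}_{{\goth g}}$ and to read off the existence, uniqueness, and dimension of $\chi(w)$ from Lemma~\ref{l2su3}~(ii). First I would check that $(w(h),e)\in\Omega_{{\goth g}}$: since $h$ is regular and $W_{G}({\goth h})$ preserves ${\goth h}'$, we have $w(h)\in{\goth h}'$, and $e\in{\goth g}_{2}'$, so Lemma~\ref{l3sc1}~(i) applies. Next, Lemma~\ref{l2su2}~(iii) yields $(w(h),e)\in{\cali X}_{{\goth g}}$, since $e\in{\goth u}\subset{\goth b}_{0,w}$. I would then take $P$ to be the defining family $\{q_{i,m,d_{i}-m}\mid i=2,\ldots,\rk\mathfrak{g},\ m=0,\ldots,d_{i}\}$ of ${\cali X}_{{\goth g}}$ furnished by Lemma~\ref{l2su2}~(i), whose cardinality is $\bor g{}+\rk\mathfrak{g}-3$, and invoke Lemma~\ref{l2su3}~(ii): this produces a unique irreducible component $\chi(w)$ of ${\cali X}_{{\goth g}}$ through $(w(h),e)$, smooth at that point and of dimension $2\,\dim{\goth g}-(\bor g{}+\rk\mathfrak{g}-3)=3(\bor g{}-\rk\mathfrak{g}+1)$, which is property~(1).

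For (2) and (3) I would exploit that ${\cali X}_{{\goth g}}$ is stable under both the diagonal action of the connected group $G$ and the action of the connected group $\mathrm{GL}_{2}({\Bb C})$, so that every element of either group sends each irreducible component to itself. For (2): Lemma~\ref{lsu1} gives $w_{0}(h)=-h$, hence $(ww_{0}(h),e)=(-w(h),e)=\mathrm{diag}(-1,1)\cdot(w(h),e)$ belongs to $\chi(w)$; applying the same smoothness argument to $ww_{0}$ shows this point lies on a unique component, so $\chi(ww_{0})=\chi(w)$. For (3): choose $g_{0}$ as in Lemma~\ref{lsu1}, so that $g_{0}(e)=f$ and $g_{0}$ acts as $w_{0}$ on ${\goth h}$; applying $g_{0}$ diagonally to $(w(h),e)$ gives $(w_{0}w(h),f)\in\chi(w)$; applying the swap $(x,y)\mapsto(y,x)$ in $\mathrm{GL}_{2}({\Bb C})$ to $(w(h),e)$ gives $(e,w(h))\in\chi(w)$; and finally applying $g_{0}$ to this last element yields $(f,w_{0}w(h))\in\chi(w)$.

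For the last assertion, $\gtg{s}\subset{\cali X}_{{\goth g}}$ by Lemma~\ref{l2su2}~(ii), and since $\gtg{s}$ is irreducible it must lie in a single irreducible component of ${\cali X}_{{\goth g}}$; as $(h,e)\in\gtg{s}$ lies on the unique component $\chi(1)$, we obtain $\gtg{s}\subset\chi(1)$. The case $w=w_{0}$ follows from (2), since $\chi(w_{0})=\chi(1\cdot w_{0})=\chi(1)$. The main substantive input is Lemma~\ref{l2su3}~(ii); once that lemma is granted, every step reduces to the connectedness of $G$ and $\mathrm{GL}_{2}({\Bb C})$, so no serious obstacle is anticipated beyond a careful bookkeeping of the dimension count $|P|=\bor g{}+\rk\mathfrak{g}-3$.
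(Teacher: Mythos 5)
Your proof is correct and follows essentially the same route as the paper: verify $(w(h),e)\in{\cali X}_{{\goth g}}\cap\Omega_{{\goth g}}$ via Lemmas \ref{l2su2} (iii) and \ref{l3sc1} (i), apply Lemma \ref{l2su3} (ii) with $P=\{q_{i,m,d_i-m}\}$ to get uniqueness and the dimension count, and then propagate the point $(w(h),e)$ using the bicone structure, the $G$-action through $g_0$, and the $\mathrm{GL}_2({\Bb C})$-action (connectedness of these groups fixing each irreducible component, which the paper uses implicitly). The treatment of the final assertion via irreducibility of $\gtg{s}$ and part (2) also matches the paper's argument.
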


\begin{proof} Let $P$ be the subset $\{q_{i,m,d_{i}-m}, \ i=2,\ldots,\rk\mathfrak{g}, \
m=0,\ldots,d_{i}\}$. Then ${\cali X}_{P}$ is equal to ${\cali X}_{{\goth g}}$ by Lemma
\ref{l2su2}, (i). By Lemma \ref{l2su2}, (iii), ${\cali X}_{{\goth g}}$ contains
$(w(h),e)$ and by Lemma \ref{l3sc1}, (i), $(w(h),e)$ belongs to $\Omega_{{\goth g}}$.
Hence by Lemma \ref{l2su3}, (ii), $(w(h),e)$ is a smooth point of
${\cali X}_{{\goth g}}$. So there is an unique irreducible component of
${\cali X}_{{\goth g}}$ which contains $(w(h),e)$. Moreover, this component has dimension
$3(\bor g{}-\rk\mathfrak{g}+1)$. Let us denote it by $\chi (w)$. Then $\chi (w)$ satisfies condition
(1). As $ww_{0}(h)$ is equal to $-w(h)$, by Lemma \ref{lsu1}, $\chi (w)$ contains
$(ww_{0}(h),e)$ since $\chi (w)$ is a bicone. Moreover, by Lemma \ref{lsu1}, $\chi (w)$
contains $(w_{0}w(h),f)$ since $\chi (w)$ is $G$-invariant and
$g_{0}.(w(h),e)$ is equal to $(w_{0}w(h),f)$. As $\chi (w)$ is
also GL$_{2}({\Bb C})$-invariant, it is invariant under the involution \sloppy \hbox{
$(x,y)\mapsto (y,x)$}. So $\chi (w)$ contains $(e,w(h))$ and $(f,w_{0}w(h))$.

By Lemma \ref{l2su2}, (ii), $\gtg s{}$ is contained in ${\cali X}_{{\goth g}}$. Hence there
exists an irreducible component of ${\cali X}_{{\goth g}}$ which contains $\gtg s{}$. But
$\gtg s{}$ contains $(h,e)$, which is a smooth point of ${\cali X}_{{\goth g}}$. So
$\chi (w)$ contains $\gtg s{}$ if $w(h)$ is colinear to $h$, that is to say
$w=1$ or $w=w_{0}$.
\end{proof}

\begin{remark}
When ${\goth g}$ is simple of type $B_{\ell}$, $C_{\ell}$, $D_{2\ell }$, $E_{7}$, $E_{8}$,
$F_{4}$ or $G_{2}$, $w_{0}$ is equal to $-1$. So in these cases, for any $w$ in
$W_{G}({\goth h})$, $\chi (w)$ contains $(w(h),f)$ and $(f,w(h))$ since $\chi (w)$ is a
bicone. A quick computation shows that ${\cali X}_{{\goth g}}$ precisely has
$\vert W_{G}({\goth h}) \vert/2$ irreducible components when ${\goth g}$ is equal to
${\goth s}{\goth l}_{n}({\Bb C})$ for $n=2,3$.
\end{remark}

\subsection{Proof of Corollary \ref{csu4}} The goal of this subsection is to prove Corollary \ref{csu4}. This
corollary will be crucial for the study of the dimension of ${\cali X}_{{\goth g}}$
(see Section \ref{d}).

\begin{lemma}\label{lsu4}
Let ${\cali Z}$ be an irreducible closed subset of ${\cali Z}_{{\goth g}}$ satisfying the
two following conditions:
\begin{itemize}
\item[{\rm 1)}] $\varpi_{1}({\cali Z})$ contains a non zero semisimple element of
${\goth g}$,
\item[{\rm 2)}] $\varpi_{2}({\cali Z})$ contains a regular nilpotent element of
${\goth g}$.
\end{itemize}
Then ${\cali Z}$ has a nonempty intersection with $\Omega_{{\goth g}}$.
\end{lemma}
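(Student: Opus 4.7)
The plan is to exploit the irreducibility of ${\cali Z}$ to convert the two pointwise hypotheses into a single open condition, and then to use the equations cutting out ${\cali Z}_{{\goth g}}$ to verify that any point meeting this condition automatically lies in $\Omega_{{\goth g}}$.

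First I would note that ${\cali Z} \subset {\cali Z}_{{\goth g}} \subset {\cali X}_{{\goth g}}$, so $\varpi_1({\cali Z})$ and $\varpi_2({\cali Z})$ are contained in the principal cone ${\goth X}_{{\goth g}}$. By Lemma \ref{l2su1}(i), the nullvariety of $p_1$ in ${\goth X}_{{\goth g}}$ is exactly ${\goth N}_{{\goth g}}$; since a nonzero semisimple element of ${\goth g}$ is not nilpotent, hypothesis (1) yields a point of ${\cali Z}$ where $p_1\circ\varpi_1$ does not vanish. Hypothesis (2), together with $\varpi_2({\cali Z}) \subset {\goth N}_{{\goth g}}$, yields a point of ${\cali Z}$ where $\varpi_2$ is regular. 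Consequently the two open subsets
$$ U_1 := \{(x,y) \in {\cali Z} : p_1(x) \neq 0\}, \qquad U_2 := \{(x,y) \in {\cali Z} : y \in {\goth g}_{\reg}\} $$
of ${\cali Z}$ are nonempty, and since ${\cali Z}$ is irreducible their intersection $U$ is also nonempty.

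Given $(x,y) \in U$, I would then verify membership in $\Omega_{{\goth g}}$. Since $p_1(x) \neq 0$ and $x \in {\goth X}_{{\goth g}}$, Lemma \ref{l2su1}(i) and (iii) force $x \in {\goth g}_{\reg}$; similarly $y$ is a regular (nonzero) nilpotent element, so $x$ and $y$ cannot be collinear and $\dim P_{x,y} = 2$. It remains to show that every nonzero $ax+by \in P_{x,y}$ is regular. The defining relations of ${\cali Z}_{{\goth g}}$ give $\dv xy = 0$ and $p_1(y) = 0$, so
$$ p_1(ax+by) = a^2 p_1(x) + 2ab\,\dv xy + b^2 p_1(y) = a^2 p_1(x).$$
Hence for $a \neq 0$, the element $ax+by \in P_{x,y} \subset {\goth X}_{{\goth g}}$ has nonzero $p_1$, so is non-nilpotent, so is regular by Lemma \ref{l2su1}(iii); the case $a = 0$, $b \neq 0$ reduces to $by$ with $y$ regular. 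Thus $P_{x,y} \setminus \{0\} \subset {\goth g}_{\reg}$ and $(x,y) \in \Omega_{{\goth g}} \cap {\cali Z}$.

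The one point that could have been an obstacle is controlling the intermediate combinations $ax+by$ with both coefficients nonzero; the whole argument relies crucially on the vanishing of the cross term $\dv xy$ on ${\cali Z}_{{\goth g}}$, which collapses $p_1(ax+by)$ to a perfect square in $a$ and makes the verification immediate.
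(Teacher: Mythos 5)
Your proof is correct and follows essentially the same route as the paper's: both arguments produce a point of ${\cali Z}$ with $x$ non-nilpotent in ${\goth X}_{{\goth g}}$ and $y$ regular nilpotent by intersecting two nonempty open subsets of the irreducible set ${\cali Z}$, and both hinge on the identity $\dv{ax+by}{ax+by}=a^{2}\dv xx$ on ${\cali Z}_{{\goth g}}$ together with Lemma \ref{l2su1}, (i) and (iii). The only difference is presentational — you verify membership in $\Omega_{{\goth g}}$ directly where the paper argues by contradiction.
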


\begin{proof} By Condition (1), the subset ${\cali Z}_{1}$ of elements $(x,y)$ in
${\cali Z}$ such that $x$ is a non zero semisimple element of $\mathfrak{g}$ is a
nonempty open subset of ${\cali Z}$ since the subset of semisimple elements of
${\goth g}$ which belongs to ${\goth X}_{{\goth g}}$ is an open dense subset of
${\goth X}_{{\goth g}}$. On the other hand, by Condition (2), the subset ${\cali Z}_{2}$
of elements $(x,y)$ in ${\cali Z}$ such that $y$ is a regular nilpotent element of
${\goth g}$ is a nonempty open subset of ${\cali Z}$ since the subset of regular
nilpotent elements is an open dense subset of ${\goth N}_{{\goth g}}$. Then the
intersection of ${\cali Z}_{1}$ and ${\cali Z}_{2}$ is nonempty subset since ${\cali Z}$ is
irreducible. Let $(x,y)$ be in the
intersection  ${\cali Z}_{1}$ and ${\cali Z}_{2}$. Suppose that $(x,y)$ doesn't belong to $\Omega_{{\goth g}}$. We expect a
contradiction. Then there exists $t\in {\Bb C}$ such that $tx+y$ is not a regular
element of ${\goth g}$. Indeed $x$ is regular as non zero semisimple element of
${\goth X}_{{\goth g}}$. As $(x,y)$ belongs to the principal bicone, $tx +y$ is a non
regular element of ${\goth X}_{{\goth g}}$, whence
$$\dv {tx+y}{tx+y}=0 \mbox{ ,}$$
by Lemma \ref{l2su1}, (iii). But the left hand side of this equality is
equal to $t^{2}\dv xx$ since $(x,y)$ belongs to ${\cali Z}_{{\goth g}}$. Hence $t=0$ since
for any non zero semisimple element $z$ in ${\goth X}_{{\goth g}}$,
$\dv zz\not=0$. This contradicts the fact that $y$ is regular. So ${\cali Z}$ has a
nonempty intersection with $\Omega_{{\goth g}}$.
\end{proof}

\begin{corollary}\label{csu4} Let ${\cali X}$ be an irreducible component of
${\cali X}_{{\goth g}}$. We denote by ${\cali X}'$ the intersection of ${\cali X}$ and
${\cali Z}_{\goth g}$. Let us suppose that ${\cali X}'$ has an irreducible component
${\cali Z}$ satisfying the two following conditions:
\begin{itemize}
\item[{\rm 1)}] $\varpi_{1}({\cali Z})$ is equal to ${\goth X}_{{\goth g}}$,
\item[{\rm 2)}] $\varpi_{2}({\cali Z})$ is equal to ${\goth N}_{{\goth g}}$.
\end{itemize}
Then ${\cali X}'$ and ${\cali X}$ have a non empty intersection with $\Omega_{{\goth g}}$.
In particular, ${\cali X}$ has dimension \sloppy \hbox{$3(\bor g{}-\rk\mathfrak{g}+1)$}.
\end{corollary}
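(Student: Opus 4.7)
The approach is to apply Lemma~\ref{lsu4} to ${\cali Z}$, and then deduce the dimension of ${\cali X}$ from the smooth-point criterion of Lemma~\ref{l2su3}, (ii). By hypothesis ${\cali Z}$ is an irreducible closed subset of ${\cali Z}_{{\goth g}}$. Condition~(1) of the corollary gives $\varpi_{1}({\cali Z}) = {\goth X}_{{\goth g}}$, which contains the non zero semisimple element $h$, and Condition~(2) gives $\varpi_{2}({\cali Z}) = {\goth N}_{{\goth g}}$, which contains the regular nilpotent element $e$. Hence Lemma~\ref{lsu4} applies and yields ${\cali Z}\cap \Omega_{{\goth g}}\neq\emptyset$. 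Since ${\cali Z}\subset {\cali X}'\subset {\cali X}$, this already proves that both ${\cali X}'$ and ${\cali X}$ meet $\Omega_{{\goth g}}$.

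For the dimension of ${\cali X}$, I would set
\[ P := \{q_{i,m,d_{i}-m}\mid i=2,\ldots,\rk\mathfrak{g},\ m=0,\ldots,d_{i}\}, \]
so that ${\cali X}_{P} = {\cali X}_{{\goth g}}$ by Lemma~\ref{l2su2}, (i). Fix any $(x,y)\in {\cali X}\cap \Omega_{{\goth g}}$ produced above. By Lemma~\ref{l2su3}, (ii), the point $(x,y)$ is a smooth point of ${\cali X}_{{\goth g}}={\cali X}_{P}$, and the unique irreducible component of ${\cali X}_{{\goth g}}$ containing $(x,y)$ has dimension $2\dim{\goth g} - |P|$. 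A direct count, using $d_{1}=2$ and $d_{1}+\cdots+d_{\rk\mathfrak{g}}=\bor g{}$, gives
\[ |P| = \sum_{i=2}^{\rk\mathfrak{g}}(d_{i}+1) = (\bor g{}-d_{1})+(\rk\mathfrak{g}-1) = \bor g{}+\rk\mathfrak{g}-3, \]
whence $2\dim{\goth g} - |P| = 4\bor g{}-2\rk\mathfrak{g}-\bor g{}-\rk\mathfrak{g}+3 = 3(\bor g{}-\rk\mathfrak{g}+1)$. Since the irreducible component ${\cali X}$ passes through the smooth point $(x,y)$ of ${\cali X}_{{\goth g}}$, it must coincide with the unique component singled out by Lemma~\ref{l2su3}, and therefore $\dim{\cali X} = 3(\bor g{}-\rk\mathfrak{g}+1)$.

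The argument is essentially a concatenation of previous results, so no real obstacle is expected. The one point to verify carefully is that the hypotheses on the projections of ${\cali Z}$ in the corollary are \emph{exactly} what Lemma~\ref{lsu4} requires: the equalities $\varpi_{1}({\cali Z})={\goth X}_{{\goth g}}$ and $\varpi_{2}({\cali Z})={\goth N}_{{\goth g}}$ immediately supply a non zero semisimple element and a regular nilpotent element in the respective projections, so Lemma~\ref{lsu4} applies without extra work, and the rest is bookkeeping.
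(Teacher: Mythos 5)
Your proof is correct and follows exactly the paper's route: Lemma \ref{lsu4} applied to ${\cali Z}$ gives the nonempty intersection with $\Omega_{{\goth g}}$, and Lemma \ref{l2su3}, (ii) (via ${\cali X}_{P}={\cali X}_{{\goth g}}$ for $P=\{q_{i,m,d_i-m}\}$) gives the dimension. The cardinality count $|P|=\bor g{}+\rk\mathfrak{g}-3$ and the resulting dimension $3(\bor g{}-\rk\mathfrak{g}+1)$ are both right.
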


\begin{proof}
By Lemma \ref{lsu4}, ${\cali Z}$, ${\cali X}'$ and ${\cali X}$ have a nonempty intersection
with $\Omega _{{\goth g}}$. Then, by Lemma \ref{l2su3}, (ii), ${\cali X}$ has
dimension \sloppy \hbox{$3(\bor g{}-\rk\mathfrak{g}+1)$}.
\end{proof}

\section{Jet schemes and motivic integration}\label{jm}

We plan to study in Section \ref{d} the dimensions of the nilpotent bicone and the
principal bicone via motivic integration arguments. In view of this work, we start this
section by some basics on motivic integration.

\subsection{Jet schemes}\label{jm1} Let us first review the definition and properties
of jet schemes. Let $X$ be a complex algebraic variety. For $m \in {\Bb N}$,  the
{\it $m$-order jet scheme} $J_m(X)$ of $X$ is the scheme whose closed points over
$x \in X$ are morphisms
$$\an {X}{x} \longrightarrow {\Bb C}[t]/t^{m+1} \mbox{ . }$$
Thus, the ${\Bb C}$-valued points of $J_m(X)$ are in natural bijection with the
${\Bb C}[t]/t^{m+1}$-valued points of $X$. In particular, there are canonical
isomorphisms $J_0(X) \simeq X$ and $J_1(X) \simeq \mathrm{T}X$, where $\mathrm{T}X$ is
the total tangent space of $X$. The canonical morphisms
$$\pi _{l,m} \ : \ J_l(X) \longrightarrow J_{m}(X) \mbox{ , } $$
with $l \geq m$, obtained by truncation, induce a projective system
$(J_{m}(X),\pi _{l,m})$. The {\it space of the arcs}
$$J_{\infty }(X):=\mathrm{proj} \lim\limits_{m} J_m(X)$$
is the projective limit of the system $(J_{m}(X),\pi _{l,m})$. Denote by
$\pi _{\infty,m}$ the canonical morphism
$$\pi _{\infty,m} \ : \ J_{\infty}(X) \longrightarrow J_{m}(X) \mbox{ , }$$
for $m\in \mathbb{N}$, obtained by truncation, too. For $\nu $ in $J_{\infty }(X)$ and $\varphi $ a regular function on $X$, we denote by
$\ord {\nu \rond \varphi }$ the order of the serie $\nu \rond \varphi$. If
${\cali I}$ is an ideal of $\an X{}$, we denote by $\ord {{\cali I},\nu }$ the smallest
integer $\ord {\nu \rond \varphi }$, where $\varphi $ runs through
the ideal ${\cali I}_{\pi_{\infty,0}(\nu)}$ generated by ${\cali I}$ in the local ring
$\an X{\pi_{\infty,0}(\nu)}$. The function
$F_{{\cali I}}: \nu \mapsto \ord {{\cali I},\nu }$ is semialgebraic by
\cite{DL2}(Theorem 2.1). As a matter of fact, for $m \in {\Bb N}$, we can define an
analogous function from $J_{m}(X)$ to $\{1,\ldots,m\}$ that we denote by the same symbol.
When ${\cali I}$ is the ideal of definition of a closed subset $Z$ of $X$,
we rather denote by $F_{Z}$ the function $F_{{\cali I}}$. In particular, if ${\cali I}$ is
the ideal of definition of a divisor $D$ of $X$, we denote by $F_{D}$ the function
$F_{{\cali I}}$.\\

In \cite{Mu}, M. Musta\c{t}\v{a} proves the following result, first conjectured by
David Eisenbud and Edward Frenkel:

\begin{theorem}[Musta\c{t}\v{a}]\label{tjm1} If $X$ is locally a complete intersection
variety, then $J_m(X)$ is irreducible of dimension $\dim X(m+1)$ for all $m \geq 1$ if
and only if $X$ has rational singularities.
\end{theorem}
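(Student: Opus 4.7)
The plan is to follow Musta\c{t}\v{a}'s approach, combining motivic integration on the arc space $J_\infty(X)$ with birational geometry via a resolution of singularities. The first observation, which gives the trivial lower bound, is that the smooth locus $X_{\reg}$ is open dense in $X$, and $J_m(X_{\reg})$ is an affine bundle of rank $m\dim X$ over $X_{\reg}$. Its Zariski closure in $J_m(X)$ is therefore an irreducible component of dimension exactly $(m+1)\dim X$. Consequently, the conclusion \textit{``$J_m(X)$ is irreducible of dimension $(m+1)\dim X$''} is equivalent to the dimension bound $\dim \pi_{m,0}^{-1}(X_{\mathrm{sing}}) < (m+1)\dim X$ for every $m\geq 1$, so the whole theorem reduces to estimating this singular fiber.

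Second, I would fix a log resolution $f\colon Y \to X$. Since $X$ is locally a complete intersection, the relative canonical divisor $K_{Y/X}$ is well defined with integer coefficients $a_i$ along the exceptional divisors $E_i$, and the Jacobian ideal $\mathrm{Jac}_f$ cuts out $\sum a_i E_i$. The key analytic tool is the Kontsevich--Denef--Loeser change of variables formula: for any cylinder $C\subset J_\infty(X)$,
$$\mu_X(C) = \int_{f_\infty^{-1}(C)} \Bb L^{-F_{\mathrm{Jac}_f}}\dd \mu_Y \mbox{ ,}$$
where $f_\infty\colon J_\infty(Y) \to J_\infty(X)$ is the map induced by $f$ and $\Bb L=[\Bb A^1]$. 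Stratifying $J_\infty(Y)$ by the multi-contact orders of arcs along the $E_i$, one can evaluate the right-hand side stratum by stratum. Pushing this analysis down from $J_\infty$ to the truncated jet scheme $J_m$, a careful bookkeeping yields an estimate of $\dim \pi_{m,0}^{-1}(X_{\mathrm{sing}})$ in terms of $m$ and $\min_i a_i$, such that the desired strict inequality holds for all $m\geq 1$ if and only if every $a_i>-1$.

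Finally, the passage from the combinatorial condition \emph{``$a_i>-1$ for all $i$''} to \emph{``$X$ has rational singularities''} is supplied by a theorem of Elkik: for a locally complete intersection, rational singularities is equivalent to log terminal singularities, i.e.\ to all discrepancies along a log resolution being strictly greater than $-1$. This pins down exactly the condition under which the dimension estimate above is strict for every $m$, proving both implications of the theorem. The main obstacle is the stratification and change-of-variables calculation in the second paragraph: one must arrange cylindrical approximations of $\pi_{m,0}^{-1}(X_{\mathrm{sing}})$ finely enough to extract a sharp dimension bound from the motivic measure, and simultaneously verify that the presence of a single discrepancy $a_i\leq -1$ really does force an extra-dimensional component in some $J_m(X)$. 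This delicate two-sided estimate, together with the measurability and finite additivity of the motivic measure on cylinders, is the technical heart of \cite{Mu}.
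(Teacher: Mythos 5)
The paper offers no proof of this statement: Theorem \ref{tjm1} is imported verbatim from \cite{Mu}, with attribution to Musta\c{t}\v{a}, so there is nothing internal to compare your argument against. (What the paper does later, in Section \ref{jm}, is to re-run pieces of Musta\c{t}\v{a}'s machinery --- the stratification of a log resolution by contact orders along the exceptional divisors and the resulting motivic dimension estimates --- but for its own auxiliary sets, not to reprove this theorem.)

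Judged as a summary of Musta\c{t}\v{a}'s proof, your outline has the right architecture: reduce irreducibility to the bound $\dim \pi_{m,0}^{-1}(X_{\mathrm{sing}})<(m+1)\dim X$, estimate that dimension by a motivic change of variables over a log resolution, and convert the resulting discrepancy condition into rational singularities via Elkik and Flenner. Three remarks. First, your reduction silently uses that \emph{every} irreducible component of $J_m(X)$ has dimension at least $(m+1)\dim X$; this follows because, locally, $J_m(X)$ is cut out of ${\Bb A}^{N(m+1)}$ by $r(m+1)$ equations, and it is the first place the lci hypothesis enters --- it should be stated. Second, your central claim that the Jacobian ideal of a resolution $f\colon Y\to X$ cuts out $K_{Y/X}=\sum a_iE_i$ is not correct when $X$ is singular: the order of that Jacobian is the Mather discrepancy, which differs from $K_{Y/X}$ by a correction term supported on the singular locus of $X$. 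Musta\c{t}\v{a} avoids this issue by integrating on a smooth ambient variety $M\supset X$ rather than on $X$ itself: he identifies $[J_m(X)]{\Bb L}^{-(m+1)N}$ with the motivic measure of the contact locus $\{\nu \mid F_X(\nu)\geq m+1\}$ in $J_\infty(M)$ and applies the change of variables to a log resolution of the pair $(M,X)$, comparing the pullback divisor $\sum a_iE_i$ of $X$ with the discrepancy $\sum b_iE_i$ of the ambient resolution; this is exactly the $b_i\geq ra_i$ framework the present paper borrows in Section \ref{jm}. Third, your sketch explicitly defers the entire quantitative content (the ``careful bookkeeping'' and the ``delicate two-sided estimate''), so as it stands it is an outline of \cite{Mu} rather than a proof; that is defensible here only because the paper itself treats the theorem as a citation.
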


\begin{remark}\label{rjm1} The nilpotent cone ${\goth N}_{{\goth g}}$ of the reductive Lie
algebra ${\goth g}$ is a complete intersection and by \cite{He}(Theorem A), it has
rational singularities. Likewise, by Corollary \ref{csu1}, the principal cone
${\goth X}_{{\goth g}}$ of ${\goth g}$ is a complete intersection and has
rational singularities. As a consequence, Theorem \ref{tjm1} can be applied to
${\goth N}_{{\goth g}}$ or ${\goth X}_{{\goth g}}$. In the appendix of \cite{Mu},
D. Eisenbud and E. Frenkel apply Theorem \ref{tjm1} to ${\goth N}_{{\goth g}}$ to extend
Kostant's results in the setting of jet schemes.
\end{remark}

\subsection{Motivic integration}\label{jm2} Recall now some facts about the theory of motivic
integration. The construction of motivic integrals for smooth
spaces is due to Kontsevich \cite{Kon} and was generalized by Denef and Loeser to
singular spaces in \cite{DL2} and \cite{DL1}. Further in the paper, we will only need of
the Hodge realization of motivic integrals on the arcs of a smooth variety. We refer to
\cite{Ba}, \cite{DL2} and \cite{Lo} for more explanations, definitions and proofs
(see also \cite{Cr} for an introduction).\\

Let ${\cali V}$ be the category of complex algebraic varieties. Denote by
K$_{0}({\cali V})$ the Grothendieck ring of ${\cali V}$ and denote by $[X]$ the class in
K$_{0}({\cali V})$ of an element $X$ in ${\cali V}$.  The map $X\mapsto [X]$ naturally
extends to the category of constructible subsets of algebraic varieties. Let ${\Bb L}$ be the class of
${\Bb A}^{1}$ in K$_{0}({\cali V})$ and let ${\bf M}$ be
the localization K$_{0}({\cali V})[{\Bb L}^{-1}]$. For $m$ in ${\Bb Z}$, we denote by $F^{m}{\bf M}$ the subgroup of
${\bf M}$ generated by the elements $[X]{\Bb L}^{-r}$, where $r-\dim X \geq m$.
Then we get a decreasing filtration of the ring ${\bf M}$ and we denote by
$\widehat{{\bf M}}$ its separated completion.\\

Let $X$ be an algebraic variety of pure dimension $d$. A subset $A$ in $J_{\infty}(X)$ is
called {\it a cylinder} if $A$ is a finite union of fibers of
$\pi_{\infty,m} : J_{\infty}(X) \longrightarrow J_{m}(X)$, for $m\in \mathbb{N}$.  If $A \in J_{\infty}(X)$ is a
cylinder, we say that $A$ is {\it stable at level
$m \in {\Bb N}$} if, for any
$n \geq m$, the map
$\pi_{\infty,n+1}(J_{\infty}(X)) \longrightarrow \pi_{\infty,n}(J_{\infty}(X))$ is a
piecewise trivial fibration over $\pi_{\infty,n}(A)$ with fiber ${\Bb A}^{d}$. When $X$
is smooth, any cylinder is stable and the later additional condition is superfluous.

\begin{proposition}[\cite{DL2}(Definition-Proposition 3.2)]\label{pjm2}
There is a well-defined subalgebra ${\bf B}_{X}$ of the Boolean algebra of subsets of
$J_{\infty }(X)$ which contains the cylinders and a map $\mu _{X}$ from ${\bf B}_{X}$ to
$\widehat{{\bf M}}$ satisfying the following properties:

{\rm 1)} if $A \in {\bf B}_{X}$ is stable at level $m$, then
$\mu _{X}(A)=[\pi_{\infty,m}(A)]{\Bb L}^{-(m+1)d}$,

{\rm 2)} if $Y$ is a closed subvariety in $X$ of dimension strictly smaller than
$d$, then for any $A$ in ${\bf B}_{X}$, contained in $J_{\infty }(Y)$,
$\mu _{X}(A)=0$,

{\rm 3)} let $\poi A1{,}{2}{}{}{},\ldots$ be a sequence of elements in
${\bf B}_{X}$ whose union $A$ is in ${\bf B}_{X}$, then the sequence
$\poi A1{,}{2}{\mu }{X}{X},\ldots$ converges to $\mu _{X}(A)$ in $\widehat{{\bf M}}$,

{\rm 4)} if $A$ and $B$ are in ${\bf B}_{X}$, $A$ is contained in $B$ and
$\mu _{X}(B)$ belongs to the closure of $F^{m}{\bf M}$ in $\widehat{{\bf M}}$, then
$\mu _{X}(A)$ belongs to the closure of $F^{m}{\bf M}$ in $\widehat{{\bf M}}$.
\end{proposition}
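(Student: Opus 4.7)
My plan is to essentially reproduce the Denef--Loeser construction, since the result is cited from \cite{DL2}. The starting point is the class of stable cylinders: on such a subset $A$, stable at level $m$, I would set
$$\mu_X(A) \ := \ [\pi_{\infty,m}(A)]\, \mathbb{L}^{-(m+1)d}.$$
To show this is well defined I would check that the stability hypothesis implies $[\pi_{\infty,n+1}(A)] = \mathbb{L}^d \, [\pi_{\infty,n}(A)]$ in $\mathrm{K}_0(\mathcal{V})$ for every $n \geq m$, which follows from the fact that a piecewise trivial $\mathbb{A}^d$-fibration multiplies classes by $\mathbb{L}^d$. So the value is independent of the stabilization level, and routine additivity and inclusion--exclusion arguments show that the stable cylinders form a Boolean subalgebra of $\mathcal{P}(J_\infty(X))$ on which $\mu_X$ is finitely additive.

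Next I would enlarge the class of measurable sets. Define $\mathbf{B}_X$ to consist of those subsets $A \subset J_\infty(X)$ for which, for every $m \in \mathbb{N}$, there exists a stable cylinder $A_m$ with $A \triangle A_m$ contained in some stable cylinder $C_m$ whose measure $\mu_X(C_m)$ lies in the closure of $F^m \mathbf{M}$ in $\widehat{\mathbf{M}}$. Then $\mu_X(A_m)$ is Cauchy in $\widehat{\mathbf{M}}$, and I would define $\mu_X(A)$ as its limit. Finite additivity and independence of the approximating sequence follow from the filtration property of $\widehat{\mathbf{M}}$. This construction automatically gives (1) (stable cylinders already converge), and (4), since an inclusion $A \subset B$ transports the approximation and control on filtration classes.

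For property (2), the key input is that for a closed subvariety $Y \subset X$ of dimension $< d$, Greenberg's theorem bounds the dimensions of $\pi_{m}(J_\infty(Y))$ by a function growing slower than $(m+1)d$, so the contributions $[\pi_{\infty,m}(A)]\mathbb{L}^{-(m+1)d}$ already lie arbitrarily deep in the filtration $F^\bullet \mathbf{M}$ and vanish in the limit. Property (3) follows by a diagonal argument: approximate each $A_i$ at level sufficient to force the symmetric differences into $F^m$ and sum; the geometric series structure of $\widehat{\mathbf{M}}$ guarantees convergence of $\sum \mu_X(A_i)$ to $\mu_X(A)$.

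The hardest step is the passage from the smooth case to the singular case hidden in defining \emph{stable} cylinders and proving the piecewise trivial fibration property when $X$ is singular. When $X$ is smooth every cylinder is stable, and the fibers of $\pi_{n+1,n}$ are literally affine spaces of dimension $d$. In the singular case one must either work only over the smooth locus and handle the singular stratum by the dimension-bound argument used for (2), or appeal to a resolution $f: \tilde X \to X$ and transfer the measure back via a change-of-variables formula. I would follow the former approach here, since the proposition only asserts the existence of $\mu_X$ on an algebra \emph{containing} the stable cylinders, not on all cylinders, so the singular locus can be absorbed into the error term.
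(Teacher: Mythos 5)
The paper offers no proof of this proposition: it is quoted verbatim from Denef--Loeser \cite{DL2}(Definition-Proposition 3.2), and your proposal is a recognizable sketch of exactly that construction (measure on stable cylinders via the fibration property, completion by approximation in the filtration $F^{\bullet}{\bf M}$, Greenberg-type dimension bounds for thin sets, limits for countable unions). In that sense you are on the same route as the source.

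One point is genuinely off, though. You write that ``the proposition only asserts the existence of $\mu_X$ on an algebra containing the \emph{stable} cylinders, not on all cylinders, so the singular locus can be absorbed into the error term.'' The statement actually requires ${\bf B}_{X}$ to contain \emph{all} cylinders, and for singular $X$ (which is the relevant case here: ${\goth N}_{{\goth g}}$ and ${\goth X}_{{\goth g}}$ are singular) non-stable cylinders do occur. So you cannot discharge this by weakening the conclusion; you must prove measurability of an arbitrary cylinder $A$. The standard way, which is compatible with the rest of your construction, is to stratify $A$ by the order of vanishing of the Jacobian ideal (or of the ideal of the singular locus), observe that each stratum with bounded contact order is a stable cylinder, and use the dimension estimate you already invoke for property (2) to show that the high-contact tail has measure arbitrarily deep in $F^{m}{\bf M}$; then $A$ is measurable in your sense. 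Relatedly, your treatment of property (3) is under-argued: as stated, (3) concerns the convergence of the sequence $\mu_X(A_1),\mu_X(A_2),\ldots$ to $\mu_X(A)$ (an increasing-union statement), not the convergence of the series $\sum_i \mu_X(A_i)$, and in either form one needs the quantitative control that $\mu_X(A\setminus(A_1\cup\cdots\cup A_n))$ tends to $0$ in the filtration topology, which does not follow from ``the geometric series structure'' alone but from the same approximation mechanism as above. With those two repairs the argument is the Denef--Loeser proof.
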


For $A$ in  $\mathbf{B}_X$ and $\psi : A \longrightarrow {\Bb Z} \cup \{\infty\}$ a
function such that $\psi^{-1}(s) \in \mathbf{B}_X$ for any
$s \in {\Bb Z} \cup \{\infty\} $ and $\mu_X(\psi^{-1}(\infty))=0$, we can set
\begin{eqnarray}\label{eint}
\int\limits_{A}  {\Bb L}^{-\psi}\, \textrm{d}\mu_X := \sum\limits_{ s \in {\Bb Z}}
\mu_X(\psi^{-1}(s)) {\Bb L}^{-s} \mbox{ }
\end{eqnarray}
in $\widehat{{\bf M}}$, whenever the right hand side converges in $\widehat{{\bf M}}$. In
this case, we say that {\it ${\Bb L}^{-\psi}$ is integrable on $A$}.\\

Recall that the dimension of a non irreducible variety is the maximal dimension
of its irreducible components. Let $u$ and $v$ be two indeterminate variables. For any
smooth projective variety $X$, the {\it Hodge-Deligne polynomial} of $X$ is the element of
${\Bb Z}[u,v]$
\begin{eqnarray*}
h(X) := \sum_{k\in {\Bb N}}\sum_{(p,q)\in {\Bb N}^{2}\atop{p+q=k}}
(-1)^{k} h_{p,q}({\rm H}^{k}_{c}(X;{\Bb C})) u^{p}v^{q} \mbox{ , }
\end{eqnarray*}
where $h_{p,q}({\rm H}^{k}_{c}(X;{\Bb C}))$ are the Hodge-Deligne numbers of $X$. The
map $h$ factors through the ring K$_{0}({\cali V})$. So we have a morphism $h$ from
K$_{0}({\cali V})$ to ${\Bb Z}[u,v]$ such that $h(X)$ is equal to
$h([X])$ for any smooth projective variety. In particular, $h({\Bb L})$ is equal to
$uv$. What is important for us is that $h([X])$ is a polynomial whose highest degree term
is $c(uv)^{\dim X}$, where $c$ is the number of irreducible components of $X$ of
dimension $\dim X$. By continuity, the morphism $h$ uniquely  extends to a morphism from
$\widehat{{\bf M}}$ to the ring ${\Bb Z}[u,v][[u^{-1},v^{-1}]]$. The compound map
$h \circ \mu_X$, which is now well-defined, is {\it the Hodge realization of the motivic
measure}.

\subsection{Some technical results} \label{jm3}
Let $V$ be a finite dimensional vector space. We study in this
subsection various properties of specific subsets in $J_{\infty}(V)$.

For $(x,y)$ in $V\times V$, we denote by
$\nu _{x,y}$ the arc $t\mapsto x+ty$ of $J_{\infty}(V)$. Let $m$ be in ${\Bb N}^*$.  We
denote by $\nu _{x,y,m}$ the image of $\nu_{x,y}$ by the canonical projection
$\pi_{\infty,m}^{V}$ from $J_{\infty}(V)$ to $J_{m}(V)$. Let ${\bf K}$ be a connected
closed subgroup of ${\rm GL}(V)$. The
${\bf K}$-action on $V$ extends to a ${\bf K}$-action on $J_m(V)$ which is compatible
with the canonical projections $\pi_{\infty,m}^{V}$. Let $X$ be a ${\bf K}$-invariant
irreducible closed cone in $V$. We suppose that $X$ is a complete intersection in $V$
with rational singularities and we suppose that $X$ is a finite union of
${\bf K}$-orbits. Let $N$ be the dimension of $X$ and let $r$ be the codimension of $X$
in $V$.

We use now techniques developed in \cite{Mu}(Theorem 3.2) in order to prove Proposition
\ref{pjm3}. We denote by $Z$ the union of ${\bf K}$-orbits in $X$ which are not of
maximal dimension. Let $B_{X}$ be the blowing up of $V$ whose center is $X$ and let
$$\tau \ : \ B_{X} \longrightarrow V$$
be the morphism of the blowing-up. As $X$ is a complete intersection in $V$,
$\tau ^{-1}(X)$ is an integral divisor on $B_{X}$ and is locally a complete intersection.
Moreover, there exists a regular action of ${\bf K}$ on $B_{X}$ for which $\tau $ is a
${\bf K}$-equivariant morphism. By the theorem of embedded desingularization of Hironaka
\cite{Hi}, there exists a desingularization $(Y,\tilde{\tau })$ of $B_{X}$ such that
$(\tau \rond \tilde{\tau })^{-1}(X)$ is a divisor with normal crossings. Moreover, we can
find $(Y,\tilde{\tau })$ such that there exists a regular action of ${\bf K}$ on $Y$ for
which $\tilde{\tau }$ is an equivariant morphism, and
$(\tau \rond \tilde{\tau })^{-1}(X)$ is a ${\bf K}$-invariant divisor. Set
$\gamma :=\tau \rond \tilde{\tau }$. Denoting by
$\poi E1{,\ldots,}{t}{}{}{}$ the irreducible
components of $\gamma ^{-1}(X)$, we can assume that the following conditions are
fulfilled:
\begin{list}{}{}
\item a) $E_{1}$ is the only prime divisor dominating $X$,
\item b) the divisor $\gamma ^{-1}(X)$ is equal to $\sum_{i=1}^{t} a_{i}E_{i}$,
\item c) the discrepancy $W$ of $\gamma $ is equal to $\sum_{i=1}^{t} b_{i}E_{i}$,
\item d) $a_{1}$ is equal to $1$ and $b_{1}+1$ is equal to $r$,
\item e) $\gamma ^{-1}(Z)$ is contained in the union of $\poi E2{,\ldots,}{t}{}{}{}$,
\end{list}
since $\tau ^{-1}(X)$ is an integral divisor on $B_{X}$ and ${\bf K}$ has finitely many
orbits in $X$. By Condition (b), $\poi E1{,\ldots,}{t}{}{}{}$ are ${\bf K}$-invariant.
Moreover,  by Condition (e), $Z$ is the image by $\gamma $ of the union of
$\poi E2{,\ldots,}{t}{}{}{}$. So there exist $\poi c2{,\ldots,}{t}{}{}{} \in {\Bb N}$,
such that $\gamma ^{-1}(Z)=\sum_{i=2}^{t}c_{i}E_{i}$. For
$m\in {\Bb N}\cup \{\infty \}$, let
$$\gamma _{m} \ : \ J_{m}(Y) \longrightarrow J_{m}(V) $$
be the morphism induced by $\gamma $. In the remainder of
this subsection, for $m\in {\Bb N}\cup \{\infty \}$ and
$n\in {\Bb N}$ with $m \geq n$, $\pi_{m,n}$ refers to the canonical morphism
$J_m(Y) \longrightarrow J_n(Y)$.\\

As $X$ has rational singularities, the canonical injection from its canonical module to
its dualizing module is an isomorphism by \cite{Fl}(Satz 1.1). But its canonical module
is locally free of rank $1$ since $X$ is Gorenstein as a complete intersection in
$V$ \cite{Br}(Theorem 3.3.7 and Proposition 3.1.20). So $X$ has canonical
singularities. Moreover, by \cite{Mu}(Theorem 2.1), $b_{i} \geq ra_{i}$ for
$i=1,\ldots,t$. Then by the above Condition (d), we have $b_i \geq (b_{1}+1)a_i$, for
$i=2,\ldots,t$. For $k \in {\Bb N}$, for $J$ a  nonempty subset of $\{1,\ldots,t\}$ and
for $J'$ a nonempty subset of $J$, we denote by $\psi _{k,J,J'}$ the affine functional on
${\Bb Q}^{\vert J \vert}$
\begin{eqnarray*}
(\alpha _{i},i\in J)&\longmapsto &(N-\vert J' \vert)(m+1)+
\sum_{i\in J} (b_{i}+1)\alpha _{i} - \sum_{i\in J\backslash J'} \alpha _{i}
+ k\sum_{i \in J} c_{i}\alpha _{i} \mbox{ . }
\end{eqnarray*}
For $p \in {\Bb N}^*$, we set
\begin{eqnarray*}
A_{J,J',p} & \ := \ \{(\alpha _{i},i\in J)\in {\Bb N}^{\vert J \vert} \ \vert \ &
\sum_{i\in J}a_{i}\alpha _{i} \geq p \mbox{ and } \alpha _{i}\geq m+1 \mbox{ , } i \in J'
\\ &  & \mbox{ and } 1\leq \alpha _{i} \leq m \mbox{ , }i \in J\backslash J'\} \mbox{  }
\end{eqnarray*}
and we denote by $\nu _{k,J,J',p}$ the minimal value of $\psi _{k,J,J'}$ on
$A_{J,J',p}$. The two assertions of Lemma \ref{ljm3} are straightforward from the
 definitions and the preceding inequalities: $b_i \geq (b_{1}+1)a_i$, for $i=2,\ldots,t$.
Their verifications are left to the reader.

\begin{lemma}\label{ljm3} For $k\in {\Bb N}^*$, $p \geq m+1$,
$J \subseteq \{1,\ldots,t\}$ and $J'$ a nonempty subset of $J$, we have:

{\rm i)} $\nu _{0,J,J',p}=p(b_{1}+1)+(N-1)(m+1)=\nu _{k,J,J',p}$.

{\rm ii)} $\nu _{k,\{1\},\{1\},p} \leq \nu _{k,J,J',p}$.
\end{lemma}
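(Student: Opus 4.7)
My plan is to reduce both assertions to elementary optimization, leveraging the structural inputs: $a_1 = 1$, $b_1+1 = r$ (condition (d)), the canonical-singularities estimate $b_i \geq (b_1+1)a_i$ for $i \geq 2$, and $c_i \geq 0$.

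I would begin with the distinguished case $J = J' = \{1\}$ that appears in both assertions. The feasible set is $\{\alpha_1 \in {\Bb N} \ \vert \ \alpha_1 \geq p\}$ (since $p \geq m+1$), and the objective $\psi_{k,\{1\},\{1\}}(\alpha_1) = (N-1)(m+1) + (b_1+1)\alpha_1$ carries no $c$-term (the $c_i$ run only over $i \geq 2$), hence is independent of $k$. Its minimum, $(b_1+1)p + (N-1)(m+1)$, is attained at $\alpha_1 = p$. This settles both equalities of (i) in the case $J=J'=\{1\}$ and evaluates $\nu_{k,\{1\},\{1\},p}$.

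For the general lower bound of (ii), I would prove $\psi_{k,J,J'}(\alpha) \geq (b_1+1)p + (N-1)(m+1)$ for every $\alpha \in A_{J,J',p}$. Since $k\sum c_i\alpha_i \geq 0$, it suffices to bound $\psi_0 := \psi_{0,J,J'}$. The key step is the algebraic rewriting
$$\psi_0(\alpha) = (N-|J'|)(m+1) + (b_1+1)\sum_{i\in J}a_i\alpha_i + \sum_{i \in J'\setminus\{1\}}\bigl[(b_i+1) - (b_1+1)a_i\bigr]\alpha_i + \sum_{i \in J\setminus J',\, i\geq 2}\bigl[b_i - (b_1+1)a_i\bigr]\alpha_i - \varepsilon\,\alpha_1,$$
where $\varepsilon = 1$ if $1 \in J \setminus J'$ and $0$ otherwise. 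The brackets are nonnegative by the canonical-singularities estimate, and are in fact $\geq 1$ for $i \in J'\setminus\{1\}$; combined with $\alpha_i \geq m+1$ for $i \in J'$ and $\sum a_i\alpha_i \geq p$, a three-case split on whether $1 \in J'$, $1 \in J \setminus J'$, or $1 \notin J$ yields $\psi_0(\alpha) \geq (N-1)(m+1) + (b_1+1)p$ in all cases. The equality in (i) for general $J,J'$ then follows from (ii) by exhibiting a matching feasible upper bound, namely $\alpha_1 = p$ together with the minimal admissible values of the remaining $\alpha_i$; the upper and lower bounds match thanks to the equality cases in $b_i \geq (b_1+1)a_i$ encountered at that point.

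The main subtlety is the case $1 \in J \setminus J'$: there $\alpha_1$ enters with effective coefficient $b_1 < b_1+1$, leaving a deficit $-\varepsilon\,\alpha_1 \geq -m$; this deficit is absorbed by the guaranteed $(m+1)$-contribution from any $i \in J'$ (nonempty by hypothesis), so the inequality still closes. The rest is careful bookkeeping in the three-case analysis.
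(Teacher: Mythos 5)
Your reduction is the right one. The decomposition
$$\sum_{i\in J}(b_i+1)\alpha_i \ = \ (b_1+1)\sum_{i\in J}a_i\alpha_i + \sum_{i\in J,\ i\geq 2}\bigl[(b_i+1)-(b_1+1)a_i\bigr]\alpha_i$$
(using $a_1=1$), followed by the three-case split on the position of $1$ relative to $J$ and $J'$, is precisely the "straightforward verification from $b_i\geq(b_1+1)a_i$" that the paper leaves to the reader. Your evaluation $\nu_{k,\{1\},\{1\},p}=(b_1+1)p+(N-1)(m+1)$ (with $c_1=0$, so no $k$-dependence) and your lower bound $\psi_{k,J,J'}\geq(b_1+1)p+(N-1)(m+1)$ on all of $A_{J,J',p}$ are both correct, and together they are exactly what the proof of Proposition \ref{pjm3} uses.

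The last step of your argument, however, is wrong: you cannot upgrade the lower bound of (ii) to the equality of (i) for arbitrary $J,J'$ by "exhibiting a matching feasible upper bound", because no such feasible point exists in general. Take $J=J'=\{2\}$: every $\alpha_2\in A_{\{2\},\{2\},p}$ satisfies $a_2\alpha_2\geq p$, and since $b_2+1\geq(b_1+1)a_2+1$ one gets $\psi_{0,\{2\},\{2\}}(\alpha_2)\geq(N-1)(m+1)+(b_1+1)p+p/a_2$, strictly larger than the claimed common value. If $1\notin J$ there is no $\alpha_1$ to set equal to $p$ in the first place, and even when $1\in J$ the remaining variables contribute strictly positively unless the inequalities $b_i\geq(b_1+1)a_i$ happen to be equalities, which nothing guarantees. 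So assertion (i) is true only as the computation of $\nu_{0,\{1\},\{1\},p}=\nu_{k,\{1\},\{1\},p}$ — the sole case invoked later in the paper — and the correct move is to record that restriction (the general statement is a misprint) rather than to attempt a proof of the general equality; your final sentence should be deleted, not repaired.
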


For $J \subseteq \{1,\ldots,t\}$, we set:
$$E_{J}^{0} := \cap _{i\in J} E_{i} \backslash \cup _{i\not \in J} E_{i} \mbox{ . }$$
Let $m$ be in ${\Bb N}^*$ and let $S$ be a nonempty constructible subset of $J_m(V)$.
For $\alpha=(\alpha _{i})_{i\in J} \in ({\Bb N}^*)^{\vert J \vert}$, we denote by
$J_{\alpha }$ the subset of elements $i \in J$ such that $\alpha _{i} \geq m+1$ and we
set:
$$T_{\alpha } \ := \
\{\nu \in \pi_{m,0}^{-1}(E_{J}^{0})\cap \gamma _{m}^{-1}(S)  \
\vert \ F_{E_{i}}(\nu ) = \alpha _{i} \mbox{ , } i \in J\backslash J_{\alpha }\}
\mbox{ . }$$
For $q \in {\Bb N} \cup \{\infty\}$, with $q \geq m+1$, we set:
\begin{eqnarray*}
S^{q}_{\alpha} & := & \{ \nu \in \pi _{q ,0}^{-1}(E_{J}^{0})\cap
(\gamma _{m}\rond \pi_{q,m})^{-1}(S)
\nonumber \ \vert \ F_{E_{i}}(\nu ) = \alpha _{i} \mbox{ , } i\in J\} \mbox{ . }
\end{eqnarray*}

\begin{lemma}\label{l2jm3}  Fix $J \subseteq \{1,\ldots,t\}$, $q \geq m+1$ and
$\alpha=(\alpha _{i})_{i\in J} \in ({\Bb N}^*)^{\vert J \vert}$. Suppose that
$S^{q}_{\alpha}$ is a nonempty set. Then $S^{q}_{\alpha}$ is a locally trivial fibration
over $T_{\alpha }$ whose fiber is isomorphic to
$${\Bb C}^{(q-m)(N-\vert J_{\alpha} \vert)}\times
({\Bb C}^{*})^{\vert J_{\alpha} \vert}
\times {\Bb C}^{q\vert J_{\alpha} \vert-\sum_{i\in J_{\alpha}} \alpha _{i}} \mbox{ . }$$
\end{lemma}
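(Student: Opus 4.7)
The plan is to show that the truncation morphism $\pi_{q,m}: J_q(Y) \to J_m(Y)$, restricted to $S^q_\alpha$, realizes the claimed fibration onto $T_\alpha$. First I would check well-definedness of this restriction: for $\nu \in S^q_\alpha$ and $i \in J \setminus J_\alpha$ (so $\alpha_i \leq m$), the equality $F_{E_i}(\pi_{q,m}(\nu)) = \alpha_i$ holds because $F_{E_i}$ at a jet is determined by the coefficients up to level $\alpha_i$; the memberships $\pi_{q,0}(\nu) \in E_J^0$ and $\gamma_m(\pi_{q,m}(\nu)) \in S$ are built into the definition of $S^q_\alpha$; hence $\pi_{q,m}(\nu) \in T_\alpha$.

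Next, I would argue locally on $Y$. Since $\bigcup_{i=1}^t E_i$ is a divisor with normal crossings, around any point $y \in E_J^0$ there exist an open neighborhood $U$ and regular coordinates $x_1, \ldots, x_N$ on $U$ such that $E_i \cap U = \{x_i = 0\}$ for each $i \in J$. A $q$-jet $\nu$ based in $U$ is then described by expansions
\[
x_i \circ \nu \ = \ \sum_{k=0}^{q} a_{i,k}\, t^k, \qquad i = 1, \ldots, N,
\]
the base point lies in $E_J^0$ exactly when $a_{i,0} = 0$ for $i \in J$ and $a_{i,0} \neq 0$ for $i \notin J$, and for $i \in J$ one has $F_{E_i}(\nu) = \min\{k : a_{i,k} \neq 0\}$. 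Under $\pi_{q,m}$, the coefficients $a_{i,k}$ with $k \leq m$ are recorded in the target $m$-jet, while those with $m < k \leq q$ parametrize the fiber.

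I would then compute the fiber of $\pi_{q,m}$ over a point $\nu_0 \in T_\alpha \cap \pi_{m,0}^{-1}(U)$ by imposing the prescribed orders on the new coefficients $(a_{i,k})_{m < k \leq q}$. For $i \notin J$ there is no constraint, contributing $\Bb C^{q-m}$. For $i \in J \setminus J_\alpha$, the $m$-jet $\nu_0$ already achieves $F_{E_i}(\nu_0) = \alpha_i \leq m$, so the higher coefficients remain free, again contributing $\Bb C^{q-m}$. For $i \in J_\alpha$, $\nu_0$ has $a_{i,0} = \cdots = a_{i,m} = 0$, so one must impose $a_{i,m+1} = \cdots = a_{i,\alpha_i - 1} = 0$, pick $a_{i,\alpha_i} \in \Bb C^*$, and leave $a_{i,\alpha_i+1}, \ldots, a_{i,q}$ free, giving $\Bb C^* \times \Bb C^{q - \alpha_i}$ (nonemptiness of $S^q_\alpha$ forces $\alpha_i \leq q$). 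Multiplying these contributions gives exactly
\[
\Bb C^{(q-m)(N - \vert J_\alpha \vert)} \times (\Bb C^*)^{\vert J_\alpha \vert} \times \Bb C^{q \vert J_\alpha \vert - \sum_{i \in J_\alpha} \alpha_i}.
\]
Surjectivity of $\pi_{q,m}$ onto $T_\alpha$ is immediate, since such a choice of higher coefficients always produces a point of $S^q_\alpha$: the membership $(\gamma_m \circ \pi_{q,m})(\nu) = \gamma_m(\nu_0) \in S$ is inherited from $\nu_0 \in T_\alpha$.

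Finally I would patch the local pictures. Covering $T_\alpha$ by cylinders $\pi_{m,0}^{-1}(U)$ attached to coordinate neighborhoods $U \subset Y$ adapted to the normal crossings divisor, the coordinate description above provides local trivializations of $\pi_{q,m}$. These patch because both $F_{E_i}$ and $\pi_{q,m}$ are intrinsic and independent of the choice of coordinates defining the $E_i$. The only real work is the three-case bookkeeping for $i \in J_\alpha$, $i \in J \setminus J_\alpha$, and $i \notin J$; once it is carried out, the remaining steps are routine linear algebra on the extra coefficients and a standard patching argument.
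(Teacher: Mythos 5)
Your proof is correct and follows essentially the same route as the paper's: both pass to normal-crossing coordinate charts adapted to $\bigcup_{i} E_{i}$, describe $q$-jets by their Taylor coefficients, and count the free coefficients in the three cases $i \in J_{\alpha}$, $i \in J \setminus J_{\alpha}$, $i \notin J$ to identify the fiber of $\pi_{q,m}$ over $T_{\alpha}$. Your version is a bit more explicit about which map realizes the fibration and about the case bookkeeping; note only that your assertion that $a_{i,0}=\cdots=a_{i,m}=0$ for $i\in J_{\alpha}$ at points of $T_{\alpha}$ rests on the same implicit convention for $F_{E_{i}}$ at level $m$ (order $\geq m+1$ meaning the truncated local equation vanishes identically) that the paper's own proof uses.
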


\begin{proof} We follow the proof of \cite{Cr}(Proposition 2.5). Since the divisor
$\gamma^{-1}(X)=\sum\limits_{i=1}^{t} a_i E_i$ on $Y$ has only simple normal crossing,
for any $y$ in $Y$, there exists a neighborhood $U$ of $y$ in $Y$ with global coordinates
$\poi z1{,\ldots,}{N}{}{}{}$ on $U$ for which a local defining equation for
$\gamma^{-1}(X)$ is given by
\begin{eqnarray}\label{chart}
g=z_{1}^{a_1} \cdots z_{j_y}^{a_{j_y}}\mbox{ , }
\end{eqnarray}
for some $j_{y} \leq N$. We cover $Y=\bigcup U$ by finitely many charts on which
$\gamma^{-1}(X)$ has a local equation of the form (\ref{chart}), and we lift to cover
$J_{q}(Y)=\bigcup \pi_{q,0}^{-1}(U)$. Hence $S^{q}_{\alpha}$ is covered by
the subsets
\begin{eqnarray*}
U_{\alpha}^{q} &:=& \bigcap\limits_{i \in J} F_{E_i}^{-1}(\alpha_i) \cap
\pi_{q,0}^{-1}(U) \cap  \pi_{q,m}^{-1}( \gamma _{m}^{-1}(S)) \mbox{ . }
\end{eqnarray*}
As $\alpha_i \geq 1$, the subset $U_{\alpha}^{q}$ is contained in
$\pi_{q,0}^{-1}(E_{J}^{0})$. Thus, the subsets $U_{\alpha}^{q}$ are open subsets of
$S^{q}_{\alpha}$. Let $U_{\alpha}^{q}$ be such an open subset of
$S_{\alpha}^{q}$ which is nonempty.  If $J$ is not contained in
$\{1,\ldots,j_y\}$ then $E_{J}^{0} \cap U$ is empty, and so $U_{\alpha}^{q}$. So
$J$ is contained in $\{1,\ldots,j_y\}$.

For $\nu \in \pi_{q,0}^{-1}(U)$, we can view $\nu$ as an $N$-tuple
$(f_1(z),\ldots,f_N(z))$ of polynomials of degree at most $q$ with zero constant
term. We continue to argue as in the proof of \cite{Cr}(Proposition 2.5). We see that
$\nu \in F_{E_{i}}^{-1}(\alpha_i)$ if and only if the truncation of $f_i(z)$ to degree
$\alpha _{i}$ is of the form $c_{\alpha _{i}}z^{\alpha _{i}}$ where $c_{\alpha  _{i}}$
is different from $0$. Then we obtain $N-\vert J \vert$ polynomials of degree $q$ with
zero constant term, and, for each $j \in J$, a polynomial of the form
$$f_j(z)=0+\cdots+0 + c_{\alpha _{j}}z^{\alpha _{j}} +
c_{\alpha _{j+1}}z^{\alpha _{j+1}} +  \cdots + c_{\alpha _{q}}z^{\alpha _{q}} \mbox{ , }$$
with $c_{\alpha _{j}} \in {\Bb C}^*$ and $c_{k} \in {\Bb C}$, for $k > j$. So, when we
cut the $m$ first terms of each polynomial $f_{j}(z)$, the space of all such $N$-tuples
so obtained is isomorphic to
$$ {\Bb C}^{(q-m)(N-\vert J_{\alpha} \vert)}\times
({\Bb C}^{*})^{\vert J_{\alpha} \vert}
\times {\Bb C}^{q\vert J_{\alpha} \vert-\sum_{i\in J_{\alpha}} \alpha _{i}} \mbox{ . }$$
As a consequence,  $U_{\alpha}^{q}$ is isomorphic to
\begin{eqnarray*}
(\pi _{m,0}^{-1}(U)\cap T_{\alpha})  \times \ {\Bb C}^{(q-m)(N-\vert J_{\alpha} \vert)}
\ \times \ ({\Bb C}^{*})^{\vert J_{\alpha} \vert}
\times {\Bb C}^{q\vert J_{\alpha} \vert-\sum_{i\in J_{\alpha}} \alpha _{i}} \mbox{ , }
\end{eqnarray*}
whence the lemma.
\end{proof}

Let $S$ be a constructible subset of $J_{m}(V)$. For $p \geq m+1$, we
set
\begin{eqnarray*}
S_{\infty,m,p}&:=&\{ \ \nu \in (\pi_{\infty,m}^{V})^{-1}(S) \ \vert \ F_{X}(\nu) \geq p
\ \} \mbox{ . }\end{eqnarray*}
Then, for $k >0$, we set
\begin{eqnarray*}
{\bf I} _{m,p,k} &:= &h\left( \ \int_{S_{\infty ,m,p}} {\Bb L}^{-kF_{Z}(\nu )}
\dd \mu _{V}(\nu ) \ \right)
\mbox{ . }\end{eqnarray*}

\begin{proposition}\label{pjm3} Let $m,k$ be in ${\Bb N}^*$ and let $S$ be a constructible
subset of $J_{m}(V)$. We suppose that the image by $\pi_{m,0}^{V}$ of $S$ is dense in $X$
and we suppose that $S$ is ${\bf K}$-invariant.

{\rm i)} We suppose that the image by $\pi _{m,0}^{V}$ of any irreducible component of
maximal dimension of $S$ is dense in $X$. Then the highest degree terms of
${\bf I} _{m,p,k}$ does not depend on $k$ for $p$ big enough.

{\rm ii)} We suppose that the highest
degree terms of ${\bf I} _{m,p,k}$ does not depend on $k$ for $p$ big enough. Then, for
$p$ big enough, the highest degree terms of \sloppy
\hbox{$h\rond \mu_V(S _{\infty,m,p})$} only depend on the irreducible components of $S$
whose image by $\pi_{m,0}^{V}$ is dense in $X$.
\end{proposition}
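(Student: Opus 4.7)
The strategy is to transfer the motivic integral to the log resolution $\gamma\colon Y \to V$ via the Kontsevich--Denef--Loeser change-of-variables formula, expand the result as a sum indexed by the contact profiles of arcs with the divisors $E_i$, and read off the top Hodge degrees using Lemma~\ref{ljm3} combined with Lemma~\ref{l2jm3}. The main obstacle is the careful derivation of the expansion: one must combine the contributions from $h([T_\alpha])$ (supplied by Lemma~\ref{l2jm3}), the Jacobian offset $F_W = \sum_i b_i F_{E_i}$, the weight $F_{\gamma^{-1}(Z)} = \sum_{i \geq 2} c_i F_{E_i}$, and the appropriate dimensional normalizations, so that the resulting exponent in $uv$ is precisely $\psi_{k,J,J'}(\alpha)$.

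Concretely, the change-of-variables formula yields
$$\int_{S_{\infty,m,p}} \mathbb{L}^{-kF_Z}\, d\mu_V \;=\; \int_{\gamma_\infty^{-1}(S_{\infty,m,p})} \mathbb{L}^{-k\sum_{i\geq 2} c_i F_{E_i}\,-\,\sum_i b_i F_{E_i}}\, d\mu_Y,$$
the preimage being cut out by $\pi^V_{\infty,m}\gamma_\infty(\tilde\nu) \in S$ and $\sum_i a_i F_{E_i}(\tilde\nu) \geq p$. Stratifying this preimage by the profile $(J, J', \alpha)$, with $J = \{i : F_{E_i}(\tilde\nu) \geq 1\}$, $J' = \{i \in J : F_{E_i}(\tilde\nu) \geq m+1\}$, and $(\alpha_i)_{i \in J\setminus J'}$ the exact orders of contact at the remaining divisors, Lemma~\ref{l2jm3} computes the Hodge realization of each stratum's measure. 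Assembling the pieces produces an expansion of the form
$$\mathbf{I}_{m,p,k} \;=\; \sum_{J,\,J'\neq\emptyset}\ \sum_{\alpha \in A_{J,J',p}} h([T_\alpha])\,(uv-1)^{|J'|}\,(uv)^{-\psi_{k,J,J'}(\alpha)}.$$

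For assertion (i), Lemma~\ref{ljm3}(i) asserts that $\min_{A_{J,J',p}} \psi_{k,J,J'} = p(b_1+1)+(N-1)(m+1)$ is independent of $k$. Since $\psi_k = \psi_0 + k\sum_i c_i\alpha_i$ with $c_1 = 0$ and all $c_i \geq 0$, a minimizer for $k > 0$ must annihilate $\sum_i c_i\alpha_i$; by Lemma~\ref{ljm3}(ii) the stratum $(J,J') = (\{1\},\{1\})$ at $\alpha^* = (p)$ achieves the minimum for every $k \geq 0$ and contributes the class $h([T_{(p)}])$. Under the hypothesis that every maximal-dimensional irreducible component of $S$ has dense image in $X$, this class realizes the maximal Hodge degree; any other minimizing stratum has $J \cap \{2,\ldots,t\} \neq \emptyset$, so its $T_\alpha$ maps into a proper closed subset of $X$, has strictly smaller dimension, and contributes a strictly lower Hodge degree. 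Thus, for $p$ large enough that all subdominant strata are pushed strictly below the top degree, the top-degree coefficient of $h(\mathbf{I}_{m,p,k})$ is $k$-independent.

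For assertion (ii), any summand in the expansion with $\sum_i c_i\alpha_i > 0$ carries the factor $(uv)^{-k\sum_i c_i\alpha_i}$ and therefore cannot reach the top Hodge degree once $k$-independence is assumed. Because only $E_1$ dominates $X$, one has $\gamma(E_i) \subset Z$ and hence $c_i \geq 1$ for every $i \geq 2$, so any stratum with $J \cap \{2,\ldots,t\} \neq \emptyset$ satisfies $\sum_i c_i\alpha_i > 0$ and is strictly subdominant. Now an irreducible component $S_0$ of $S$ whose $\pi^V_{m,0}$-image is not dense in $X$ (and is therefore contained, after $\mathbf{K}$-closure, in $Z$) lifts only to arcs touching some exceptional $E_i$ with $i \geq 2$, so its contribution lies in subdominant strata. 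Consequently the top-degree of $h\circ\mu_V(S_{\infty,m,p}) = \mathbf{I}_{m,p,0}$ depends only on the components of $S$ whose image is dense in $X$.
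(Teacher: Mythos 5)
Your proof follows the paper's argument essentially step for step: the same change-of-variables formula, the same stratification of $\gamma_\infty^{-1}(S_{\infty,m,p})$ by the contact orders $\alpha$ with the divisors $E_i$, and the same combination of Lemma~\ref{l2jm3} (for $h([T_\alpha])$ and the fiber factors) with Lemma~\ref{ljm3} and the density hypothesis to show that for $p$ large the top degree comes from the $J=\{1\}$ stratum and is $k$-independent. For (ii) the paper packages the same idea slightly differently---it introduces $\widetilde S$, applies (i) to it, and lets $k\to\infty$ in the integral over $S_{\infty,m,p}\setminus\widetilde S_{\infty,m,p}$, where $F_Z>0$, instead of arguing within the expansion that strata with $\sum_i c_i\alpha_i>0$ cannot carry the top-degree coefficient---but the mechanism (large $k$ kills the contribution of components whose base points lie in $Z$) is identical, so this is a repackaging rather than a different route.
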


\begin{proof} We apply the transformation rule for the motivic integrals to
${\bf I} _{m,p,k}$. Namely, by \cite{DL2}(Lemma 3.3), we have
\begin{eqnarray}\label{kon}
{\bf I} _{m,p,k} = h\left( \ \int_{\gamma _{\infty }^{-1}(S_{\infty ,m,p})}
{\Bb L}^{-F_{W}(\nu )-k\sum_{i=2}^{t}c_{i}F_{E_{i}}(\nu )} \dd \mu _{Y}(\nu ) \ \right)
 \mbox{ . }
\end{eqnarray}

For $J \subseteq \{1,\ldots,t\}$ and
$\alpha=(\alpha _{i})_{i \in J} \in ({\Bb N}^*)^{\vert J \vert}$, $S^{\infty}_{\alpha}$
is the inverse image by $\pi _{\infty ,q}$ of $S_{\alpha}^{q}$, for any $q \geq m+1$. So,
by Proposition \ref{pjm2}, (1), \sloppy \hbox{$h\rond \mu _{Y}(S_{\alpha }^{\infty}) =
h([S^{q}_{\alpha}]) (uv)^{-N(q+1)}$}. In addition, by Lemma \ref{l2jm3}, the subset
$S^{q}_{\alpha}$ is a locally trivial fibration over $T_{\alpha }$ whose fiber is
isomorphic to
$${\Bb C}^{(q-m)(N-\vert J_{\alpha} \vert)}\ \times \
({\Bb C}^{*})^{\vert J_{\alpha} \vert}
\times {\Bb C}^{q\vert J_{\alpha} \vert-\sum_{i\in J_{\alpha}} \alpha _{i}} \mbox{ . }$$
Then we deduce from \cite{Cr}(Theorem 3.2, (iii)) the relation:
\begin{eqnarray}\label{mot2}
\hspace{1cm} h\rond \mu _{Y}(S_{\alpha}^{\infty})  & = &
h([T_{\alpha }])(uv-1)^{\vert J_{\alpha } \vert}
\\ \nonumber
&& (uv)^{-\vert J_{\alpha } \vert-(N-\vert J_{\alpha } \vert)(m+1) -
\sum_{i\in J_{\alpha }}\alpha _{i}} \mbox{ . }
\end{eqnarray}

As the subset $\gamma ^{-1}(X)$ is equal to the union of $\poi E1{,\ldots,}{t}{}{}{}$,
the subset $\gamma _{\infty }^{-1}(S_{\infty,m,p})$ is given by the equality:
$$\gamma _{\infty }^{-1}(S_{\infty,m,p}) =
\bigcup _{J\subset \{1,\ldots,t\} \atop J\not = \emptyset}
\bigcup _{(\alpha _{i},i\in J)\in ({\Bb N}\backslash \{0\})^{\vert J \vert}
\atop \sum_{i\in J} a_{i}\alpha _{i} \geq p} S_{\alpha}^{\infty} \mbox{ , } $$
for any $p$. We set:
$$M  \ := \ 1 + m \sum_{i=1}^{t}a_{i} \mbox{ . }$$
In particular, when $p \geq M$, $J_{\alpha }$ is not empty as soon as
$\sum_{i \in J} a_{i}\alpha _{i} \geq p$. For any $p \geq M$ and any
$k \in {\Bb N}$, we have the equality:
$$ {\bf I} _{m,p,k} = \sum_{J\subset \{1,\ldots,t\}\atop{J \not = \emptyset}}
\sum_{\alpha _{i}, i\in J\atop {\sum_{i\in J} a_{i}\alpha _{i}\geq p},\alpha _{i}\geq 1}
\mathbf{S}_{J,\alpha,k}$$
where
\begin{eqnarray}\label{star}
&& \mathbf{S}_{J,\alpha,k}  :=
h([T_{\alpha }]) (uv)^{-(N-\vert J_{\alpha } \vert)(m+1) +
\sum_{i\in J\backslash J_{\alpha }}\alpha _{i} }
\\ \nonumber
 & &\hspace{.3cm} (uv-1)^{\vert J_{\alpha } \vert}
(uv)^{-\vert J_{\alpha } \vert} (uv)^{-\sum_{i\in J}\alpha _{i}(b_{i}+1)
-k\sum_{i\in J\cap \{2,\ldots,t\}} c_{i}\alpha _{i}}  \mbox{ . }
\end{eqnarray}
For $J$ nonempty subset in $\{1,\ldots,t\}$, we set:
$$\mathbf{S}'_{J,p,k} = \sum_{\alpha _{i},i\in J
\atop \sum_{i\in J} a_{i}\alpha _{i}\geq p,\alpha _{i}\geq 1}
\mathbf{S}_{J,\alpha ,k} \mbox{ . }$$
Thus ${\bf I} _{m,p,k}$ is the sum of the $\mathbf{S}'_{J,p,k}$. For any $J$, the highest
degree terms in $\mathbf{S}'_{J,p,k}$ are the terms
$\mathbf{S}_{J,\alpha ,p,k}$ for which the number
$$\dim T_{\alpha }-(N-\vert J_{\alpha } \vert)(m+1) +
\sum_{i\in J\backslash J_{\alpha }}\alpha _{i}-
\sum_{i\in J}\alpha _{i}(b_{i}+1) -
k\sum_{i\in J\cap \{2,\ldots,t\}} c_{i}\alpha _{i}$$
is maximal. In particular, for $J=\{1\}$, the highest degree term of
$\mathbf{S}'_{J,p,k}$ does not depend on $k$.\\

i) By hypothesis, the image by $\pi_{m,0}$ of any irreducible component of maximal
dimension of $\gamma _{m}^{-1}(S)$ is dense in $E_{1}$. Therefore, for $J\not=\{1\}$, the
degree of
$$h([\pi _{m,0}^{-1}(E_{J}^{0}) \cap \gamma _{m}^{-1}(S)])$$
is strictly smaller than the degree of the same expression
with $J=\{1\}$. As a consequence, when $J\not=\{1\}$, for
$\alpha=(\alpha _{i})_{i \in J}\in({\Bb N}^*)^{\vert J \vert}$, the degree of
$h([T_{\alpha }])$ is strictly smaller than the degree of
$$h([\pi _{m,0}^{-1}(E_{\{1\}}^{0}) \cap \gamma _{m}^{-1}(S)])\mbox{ . }$$
Hence by Lemma \ref{ljm3} and relation (\ref{star}), when
$p$ is big enough, the highest degree term of ${\bf I} _{m,p,k}$ is the highest degree
term of $\mathbf{S}'_{\{1\},p,k}$. In particular, it does not depend on $k$.\\

ii) Denote by $\widetilde{S}$ the union of irreducible components of $S$ whose image by
$\pi_{m,0}^{V}$ is dense in $X$. By hypothesis, $\widetilde{S}$ is a nonempty set. We set
\begin{eqnarray*}
\widetilde{S}_{\infty,m,p}&:=&\{ \ \nu \in (\pi_{\infty,m}^{V})^{-1}(\widetilde{S})
\ \vert \ F_{X}(\nu) \geq p \ \}
\mbox{  }\end{eqnarray*}
and
\begin{eqnarray*}\widetilde{{\bf I}} _{m,p,k}& := &h\left(\
\int_{\widetilde{S}_{\infty ,m,p}} {\Bb L}^{-kF_{Z}(\nu )} \dd \mu _{V}(\nu ) \ \right)
\mbox{  ,}\end{eqnarray*}
for $k >0$. It suffices to prove that the highest degree terms of ${\bf I} _{m,p,k}$ and
$\widetilde{{\bf I}}_{m,p,k}$ are the same, for $p$ big enough. Indeed, if so, we obtain
the expected result with $k=0$ in ${\bf I} _{m,p,k}$ and $\widetilde{{\bf I}}_{m,p,k}$.
By (i), the highest degree term of $\widetilde{{\bf I}}_{m,p,k}$ does not depend on $k$
for $p$ big enough. In addition, by hypothesis, the highest degree term of
${\bf I}_{m,p,k}$ does not depend on $k$ for $p$ big enough. Fix $p$ big enough such that
the highest degree terms of both $\widetilde{{\bf I}}_{m,p,k}$ and ${\bf I} _{m,p,k}$ do
not depend on $k$. As $S$ and $\widetilde{S}$ are $\mathbf{K}$-invariant, the function
$F_{Z}(\nu )$ is positive for any $\nu $ in
$S_{\infty,m,p}\backslash \widetilde{S}_{\infty,m,p}$ since ${\bf K}$ has finitely
many orbits in $X$. So we get
$$ \lim _{k\rightarrow +\infty }
h\left( \ \int_{S_{\infty,m,p}\backslash \widetilde{S}_{\infty,m,p}}
{\Bb L}^{-kF_{Z}(\nu )}
\dd \mu _{V}(\nu ) \ \right) = 0 \mbox{ . }$$
Hence for $k$ big enough, the highest degree terms of ${\bf I} _{m,p,k}$ and
$\widetilde{{\bf I}}_{m,p,k}$ are the same.
\end{proof}

Let $m,p$ be in ${\Bb N}$ with $p \geq m+1$, and let
$S$ be a constructible subset of $J_{m}(V)$. In view of Proposition \ref{pjm3}, we wish
 to investigate the highest degree term of $h\rond \mu_V(S_{\infty,m,p})$. For $q$ in
${\Bb N} \cup \{\infty\}$ and $q \geq n$, we denote by $\pi_{q,n}^{X}$ the canonical
morphism from $J_q(X)$ to $J_{n}(X)$. The following lemma is easy and helpful for Lemma \ref{l4jm3}.

\begin{lemma}\label{l3jm3}
{\rm i)} If $\nu \in J_q(X)$ for $q \in {\Bb N} \cup \{\infty\}$, then \sloppy
\hbox{$\pi_{q,n}^{V}(\nu)=\pi_{q,n}^{X}(\nu)$} for any $n \leq q$.

{\rm ii)}  For $\nu \in J_{\infty }(V)$ and $q \in \mathbb{N}^*$,
$\pi _{\infty ,q-1}^{V}(\nu ) \in J_{q-1}(X)$ if and only if \hbox{$F_{X}(\nu ) \geq q$.}
\end{lemma}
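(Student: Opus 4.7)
The plan is to unwind the functorial definitions of the jet schemes together with the fact that $X$ is a closed subvariety of $V$, with no deep ingredients required.

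For (i), I would argue that both $\pi_{q,n}^{V}$ and $\pi_{q,n}^{X}$ are obtained by post-composing an arc, viewed as a ${\Bb C}$-algebra morphism with target ${\Bb C}[t]/t^{q+1}$, with the truncation ${\Bb C}[t]/t^{q+1} \rightarrow {\Bb C}[t]/t^{n+1}$. Concretely, an arc $\nu \in J_{q}(X)$ corresponds to a local morphism $\an {X}{\pi _{q,0}^{X}(\nu)} \rightarrow {\Bb C}[t]/t^{q+1}$, and since the closed immersion $X \hookrightarrow V$ gives a surjection $\an {V}{\pi _{q,0}^{X}(\nu)} \rightarrow \an {X}{\pi _{q,0}^{X}(\nu)}$, we obtain the corresponding $V$-arc by precomposition. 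The truncation operation commutes with this precomposition because both act on the target ring ${\Bb C}[t]/t^{q+1}$. Hence $\pi_{q,n}^{V}(\nu)$ and $\pi_{q,n}^{X}(\nu)$ are given by exactly the same element of ${\Bb C}[t]/t^{n+1}$, proving (i). The case $q = \infty$ is handled identically with ${\Bb C}[[t]]$ in place of ${\Bb C}[t]/t^{q+1}$.

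For (ii), I would directly translate both conditions into statements about the ideal ${\cali I}_{X}$ of $X$ in $V$. Setting $x := \pi_{\infty,0}^{V}(\nu)$, the arc $\nu$ corresponds to a morphism $\nu ^{*} : \an {V}{x} \rightarrow {\Bb C}[[t]]$. The truncation $\pi_{\infty,q-1}^{V}(\nu)$ lies in $J_{q-1}(X)$ if and only if, after composing with ${\Bb C}[[t]] \rightarrow {\Bb C}[t]/t^{q}$, the morphism $\nu ^{*}$ factors through $\an {V}{x}/{\cali I}_{X,x}$. Equivalently, $\nu \rond \varphi$ has order at least $q$ for every $\varphi$ in ${\cali I}_{X,x}$; but taking the infimum over all such $\varphi$ is precisely the definition of $F_{X}(\nu) = \ord {{\cali I}_{X},\nu}$. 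Hence the truncation lies in $J_{q-1}(X)$ if and only if $F_{X}(\nu) \geq q$, which is (ii).

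Neither step presents any real obstacle; the only point to be careful about is keeping track of which local ring and which truncation is being used at each stage, so that one verifies that the condition defining $J_{q-1}(X)$ inside $J_{q-1}(V)$ is exactly the vanishing of ${\cali I}_{X}$ modulo $t^{q}$. Once this bookkeeping is made explicit, both parts follow immediately from the definitions of $J_{m}(X)$, of the truncation maps $\pi_{q,n}$, and of the order function $F_{X}$ recalled in Subsection \ref{jm1}.
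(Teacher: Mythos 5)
Your proof is correct; the paper in fact states this lemma without proof (declaring it "easy" and leaving the verification to the reader), and your definition-unwinding argument — truncation commutes with precomposition by $\an V{x}\rightarrow \an X{x}$ for (i), and factoring the arc modulo $t^{q}$ through $\an V{x}/{\cali I}_{X,x}$ for (ii) — is exactly the intended routine verification.
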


We suppose that $S$ satisfies the following conditions:
\begin{list}{}{}
\item 1) $S$ is contained in $J_m(X)$,
\item 2) the image by $\pi_{m,0}^{V}$ of $S$ is dense in $X$,
\item 3) for any $\nu \in S$, the fiber $(\pi_{p-1,m}^{X})^{-1}(\nu)$ is a nonempty set.
\end{list}
Denote by $\widetilde{S}$ the union of irreducible components of $S$ whose image by
$\pi_{m,0}^{V}$ is dense in $X$. By Conditions (1) and (2), $\widetilde{S}$ is a nonempty
constructible subset of $J_m(X)$. Let $d$ and $\tilde{d}$ be the dimension of $S$ and
$\widetilde{S}$ respectively, and let $c$ and $\tilde{c}$ be the number of irreducible
components of maximal dimension of $S$ and $\widetilde{S}$ respectively.

Denote by $X_{\reg}$ the smooth part of $X$. Notice that when $S$ is contained
in $X_{\mathrm{reg}}$, Condition (3) is automatically satisfied.

\begin{lemma}\label{l4jm3}
{\rm i)} Let $T$ be an irreducible component of $S$ whose image by $\pi_{m,0}^{V}$ is
dense in $X$. Then $(\pi_{p-1,m}^{X})^{-1}(T)$ has dimension
\sloppy \hbox{$\dim T + (N-r)(p-1-m)$}.

{\rm ii)} The highest degree term of $h \rond \mu_{V}(\widetilde{S}_{\infty,m,p})$ is
$\tilde{c}(uv)^{\tilde{d}-(N-r)(m+1)-pr}$.

{\rm iii)} The degree of $h \rond \mu_{V}(S_{\infty,m,p})$ is at least $d-(N-r)(m+1)-pr$.

{\rm iv)} Suppose that the highest degree term of $h \rond \mu_{V}(S_{\infty,m,p})$ is
$\tilde{c}(uv)^{\tilde{d}-(N-r)(m+1)-pr}$. Then $d=\tilde{d}$ and $c=\tilde{c}$.
\end{lemma}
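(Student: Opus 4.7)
The plan is to treat (ii) and (iii) through a uniform identification of $\widetilde{S}_{\infty,m,p}$ and $S_{\infty,m,p}$ as stable cylinders in $J_{\infty}(V)$, reducing the motivic computations to dimension counts in the jet spaces. By Lemma~\ref{l3jm3}, the condition $F_{X}(\nu)\geq p$ is equivalent to $\pi_{\infty,p-1}^{V}(\nu)\in J_{p-1}(X)$, and $\pi_{p-1,m}^{V}$ coincides with $\pi_{p-1,m}^{X}$ on $J_{p-1}(X)$. Hence
\[
\widetilde{S}_{\infty,m,p}=(\pi_{\infty,p-1}^{V})^{-1}(C), \qquad S_{\infty,m,p}=(\pi_{\infty,p-1}^{V})^{-1}(C'),
\]
with $C:=(\pi_{p-1,m}^{X})^{-1}(\widetilde{S})$ and $C':=(\pi_{p-1,m}^{X})^{-1}(S)$ both contained in $J_{p-1}(X)\subseteq J_{p-1}(V)$. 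Both cylinders are stable at level $p-1$, so Proposition~\ref{pjm2}~(1) gives
\[
\mu_{V}(\widetilde{S}_{\infty,m,p})=[C]\,{\Bb L}^{-pN}, \qquad \mu_{V}(S_{\infty,m,p})=[C']\,{\Bb L}^{-pN},
\]
and everything reduces to computing $\dim C$, $\dim C'$ and the numbers of their top-dimensional irreducible components.

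To prove (i), I will use that $\pi_{m,0}^{V}(T)$ is dense in $X$ and that $X_{\reg}$ is a big open of the normal variety $X$, so that the generic point of $T$ lies over $X_{\reg}$. There, $\pi_{p-1,m}^{X}$ is a locally trivial affine bundle with fibers of dimension $(N-r)(p-1-m)$. Since $J_{p-1}(X)$ is irreducible of dimension $(N-r)p$ by Musta\c{t}\v{a}'s Theorem~\ref{tjm1}, and condition~(3) makes $(\pi_{p-1,m}^{X})^{-1}(T)\to T$ surjective, the fiber-dimension theorem yields the claimed equality $\dim(\pi_{p-1,m}^{X})^{-1}(T)=\dim T+(N-r)(p-1-m)$. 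Applied to each of the $\tilde{c}$ top-dimensional components of $\widetilde{S}$, this exhibits $\tilde{c}$ top-dimensional components of $C$ of dimension $\tilde{d}+(N-r)(p-1-m)$; passing to the Hodge realization and multiplying by $(uv)^{-pN}$ simplifies the exponent to $\tilde{d}-(N-r)(m+1)-pr$, proving (ii).

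Part (iii) follows by the same fiber-dimension argument without requiring $T$ to dominate $X$: for any irreducible component $T$ of $S$ of maximal dimension $d$, condition~(3) makes $(\pi_{p-1,m}^{X})^{-1}(T)\to T$ surjective and forces $\dim(\pi_{p-1,m}^{X})^{-1}(T)\geq d+(N-r)(p-1-m)$, whence $\deg h\rond\mu_{V}(S_{\infty,m,p})\geq d-(N-r)(m+1)-pr$. For (iv), combining (iii) with the hypothesis gives $d\leq \tilde{d}$, and $\widetilde{S}\subseteq S$ gives $d\geq \tilde{d}$, whence $d=\tilde{d}$. I will then argue by contradiction: suppose $S$ contains an irreducible component $T$ of dimension $d=\tilde{d}$ not lying in $\widetilde{S}$. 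Its preimage in $C'$ has dimension at least $\tilde{d}+(N-r)(p-1-m)$; if strictly greater, $\deg h\rond\mu_{V}(S_{\infty,m,p})$ strictly exceeds $\tilde{d}-(N-r)(m+1)-pr$, contradicting the hypothesis; if equal, then $(\pi_{p-1,m}^{X})^{-1}(T)$ contributes an additional top-dimensional irreducible subset of $C'$ disjoint from those coming from $\widetilde{S}$, so the leading coefficient of $h([C'])$ is at least $\tilde{c}+1$, again a contradiction. Hence $c=\tilde{c}$.

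\textbf{Main obstacle.} The delicate step is (iv): non-dominating top-dimensional components of $S$ lift to subsets of $C'$ whose dimension is only bounded below by the fiber-dimension theorem, because the fibers of $\pi_{p-1,m}^{X}$ over $m$-jets based at singular points of $X$ may exceed the generic fiber dimension. Turning any such prospective extra contribution into a contradiction relies on the positivity of the top Hodge--Deligne coefficient of a constructible set (it equals the number of its top-dimensional irreducible components), so a hypothetical extra top-dimensional piece of $C'$ cannot cancel with the contribution coming from $\widetilde{S}$.
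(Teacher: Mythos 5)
Your reduction of everything to the classes $[\widetilde S^{(p-1)}]$ and $[S^{(p-1)}]$ via Lemma \ref{l3jm3} and Proposition \ref{pjm2}, (1), applied to $\mu_V$, is exactly the paper's starting point, and your treatments of (iii) and (iv) follow the paper's lines (the paper phrases (iv) as a direct injection of component sets rather than a contradiction, but the content is the same). The genuine problem is in (ii), which you derive from (i). From (i) you may conclude that $\dim \widetilde S^{(p-1)}=\tilde d+(N-r)(p-1-m)$ and that $\widetilde S^{(p-1)}$ has \emph{at least} $\tilde c$ irreducible components of that dimension (one dominating each top-dimensional component of $\widetilde S$), but not that it has \emph{exactly} $\tilde c$: statement (i) controls only the total dimension of $(\pi_{p-1,m}^X)^{-1}(T)$, not its set of irreducible components. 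Over $m$-jets based at singular points of $X$ the fibres of $\pi_{p-1,m}^X$ can exceed $(N-r)(p-1-m)$, so the preimage of the proper closed subset $T\setminus(\pi_{m,0}^X)^{-1}(X_{\reg})$ may itself reach the top dimension and contribute further components; nothing in your argument excludes this (and your own invocation of the "fiber-dimension theorem" in (i) only gives the lower bound anyway). Since the exact leading coefficient $\tilde c$ is precisely what (iv) and Proposition \ref{pjm4} consume, this is a real gap — and the positivity of top Hodge--Deligne coefficients, which you correctly use in (iv), works against you here: hypothetical extra components cannot cancel.

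The paper closes this gap by a mechanism you do not use and which bypasses (i) entirely: the subset $(\pi_{\infty,m}^X)^{-1}(\widetilde S)=(\pi_{\infty,p-1}^X)^{-1}(\widetilde S^{(p-1)})$ of $J_\infty(X)$ is a cylinder stable both at level $m$ and at level $p-1$ (this is where the hypothesis that every component of $\widetilde S$ dominates $X$ enters), so Proposition \ref{pjm2}, (1), applied twice to $\mu_X$ yields the identity of full Hodge polynomials $h([\widetilde S^{(p-1)}])=h([\widetilde S])\,(uv)^{(N-r)(p-1-m)}$, hence the exact leading term $\tilde c\,(uv)^{\tilde d+(N-r)(p-1-m)}$ and, after multiplying by $(uv)^{-Np}$, statement (ii). You should replace your fibration count in (ii) by this double evaluation of the motivic measure on $X$; the rest of your argument then goes through.
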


\begin{proof} i) By hypothesis, the preimage of
$T \cap (\pi_{m,0}^{X})^{-1}(X_{\mathrm{reg}})$ by  $\pi_{p-1,m}^{X}$  is a locally
trivial fibration over
$T \cap (\pi_{m,0}^{X})^{-1}(X_{\mathrm{reg}})$ with fiber ${\Bb A}^{(N-r)(p-1-m)}$,
whence (i).

ii) Denote by $S^{(p-1)}$ and $\widetilde{S}^{(p-1)}$ the preimage by $\pi_{p-1,m}^{X}$
of $S$ and $\widetilde{S}$ respectively. Using Lemma \ref{l3jm3}, we can readily check
that the relation \sloppy \hbox{
$\widetilde{S}_{\infty,m,p}=(\pi_{\infty,p-1}^{V})^{-1}(\widetilde{S}^{(p-1)})$} holds,
since $S$ is contained in $J_m(X)$. Hence, by Proposition \ref{pjm2}, (1), we have:
\begin{eqnarray}\label{h}
h \rond \mu_V (\widetilde{S}_{\infty,m,p})=h([\widetilde{S}^{(p-1)}]) (uv)^{-Np} \mbox{ .}
\end{eqnarray}
By definition of $\widetilde{S}$, the subsets
$$(\pi_{\infty,p-1}^{X})^{-1}(\widetilde{S}^{(p-1)} ) \ \textrm{ and } \
(\pi_{\infty,m}^{X})^{-1}( \widetilde{S} )$$
are stable sets  with respect to $\mu_X$ at level $p-1$ and $m$ respectively. Then, by
Proposition \ref{pjm2}, (1), applied to $\mu_X$, we get:
\begin{eqnarray*}
&&h\rond \mu_X\left( (\pi_{\infty,p-1}^{X})^{-1}(\widetilde{S}^{(p-1)} ) \right)  \ = \
h([\widetilde{S}^{(p-1)}]) (uv)^{-(N-r)p}\mbox{ ,}\\
&&h\rond \mu_X\left( (\pi_{\infty,m}^{X})^{-1}(\widetilde{S}) \right) \ = \
h([\widetilde{S}]) (uv)^{-(N-r)(m+1)} \mbox{ .}
\end{eqnarray*}
As $ (\pi_{\infty,p-1}^{X})^{-1}(\widetilde{S}^{(p-1)} )$ and
$(\pi_{\infty,m}^{X})^{-1}(\widetilde{S})$ coincides, we deduce:
$$h([\widetilde{S}^{(p-1)}])=h([\widetilde{S}]) (uv)^{(N-r)(p-1-m)} \mbox{ .}$$
But the highest degree term of $h([\widetilde{S}])$ is $\tilde{c}(uv)^{\tilde{d}}$,
whence (ii) by Relation (\ref{h}).

iii) Analogously to (ii), we can write:
\begin{eqnarray}\label{h2}
h \rond \mu_V (S_{\infty,m,p})=h([S^{(p-1)}]) (uv)^{-Np} \mbox{ .}
\end{eqnarray}
Let $T$ be an irreducible component of $S$ of dimension $d$. As $S$ satisfies
Condition (3), we have $\pi_{p-1,m}^{X}((\pi_{p-1,m}^{X})^{-1}(T))=T$. So there is an
irreducible component $T'$ of $(\pi_{p-1,m}^{X})^{-1}(T)$ whose image by
$\pi_{p-1,m}^{X}$ is dense in $T$. As the generic fiber of $\pi_{p-1,m}^{X}:
J_{p-1}(X) \longrightarrow J_m(X)$ has dimension $(N-r)(p-1-m)$, the dimension of $T'$ is
at least $d+(N-r)(p-1-m)$. Hence, by Relation (\ref{h2}), the degree of
$h \rond \mu_V (S_{\infty,m,p})$ is at least $d-(N-r)(m+1)-pr$.

iv) By Relation (\ref{h2}), the hypothesis of (iv) means that the highest
degree term of $h([S^{(p-1)}])$ is $\tilde{c}(uv)^{\tilde{d}+(N-r)(p-1-m)}$. In
particular, $S^{(p-1)}$ has dimension $\tilde{d}+(N-r)(p-1-m)$ and the number of its
irreducible components of maximal dimension is $\tilde{c}$. If $d > \tilde{d}$, then
(iii) gives a contradiction. Hence $d=\tilde{d}$. So any
irreducible component of maximal dimension of $\widetilde{S}$ is an irreducible component
of maximal dimension of $S$. Hence, $c \geq \tilde{c}$. It remains to
prove: $c \leq \tilde{c}$.  Let $\poi T1{,\ldots,}{q}{}{}{}$ be the irreducible
components of $S$. Then
$$S^{(p-1)}= (\pi_{p-1,m}^{X})^{-1}(T_1) \cup \ldots \cup (\pi_{p-1,m}^{X})^{-1}(T_q)
\mbox{ .}$$
By Condition (3), $\pi_{p-1,m}^{X}\left((\pi_{p-1,m}^{X})^{-1}(T_i)\right)=T_i$, for any
$i=1,\ldots,q$. Consequently,
$$\pi_{p-1,m}^{X}(T_i) \varsubsetneq \pi_{p-1,m}^{X}(T_j)\mbox{ ,}$$
for any $1\leq i,j \leq q$, with $i\not=j$. Therefore there is an injection from the set
of irreducible components of $S$ into the set of irreducible components of $S^{(p-1)}$.
Moreover, by (i), any irreducible component of $S$ whose image by $\pi_{m,0}^{V}$ is
dense in $X$ provides an irreducible component of $S^{(p-1)}$ of maximal dimension, since
$d=\tilde{d}$. So $\tilde{c} \geq c$, since $\tilde{c}$ is the number of the
irreducible components of maximal dimension of $S^{(p-1)}$.
\end{proof}

\subsection{Key proposition}\label{jm4} We keep the notations of the previous subsection. We explicitly
construct in this subsection a sequence of subsets in $J_m(X)$, for $m \geq 1$,
to which we apply the results of Subsection \ref{jm3}. Here, for $q$ in
${\Bb N} \cup \{\infty\}$ and $q \geq m$, $\pi_{q,m}$ refers to the canonical projection
from $J_{q}(V)$ to $J_m(V)$. As $V$ is a vector space, there is a canonical injection
from $J_{m}(V)$ into $J_{m+1}(V)$. Furthermore, this injection is a
closed immersion. The first projection from $V \times V$ to $V$ is denoted by
$\varpi_1$. Let $T$ be a closed
bicone of $X\times V$ satisfying the two following conditions:
\begin{list}{}{}
\item 1) $T$ is ${\bf K}$-invariant under the diagonal action of ${\bf K}$ in
$V\times V$,
\item 2) for any $(x,y)$ in $T$, $y$ is a tangent vector of $X$ at $x$.
\end{list}
\mbox{ }\\

\noindent We define by induction on $m$ a subset $C_{m}$ of $J_{m}(V)$.\\

For $m=1$, we denote by $C_{1}$ the image of $T$ by the map
$(x,y)\mapsto \nu _{x,y,1}$.\\

Let us suppose that the subset $C_{m}$ of $J_{m}(V)$ is defined for some $m \geq 1$. Let
$C'_{m+1}$ be the image of $C_{m}$ by the canonical injection from $J_{m}(V)$ into
$J_{m+1}(V)$. For any $p\geq m+1$, we set
$$C'_{\infty ,m+1,p}:=
\{ \nu \in \pi _{\infty,m+1}^{-1}(C'_{m+1}) \ \vert \ F_{X}(\nu ) \geq p \} \mbox{ . }$$
Then we set
$$C_{m+1}:= \bigcap\limits_{p \geq m+1} \overline{\pi _{\infty, m+1}(C'_{\infty ,m+1,p})}
\mbox{ , }$$
where $\overline{C}$ denotes the closure of the subset $C$ in $J_{m+1}(V)$. Thus,
$C_{m+1}$ is a closed subset of $J_{m+1}(V)$.\\

For $m \geq 1$, we set
$$D_m:=\{ \ (x,y) \in X\times V \ \vert \ \nu _{x,y,m} \in C_m \ \} \mbox{ . }$$

\begin{lemma}\label{ljm4}  Let $m$ be in ${\Bb N}^*$.

{\rm i)} If $m \geq 2$, then $C_{m}$ is the
closure of $\pi _{\infty ,m}(C'_{\infty ,m,p})$ in $J_{m}(V)$, for $p$ big enough.

{\rm ii)} The subset $C_{m}$ is the image of $D_{m}$ by the
map \sloppy \hbox{$(x,y)\mapsto \nu _{x,y,m}$}. Moreover $C_{m}$ is contained in
$J_{m}(X)$.

{\rm iii)} If $m \geq 2$, then for $p \geq m+1$ and for
$\nu \in \pi_{\infty,m}(C'_{\infty,m,p})$, the fiber at $\nu$ of the canonical morphism
from $J_{p-1}(X)$ to $J_m(X)$ is a nonempty set.

{\rm iv)} The set $C_{m}$ is invariant under the action of ${\bf K}$ in
$J_{m}(V)$.

{\rm v)} If $\varpi _{1}(T)$ is equal to $X$, then the image by $\pi _{m,0}$ of $C_{m}$
is equal to $X$.
\end{lemma}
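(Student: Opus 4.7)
My overall strategy is to prove (i) directly from Noetherianity of $J_m(V)$, prove (iii) by chasing the definitions through Lemma \ref{l3jm3}, and prove (ii), (iv), and (v) simultaneously by induction on $m$. The base case $m=1$ is essentially tautological: $C_1$ is defined as the image of $T$ under the injective map $(x,y)\mapsto \nu_{x,y,1}$, so $D_1 = T$, the image equals $C_1$, and every element is a $1$-jet of $X$ because condition (2) defining $T$ says $y$ is tangent to $X$ at $x$. The ${\bf K}$-invariance of $C_1$ is immediate from ${\bf K}$-equivariance of $(x,y)\mapsto \nu_{x,y,1}$, and (v) at $m=1$ is obvious.

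For (i), the sequence $\bigl(\overline{\pi_{\infty,m}(C'_{\infty,m,p})}\bigr)_{p\ge m+1}$ is a decreasing chain of closed subsets in the Noetherian space $J_m(V)$, hence stabilizes at some $p_0$; then $C_m$ coincides with this stable closure for all $p\ge p_0$. For (iii), pick any lift $\tilde\nu \in J_\infty(V)$ with $\pi_{\infty,m}(\tilde\nu)=\nu$ and $F_X(\tilde\nu)\ge p$; Lemma \ref{l3jm3}(ii) gives $\pi_{\infty,p-1}(\tilde\nu)\in J_{p-1}(X)$, and Lemma \ref{l3jm3}(i) identifies its further truncation with $\nu$.

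The inductive step for (ii) is the crux. The key observation is that $C'_{m+1}$ already consists of jets of the form $\nu_{x,y,m+1}$: indeed, the canonical injection $J_m(V)\hookrightarrow J_{m+1}(V)$ sends $\nu_{x,y,m}$ to $\nu_{x,y,m+1}$ (padding by a zero $t^{m+1}$-coefficient), so by the induction hypothesis $C'_{m+1}=\{\nu_{x,y,m+1}\,:\,(x,y)\in D_m\}$, which is a \emph{closed} subset of $J_{m+1}(V)$. Since $C'_{\infty,m+1,p}\subset \pi_{\infty,m+1}^{-1}(C'_{m+1})$, we have $\pi_{\infty,m+1}(C'_{\infty,m+1,p})\subset C'_{m+1}$; taking closures and the intersection over $p$ yields $C_{m+1}\subset C'_{m+1}$, so every element of $C_{m+1}$ has the required form and (ii) follows. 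The inclusion $C_{m+1}\subset J_{m+1}(X)$ comes from Lemma \ref{l3jm3}(ii) applied at $p\ge m+2$, together with the closedness of $J_{m+1}(X)$ in $J_{m+1}(V)$. Part (iv) then propagates ${\bf K}$-invariance through each construction: the canonical injection is ${\bf K}$-equivariant, $F_X$ is ${\bf K}$-invariant since $X$ is, and projections, preimages, closures, and intersections preserve ${\bf K}$-invariance.

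The step I expect to be the main obstacle is (v), whose proof hinges essentially on the bicone structure of $T$. Because $T$ is a closed bicone, stability under $(x,y)\mapsto (x,ty)$ with $t\to 0$ forces $(x,0)\in T$ for every $x\in \varpi_1(T)=X$. The constant arc $t\mapsto x$ then lies in $J_\infty(X)$, so $F_X=\infty$ and its $(m+1)$-truncation is exactly $\nu_{x,0,m+1}$. Inductively, this shows $\nu_{x,0,m}\in \pi_{\infty,m}(C'_{\infty,m,p})$ for every $p$, hence $\nu_{x,0,m}\in C_m$; taking constant terms yields $X\subset \pi_{m,0}(C_m)$. The reverse inclusion follows from (ii) combined with $C_m\subset J_m(X)$, completing the induction.
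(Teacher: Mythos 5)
Your proof is correct and follows essentially the same route as the paper's: Noetherianity for (i), induction on $m$ through the chain $C_m \to C'_{m+1} \to C'_{\infty,m+1,p} \to C_{m+1}$ for (ii) and (iv), lifting to an arc with $F_X \ge p$ plus Lemma \ref{l3jm3} for (iii), and the bicone point $(x,0)$ with its constant arc for (v). Your added explicit remarks (closedness of $C'_{m+1}$ via the closed immersion, and the reverse inclusion in (v) from $C_m \subset J_m(X)$) are accurate fillings of steps the paper leaves implicit.
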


\begin{proof}
i) By definition, the sequence
$$\pi_{\infty,m}(C'_{\infty,m,m}), \ \pi_{\infty,m}(C'_{\infty,m,m+1}),
\ldots \mbox{ }$$
is weakly decreasing. Therefore, by noetherianity, the weakly decreasing sequence
$$\overline{\pi_{\infty,m}(C'_{\infty,m,m})}, \
\overline{\pi_{\infty,m}(C'_{\infty,m,m+1})},\ldots \mbox{ } $$
of closed subsets in $J_m(V)$ is stationary. Hence for $p$ big enough, $C_{m}$ is the
closure of $\pi _{\infty ,m}(C'_{\infty,m,p})$ in $J_{m}(V)$.

ii) It suffices to prove that $C_m$ is contained in the image of
$D_{m}$ by the map $(x,y)\mapsto \nu _{x,y,m}$ and that $C_m$ belongs to $J_m(X)$. We
prove the statements by induction on $m$. By definition, $C_{1}$ is the image of
$T$ by the map $(x,y)\mapsto \nu _{x,y,1}$ so $C_{1}$ is contained in $J_{1}(X)$ by
Condition (2). So $D_1$ is equal to $T$, whence the two statements for \sloppy \hbox{
$m=1$}. Let us suppose that $C_{m}$ is the image of $D_{m}$ by the map
$(x,y)\mapsto \nu _{x,y,m}$ and let us suppose that $C_{m}$ is contained in $J_{m}(X)$,
for some $m \geq 1$. Then $C'_{m+1}$ is the image of $D_{m}$ by the map \sloppy
\hbox{$(x,y)\mapsto \nu _{x,y,m+1}$}. By definition, $(x,y)$ belongs to $D_{m+1}$ if and
only if $\nu _{x,y,m+1}$ belongs to $C_{m+1}$. As $C_{m+1}$ is contained in $C'_{m+1}$,
we deduce that $C_{m+1}$ is contained in the image of $D_{m+1}$ by the map
$(x,y)\mapsto \nu _{x,y,m+1}$. In addition, for
$p \geq m+2$, $\pi _{\infty ,m+1}(C'_{\infty,m+1,p})$ is contained in
$J_{m+1}(X)$, by definition of $C'_{\infty,m+1,p}$. Hence $C_{m+1}$ is contained in
$J_{m+1}(X)$ since $J_{m+1}(X)$ is closed in $J_{m+1}(V)$.

iii) Let $\nu$ be in $\pi_{\infty,m}(C'_{\infty,m,p})$. Then, there is $\nu'$ in
$C'_{\infty,m,p}$ such that $\pi_{\infty,m}(\nu')=\nu$.  As $\nu'$ belongs to
$C'_{\infty,m,p}$, we have $F_X(\nu') \geq p$. Hence, by Lemma \ref{l3jm3}, (ii),
$\pi_{\infty,p-1}(\nu')$ belongs to $J_{p-1}(X)$. In addition, since $m \geq 2$,
$\pi_{\infty,m}(C'_{\infty,m,p})$ is contained in $C_m$. So by (ii), $\nu $ belongs to
$J_m(X)$. Therefore, by Lemma \ref{l3jm3}, (i), $\pi_{\infty,p-1}(\nu')$ is in the fiber
at $\nu $ of the canonical morphism from $J_{p-1}(X)$ to $J_m(X)$.

iv) We prove the statement by induction on $m$. As $T$ is ${\bf K}$-invariant by
Condition (1), $C_{1}$ is ${\bf K}$-invariant. We suppose $m \geq 2$ and we suppose that
$C_{m}$ is ${\bf K}$-invariant. Then $C'_{m+1}$ is ${\bf K}$-invariant. So
$\pi _{\infty,m+1}^{-1}(C'_{m+1})$ is ${\bf K}$-invariant. As $X$ is
${\bf K}$-invariant, the function $F_{X}$ is ${\bf K}$-invariant. So for any
$p \geq m+1$, $C'_{\infty,m+1,p}$ is ${\bf K}$-invariant. Hence
$\pi_{\infty,m+1}(C'_{\infty,m+1,p})$ and $C_{m+1}$ are ${\bf K}$-invariant.

v) We suppose that $\varpi _{1}(T)$ is equal to $X$. Then for any $x$ in $X$, $(x,0)$
belongs to $T$ since $T$ is a closed bicone. As a consequence, for any
$m \in {\Bb N}^*$, $C_{m}$ contains $\nu _{x,0,m}$. Hence $\pi _{m,0}(C_{m})$ is equal
to $X$.
\end{proof}

\begin{proposition}\label{pjm4}
{\rm i)} For $m$ big enough, we have
$$D_{m}=\{ (x,y) \in T \ \vert \ x+ty \in X, \ \forall t \in {\Bb C} \}
\mbox{ . }$$

{\rm ii)} We suppose that the image by $\varpi _{1}$ of any irreducible component of
maximal dimension of $T$ is equal to $X$. Then, for $m\geq 1$, the image by
$\varpi _{1}$ of any irreducible component of maximal dimension of $D_{m}$ is dense in
$X$.
\end{proposition}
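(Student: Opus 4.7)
For Part (i), set $A := \{(x,y) \in T \mid x+ty \in X \text{ for all } t \in {\Bb C}\}$, which is a closed ${\bf K}$-invariant bicone in $T$. I will show both inclusions $A \subseteq D_m$ (for every $m\geq 1$) and $D_m \subseteq A$ (for $m$ big enough), which gives $D_m = A$ and hence Part (i).

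The inclusion $A \subseteq D_m$ is proved by induction on $m$. For $m = 1$ it is trivial since $D_1 = T$. For the inductive step, fix $(x,y) \in A$ and suppose $\nu_{x,y,m} \in C_m$. Because the polynomial arc $\nu_{x,y}(t)=x+ty$ lies entirely in $X$, we have $F_X(\nu_{x,y}) = +\infty$. Moreover $\nu_{x,y}$ has no coefficient of order $\geq 2$ in $t$, so $\nu_{x,y,m+1}$ is the canonical lift of $\nu_{x,y,m}$ under the inclusion $J_m(V) \hookrightarrow J_{m+1}(V)$, and the induction hypothesis yields $\nu_{x,y,m+1} \in C'_{m+1}$. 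Therefore $\nu_{x,y} \in C'_{\infty,m+1,p}$ for all $p \geq m+2$, whence $\nu_{x,y,m+1}\in \pi_{\infty,m+1}(C'_{\infty,m+1,p}) \subseteq C_{m+1}$, so $(x,y)\in D_{m+1}$.

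For the reverse inclusion, Lemma \ref{ljm4}, (ii), gives $C_m \subseteq J_m(X)$, so $(x,y) \in D_m$ implies $\nu_{x,y,m} \in J_m(X)$. Choosing generators $f_1,\ldots,f_r$ of the ideal of $X$ in ${\Bb C}[V]$ of maximal degree $d$, the condition $\nu_{x,y,m} \in J_m(X)$ translates into $f_i(x+ty) \equiv 0 \pmod{t^{m+1}}$ for each $i$. But $f_i(x+ty)$ is a polynomial in $t$ of degree at most $d$, so for $m \geq d$ it must vanish identically in $t$, meaning $x+ty \in X$ for every $t \in {\Bb C}$, i.e., $(x,y) \in A$. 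Combining both inclusions yields $D_m = A$ for $m \geq d$, proving (i).

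For Part (ii), the case $m = 1$ is immediate since $D_1 = T$. For $m \geq 2$, the strategy rests on three observations: $D_m$ is ${\bf K}$-invariant by Lemma \ref{ljm4}, (iv), so each irreducible component of $D_m$ is ${\bf K}$-invariant (since ${\bf K}$ is connected); the smooth locus $X_{\reg}$ of $X$ coincides with the unique open ${\bf K}$-orbit (using that ${\bf K}$ has finitely many orbits on $X$ and $X$ is irreducible); and Lemma \ref{ljm4}, (v), together with the hypothesis $\varpi_1(T) = X$, ensures $\varpi_1(D_m) = X$. The open subset $D_m \cap \varpi_1^{-1}(X_{\reg})$ of $D_m$ is therefore nonempty and ${\bf K}$-invariant, so every irreducible component of $D_m$ meeting it has $\varpi_1$-image containing $X_{\reg}$ by ${\bf K}$-transitivity, hence dense in $X$. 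It then suffices to show that every maximal-dimensional component of $D_m$ meets $\varpi_1^{-1}(X_{\reg})$: a component lying entirely over $X \setminus X_{\reg}$ has $\varpi_1$-image of dimension at most $N-1$, and using the bicone structure of $T$, part (i), and the hypothesis on the maximal-dimensional components of $T$, one bounds the fibers of $\varpi_1 : D_m \to X$ over proper ${\bf K}$-orbits and shows such a component cannot attain the maximal dimension of $D_m$. The principal obstacle here is precisely this fiber-dimension estimate over deeper ${\bf K}$-strata, where fibers may jump upwards, and one must argue via the specific jet-theoretic definition of $D_m$ (and the constancy of fiber dimension along a fixed ${\bf K}$-orbit) that the jumps are controlled enough to prevent spurious top-dimensional components above $X \setminus X_{\reg}$.
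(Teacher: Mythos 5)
Your proof of part (i) is correct and is essentially the paper's argument: both directions rest on the observation that $\varphi(x+ty)$ is a polynomial in $t$ of bounded degree, so high-order vanishing at $t=0$ forces identical vanishing; your explicit induction showing $A\subseteq D_m$ (using that $\nu_{x,y,m+1}$ is the canonical lift of $\nu_{x,y,m}$ and that $F_X(\nu_{x,y})=\infty$) is a slightly more careful write-up of what the paper compresses into one sentence.

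Part (ii), however, has a genuine gap. Your reduction is fine up to the point where you must show that no irreducible component of maximal dimension of $D_m$ lies entirely over $X\setminus X_{\reg}$ (or, more precisely, over the complement of the open ${\bf K}$-orbit --- note in passing that $X_{\reg}$ need not equal the open orbit, e.g.\ for the principal cone $G.e$ consists of smooth points but is not the open orbit). At that point you write that ``one bounds the fibers of $\varpi_1:D_m\to X$ over proper ${\bf K}$-orbits'' and acknowledge that fibers may jump over deeper strata and that the jumps must be ``controlled enough.'' No such control is established, and none is available by elementary means: the recursive definition of $C_{m+1}$ as a limit of closures of truncations of arcs with high contact order with $X$ is exactly the situation where components can concentrate over the singular locus with large fibers (this is the phenomenon governed by Musta{\c{t}}{\v{a}}'s theorem relating irreducibility of jet schemes to rational singularities). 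The paper's proof of (ii) is precisely the mechanism you are missing: it passes to a log resolution $\gamma:Y\to V$ of the pair $(V,X)$, uses that $X$ has rational, hence canonical, singularities to get the discrepancy inequality $b_i\geq r a_i$ (Lemma \ref{ljm3}), and compares the highest degree terms of the Hodge realizations of the motivic integrals ${\bf I}_{m,p,k}$ and $\widetilde{{\bf I}}_{m,p,k}$ as $k\to\infty$ (Proposition \ref{pjm3} and Lemma \ref{l4jm3}) to conclude $d=\tilde d$ and $c=\tilde c$ by induction on $m$. Your sketch never uses the rational singularities hypothesis, which is essential; without it the fiber-dimension estimate you need can fail, so the argument as written does not close.
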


\begin{proof}
i) Let $m$ be in $\mathbb{N}^*$ such that $m-1$ is strictly bigger than the degree of any
element of a generating family of the ideal of definition of $X$ in the algebra of
polynomial functions on $V$. Let $(x,y)$ be in $T$. If $x+ty$ belongs to $X$ for
any $t$ in ${\Bb C}$, then $F_{X}(\nu _{x,y})$ is equal to $\infty $. In particular,
$C_{m}$ contains $\nu _{x,y,m}$ and $D_{m}$ contains $(x,y)$. Conversely, let us suppose
that $D_{m}$ contains $(x,y)$. Then $T$ contains $(x,y)$ since $T$ contains $D_{i}$ for
any $i \in \mathbb{N}^*$. Moreover, for any $\varphi $ in the ideal of definition of $X$,
the function $t\mapsto \varphi (x+ty)$ is divisible by $t^{m}$. So by the choice of $m$,
$\varphi (x+ty)$ is equal to $0$, for any $t \in {\Bb C}$. In other words, $x+ty$
belongs to $X$ for any $t \in {\Bb C}$.

ii) By Lemma \ref{ljm4}, (ii), the statement is equivalent to the following statement: \\

{\it for any $m \in \mathbb{N}^*$, the image by $\pi _{m,0}$ of any irreducible component
of maximal dimension of $C_{m}$ is dense in $X$.}\\

We prove this statement by induction on $m$. It is true for $m=1$ by hypothesis and by
definition of $C_1$. Suppose $m \geq 2$ and suppose the statement true for $m-1$. Let
$\widetilde{C}_{m}$ be the union of irreducible components of $C_{m}$ whose image by
$\pi _{m,0}$ is dense in $X$. By Lemma \ref{ljm4}, (v), $\widetilde{C}_{m}$ is not empty.
Let $d$ and $\tilde{d}$ be the dimension of $C_{m}$ and $\widetilde{C}_m$ respectively
and let $c$ and $\tilde{c}$ be the number of their irreducible components of maximal
dimension. It is enough to prove the equalities $d=\tilde{d}$ and $c=\tilde{c}$. In fact,
in this case, any irreducible component of maximal dimension of $C_m$ is an irreducible
component of $\widetilde{C}_m$. By Lemma \ref{ljm4}, (ii), (iii) and (iv), the
conditions of Lemma \ref{l4jm3} are satisfied.

For $p \geq m+1$, we set:
\begin{eqnarray}\label{B}
\widetilde{C}_{\infty,m,p} & := & \{ \nu \in \pi_{\infty,m}^{-1}(\widetilde{C}_{m})
\ \vert \ F_{X}(\nu) \geq p\} \mbox{ . }
\end{eqnarray}
The image by $\pi _{m,0}$ of any irreducible component of
$\widetilde{C}_{m}$ is dense in $X$. Hence, by Proposition \ref{pjm3}, (i),  the highest
degree term of the element
$$ \widetilde{{\bf I}}_{m,p,k} \ := \ h\left( \ \int_{\widetilde{C}_{\infty,m,p}}
{\Bb L}^{-kF_{Z}(\nu )} \dd \mu _{V}(\nu ) \ \right)$$
of ${\Bb Z}[u,v][[u^{-1},v^{-1}]]$ does not depend on $k$, for $p$ big enough. On the
other hand, by induction hypothesis, the  image by $\pi_{\infty,m-1}$ of any irreducible
component of maximal dimension of $C_{m-1}$ is dense in $X$, so the image by
$\pi_{\infty,m}$  of any irreducible component of maximal dimension of $C'_{m}$ is dense
in $X$. Hence by Proposition \ref{pjm3}, (i)  the
highest degree term of the element
$$ {\bf I}_{m,p,k} \ := \ h\left( \ \int_{C'_{\infty,m,p}} {\Bb L}^{-kF_{Z}(\nu )}
\dd \mu _{V}(\nu ) \ \right)$$
of ${\Bb Z}[u,v][[u^{-1},v^{-1}]]$ does not depend on $k$, for $p$ big enough. At last,
applying besides Lemma \ref{ljm4}, (i), we may choose $p$ big enough such that the
following conditions hold:
\begin{itemize}
\item[1)] $C_{m}$ is the closure of $\pi_{\infty,m}(C'_{\infty,m,p})$,
\item[2)] the highest degree terms of ${\bf I} _{m,p,k}$ and
$\widetilde{{\bf I}}_{m,p,k}$ do not depend on $k$.
\end{itemize}
Using Condition (1), we can easily check that the relation
\begin{eqnarray}\label{c}
C'_{\infty,m,p}=\{ \nu \in \pi_{\infty,m}^{-1}(\pi _{\infty ,m}(C'_{\infty ,m,p}))
\ \vert \ F_X(\nu) \geq p\} \end{eqnarray}
holds. By Lemma \ref{ljm4}, (v), $\pi_{m,0}(C_m)$ is equal to $X$. So, by Condition (1),
the image of $\pi_{\infty,m}(C'_{\infty,m,p})$ by $\pi_{m,0}$ is dense in $X$. Then,
since Condition (2) holds, it results from Proposition \ref{pjm3}, (ii), that the highest
degree term of ${\bf I} _{m,p,k}$ only depends on the irreducible components of
$ \pi_{\infty,m}(C'_{\infty,m,p})$ whose image by $\pi_{m,0}$ is dense in $X$. In other
words, the highest degree terms of ${\bf I} _{m,p,k}$ and $\widetilde{{\bf I}}_{m,p,k}$
are the same.

By Condition (2) with $k=0$ in $\widetilde{{\bf I}}_{m,p,k}$ and Lemma \ref{l4jm3}, (ii),
we deduce that the highest degree term of $\widetilde{{\bf I}} _{m,p,k}$ is
$\tilde{c}(uv)^{\tilde{d}-(N-r)(m+1)-pr}$. So the highest degree term of
${\bf I} _{m,p,k}$ is $\tilde{c}(uv)^{\tilde{d}-(N-r)(m+1)-pr}$, too. Hence, by Condition
(2) with $k=0$ in ${\bf I} _{m,p,k}$, we deduce that the highest degree term of
$h\circ\mu_V(C'_{\infty,m,p})$ is $\tilde{c}(uv)^{\tilde{d}-(N-r)(m+1)-pr}$.  Then, by
Lemma \ref{l4jm3}, (iv), the two equalities $d=\tilde{d}$ and $c=\tilde{c}$ hold.
\end{proof}

\section{Dimension of the nilpotent bicone via motivic integration}\label{d}
We prove in this section the main result of this note. As in Section \ref{su}, we suppose that
${\goth g}$ is simple and we adopt the notations of previous sections. The aim of this
section is the following theorem:

\begin{theorem}\label{td}
{\rm i)} The principal bicone of ${\goth g}$ is a reduced complete intersection of
dimension \sloppy \hbox{$3(\bor g{}-\rk\mathfrak{g}+1)$}.

{\rm ii)} The subscheme ${\cali Y}_{{\goth g}}$ is a reduced complete intersection of
dimension \sloppy \hbox{$3(\bor g{}-\rk\mathfrak{g})+2$}. Moreover, any irreducible component of
${\cali Y}_{{\goth g}}$ is the intersection of ${\cali Y}_{{\goth g}}$ with an irreducible
component of ${\cali X}_{{\goth g}}$.

{\rm iii)} The subscheme ${\cali Z}_{{\goth g}}$ is a reduced complete intersection of
dimension \sloppy \hbox{$3(\bor g{}-\rk\mathfrak{g})+1$}.

{\rm iv)} The nilpotent bicone ${\cali N}_{{\goth g}}$ is a complete intersection of
dimension \sloppy \hbox{$3(\bor g{}-\rk\mathfrak{g})$}.
\end{theorem}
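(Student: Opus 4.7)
My plan is to prove parts (i)--(iv) in sequence, exploiting the chain
${\cali N}_{{\goth g}}\subset{\cali Z}_{{\goth g}}\subset{\cali Y}_{{\goth g}}\subset{\cali X}_{{\goth g}}$ furnished by Lemma~\ref{lsu3}, in which each inclusion is the scheme-theoretic vanishing of one of the three bihomogeneous pieces $p_{1,1,1}=\dv xy$, $p_{1,0,2}=\dv yy$, $p_{1,2,0}=\dv xx$ of the Casimir $p_{1}$. Since the number of defining equations already matches the asserted codimension at every stage, the theorem reduces to (i) together with the assertion that, at each subsequent cut, the newly adjoined equation is a non-zero-divisor on every irreducible component of the preceding scheme.

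For (i), the lower bound $\dim\mathcal{X}\geq 3(\bor g{}-\rk\mathfrak{g}+1)$ for every component $\mathcal{X}$ of ${\cali X}_{{\goth g}}$ is Lemma~\ref{l2su2}(i); the matching upper bound is the crux. I would establish it by triggering Corollary~\ref{csu4} for every component: it suffices to produce in each $\mathcal{X}\cap{\cali Z}_{{\goth g}}$ an irreducible subcomponent whose $\varpi_{1}$-image equals ${\goth X}_{{\goth g}}$ and whose $\varpi_{2}$-image equals ${\goth N}_{{\goth g}}$. This is what the motivic machinery of Section~\ref{jm} delivers: since both ${\goth X}_{{\goth g}}$ (Corollary~\ref{csu1}) and ${\goth N}_{{\goth g}}$ (by \cite{He}) are complete intersections with rational singularities, Theorem~\ref{tjm1} applies, and I would invoke Proposition~\ref{pjm4} twice---once with $X={\goth X}_{{\goth g}}$ and once with $X={\goth N}_{{\goth g}}$---each time taking an auxiliary closed $G$-invariant bicone $T$ inside the tangent correspondence over $X$ so that conditions~(1)--(2) of Subsection~\ref{jm4} hold and every maximal-dimension component of $T$ already dominates $X$ under the appropriate projection. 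Proposition~\ref{pjm4}(ii) then propagates that surjectivity to every maximal-dimension component of the stabilized scheme $D_{m}$, which by Proposition~\ref{pjm4}(i) equals $T\cap{\cali X}_{{\goth g}}$ for $m$ large. Combining the two conclusions provides the hypothesis of Corollary~\ref{csu4}, whence the dimension equality. Reducedness then follows because ${\cali X}_{{\goth g}}$ is Cohen--Macaulay as a complete intersection of the correct codimension, while Corollary~\ref{csu4} gives a point of $\Omega_{{\goth g}}$ on each component, at which Lemma~\ref{l2su3}(ii) provides smoothness.

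With (i) in hand, parts (ii) and (iii) become hyperplane sections by non-zero-divisors. For each component $\mathcal{X}$ of ${\cali X}_{{\goth g}}$, Proposition~\ref{psu3} produces a point $(w(h),e)\in\mathcal{X}$; acting by a general element of $\mathrm{GL}_{2}({\Bb C})$ yields $(aw(h)+be,cw(h)+de)\in\mathcal{X}$, on which $p_{1,1,1}=ac\,\dv hh\neq 0$, because $w(h)\perp e$ and $\dv ee=0$ eliminate the cross and $bd$-terms. Hence $p_{1,1,1}$ is a non-zero-divisor on ${\cali X}_{{\goth g}}$; Cohen--Macaulayness is inherited, and the smooth point $(h,e)\in\Omega_{{\goth g}}\cap{\cali Y}_{{\goth g}}$ produced by Lemma~\ref{l3sc1}(i), together with Lemma~\ref{l2su3}(ii), supplies generic reducedness on the component containing it; an analogous argument applied to the remaining components using their own $(w(h),e)$-type points completes (ii). An identical argument with $(e,w(h))$ in place of $(w(h),e)$ establishes (iii). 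For (iv), the dominance conclusion secured in step~(i) yields in each component $\mathcal{Z}$ of ${\cali Z}_{{\goth g}}$ a point with nonzero semisimple first coordinate, so $p_{1,2,0}=\dv xx\neq 0$ there: hence $p_{1,2,0}$ is a non-zero-divisor on ${\cali Z}_{{\goth g}}$, and ${\cali N}_{{\goth g}}$ is a complete intersection of dimension $3(\bor g{}-\rk\mathfrak{g})$. No reducedness is claimed for ${\cali N}_{{\goth g}}$, in accordance with Remark~\ref{rsc4} and the possible emptiness of $\Omega_{{\goth g}}\cap{\cali N}_{{\goth g}}$ on individual components.

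The decisive obstacle is step~(i): constructing, for every component of ${\cali X}_{{\goth g}}$, an auxiliary bicone $T$ meeting both conditions of Proposition~\ref{pjm4} and whose maximal-dimension pieces jointly yield the $\varpi_{1}$-dominance over ${\goth X}_{{\goth g}}$ and the $\varpi_{2}$-dominance over ${\goth N}_{{\goth g}}$. The inductive bookkeeping of the highest-degree Hodge realization through Lemmas~\ref{ljm3}, \ref{l2jm3} and \ref{l4jm3} is where the technical weight sits. Once this is accomplished, parts (ii)--(iv) are routine cuts by non-zero-divisors, verified by explicit test points built from the principal ${\goth{sl}}_{2}$-triple $(e,h,f)$ and $G\times\mathrm{GL}_{2}({\Bb C})$-orbit considerations.
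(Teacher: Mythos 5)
Your overall architecture coincides with the paper's (Proposition~\ref{pjm4} to control projections of maximal-dimension components, Corollary~\ref{csu4} for the dimension of ${\cali X}_{{\goth g}}$, then successive cuts by $p_{1,1,1}$, $p_{1,0,2}$, $p_{1,2,0}$), but the sentence ``combining the two conclusions provides the hypothesis of Corollary~\ref{csu4}'' hides the actual crux. Proposition~\ref{pjm4} yields $\varpi_{1}$-dominance only for the \emph{maximal-dimension components of the schemes $D_m$ themselves}, i.e.\ of ${\cali Z}_{{\goth g}}$ (taking $T=\mathrm{T}''{\goth X}_{{\goth g}}$) and of ${\cali N}_{{\goth g}}$ (taking $T=\mathrm{T}{\goth N}_{{\goth g}}$); Corollary~\ref{csu4} instead requires, for a \emph{fixed} component ${\cali X}$ of ${\cali X}_{{\goth g}}$, a component ${\cali Z}$ of ${\cali X}\cap{\cali Z}_{{\goth g}}$ with $\varpi_{1}({\cali Z})={\goth X}_{{\goth g}}$ \emph{and} $\varpi_{2}({\cali Z})={\goth N}_{{\goth g}}$. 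Transferring the information needs two things you do not supply: (a) the relative dimension bookkeeping showing that every component of ${\cali X}\cap{\cali Z}_{{\goth g}}$ is a maximal-dimension component of ${\cali Z}_{{\goth g}}$ and contains a maximal-dimension component of ${\cali N}_{{\goth g}}$ (the paper's Proposition~\ref{p2d2}, which is logically \emph{prior} to part (i), not a consequence of it); and (b) the conversion of a $\varpi_{1}$-statement into the required $\varpi_{2}$-statement, which uses the $\mathrm{GL}_{2}({\Bb C})$-invariance of the components of ${\cali N}_{{\goth g}}$ — Proposition~\ref{pjm4} cannot be applied ``under the appropriate projection''; it only sees $\varpi_{1}$. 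Moreover the auxiliary bicones $T$ must actually be produced and their hypotheses checked: for $T=\mathrm{T}''{\goth X}_{{\goth g}}$ the fact that \emph{every} maximal-dimension component dominates ${\goth X}_{{\goth g}}$ rests on the irreducibility of $\mathrm{T}'{\goth X}_{{\goth g}}$, proved via the shear $\theta(t,x,y)=(x,y+tx)$ (the paper's Lemmas~\ref{ld1} and~\ref{l2d1}); this is not automatic.

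For (ii)--(iii) there is a concrete error: Proposition~\ref{psu3} does \emph{not} furnish a point $(w(h),e)$ on every component of ${\cali X}_{{\goth g}}$ — it only says that each such point lies on a unique component $\chi(w)$, and nothing guarantees every component is some $\chi(w)$. (The correct source of a point with nonzero semisimple first coordinate on every component is Proposition~\ref{pd2}(i), once equidimensionality is known.) Two further pieces are missing: the ``Moreover'' clause of (ii) requires that the nullvariety of $p_{1,1,1}$ in each component ${\cali X}$ be \emph{irreducible}, which a non-zero-divisor argument cannot give and which the paper derives from Lemma~\ref{ld1}; and reducedness of ${\cali Y}_{{\goth g}}$ and ${\cali Z}_{{\goth g}}$ needs a smooth point on \emph{every} component, which your $(w(h),e)$-type points do not supply — the paper obtains such points only through Corollary~\ref{csu4} applied to ${\cali X}\cap{\cali Z}_{{\goth g}}\subseteq{\cali X}\cap{\cali Y}_{{\goth g}}$. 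Your treatment of (iv) is sound once (i)--(iii) and the dominance statements are in place.
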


Let us give a brief description of our approach to prove Theorem \ref{td}. We plan to
apply Corollary \ref{csu4}. Namely, we intend to prove that any irreducible component of
maximal dimension of ${\cali X}_{{\goth g}}$ satisfies Conditions (1) and (2) of Corollary
\ref{csu4}. Then we will deduce Theorem \ref{td}, (i), from Lemma \ref{l2su2}, (i). Next,
the statements (ii), (iii) and (iv) of Theorem \ref{td} will be mostly consequences of
statement (i). The main point is therefore to study the images by $\varpi_1$ and
$\varpi_2$ of the irreducible components of maximal dimension of ${\cali X}_{{\goth g}}$,
${\cali Y}_{{\goth g}}$, ${\cali Z}_{{\goth g}}$ and ${\cali N}_{{\goth g}}$. To process, we
consider jet schemes of ${\goth X}_{\goth g}$ and ${\goth N}_{\goth g}$ and we use the
results of Section \ref{jm} about motivic integration. In fact, we remark that an element
$(x,y) \in \gtg g{}$ belongs to ${\cali X}_{\goth g}$ or ${\cali N}_{\goth g}$ if and only
if the arc $t \mapsto x+ty$ is an element of $J_{\infty}({\goth X}_{\goth g})$ or
$J_{\infty}({\goth N}_{\goth g})$ respectively. Thus ${\cali X}_{\goth g}$ and
${\cali N}_{\goth g}$ can be identified to subsets of $J_{\infty}({\goth X}_{\goth g})$
and $J_{\infty}({\goth N}_{\goth g})$ respectively. As  ${\goth X}_{\goth g}$ and
${\goth N}_{\goth g}$ are strictly contained in ${\goth g}$, the motivic measure with
respect to ${\goth g}$ of the so obtained subsets is zero by Proposition \ref{pjm2}, (2).
So we cannot expect to obtain any information from these measures. That is why we are going instead to make use of the subtler construction as described in Section \ref{jm}, \S\ref{jm4}. We will
obtain in this way subsets whose Hodge realization of the motivic measure with respect to
${\goth g}$ provide all the information we need about the varieties ${\cali X}_{\goth g}$
and ${\cali N}_{\goth g}$.

\subsection{Two lemmas}\label{d1}
We give in this subsection two lemmas useful for Subsection \ref{d2}. We denote by
$\theta $ the map
$${\Bb C}\times \gtg g{} \longrightarrow \gtg g{} \mbox{ , }
(t,x,y) \mapsto (x,y+tx) \mbox{ .}$$

\begin{lemma}\label{ld1}
Let ${\cali X}$ be a $G$-invariant irreducible closed bicone of $\gtg g{}$. Suppose
that $\varpi _{1}({\cali X})={\goth X}_{{\goth g}}$ and that
$\theta ({\Bb C}\times {\cali X})$ is contained in ${\cali X}$. If ${\cali X}_{1}$ and
${\cali X}_{2}$ are two irreducible components of the nullvariety of $p_{1,1,1}$ in
${\cali X}$ such that
$\varpi _{1}({\cali X}_{1})=\varpi _{1}({\cali X}_{2})={\goth X}_{{\goth g}}$, then
${\cali X}_{1}={\cali X}_{2}$.
\end{lemma}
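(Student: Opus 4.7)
The plan is to exploit the hypothesis $\theta ({\Bb C}\times {\cali X})\subset {\cali X}$, which says that the one-parameter family $\theta _{t}:(x,y)\mapsto (x,y+tx)$ acts on ${\cali X}$. On the open locus of ${\cali X}$ where $\dv{x}{x}\neq 0$ each $\theta $-orbit meets the hypersurface $p_{1,1,1}=0$ in exactly one point, so this hypersurface is a geometric ``slice'' for the action. My strategy is to write this slice down explicitly and show that it must coincide with every irreducible component of the nullvariety that dominates ${\goth X}_{{\goth g}}$ via $\varpi _{1}$.

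If $p_{1,1,1}$ vanishes identically on ${\cali X}$ the nullvariety equals the irreducible ${\cali X}$ and the statement is trivial; so I may assume $p_{1,1,1}\not\equiv 0$ on ${\cali X}$, so that every component of the nullvariety of $p_{1,1,1}$ in ${\cali X}$ has codimension one in ${\cali X}$. I next introduce the open subset $U:=\{(x,y)\in {\cali X}\ \vert\ \dv{x}{x}\neq 0\}$ of ${\cali X}$, which is nonempty (hence dense): indeed ${\goth X}_{{\goth g}}=\varpi _{1}({\cali X})$ contains $h$, and $\dv{h}{h}=p_{1}(h)\neq 0$ by the proof of Lemma~\ref{l2su1}. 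On $U$ I consider the morphism
\[
\phi \ :\ U \longrightarrow \gtg g{}\ ,\qquad (x,y) \longmapsto \Big(x,\ y-\frac{\dv{x}{y}}{\dv{x}{x}}\,x\Big)\ .
\]
The $\theta $-stability of ${\cali X}$ yields $\phi (U)\subset {\cali X}$, and by construction $p_{1,1,1}\rond \phi =0$, so $\phi (U)$ lies in the nullvariety of $p_{1,1,1}$ in ${\cali X}$. Each fiber of $\phi $ is the one-parameter line $\{(x,y+sx)\ \vert\ s\in {\Bb C}\}$, so $\phi (U)$ is irreducible of dimension $\dim {\cali X}-1$, and its closure ${\cali X}_{0}$ is an irreducible component of the nullvariety.

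To conclude I would show ${\cali X}_{1}={\cali X}_{0}={\cali X}_{2}$. For $i=1,2$, the assumption $\varpi _{1}({\cali X}_{i})={\goth X}_{{\goth g}}$ together with the density of the regular semisimple orbit $G.({\Bb C}^{*}h)$ in ${\goth X}_{{\goth g}}$ implies that ${\cali X}_{i}\cap U$ is a nonempty open, hence dense, subset of the irreducible ${\cali X}_{i}$. For $(x,y)\in {\cali X}_{i}\cap U$ one has $\dv{x}{y}=0$ by membership in the nullvariety, whence $\phi (x,y)=(x,y)$; thus ${\cali X}_{i}\cap U\subset \phi (U)\subset {\cali X}_{0}$. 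Taking closures gives ${\cali X}_{i}\subset {\cali X}_{0}$, and maximality of ${\cali X}_{i}$ as an irreducible closed subset of the nullvariety forces ${\cali X}_{i}={\cali X}_{0}$.

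The main obstacle is conceptual rather than computational: one needs to recognize that the hypothesis $\theta ({\Bb C}\times {\cali X})\subset {\cali X}$, combined with the nonvanishing of the Killing form on the regular semisimple locus of ${\goth X}_{{\goth g}}$, sets up a classical slicing situation for an additive ${\Bb C}$-action. Once the map $\phi $ is written down everything else reduces to routine dimension and irreducibility bookkeeping.
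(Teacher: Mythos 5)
Your proof is correct and rests on exactly the same mechanism as the paper's: $\theta$-stability of ${\cali X}$ together with $\dv xx\neq 0$ on the semisimple locus of ${\goth X}_{{\goth g}}$, which makes each $\theta$-orbit through such a point meet $\{p_{1,1,1}=0\}$ in exactly one point. The only difference is packaging: you materialize this as an explicit retraction $\phi$ onto a canonical component ${\cali X}_{0}$ containing both ${\cali X}_{1}$ and ${\cali X}_{2}$, whereas the paper shows each $\theta({\Bb C}\times {\cali X}_{i})$ is dense in ${\cali X}$ and then uses the same uniqueness at generic points to identify ${\cali X}_{1}$ with ${\cali X}_{2}$ directly.
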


\begin{proof} By hypothesis, we can suppose that
${\cali X}_{1}$ and ${\cali X}_{2}$ have codimension $1$ in ${\cali X}$ since ${\cali X}$ is
irreducible. If $(x,y)$ belongs to the nullvariety of $p_{1,1,1}$ in
${\cali X}$, with $x$ semisimple, then $p_{1,1,1}(x,x+y) \not=0$, since $\dv xx \not= 0$ by Lemma \ref{l2su1}, (i).
By hypothesis, for $i=1,2$,
$\theta ({\Bb C}\times {\cali X}_{i})$ is an irreducible constructible subset of
${\cali X}$ which strictly contains ${\cali X}_{i}$. Hence for $i=1,2$,
$\theta ({\Bb C}\times {\cali X}_{i})$ contains a dense open subset of ${\cali X}$. As a
result, for any $(t,x,y)$ in a certain dense open subset of
${\Bb C}\times {\cali X}_{1}$, $x$ is semisimple and there exists $(t',x',y')$
in ${\Bb C}\times {\cali X}_{2}$ such that $\theta (t',x',y')=\theta (t,x,y)$. From this
equality we deduce the equalities:
$$x'=x \mbox{ , } y'-y = (t-t')x \mbox{ .}$$
But $\dv xx\not=0$ since $x$ is a semisimple element of
${\goth X}_{{\goth g}}$. On the other hand, \sloppy \hbox{$\dv xy=\dv x{y'}=0$}. So
$t'=t$ and $(x,y)=(x',y')$. As a consequence \sloppy \hbox{${\cali X}_{1}={\cali X}_{2}$}.
\end{proof}

Let ${\mathrm T}{\goth X}_{{\goth g}}$ be the total tangent space of
${\goth X}_{{\goth g}}$ and let ${\mathrm T}{\goth N}_{{\goth g}}$ be the total tangent
space of ${\goth N}_{{\goth g}}$. They are closed subsets of $\gtg g{}$. Let
${\mathrm T}'{\goth X}_{{\goth g}}$ be the nullvariety of $p_{1,1,1}$ in
${\mathrm T}{\goth X}_{{\goth g}}$ and let ${\mathrm T}''{\goth X}_{{\goth g}}$ be the
nullvariety of $p_{1,0,2}$ in ${\mathrm T}'{\goth X}_{{\goth g}}$.

\begin{lemma}\label{l2d1}
{\rm i)} The subsets ${\mathrm T}{\goth X}_{{\goth g}}$ and
${\mathrm T}{\goth N}_{{\goth g}}$ are $G$-invariant closed irreducible bicones, their
images by $\varpi _{1}$ are equal to ${\goth X}_{{\goth g}}$ and
${\goth N}_{{\goth g}}$ respectively, and they have dimension $4(\bor g{}-\rk\mathfrak{g})+2$
and $4(\bor g{}-\rk\mathfrak{g})$ respectively.

{\rm ii)} The intersection of $\varpi _{1}^{-1}({\goth N}_{{\goth g}})$ and
${\mathrm T}'{\goth X}_{{\goth g}}$ is equal to ${\mathrm T} {\goth N}_{{\goth g}}$.

{\rm iii)} The subset ${\mathrm T}'{\goth X}_{{\goth g}}$ is irreducible of dimension
$4(\bor g{}-\rk\mathfrak{g})+1$. Moreover, $\varpi _{1}(\mathrm{T}' {\goth X}_{{\goth g}})$ is equal
to ${\goth X}_{{\goth g}}$.

{\rm iv)} The image of any irreducible component of $\mathrm{T}''{\goth X}_{{\goth g}}$
by $\varpi_1$ is equal to ${\goth X}_{{\goth g}}$.
\end{lemma}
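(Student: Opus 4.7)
The plan is to deduce (i) immediately from Musta\c{t}\v{a}'s theorem, obtain (ii) by a short direct computation with the defining equations of ${\goth X}_{{\goth g}}$, and then bootstrap to (iii) and (iv) by dimension counting on hypersurfaces inside the total tangent spaces, using (ii) to identify the ``bad'' fiber over ${\goth N}_{{\goth g}}$.

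For (i), I note that $\mathrm{T}{\goth X}_{{\goth g}}=J_{1}({\goth X}_{{\goth g}})$ and $\mathrm{T}{\goth N}_{{\goth g}}=J_{1}({\goth N}_{{\goth g}})$. Both ${\goth X}_{{\goth g}}$ (Corollary \ref{csu1}) and ${\goth N}_{{\goth g}}$ (Kostant together with \cite{He}) are complete intersections with rational singularities, so Theorem \ref{tjm1} immediately gives irreducibility and the expected dimensions $4(\bor g{}-\rk \mathfrak{g})+2$ and $4(\bor g{}-\rk \mathfrak{g})$. The $G$-invariance is obvious, and the bicone property holds because both varieties are cones cut by homogeneous polynomial equations of degree $\geq 2$: for $s\in {\Bb C}^{*}$ one has $T_{sx}X = T_{x}X$, and $T_{0}X$ is all of ${\goth g}$, so $(sx,ty)\in \mathrm{T}X$ whenever $(x,y)\in \mathrm{T}X$. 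Finally $\varpi_{1}(\mathrm{T}X)=X$ is immediate from $(x,0)\in \mathrm{T}X$. For (ii), the inclusion $\mathrm{T}{\goth N}_{{\goth g}}\subset \varpi_{1}^{-1}({\goth N}_{{\goth g}}) \cap \mathrm{T}'{\goth X}_{{\goth g}}$ uses only ${\goth N}_{{\goth g}}\subset {\goth X}_{{\goth g}}$ and $dp_{1}(x)\cdot y = 2\dv xy$. Conversely, the explicit form of $q_{i}$ reverses the implication: for $x$ nilpotent, $p_{1}(x)=\dv xx=0$, and since $d_{i}\geq 3$ for $i\geq 2$ in a simple Lie algebra, the formula $q_{i}=p_{1}(h)^{d_{i}/2}p_{i}-p_{i}(h)p_{1}^{d_{i}/2}$ in the even case yields $dq_{i}(x) = p_{1}(h)^{d_{i}/2}dp_{i}(x)$, while $q_{i}=p_{i}$ in the odd case gives $dq_{i}(x)=dp_{i}(x)$; since $p_{1}(h)\neq 0$, vanishing of $dq_{i}(x)\cdot y$ for $i\geq 2$ together with $\dv xy=0$ forces $dp_{i}(x)\cdot y=0$ for every $i$, whence $y \in T_{x}{\goth N}_{{\goth g}}$.

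For (iii), $p_{1,1,1}$ is not identically zero on $\mathrm{T}{\goth X}_{{\goth g}}$ (test at $(h,h)$), so by (i) the set $\mathrm{T}'{\goth X}_{{\goth g}}$ has pure dimension $4(\bor g{}-\rk \mathfrak{g})+1$. Over the open subset $U={\goth X}_{{\goth g}}\setminus{\goth N}_{{\goth g}}={\Bb C}^{*}G. h$ (Lemma \ref{l2su1}, (i) and (ii)), which lies in the smooth locus of ${\goth X}_{{\goth g}}$ by Corollary \ref{csu1}, the total tangent space restricts to a genuine vector bundle, and the linear form $y\mapsto \dv xy$ is nonzero on $T_{x}{\goth X}_{{\goth g}}$ because $x\in T_{x}{\goth X}_{{\goth g}}$ with $\dv xx=p_{1}(x)\neq 0$; hence $\mathrm{T}'{\goth X}_{{\goth g}}\cap \varpi_{1}^{-1}(U)$ is a sub-vector bundle, so irreducible of dimension $4(\bor g{}-\rk \mathfrak{g})+1$. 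By (ii) its complement in $\mathrm{T}'{\goth X}_{{\goth g}}$ equals $\mathrm{T}{\goth N}_{{\goth g}}$, of dimension $4(\bor g{}-\rk \mathfrak{g})$ by (i), too small to contain a component. Hence $\mathrm{T}'{\goth X}_{{\goth g}}$ is irreducible, and $\varpi_{1}(\mathrm{T}'{\goth X}_{{\goth g}})={\goth X}_{{\goth g}}$ is trivial.

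For (iv), let $C$ be an irreducible component of $\mathrm{T}''{\goth X}_{{\goth g}}$. Since $p_{1,0,2}=\dv yy$ is not identically zero on $\mathrm{T}'{\goth X}_{{\goth g}}$ (test at $(h,e-f)$, using $\dv ef\neq 0$), the dimension of $C$ is $4(\bor g{}-\rk \mathfrak{g})$. If $\varpi_{1}(C)\subset {\goth N}_{{\goth g}}$, then by (ii) we have $C \subset \mathrm{T}{\goth N}_{{\goth g}}\cap\{p_{1,0,2}=0\}$, which is a proper hypersurface in $\mathrm{T}{\goth N}_{{\goth g}}$ (test at $(0,y)$ with $y$ regular semisimple, since $T_{0}{\goth N}_{{\goth g}}={\goth g}$); hence this set has dimension at most $4(\bor g{}-\rk \mathfrak{g})-1$, contradicting $\dim C$. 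So $\varpi_{1}(C)$ is a $G$-invariant closed cone in ${\goth X}_{{\goth g}}$ not contained in ${\goth N}_{{\goth g}}$, and since ${\goth X}_{{\goth g}}\setminus {\goth N}_{{\goth g}}={\Bb C}^{*}G. h$ has closure ${\goth X}_{{\goth g}}$, every such cone is all of ${\goth X}_{{\goth g}}$. The main obstacle is really step (iii): the hypersurface $\mathrm{T}'{\goth X}_{{\goth g}}$ is cut out of the singular variety $\mathrm{T}{\goth X}_{{\goth g}}$, so its irreducibility is not formal, and one needs (ii) to identify precisely the fiber over ${\goth N}_{{\goth g}}$ together with the sharp dimension from (i) to prevent spurious components from appearing over the singular locus.
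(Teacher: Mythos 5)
Your argument is correct, and parts (i), (ii) and (iv) follow essentially the paper's own route: Musta\c{t}\v{a}'s theorem for the irreducibility and dimension of the two total tangent spaces, the explicit form of the $q_i$ at a nilpotent point to get (ii) (your direct verification of the inclusion $\mathrm{T}{\goth N}_{{\goth g}}\subset \mathrm{T}'{\goth X}_{{\goth g}}$ via $dq_i(x)=p_1(h)^{d_i/2}dp_i(x)$ is a harmless variant of the paper's density argument over $G.e$), and for (iv) the same dimension count, with the witness $(h,e-f)$ in place of the paper's $(e,h)$. The genuine divergence is in (iii). The paper first shows that every irreducible component of $\mathrm{T}'{\goth X}_{{\goth g}}$ dominates ${\goth X}_{{\goth g}}$ (using (ii), Lemma \ref{lint} and the dimension of $\mathrm{T}{\goth N}_{{\goth g}}$) and then invokes Lemma \ref{ld1}, i.e.\ the shear map $\theta(t,x,y)=(x,y+tx)$ together with nondegeneracy of $\dv xx$ at semisimple points, to conclude that two such components must coincide. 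You instead observe that over the open stratum ${\Bb C}^{*}G.h$, which lies in the smooth locus by Corollary \ref{csu1}, the condition $p_{1,1,1}=0$ cuts a corank-one sub-vector bundle of the tangent bundle (the form $y\mapsto \dv xy$ being nonzero on $T_x{\goth X}_{{\goth g}}$ since $x\in T_x{\goth X}_{{\goth g}}$ and $p_1(x)\neq 0$), so that $\mathrm{T}'{\goth X}_{{\goth g}}\cap \varpi_1^{-1}({\Bb C}^{*}G.h)$ is irreducible of the right dimension; (ii) and the dimension of $\mathrm{T}{\goth N}_{{\goth g}}$ then rule out components supported over ${\goth N}_{{\goth g}}$. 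Both proofs rest on the same two inputs ((ii) and the sharp dimension from (i)); yours avoids Lemma \ref{ld1} entirely and is somewhat more geometric, while the paper's Lemma \ref{ld1} is reused later (Proposition \ref{p2d2}, (ii)) in a setting where no bundle structure is available, which is why the authors set it up as a separate lemma.
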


\begin{proof}
i) As ${\goth X}_{{\goth g}}$ and ${\goth N}_{{\goth g}}$ are $G$-invariant closed cones,
${\mathrm T}{\goth X}_{{\goth g}}$ and ${\mathrm T}{\goth N}_{{\goth g}}$ are
$G$-invariant closed bicones. By Theorem \ref{tjm1} and Remark \ref{rjm1},
${\mathrm T}{\goth X}_{{\goth g}}$ is irreducible and has dimension \sloppy \hbox{
$4(\bor g{}-\rk\mathfrak{g})+2$}. Likewise, by Theorem \ref{tjm1} and Remark \ref{rjm1},
${\mathrm T} {\goth N}_{{\goth g}}$ is irreducible and has dimension \sloppy \hbox{
$4(\bor g{}-\rk\mathfrak{g})$}.

ii) For $x$ in $G. e$ and $y$ in ${\goth g}$, $(x,y)$ belongs to
${\mathrm T}{\goth N}_{{\goth g}}$ if and only if $y$ belongs to $[x,{\goth g}]$. In
particular, for any $x$ in $G. e$ and any $y$ in ${\goth g}$ such that $(x,y)$ belongs to
${\mathrm T}{\goth N}_{{\goth g}}$, $(x,y)$ belongs to
${\mathrm T}'{\goth X}_{{\goth g}}$ since $x$ is orthogonal to
$[x,{\goth g}]$. By (i), the intersection of ${\mathrm T}{\goth N}_{{\goth g}}$ and
$\varpi _{1}^{-1}(G.e)$ is dense in ${\mathrm T}{\goth N}_{{\goth g}}$. So
${\mathrm T}{\goth N}_{{\goth g}} \subset \varpi _{1}^{-1}({\goth N}_{{\goth g}})
\cap {\mathrm T}'{\goth X}_{{\goth g}}$. Let us prove the other inclusion.
As ${\goth X}_{{\goth g}}$ is the nullvariety in ${\goth g}$ of
$\poi q2{,\ldots,}{\ran g{}}{}{}{}$, for any $x$ in ${\goth X}_{{\goth g}}$, the subspace
of elements $y$ of ${\goth g}$ such that $(x,y)$ belongs to
${\mathrm T}{\goth X}_{{\goth g}}$ is the intersection of the kernels of the
differentials at $x$ of $\poi q2{,\ldots,}{\ran g{}}{}{}{}$. By definition of the $q_i$
(see Subsection \ref{su1}), for any $x$ in ${\goth N}_{{\goth g}}$, the subset of
elements $y$ of ${\goth g}$ such that
$(x,y)$ belongs to ${\mathrm T}{\goth X}_{{\goth g}}$ is the intersection of the kernels
of the differentials at $x$ of $\poi p2{,\ldots,}{\ran g{}}{}{}{}$. Therefore, the
intersection of ${\mathrm T}'{\goth X}_{{\goth g}}$ and
$\varpi _{1}^{-1}({\goth N}_{{\goth g}})$ is contained in
${\mathrm T}{\goth N}_{{\goth g}}$, since ${\goth N}_{{\goth g}}$ is the nullvariety of
$\poi p1{,\ldots,}{\ran g{}}{}{}{}$ in ${\goth g}$.

iii) As ${\goth X}_{{\goth g}}$ is a cone, ${\mathrm T}{\goth X}_{{\goth g}}$ contains
the diagonal of ${\goth X}_{{\goth g}}\times {\goth X}_{{\goth g}}$. So
${\mathrm T}{\goth X}_{{\goth g}}$ strictly contains ${\mathrm T}'{\goth X}_{{\goth g}}$.
Then by (i), ${\mathrm T}' {\goth X}_{{\goth g}}$ has dimension $4(\bor g{}-\rk\mathfrak{g})+1$.
Moreover, ${\mathrm T}'{\goth X}_{{\goth g}}$ is an equidimensional $G$-invariant closed
bicone, since $p_{1,1,1}$ is bihomogeneous and $G$-invariant.
Hence by (ii) and Lemma \ref{lint}, the image by $\varpi _{1}$ of any irreducible
component of ${\mathrm T}'{\goth X}_{{\goth g}}$ is equal to
${\goth X}_{{\goth g}}$. As ${\goth X}_{{\goth g}}$ is a cone, for any $t$ in
${\Bb C}$ and any $(x,y)$ in ${\mathrm T}{\goth X}_{{\goth g}}$, $(x,y+tx)$ belongs to
${\mathrm T}{\goth X}_{{\goth g}}$. Therefore, by Lemma \ref{ld1},
${\mathrm T}'{\goth X}_{{\goth g}}$ is irreducible.

iv) By (iii), $\mathrm{T}'' {\goth X}_{{\goth g}}$ is equidimensional of dimension at
least $4(\bor g{}-\rk\mathfrak{g})$. Moreover, since $p_{1,0,2}$ is
bihomogeneous and $G$-invariant, $\mathrm{T}'' {\goth X}_{{\goth g}}$ is a $G$-invariant
closed bicone. As $(e,h)$ belongs to $\mathrm{T}' {\goth X}_{{\goth g}}$,
$\mathrm{T}''{\goth X}_{{\goth g}}$ is strictly contained in
$\mathrm{T}'{\goth X}_{{\goth g}}$ and $\mathrm{T}'' {\goth X}_{{\goth g}}$ is
equidimensional of dimension $4(\bor g{}-\rk\mathfrak{g})$. In addition, as $(e,h)$ belongs to
$\mathrm{T} {\goth N}_{{\goth g}} \setminus \mathrm{T}'' {\goth X}_{{\goth g}}$, we
deduce from (ii) and Lemma \ref{l2su1}, (i), that the intersection of
$\mathrm{T}''{\goth X}_{{\goth g}}$ and $\varpi_{1}^{-1}({\goth N}_{\goth g})$ is
equidimensional of dimension $4(\bor g{}-\rk\mathfrak{g})-1$. Hence, by Lemma \ref{lint}, the image
of any irreducible component of $\mathrm{T}''{\goth X}_{{\goth g}}$ is equal to
${\goth X}_{{\goth g}}$.
\end{proof}

\subsection{Proof of Theorem \ref{td}}\label{d2} In this subsection, we apply Proposition \ref{pjm4} to
suitable $V$, $X$, $T$ and ${\bf K}$ in order to prove Theorem \ref{td}.

\begin{proposition}\label{pd2}
{\rm i)} Let ${\cali X}$ be an irreducible component of maximal dimension of
${\cali X}_{{\goth g}}$ $($resp. ${\cali Y}_{{\goth g}}$, ${\cali Z}_{{\goth g}}$$)$. Then
$\varpi _{1}({\cali X})$ is equal to ${\goth X}_{{\goth g}}$.

{\rm ii)} Let ${\cali X}$ be an irreducible component of maximal dimension of
${\cali N}_{{\goth g}}$, then $\varpi _{1}({\cali X})$ is equal to ${\goth N}_{{\goth g}}$.
\end{proposition}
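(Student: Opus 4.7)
The plan is to apply Proposition \ref{pjm4} four times, once for each of the varieties ${\cali X}_{{\goth g}}$, ${\cali Y}_{{\goth g}}$, ${\cali Z}_{{\goth g}}$ and ${\cali N}_{{\goth g}}$. The unifying observation is the following: whenever $X\subset V={\goth g}$ is a $G$-invariant irreducible closed cone satisfying the hypotheses of \S\ref{jm3} and $T\subset X\times V$ is a $G$-invariant closed bicone whose members $(x,y)$ satisfy $y\in\mathrm{T}_{x}X$, then the description of $D_{m}$ in Proposition \ref{pjm4}, (i), for $m$ big enough combined with the fact that $X$ is a closed cone (so that $x+ty\in X$ for all $t$ forces $y\in X$ by passing to the limit $t\to\infty$, hence $P_{x,y}\subset X$) identifies $D_{m}$ with $\{(x,y)\in T\ \vert\ P_{x,y}\subset X\}$. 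Proposition \ref{pjm4}, (ii), then yields that $\varpi_{1}$ sends every irreducible component of maximal dimension of $D_{m}$ onto a dense subset of $X$, provided the same holds for the irreducible components of maximal dimension of $T$.

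For (i), the first instance takes $X={\goth X}_{{\goth g}}$, $T=\mathrm{T}{\goth X}_{{\goth g}}$, and ${\bf K}=G\times {\Bb C}^{*}$ with ${\Bb C}^{*}$ acting by scalar multiplication, so that ${\goth X}_{{\goth g}}$ becomes the finite disjoint union of ${\bf K}$-orbits ${\Bb C}^{*}(G. h)\cup\{0\}$ together with the finitely many nilpotent orbits contained in ${\goth N}_{{\goth g}}$. The complete-intersection and rational singularity hypotheses are provided by Corollary \ref{csu1}, and Lemma \ref{l2d1}, (i), supplies the irreducibility of $T$ with $\varpi_{1}(T)={\goth X}_{{\goth g}}$. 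The identification $D_{m}={\cali X}_{{\goth g}}$ for $m$ big follows from the cone argument above. The same scheme applied with $T=\mathrm{T}'{\goth X}_{{\goth g}}$ (resp.\ $T=\mathrm{T}''{\goth X}_{{\goth g}}$) produces $D_{m}={\cali Y}_{{\goth g}}$ (resp.\ $D_{m}={\cali Z}_{{\goth g}}$) for $m$ big, via Lemma \ref{lsu3}, (ii) and (iii), and via Lemma \ref{l2su1}, (i), to pass from $p_{1,0,2}(x,y)=\dv yy=0$ on ${\goth X}_{{\goth g}}$ to $y\in{\goth N}_{{\goth g}}$. The hypotheses of Proposition \ref{pjm4}, (ii), on the auxiliary $T$ are furnished by Lemma \ref{l2d1}, (iii) and (iv) respectively.

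For (ii), we repeat the argument with $X={\goth N}_{{\goth g}}$, $T=\mathrm{T}{\goth N}_{{\goth g}}$ and ${\bf K}=G$: the nilpotent cone is a complete intersection with rational singularities (Remark \ref{rjm1}) which is the finite union of nilpotent orbits, and Lemma \ref{l2d1}, (i), again gives the irreducibility of $T$ with $\varpi_{1}(T)={\goth N}_{{\goth g}}$. The same cone identification produces $D_{m}={\cali N}_{{\goth g}}$ for $m$ big.

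In each of the four cases, once Proposition \ref{pjm4}, (ii), delivers the density of $\varpi_{1}({\cali X})$ in the relevant cone, we promote it to equality as follows: each irreducible component ${\cali X}$ of any of these closed bicones is $G$- and $\mathrm{GL}_{2}({\Bb C})$-invariant because both groups are connected, so by Lemma \ref{lint} the image $\varpi_{1}({\cali X})$ is a closed $G$-invariant cone, which together with density yields the equality. The main obstacle is the recognition step: it requires combining Proposition \ref{pjm4}, (i), with the closure-under-scaling argument specific to cones, and, for the intermediate varieties ${\cali Y}_{{\goth g}}$ and ${\cali Z}_{{\goth g}}$, with the tangent-space reductions of Lemma \ref{l2d1}, so that the abstract $D_{m}$ of \S\ref{jm4} gets identified on the nose with each of our four target bicones.
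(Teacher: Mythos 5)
Your proposal is correct and follows essentially the same route as the paper: the same choices of $V={\goth g}$, $X$, ${\bf K}$ (the group generated by $G$ and the homotheties, resp.\ $G$), and $T=\mathrm{T}{\goth X}_{{\goth g}}$, $\mathrm{T}'{\goth X}_{{\goth g}}$, $\mathrm{T}''{\goth X}_{{\goth g}}$, $\mathrm{T}{\goth N}_{{\goth g}}$, with the hypotheses supplied by Corollary \ref{csu1} and Lemma \ref{l2d1} and the conclusion upgraded from density to equality via Lemma \ref{lint}. The only difference is that you spell out the identification of $D_{m}$ with each target bicone (via the cone argument and Lemma \ref{l2su1}, (i)) in more detail than the paper, which simply invokes Proposition \ref{pjm4}, (i).
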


\begin{proof}
i) Let ${\bf K}$ be the subgroup of GL$({\goth g})$ generated by $G$ and its homotheties.
By Corollary \ref{csu1}, ${\goth X}_{{\goth g}}$ is a ${\bf K}$-invariant irreducible
closed normal cone and it is a complete intersection in ${\goth g}$ with rational
singularities. Moreover, ${\bf K}$ has finitely many orbits in
${\goth X}_{{\goth g}}$. Let $T$ be the bicone ${\mathrm T}{\goth X}_{{\goth g}}$
(resp. ${\mathrm T}'{\goth X}_{{\goth g}}$, ${\mathrm T}''{\goth X}_{{\goth g}}$) of
${\goth X}_{{\goth g}}\times {\goth g}$. By Lemma \ref{l2d1}, (i), (resp. (iii), (iv)),
the image by $\varpi _{1}$  of any irreducible component of $T$ is equal to
${\goth X}_{{\goth g}}$. Then, by Proposition \ref{pjm4}, (i), applied to $V={\goth g}$,
$X={\goth X}_{{\goth g}}$, $T$ and ${\bf K}$, $D_{m}$ is equal to ${\cali X}_{{\goth g}}$
(resp. ${\cali Y}_{{\goth g}}$, ${\cali Z}_{{\goth g}}$), for $m$ big enough. Hence, by
Proposition \ref{pjm4}, (ii), $\varpi _{1}({\cali X)}={\goth X}_{{\goth g}}$ since
$\varpi _{1}({\cali X)}$ is closed by Lemma \ref{lint}.

ii) Let ${\bf K}$ be the subgroup $G$ of
GL$({\goth g})$.
The cone ${\goth N}_{{\goth g}}$ is an irreducible closed normal cone and ${\bf K}$ has
finitely many orbits in ${\goth N}_{{\goth g}}$. Moreover, it is a complete intersection
in ${\goth g}$ and by \cite{He}(Theorem A), ${\goth N}_{{\goth g}}$ has rational
singularities. Let $T$ be the bicone ${\mathrm T}{\goth N}_{{\goth g}}$ of
${\goth N}_{{\goth g}}\times {\goth g}$. By Lemma \ref{l2d1}, (i), $T$ is irreducible
and its image by $\varpi _{1}$ is equal to ${\goth N}_{{\goth g}}$. Then, by
Proposition \ref{pjm4}, (i), applied to $V={\goth g}$, $X={\goth N}_{{\goth g}}$, $T$ and
${\bf K}$, $D_{m}$ is equal to ${\cali N}_{{\goth g}}$, for $m$ big enough. Hence, by
Proposition \ref{pjm4}, (ii), $\varpi _{1}({\cali X)}={\goth N}_{{\goth g}}$ since
$\varpi _{1}({\cali X)}$ is closed by Lemma \ref{lint}.
\end{proof}

Let $d$ be the dimension of ${\cali X}_{{\goth g}}$.

\begin{proposition}\label{p2d2}
Let ${\cali X}$ be an irreducible component of dimension $d$ of ${\cali X}_{{\goth g}}$.

{\rm i)} The dimension of ${\cali Y}_{{\goth g}}$ is $d-1$.

{\rm ii)} The intersection of ${\cali X}$ and ${\cali Y}_{{\goth g}}$ is irreducible
and has dimension $d-1$. Moreover, this intersection is stable under the involution
$(x,y)\mapsto (y,x)$.

{\rm iii)} The intersection of ${\cali X}$ and ${\cali Z}_{{\goth g}}$ is an union of
irreducible components of ${\cali Z}_{{\goth g}}$ of maximal dimension. Moreover,
${\cali Z}_{{\goth g}}$ has dimension $d-2$.

{\rm iv)} The intersection of ${\cali N}_{{\goth g}}$ and ${\cali X}$ is equidimensional of
dimension $d-3$.

{\rm v)} The dimension of ${\cali N}_{{\goth g}}$ is equal to $d-3$.
\end{proposition}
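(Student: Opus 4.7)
The plan is to cascade Krull's principal ideal theorem along the chain
\[ {\cali X} \supset {\cali X}\cap {\cali Y}_{\goth g} \supset {\cali X}\cap {\cali Z}_{\goth g} \supset {\cali X}\cap {\cali N}_{\goth g}, \]
each step being cut out by one of the $2$-order polarizations $p_{1,1,1}$, $p_{1,0,2}$, $p_{1,2,0}$ of the Casimir $p_{1}$. The crux at each step is to show that the relevant polarization does not vanish identically on any irreducible component of the preceding variety, which I would verify by producing a point whose first or second coordinate is a regular semisimple element of ${\goth X}_{\goth g}$, so that the required Killing value is nonzero by Lemma~\ref{l2su1}, (i).

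I would start by collecting preliminary properties of ${\cali X}$. Since $G$ and $\mathrm{GL}_{2}({\Bb C})$ are connected and both preserve ${\cali X}_{\goth g}$, they permute its irreducible components trivially; so ${\cali X}$ is $G$-invariant, a closed bicone, and stable under the involution $\iota:(x,y)\mapsto(y,x)$. By Proposition~\ref{pd2}, (i), $\varpi_{1}({\cali X})={\goth X}_{\goth g}$, and $\iota$-invariance then forces $\varpi_{2}({\cali X})={\goth X}_{\goth g}$ as well. Furthermore $\theta({\Bb C}\times {\cali X})$ is an irreducible constructible subset of ${\cali X}_{\goth g}$ containing ${\cali X}$, hence equals ${\cali X}$ by maximality. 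Thus ${\cali X}$ satisfies every hypothesis of Lemma~\ref{ld1}.

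For (i) and (ii), pick $(x,y)\in {\cali X}$ with $x$ regular semisimple in ${\goth X}_{\goth g}$. Then $(x,y+tx)\in {\cali X}$ by the above $\theta$-invariance, and $p_{1,1,1}(x,y+tx)=\dv{x}{y}+t\dv{x}{x}$ is nonzero for generic $t$, so $p_{1,1,1}\not\equiv 0$ on ${\cali X}$. Krull's theorem then makes ${\cali X}\cap {\cali Y}_{\goth g}$ equidimensional of dimension $d-1$; combined with Lemma~\ref{lsu3}, (ii), and with the same nonvanishing on every max-dim component of ${\cali X}_{\goth g}$ (every component is max-dim, by Lemma~\ref{l2su2}, (i)), this yields $\dim {\cali Y}_{\goth g}=d-1$, proving (i). For the irreducibility in (ii), each component of ${\cali X}\cap {\cali Y}_{\goth g}$ has dimension $d-1$ and must therefore coincide with a full component of ${\cali Y}_{\goth g}$; Proposition~\ref{pd2}, (i), then gives $\varpi_{1}$-image equal to ${\goth X}_{\goth g}$, and Lemma~\ref{ld1} leaves room for exactly one such component. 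The $\iota$-stability of ${\cali X}\cap {\cali Y}_{\goth g}$ is automatic, since both factors are $\iota$-invariant.

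Parts (iii)--(v) proceed by the same template. From (ii) and $\iota$-stability, $\varpi_{2}({\cali X}\cap {\cali Y}_{\goth g})={\goth X}_{\goth g}$, so there is $(x,y)\in {\cali X}\cap {\cali Y}_{\goth g}$ with $y$ regular semisimple; then $p_{1,0,2}(x,y)=\dv{y}{y}\neq 0$, Krull gives ${\cali X}\cap {\cali Z}_{\goth g}$ pure of dimension $d-2$, and the same argument via Proposition~\ref{pd2}, (i), and the $\iota$-trick on every max-dim component of ${\cali Y}_{\goth g}$ yields $\dim {\cali Z}_{\goth g}=d-2$ after combining with Lemma~\ref{lsu3}, (iii). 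For (iv), each component of ${\cali X}\cap {\cali Z}_{\goth g}$ is a max-dim component of ${\cali Z}_{\goth g}$ and so has $\varpi_{1}$-image ${\goth X}_{\goth g}$ by Proposition~\ref{pd2}, (i); hence $p_{1,2,0}(x,y)=\dv{x}{x}$ does not vanish on it, and Krull produces ${\cali N}_{\goth g}\cap {\cali X}$ equidimensional of dimension $d-3$. The same nonvanishing on every max-dim component of ${\cali Z}_{\goth g}$ bounds $\dim {\cali N}_{\goth g}\leq d-3$, and this combines with (iv) to give (v). The only genuinely delicate step is the irreducibility claim of (ii): every other assertion collapses to a Killing-form nonvanishing check, but irreducibility relies crucially on Lemma~\ref{ld1}, whose hypothesis $\theta({\Bb C}\times {\cali X})\subset {\cali X}$ has to be established first for the individual component ${\cali X}$, not merely for ${\cali X}_{\goth g}$.
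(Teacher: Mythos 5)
Your overall strategy is exactly the paper's: descend through ${\cali X}\supset{\cali X}\cap{\cali Y}_{{\goth g}}\supset{\cali X}\cap{\cali Z}_{{\goth g}}\supset{\cali X}\cap{\cali N}_{{\goth g}}$ by cutting with $p_{1,1,1}$, $p_{1,0,2}$, $p_{1,2,0}$, using Proposition \ref{pd2}, (i), for the nonvanishing at each stage and Lemma \ref{ld1} for the irreducibility in (ii). There is, however, one genuine error: the parenthetical claim that every irreducible component of ${\cali X}_{{\goth g}}$ has dimension $d$ ``by Lemma \ref{l2su2}, (i)''. That lemma only gives the lower bound $3(\bor g{}-\rk\mathfrak{g}+1)$ for each component; at this point $d=\dim {\cali X}_{{\goth g}}$ is not yet known to equal that bound, and equidimensionality of ${\cali X}_{{\goth g}}$ is precisely Corollary \ref{c2d2}, (i), which is deduced \emph{from} the proposition you are proving. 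As written the appeal is circular, and Proposition \ref{pd2}, (i), only gives you $\varpi_{1}$-surjectivity onto ${\goth X}_{{\goth g}}$ for components of \emph{maximal} dimension, so your nonvanishing check does not a priori apply to every component of ${\cali X}_{{\goth g}}$.

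Fortunately the conclusions survive: a component of ${\cali Y}_{{\goth g}}$ lying in a component of ${\cali X}_{{\goth g}}$ of dimension $<d$ has dimension at most $d-1$ for trivial reasons, so only the maximal components actually need the nonvanishing argument; the same remark repairs the analogous ``every max-dim component'' steps for ${\cali Z}_{{\goth g}}$ and ${\cali N}_{{\goth g}}$. The paper's own route to the upper bound in (i) is worth comparing: it applies Proposition \ref{pd2}, (i), to a maximal component ${\cali X}'$ of ${\cali Y}_{{\goth g}}$ itself (the proposition covers ${\cali Y}_{{\goth g}}$ and ${\cali Z}_{{\goth g}}$ as well as ${\cali X}_{{\goth g}}$), deduces $(h,0)\in {\cali X}'$ since ${\cali X}'$ is a closed bicone with $\varpi_{1}({\cali X}')={\goth X}_{{\goth g}}$, and then uses ${\rm GL}_{2}({\Bb C})$-invariance of the ambient component ${\cali X}''$ to place $(h,h)$ in ${\cali X}''\setminus {\cali X}'$, so that ${\cali X}'$ is proper in ${\cali X}''$ and has dimension $\dim {\cali X}''-1\leq d-1$. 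Either fix is fine; just do not cite Lemma \ref{l2su2}, (i), for equidimensionality. The rest of your argument (the $\theta$-invariance of the individual component via connectedness of ${\rm GL}_{2}({\Bb C})$, the use of Lemma \ref{ld1} for (ii), and the $\iota$-trick to pass from $\varpi_{1}$ to $\varpi_{2}$) matches the paper and is correct.
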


\begin{proof}
i) Let ${\cali X}'$ be an irreducible component of ${\cali Y}_{{\goth g}}$ of maximal
dimension. As ${\cali Y}_{{\goth g}}$ is contained in ${\cali X}_{{\goth g}}$, ${\cali X}'$
is contained in an irreducible component ${\cali X}''$ of ${\cali X}_{{\goth g}}$.
Moreover, ${\cali X}'$ is an irreducible component of the nullvariety of $p_{1,1,1}$ in
${\cali X}''$. So ${\cali X}'$ has codimension at most $1$ in ${\cali X}''$. By Proposition
\ref{pd2}, (i), $\varpi _{1}({\cali X}')$ is equal to ${\goth X}_{{\goth g}}$. Hence
${\cali X}'$ contains $(h,0)$ since it is a closed bicone. As ${\cali X}''$ is an
irreducible component of ${\cali X}_{{\goth g}}$, it is invariant under the action of
GL$_{2}({\Bb C})$. Hence ${\cali X}''$ contains $(h,h)$ and so strictly contains
${\cali X}'$. So ${\cali X}'$ has dimension $\dim {\cali X}''-1$. In particular,
$\dim {\cali X}'$ is at most $d-1$. As any irreducible component of the nullvariety of
$p_{1,1,1}$ in ${\cali X}$ is contained in ${\cali Y}_{{\goth g}}$, ${\cali Y}_{{\goth g}}$
has dimension $d-1$.

ii) As ${\cali Y}_{{\goth g}}$ is stable under the involution $(x,y)\mapsto (y,x)$ and
${\cali X}$ is stable under the action of GL$_{2}({\Bb C})$, their intersection is stable
under the involution $(x,y)\mapsto (y,x)$. By (i), any irreducible component of the
intersection of ${\cali X}$ and ${\cali Y}_{{\goth g}}$ has dimension $d-1$. Hence by (i),
Proposition \ref{pd2}, (i) and Lemma \ref{ld1}, the intersection of
${\cali Y}_{{\goth g}}$ and ${\cali X}$ is irreducible since
$\theta ({\Bb C}\times {\cali X})$ is contained in ${\cali X}$.

iii) Let ${\cali Y}$ be the intersection of ${\cali Y}_{{\goth g}}$ and ${\cali X}$ and let
${\cali Z}$ be the intersection of ${\cali X}$ and ${\cali Z}_{{\goth g}}$. By definition,
${\cali Z}$ is the nullvariety of $p_{1,0,2}$ in ${\cali Y}$. Moreover, by (ii) and
Proposition \ref{pd2}, (i), $\varpi _{2}({\cali Y})$ is equal to ${\goth X}_{{\goth g}}$.
Hence by (i) and (ii), any irreducible component of ${\cali Z}$ has dimension $d-2$. Let
${\cali Z}'$ be an irreducible component of ${\cali Z}_{{\goth g}}$ of maximal dimension.
Then $\dim {\cali Z}' \geq d-2$. Suppose $\dim {\cali Z}' > d-2$. As ${\cali Z}_{{\goth g}}$
is contained in ${\cali Y}_{{\goth g}}$, ${\cali Z}'$ is an irreducible component of
maximal dimension of ${\cali Y}_{{\goth g}}$ by (i). So by Proposition \ref{pd2}, (i),
$\varpi _{2}({\cali Z}')$ is equal to ${\goth X}_{{\goth g}}$. This is impossible since
${\cali Z}' \subset {\cali Z}_{{\goth g}}$. So $\dim {\cali Z}_{{\goth g}}=d-2$ and
${\cali Z}$ is an union of irreducible components of dimension $d-2$ of
${\cali Z}_{{\goth g}}$.

iv) Let ${\cali Y}$ and ${\cali Z}$ be as in (iii). As ${\cali N}_{{\goth g}}$ is contained
in ${\cali Z}_{{\goth g}}$, the intersection of ${\cali X}$ and ${\cali N}_{{\goth g}}$ is
contained in ${\cali Z}$. By (iii) and Proposition \ref{pd2}, (i), any irreducible
component of ${\cali Z}$ is not contained in ${\cali N}_{{\goth g}}$ and has dimension
$d-2$. The intersection of ${\cali X}$ and ${\cali N}_{{\goth g}}$ is the nullvariety of
$p_{1,2,0}$ in ${\cali Z}$. Hence this intersection  is equidimensional of dimension $d-3$.

v) As ${\cali N}_{{\goth g}}$ is contained in ${\cali Z}_{{\goth g}}$, any irreducible
component of ${\cali N}_{{\goth g}}$ is contained in an irreducible component of
${\cali Z}_{{\goth g}}$. Moreover, by (iii) and Proposition \ref{pd2}, (i), any
irreducible component of dimension $d-2$ of ${\cali Z}_{{\goth g}}$ is not contained in
${\cali N}_{{\goth g}}$. Hence the dimension of ${\cali N}_{{\goth g}}$ is at most $d-3$.
So, by (iv), ${\cali N}_{{\goth g}}$ has dimension $d-3$.
\end{proof}

\begin{corollary}\label{c2d2}
{\rm i)} The subscheme ${\cali X}_{{\goth g}}$ is equidimensional of dimension \sloppy
\hbox{$3(\bor g{}-\rk\mathfrak{g}+1)$}. Moreover, any irreducible component of ${\cali X}_{{\goth g}}$
has a nonempty intersection with the intersection of $\Omega _{{\goth g}}$ and
${\cali Z}_{{\goth g}}$.

{\rm ii)} The subscheme ${\cali Y}_{{\goth g}}$ is equidimensional of dimension
$3(\bor g{}-\rk\mathfrak{g})+2$. Moreover, any irreducible component of ${\cali Y}_{{\goth g}}$ has
a nonempty intersection with $\Omega _{{\goth g}}$.

{\rm iii)} The subscheme ${\cali Z}_{{\goth g}}$ is equidimensional of dimension
$3(\bor g{}-\rk\mathfrak{g})+1$. Moreover, any irreducible component of ${\cali Z}_{{\goth g}}$ has
a nonempty intersection with $\Omega _{{\goth g}}$.
\end{corollary}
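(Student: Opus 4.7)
The plan is to apply Corollary \ref{csu4} uniformly to every irreducible component of ${\cali X}_{\goth g}$. Let $d := \dim {\cali X}_{\goth g}$. By Lemma \ref{l2su2}(i) every irreducible component of ${\cali X}_{\goth g}$ has dimension at least $3(\bor g{}-\rk\mathfrak{g}+1)$, so proving $d = 3(\bor g{}-\rk\mathfrak{g}+1)$ will automatically give equidimensionality. Fix a component ${\cali X}$ of dimension $d$; the goal is to verify the hypotheses of Corollary \ref{csu4} not for just one but for \emph{every} irreducible component ${\cali Z}$ of ${\cali X}' := {\cali X} \cap {\cali Z}_{\goth g}$, which will also yield the stronger ``every component meets $\Omega_{\goth g}\cap{\cali Z}_{\goth g}$'' half of (i).

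The crucial input is a swap-symmetry argument. Because ${\cali X}_{\goth g}$ is ${\rm GL}_{2}({\Bb C})$-invariant, the image ${\cali X}^{*}$ of ${\cali X}$ under $(x,y)\mapsto(y,x)$ is again a maximal-dimensional component, and Proposition \ref{pd2}(i) applied to ${\cali X}^{*}$ gives $\varpi_2({\cali X}) = {\goth X}_{\goth g}$. Since ${\cali N}_{\goth g}$ is likewise swap-invariant, the swap bijects the irreducible components of ${\cali X}\cap{\cali N}_{\goth g}$ with those of ${\cali X}^{*}\cap{\cali N}_{\goth g}$; by Proposition \ref{p2d2}(iv)--(v) each such component is a maximal-dimensional component of ${\cali N}_{\goth g}$, and Proposition \ref{pd2}(ii) gives $\varpi_1 = {\goth N}_{\goth g}$ on the swapped side. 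Swapping back, $\varpi_2({\cali W}) = {\goth N}_{\goth g}$ for every irreducible component ${\cali W}$ of ${\cali X}\cap{\cali N}_{\goth g}$.

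Now pick any component ${\cali Z}$ of ${\cali X}'$. By Proposition \ref{p2d2}(iii) it is a maximal-dimensional component of ${\cali Z}_{\goth g}$, so $\varpi_1({\cali Z}) = {\goth X}_{\goth g}$ by Proposition \ref{pd2}(i). A short Killing-form computation, using Lemma \ref{l2su1}(i) together with the defining relations $y\in{\goth N}_{\goth g}$ and $\dv xy = 0$ of ${\cali Z}_{\goth g}$, shows that on ${\cali Z}_{\goth g}$ the equation $p_{1,2,0}(x,y) = p_{1}(x) = 0$ cuts out exactly ${\cali N}_{\goth g}$. Because $\varpi_1({\cali Z}) = {\goth X}_{\goth g}$ contains non-nilpotent elements, $p_{1,2,0}$ is not identically zero on ${\cali Z}$, so ${\cali Z}\cap{\cali N}_{\goth g}$ is a proper hypersurface in ${\cali Z}$ of dimension $d-3$; by the equidimensionality in Proposition \ref{p2d2}(iv) it is a union of the ${\cali W}$'s above. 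Hence $\varpi_2({\cali Z}) \supseteq {\goth N}_{\goth g}$, with the reverse inclusion trivial. Corollary \ref{csu4} (through Lemma \ref{lsu4}) then applies to every such ${\cali Z}$, producing both $\dim {\cali X} = 3(\bor g{}-\rk\mathfrak{g}+1)$ and ${\cali Z}\cap\Omega_{\goth g}\neq\emptyset$, which finishes (i).

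Parts (ii) and (iii) are formal consequences. For (ii), Proposition \ref{p2d2}(i) gives $\dim{\cali Y}_{\goth g} = d-1$ and Lemma \ref{lsu3}(ii) supplies the matching lower bound, yielding equidimensionality; any component of ${\cali Y}_{\goth g}$ sits in some ${\cali X}_i\cap{\cali Y}_{\goth g}$, which is irreducible of dimension $d-1$ by Proposition \ref{p2d2}(ii), hence coincides with it and contains ${\cali X}_i\cap{\cali Z}_{\goth g}$, which meets $\Omega_{\goth g}$ by (i). Part (iii) is entirely parallel, combining Proposition \ref{p2d2}(iii) with Lemma \ref{lsu3}(iii). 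The main obstacle throughout is the upgrade from Corollary \ref{csu4}'s ``some component'' conclusion to an ``every component'' statement, achieved in the third paragraph by the swap-symmetry argument combined with the Killing-form hypersurface cut on ${\cali Z}_{\goth g}$.
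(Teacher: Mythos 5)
Your proof is correct and follows essentially the same route as the paper: you verify the hypotheses of Corollary \ref{csu4} for the components of ${\cali X}\cap{\cali Z}_{{\goth g}}$ by combining Propositions \ref{pd2} and \ref{p2d2}, with the key point being that $\varpi_{2}$ of each such component covers ${\goth N}_{{\goth g}}$ because it contains a maximal-dimensional component of ${\cali N}_{{\goth g}}$, whose image under the swap is handled by Proposition \ref{pd2}(ii). The only cosmetic difference is that the paper invokes the ${\rm GL}_{2}({\Bb C})$-stability of each component of ${\cali N}_{{\goth g}}$ directly rather than transporting components through the swapped ambient component ${\cali X}^{*}$; parts (ii) and (iii) are deduced from (i) exactly as in the paper.
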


\begin{proof}
i) Let ${\cali X}$ be an irreducible component of dimension $d$ of
${\cali X}_{{\goth g}}$. Let ${\cali Z}$ and ${\cali N}$ be the intersections of
${\cali X}$ with ${\cali Z}_{{\goth g}}$ and ${\cali N}_{{\goth g}}$ respectively. By
Proposition \ref{p2d2}, (iii) and (iv), ${\cali Z}$ and ${\cali N}$ are equidimensional of
dimension $d-2$ and \sloppy \hbox{$d-3$} respectively. Hence by Proposition \ref{p2d2},
(v), ${\cali N}$ is an union of irreducible components of maximal dimension of
${\cali N}_{{\goth g}}$. As any irreducible component of ${\cali N}_{{\goth g}}$ is stable
under the action of GL$_{2}({\Bb C})$, the image by $\varpi _{2}$ of any irreducible
component of ${\cali N}$ is equal to ${\goth N}_{{\goth g}}$ by Proposition \ref{pd2},
(ii). Moreover, each irreducible component of ${\cali Z}$ contains an irreducible
component of ${\cali N}$ since ${\cali N}$ is the nullvariety of $p_{1,2,0}$ in ${\cali Z}$.
Hence the image by $\varpi _{2}$ of any irreducible component of ${\cali Z}$ is equal to
${\goth N}_{{\goth g}}$. By Proposition \ref{p2d2}, (iii), each irreducible component of
${\cali Z}$ is an irreducible component of ${\cali Z}_{{\goth g}}$ of maximal dimension.
Hence by Proposition \ref{pd2}, (i), the image by $\varpi _{1}$ of any irreducible
component of ${\cali Z}$ is equal to ${\goth X}_{{\goth g}}$. So ${\cali Z}$ satisfies
Conditions (1) and (2) of Corollary \ref{csu4}. Hence by Corollary \ref{csu4}, ${\cali Z}$
has a nonempty intersection with $\Omega _{{\goth g}}$ and ${\cali X}$ has dimension
$3(\bor g{}-\rk\mathfrak{g}+1)$. But by Lemma \ref{l2su2}, (i), any irreducible component of
${\cali X}_{{\goth g}}$ has dimension at least $3(\bor g{}-\rk\mathfrak{g}+1)$. Hence
${\cali X}_{{\goth g}}$ is equidimensional of dimension $3(\bor g{}-\rk\mathfrak{g}+1)$.

ii) By Proposition \ref{p2d2}, (ii) and Lemma \ref{lsu3}, (ii), ${\cali Y}_{{\goth g}}$ is
equidimensional of dimension $3(\bor g{}-\rk\mathfrak{g})+2$. Moreover, any irreducible component of
${\cali Y}_{{\goth g}}$ is the intersection of ${\cali Y}_{{\goth g}}$ and an irreducible
component of ${\cali X}_{{\goth g}}$. Hence by (i) and Corollary \ref{csu4}, any
irreducible component of ${\cali Y}_{{\goth g}}$ has a nonempty intersection with
$\Omega _{{\goth g}}$.

iii) By Proposition \ref{p2d2}, (ii) and Lemma \ref{lsu3}, ${\cali Z}_{{\goth g}}$ is
equidimensional of dimension $3(\bor g{}-\rk\mathfrak{g})+1$. Let ${\cali Z}$ be an irreducible
component of ${\cali Z}_{{\goth g}}$. By Proposition \ref{pd2}, (i),
$\varpi _{1}({\cali Z})$ is equal to ${\goth X}_{{\goth g}}$. Hence by (i) and Proposition
\ref{p2d2}, (v), the nullvariety of $p_{1,2,0}$ in ${\cali Z}$ is an union of irreducible
components of ${\cali N}_{{\goth g}}$ of maximal dimension. Then by Proposition \ref{pd2},
(ii), $\varpi _{2}({\cali Z})$ is equal to ${\goth N}_{{\goth g}}$ since any irreducible
component of ${\cali N}_{{\goth g}}$ is stable under the involution $(x,y)\mapsto (y,x)$.
So by Lemma \ref{lsu4}, ${\cali Z}$ has a nonempty intersection with
$\Omega _{{\goth g}}$.
\end{proof}

We can now give the proof of Theorem \ref{td}:

\begin{proof}
i) By Corollary \ref{c2d2}, (i), Lemma \ref{l2su2}, (i) and Lemma \ref{l2su3}, (ii),
${\cali X}_{{\goth g}}$ is a complete intersection of dimension $3(\bor g{}-\rk\mathfrak{g}+1)$ and
any irreducible component of ${\cali X}_{{\goth g}}$ contains a smooth point. So
${\cali X}_{{\goth g}}$ is generically reduced and by \cite{Ma}(Ch. 8, \S 23), the scheme
${\cali X}_{{\goth g}}$ is reduced.

ii) By Corollary \ref{c2d2}, (ii), Lemma \ref{lsu3}, (ii) and Lemma \ref{l2su3}, (ii),
${\cali Y}_{{\goth g}}$ is a complete intersection of dimension $3(\bor g{}-\rk\mathfrak{g})+2$ and
any irreducible component of ${\cali Y}_{{\goth g}}$ contains a  smooth point. So
${\cali Y}_{{\goth g}}$ is generically reduced and by \cite{Ma}(Ch. 8, \S 23), the scheme
${\cali Y}_{{\goth g}}$ is reduced. Moreover, by Proposition \ref{p2d2}, (ii), any
irreducible component of ${\cali Y}_{{\goth g}}$ is the intersection of
${\cali Y}_{{\goth g}}$ and an irreducible component of ${\cali X}_{{\goth g}}$.

iii) By Corollary \ref{c2d2}, (iii), Lemma \ref{l2su2}, (i) and Lemma \ref{l2su3}, (ii),
${\cali Z}_{{\goth g}}$ is a complete intersection of dimension $3(\bor g{}-\rk\mathfrak{g})+1$ and
any irreducible component of ${\cali Z}_{{\goth g}}$ contains a  smooth point. So
${\cali Z}_{{\goth g}}$ is generically reduced and by \cite{Ma}(Ch. 8, \S 23), the scheme
${\cali Z}_{{\goth g}}$ is reduced.

iv) As ${\cali N}_{{\goth g}}$ is the nullvariety of $\bor g{}+\rk\mathfrak{g}$ polynomial functions,
any irreducible component of ${\cali N}_{{\goth g}}$ has dimension at least
$3(\bor g{}-\rk\mathfrak{g})$. Hence by Corollary \ref{c2d2}, (i) and Proposition \ref{p2d2}, (v),
${\cali N}_{{\goth g}}$ is a complete intersection of dimension $3(\bor g{}-\rk\mathfrak{g})$.
\end{proof}

We deduce Theorem \ref{t2int} from Theorem \ref{td}, (i) and Proposition \ref{pd2},
(i) and we deduce the main part of Theorem \ref{tint} from Theorem \ref{td}, (iv) and
Proposition \ref{pd2}, (ii). To complete the proof of Theorem \ref{tint}, it only remains
to show that ${\cali N}_{{\goth g}}$ is a nonreduced scheme, what we will do in Section
\ref{p}.

\section{Applications to invariant theory}\label{i}
In this section, we present various applications of previous results to invariant theory.
In this section, ${\goth g}$ is supposed to be simple again.

\subsection{Properties of the algebra of $\mathrm{pol}_2\mathrm{S}(\mathfrak{g})^{\mathfrak{g}}$} We denote by
$\pol g{}2$ the subalgebra of $\cg g{}$ generated by the $2$-order polarizations of
elements of $\ai g{}{}$. Since the polynomials $\poi p1{,\ldots,}{\ran g{}}{}{}{}$ generate
$\ai g{}{}$, the polynomials $p_{i,m,n}$, where $i=1,\ldots,\rk\mathfrak{g}$, $m+n=d_i$ generate
$\pol g{}2$ as graded algebra. Besides, the morphism $\sigma $ introduced in Section \ref{sc}, \S\ref{sc4}, is the morphism of affine varieties whose comorphism is the canonical
injection from $\pol g{}2$ into $\cg{g}$. Then, by Proposition \ref{psc4}, we can state:

\begin{proposition}\label{pi1}
The subalgebra $\pol g{}2$ is a polynomial algebra in \sloppy \hbox{$\bor g{}+\ran g{}$}
variables.
\end{proposition}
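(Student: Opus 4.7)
The plan is to identify the number of natural generators of $\pol g{}2$ and then use Proposition \ref{psc4} to show these generators are algebraically independent.

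First I would count the generators. For each $i\in\{1,\ldots,\rk\mathfrak{g}\}$, the pairs $(m,n)\in\mathbb{N}^{2}$ with $m+n=d_{i}$ produce $d_{i}+1$ polynomials $p_{i,m,n}$. Summing and using Bourbaki's identity $d_{1}+\cdots+d_{\rk\mathfrak{g}}=\bor g{}$, the total number of generators is
$$\sum_{i=1}^{\rk\mathfrak{g}}(d_{i}+1)=\bor g{}+\rk\mathfrak{g},$$
which matches the target number of variables. So it suffices to show that these generators are algebraically independent over ${\Bb C}$.

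The key input is Proposition \ref{psc4}: the morphism $\sigma$, whose comorphism is exactly the inclusion ${\Bb C}[z_{i,m,n}]\hookrightarrow\cg g{}$ sending $z_{i,m,n}\mapsto p_{i,m,n}$, is smooth at every point of $\Omega_{{\goth g}}$. By Corollary \ref{csc2}, $\Omega_{{\goth g}}$ is a (big) nonempty open subset of $\gtg g{}$, so $\sigma$ admits smooth points. At any such point the differential $\dd\sigma$ is surjective onto ${\Bb C}^{\bor g{}+\rk\mathfrak{g}}$, hence the image of $\sigma$ is dense in ${\Bb C}^{\bor g{}+\rk\mathfrak{g}}$ and the comorphism $\sigma^{*}$ is injective.

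Consequently, the $p_{i,m,n}$ satisfy no nontrivial polynomial relation, so $\pol g{}2$ is freely generated by them as a ${\Bb C}$-algebra, i.e.\ it is a polynomial algebra in $\bor g{}+\rk\mathfrak{g}$ variables. I do not anticipate any serious obstacle here: the content is entirely packaged in Proposition \ref{psc4} and the counting of polarizations, and the argument is simply the translation of ``generic smoothness of $\sigma$'' into ``algebraic independence of the generators''.
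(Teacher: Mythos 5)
Your proof is correct and follows essentially the same route as the paper: the paper also deduces the proposition directly from Proposition \ref{psc4}, noting that $\sigma$ is the morphism whose comorphism is the inclusion of $\pol g{}2$ into $\cg g{}$, so that smoothness of $\sigma$ on the nonempty open set $\Omega_{{\goth g}}$ yields the algebraic independence of the $\bor g{}+\rk \mathfrak{g}$ polarizations $p_{i,m,n}$. Your counting via $d_1+\cdots+d_{\rk\mathfrak{g}}=\bor g{}$ and the passage from generic surjectivity of $\dd\sigma$ to injectivity of the comorphism are exactly the (unstated) steps the paper relies on.
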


We deduce from Proposition \ref{pi1}:

\begin{theorem}\label{ti1} The morphism $\sigma $ is faithfully flat. Equivalently, the
extension $\cg g{}$ of $\pol g{}2$ is faithfully flat.
\end{theorem}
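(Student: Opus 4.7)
The plan is to apply the miracle flatness criterion. The source $\gtg{g}$ is smooth, hence Cohen-Macaulay, and by Proposition \ref{pi1} the target is an affine space of dimension $\bor g{}+\rk \mathfrak{g}$, hence regular. Thus $\sigma$ will be flat as soon as all non-empty fibers of $\sigma$ are equidimensional of the expected dimension
\[
2\dim {\goth g} - (\bor g{}+\rk \mathfrak{g}) = 3(\bor g{}-\rk \mathfrak{g}).
\]
Faithful flatness will then follow from the surjectivity of $\sigma$, which I intend to deduce from flatness via the open image theorem.

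The morphism $\sigma$ is dominant, since its components $p_{i,m,n}$ are algebraically independent by Proposition \ref{pi1}. Consequently, by the dimension formula for dominant morphisms of irreducible varieties, the local dimension at every point $(x,y)$ of the fiber $\sigma^{-1}(\sigma(x,y))$ is at least $3(\bor g{}-\rk \mathfrak{g})$. For the matching upper bound, I exploit that $\sigma$ is ${\Bb C}^{*}$-equivariant for the scaling action $(x,y)\mapsto (tx,ty)$ on $\gtg{g}$ and the weighted action on the target sending the $p_{i,m,n}$-coordinate to its $t^{d_{i}}$-multiple. For any $(x,y)\in \gtg{g}$ the orbit $t\mapsto (tx,ty)$ extends continuously to $t=0$, and along this orbit the local fiber dimension of $\sigma$ is constant. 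Upper semi-continuity of the fiber dimension on the source then forces this common value to be bounded above by $\dim_{(0,0)} \sigma^{-1}(0) = \dim {\cali N}_{{\goth g}}$, which by Theorem \ref{tint} equals $3(\bor g{}-\rk \mathfrak{g})$. Hence all non-empty fibers of $\sigma$ are equidimensional of the expected dimension.

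The miracle flatness criterion now gives that $\sigma$ is flat. Its image is therefore open in the target; being also dense by dominance, ${\Bb C}^{*}$-stable and containing $0$ by equivariance, it must coincide with the whole target, since the closure of the ${\Bb C}^{*}$-orbit of every point of the target contains $0$. Hence $\sigma$ is surjective and therefore faithfully flat; the algebraic statement concerning the extension $\pol g{}2\hookrightarrow \cg{g}$ is an immediate reformulation. The key step is the uniform upper bound on fiber dimensions, which is obtained by combining the graded ${\Bb C}^{*}$-scaling with the crucial input from Theorem \ref{tint}.
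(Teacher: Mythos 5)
Your proposal is correct and follows essentially the same route as the paper: the paper also invokes Proposition \ref{pi1} for regularity of the target, uses the homogeneity of the generators to see that the central fiber ${\cali N}_{{\goth g}}$ (of dimension $3(\bor g{}-\rk \mathfrak{g})$ by Theorem \ref{tint}) has maximal fiber dimension, applies the miracle-flatness criterion of Matsumura (Ch.\,8, Theorem 21.3), and deduces surjectivity from openness of the image of a flat graded morphism. Your write-up merely makes explicit the semicontinuity and ${\Bb C}^{*}$-equivariance steps that the paper leaves implicit.
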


\begin{proof} As $\pol g{}2$ is generated by homogeneous functions, the fiber at $0$ of
the morphism $\sigma $ has maximal dimension. On the other hand, by Proposition
\ref{pi1}, $\pol g{}2$ is a polynomial algebra in $\bor g{}+\rk\mathfrak{g}$ variables. So $\sigma $
is an equidimensional morphism and by \cite{Ma}(Ch. 8, Theorem 21.3), $\sigma $ is a
flat morphism. In particular by \cite{Ha}(Ch. III, Exercise 9.4), it is an
open morphism whose image contains $0$. So $\sigma $ is surjective. Hence $\sigma $ is
faithfully flat, according to \cite{Ma}(Ch. 3, Theorem 7.2).
\end{proof}

\subsection{The nullcone} The {\it nullcone} ${\cali V}_{{\goth g}}$ of the $G$-module $\gtg g{}$ is
the nullvariety in  $\gtg g{}$ of the augmentation ideal of the ideal of $\cg g{}$
generated by $(\cg g{})^{{\goth g}}$. The nullcone plays a leading part in invariant
theory. It is studied in \cite{Ri5}, \cite{Po}, \cite{LMP}, and recently in \cite{KrW1}
and \cite{KrW2}. By \cite{KrW1}, ${\cali V}_{{\goth g}}$ is irreducible and has dimension
$3(\bor g{}-\rk\mathfrak{g})$. Furthermore, N. Wallach and H. Kraft conjecture in \cite{KrW2} that
${\cali V}_{{\goth g}}$ is an irreducible component of ${\cali N}_{{\goth g}}$. Clearly,
Theorem \ref{tint} confirms that conjecture. Thereby, we claim:

\begin{theorem}\label{ti2} The nullcone ${\cali V}_{{\goth g}}$ is an irreducible component of
the nilpotent bicone ${\cali N}_{{\goth g}}$.
\end{theorem}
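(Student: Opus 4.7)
The plan is to show the inclusion ${\cali V}_{{\goth g}} \subseteq {\cali N}_{{\goth g}}$ and then use a dimension argument to upgrade the inclusion to the equality of ${\cali V}_{{\goth g}}$ with an irreducible component of ${\cali N}_{{\goth g}}$.

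First I would observe that for each $i = 1,\ldots,\rk\mathfrak{g}$ and each pair $(m,n)$ with $m+n = d_i$, the $2$-order polarization $p_{i,m,n}$ belongs to $(\cg g{})^{{\goth g}}$ and is homogeneous of positive bidegree, hence lies in the augmentation ideal of the ideal of $\cg g{}$ generated by $(\cg g{})^{{\goth g}}$. By definition of the nullcone, every such $p_{i,m,n}$ vanishes on ${\cali V}_{{\goth g}}$. Since ${\cali N}_{{\goth g}}$ is precisely the nullvariety in $\gtg g{}$ of the ideal generated by the $p_{i,m,n}$, this shows ${\cali V}_{{\goth g}} \subseteq {\cali N}_{{\goth g}}$ as sets.

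Next I would invoke the two dimension inputs. By \cite{KrW1}, the nullcone ${\cali V}_{{\goth g}}$ is irreducible of dimension $3(\bor g{}-\rk\mathfrak{g})$. By Theorem \ref{tint} (equivalently Theorem \ref{td}, (iv)), the nilpotent bicone ${\cali N}_{{\goth g}}$ is a complete intersection of dimension $3(\bor g{}-\rk\mathfrak{g})$, and since a complete intersection is equidimensional, every irreducible component of ${\cali N}_{{\goth g}}$ has dimension exactly $3(\bor g{}-\rk\mathfrak{g})$.

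Finally, combining these: the irreducible closed subset ${\cali V}_{{\goth g}}$ of ${\cali N}_{{\goth g}}$ is contained in some irreducible component ${\cali C}$ of ${\cali N}_{{\goth g}}$; since $\dim {\cali V}_{{\goth g}} = 3(\bor g{}-\rk\mathfrak{g}) = \dim {\cali C}$ and ${\cali C}$ is irreducible, the inclusion ${\cali V}_{{\goth g}} \subseteq {\cali C}$ must be an equality. This exhibits ${\cali V}_{{\goth g}}$ as an irreducible component of ${\cali N}_{{\goth g}}$, which is the desired statement. There is essentially no obstacle here beyond correctly citing Theorem \ref{tint} for the equidimensionality of ${\cali N}_{{\goth g}}$; the whole substance of the result has already been absorbed into the main theorem, and this corollary is simply a comparison of dimensions.
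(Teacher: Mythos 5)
Your proof is correct and follows exactly the route the paper takes: the paper also cites \cite{KrW1} for irreducibility and the dimension $3(\bor g{}-\rk \mathfrak{g})$ of ${\cali V}_{{\goth g}}$, and then deduces the statement from Theorem \ref{tint} by the same equidimensionality-of-a-complete-intersection comparison (the paper leaves these details implicit, remarking only that ``Theorem \ref{tint} confirms that conjecture''). Your explicit verification of the inclusion ${\cali V}_{{\goth g}}\subseteq {\cali N}_{{\goth g}}$ via the $p_{i,m,n}$ lying in the augmentation ideal is a welcome filling-in of a step the paper takes for granted.
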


Let $k$ be in ${\Bb N}^*$ and let us denote by ${\goth g}^{k}$ the
$k$-th cartesian power of ${\goth g}$. We extend the previous notions to ${\goth g}^k$.
Let $\es S{{\goth g}^k}$ be the symmetric algebra of ${\goth g}^k$ and let
$\es S{{\goth g}^k}^{{\goth g}}$ be the subalgebra of its invariant elements under the
diagonal action of $G$ in ${\goth g}^{k}$. Then we denote by ${\cali V}_{{\goth g}}^{(k)}$
the nullvariety in ${\goth g}^k$ of the augmentation ideal
$\es S{{\goth g}^k}_{+}^{{\goth g}}$ of $\es S{{\goth g}^k}^{{\goth g}}$. For $\varphi $
in $\e Sg$, the {\it $k$-order polarizations} $\varphi _{\poi i1{,\ldots,}{k}{}{}{}}$ of
$\varphi $ are defined by the relation:
$$\varphi (t_1 x_1 + \cdots + t_k x_k) =
\sum_{(\poi i1{,\ldots,}{k}{}{}{})\in {\Bb N}^{k} } t_{1}^{i_1} \ldots t_{k}^{i_k}
\varphi _{i_1,\ldots,i_k}(x_1,\ldots,x_k) \mbox{ , }$$
for $(\poi x1{,\ldots,}{k}{}{}{})$ in ${\goth g}^{k}$ and $(\poi t1{,\ldots,}{k}{}{}{})$
in ${\Bb C}^{k}$. Then, we denote by $\pol g{}k$ the subalgebra of S$({\goth g}^{k})$
generated by the $k$-order polarizations of elements in $\ai g{}{}$. Then
$\pol g{}k$ is an homogeneous subalgebra of S$({\goth g}^{k})$ and its augmentation ideal
$\mathrm{pol}_k\e Sg_{+}^{{\goth g}}$ is generated by the $k$-order polarizations of
the elements of $\e Sg_{+}^{{\goth g}}$. We denote by ${\cali N}_{{\goth g}}^{(k)}$ the
nullvariety in ${\goth g}^k$ of the ideal $\mathrm{pol}_k\e Sg_{+}^{{\goth g}}$.
Obviously, ${\cali V}_{{\goth g}}^{(1)}={\cali N}_{{\goth g}}^{(1)}={\goth N}_{{\goth g}}$,
${\cali V}_{{\goth g}}^{(2)}={\cali V}_{{\goth g}}$ and
${\cali N}_{{\goth g}}^{(2)}={\cali N}_{{\goth g}}$.\\

The {\it index of polarization of ${\goth g}$}, denoted by
${\rm polind}({\goth g})$, is defined in \cite{LMP} as being the upper bound of
$k \in {\Bb N}^*$ such that ${\cali V}_{\goth g}^{(k)}={\cali N}_{\goth g}^{(k)}$. The
inequality ${\rm polind}({\goth g})  \geq 1$ obviously always holds. We wish next to
establish an equality.\\

Recall that $(e,h,f)$ is a principal ${\goth {sl}}_2$-triple of ${\goth g}$.

\begin{lemma}\label{li3} Let $x$ be in ${\goth g}$ satisfying $(\ad x)^2(e)=0$. Then
$(e,[x,e])$ belongs to  ${\cali N}_{{\goth g}}$.
\end{lemma}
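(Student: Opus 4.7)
The plan is to verify directly from the definition that $(e,[x,e]) \in {\cali N}_{{\goth g}}$, i.e. that $ae + b[x,e] \in {\goth N}_{{\goth g}}$ for every $(a,b) \in {\Bb C}^{2}$. The whole argument rests on a single observation: the hypothesis $(\ad x)^{2}(e)=0$ forces the exponential series to truncate, so that
\begin{equation*}
\exp(t \, \ad x)(e) \;=\; e + t\,[x,e], \qquad \forall\, t \in {\Bb C}.
\end{equation*}
Since $\exp(t\,\ad x) \in G$, the element $e+t[x,e]$ is $G$-conjugate to $e$, hence it lies in ${\goth N}_{{\goth g}}$ for every $t \in {\Bb C}$.

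Next I would handle the two cases for $(a,b)$. If $a\neq 0$, I write $ae + b[x,e] = a\bigl(e + (b/a)[x,e]\bigr)$, which belongs to ${\goth N}_{{\goth g}}$ by the previous step together with the fact that ${\goth N}_{{\goth g}}$ is a cone. If $a=0$, I need to see that $[x,e]$ itself is nilpotent. For this, for $t \in {\Bb C}^{*}$ I rescale:
\begin{equation*}
\frac{1}{t}\bigl(e + t[x,e]\bigr) \;=\; \frac{e}{t} + [x,e] \;\in\; {\goth N}_{{\goth g}},
\end{equation*}
again because ${\goth N}_{{\goth g}}$ is a cone. Letting $t \to \infty$ and invoking the closedness of ${\goth N}_{{\goth g}}$ yields $[x,e] \in {\goth N}_{{\goth g}}$, whence $b[x,e] \in {\goth N}_{{\goth g}}$.

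Combining the two cases gives $P_{e,[x,e]} \subset {\goth N}_{{\goth g}}$, which is exactly the condition $(e,[x,e]) \in {\cali N}_{{\goth g}}$. There is no real obstacle here: the lemma is a short elementary consequence of the truncation of $\exp(t \, \ad x)$ afforded by the hypothesis, together with the closed-cone property of ${\goth N}_{{\goth g}}$.
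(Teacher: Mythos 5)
Your proof is correct and follows essentially the same route as the paper's: both use the truncation $\exp(t\,\ad x)(e)=e+t[x,e]$ afforded by $(\ad x)^{2}(e)=0$, the $G$-invariance of ${\goth N}_{{\goth g}}$, and its closed-cone property. The paper merely compresses your two cases into the single observation that $se+t[x,e]\in{\goth N}_{{\goth g}}$ for all $(s,t)\in{\Bb C}^{2}$ because ${\goth N}_{{\goth g}}$ is a closed cone.
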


\begin{proof} Since $(\ad x)^2(e)=0$, we have $\exp(t\ad x)(e)=e+t[x,e]$, for any $t$ in
${\Bb C}$. As ${\goth N}_{{\goth g}}$ is a closed cone in ${\goth g}$, the element
$se+t[x,e]$ belongs to ${\goth N}_{{\goth g}}$, for any $(s,t)$ in ${\Bb C}^2$. So
$(e,[x,e])$ belongs to ${\cali N}_{{\goth g}}$.
\end{proof}

By the Hilbert-Mumford criterion \cite{Kr}(Kap.\ II), an element  $(x,y)$ of $\gtg g{}$
belongs to the nullcone ${\cali V}_{\goth g}$ if and only if there is a one-parameter
subgroup $\lambda:{\Bb C}^* \rightarrow G$ such that
$\lim_{t \to 0} \lambda(t)(x,y)=0$. In particular, if $(x,y)$ belongs to the nullcone,
then the subalgebra $L$ generated by $x$ and $y$ is contained in the nilpotent cone.
Hence $L$ is nilpotent and so contained in a Borel subalgebra of ${\goth g}$. Conversely, if $x$ and $y$ belong to the nilpotent radical
of a common Borel subalgebra, then the previous criterion applies to $(x,y)$. To summarize, we have actually shown:

\begin{lemma}\label{l2i3} The element $(x,y)$ of $\gtg g{}$ belongs to the
nullcone ${\cali V}_{\goth g}$ if and only if $x$ and $y$ belong to the nilpotent radical
of a common Borel subalgebra.
\end{lemma}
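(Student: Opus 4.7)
The plan is to deduce both implications from the Hilbert--Mumford criterion recalled immediately before the lemma: $(x,y) \in {\cali V}_{{\goth g}}$ if and only if some one-parameter subgroup $\lambda : {\Bb C}^{*} \to G$ satisfies $\lim_{t \to 0}\lambda(t).(x,y) = (0,0)$.

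For the converse direction, given a Borel subalgebra ${\goth b}'$ of ${\goth g}$ with nilpotent radical ${\goth u}'$ containing both $x$ and $y$, I would conjugate ${\goth b}'$ into the standard Borel ${\goth b} = {\goth h}\oplus {\goth u}$ fixed in the introduction. The one-parameter subgroup of ${\bf H}$ corresponding to a regular dominant coweight acts on each root space ${\goth g}^{\alpha}$, $\alpha \in {\cali R}_{+}$, by a strictly positive power of $t$; decomposing $x$ and $y$ in the root spaces of ${\goth u}$, this contracts $(x,y)$ to $(0,0)$ as $t \to 0$, so Hilbert--Mumford places $(x,y)$ in ${\cali V}_{{\goth g}}$.

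For the forward direction, Hilbert--Mumford produces a one-parameter subgroup $\lambda$ with $\lim_{t \to 0}\lambda(t).(x,y) = (0,0)$. Up to replacing $(x,y)$ by a $G$-translate, I may assume $\lambda({\Bb C}^{*}) \subset {\bf H}$, so that $\lambda(t)$ acts on ${\goth g}^{\alpha}$ by $t^{\alpha(H)}$ for some fixed $H$ in the cocharacter lattice of ${\bf H}$. Writing $x$ and $y$ in the ${\goth h}$-weight decomposition of ${\goth g}$, the vanishing of the limit forces the components of $x$ and $y$ on every weight space with nonpositive weight, as well as their ${\goth h}$-components, to vanish. Hence $x,y \in {\goth n}_{H} := \bigoplus_{\alpha(H) > 0}{\goth g}^{\alpha}$.

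The main obstacle is the final step, since ${\goth n}_{H}$ is \emph{a priori} only the nilradical of the parabolic subalgebra ${\goth p}_{H} := {\goth m}_{H} \oplus {\goth n}_{H}$, where ${\goth m}_{H} := {\goth h} \oplus \bigoplus_{\alpha(H) = 0}{\goth g}^{\alpha}$ is the Levi factor. To upgrade to a Borel, I would pick any Borel subalgebra of the reductive Lie algebra ${\goth m}_{H}$, with nilpotent radical ${\goth u}_{{\goth m}}$ say; then ${\goth u}_{{\goth m}} \oplus {\goth n}_{H}$ is the nilpotent radical of a Borel subalgebra of ${\goth g}$, and it contains both $x$ and $y$, as required.
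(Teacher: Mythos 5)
Your proof is correct, and both you and the paper start from the Hilbert--Mumford criterion, so the overall strategy coincides; the difference is in how the forward direction is finished. The paper argues that if $(x,y)$ lies in ${\cali V}_{{\goth g}}$ then the Lie subalgebra $L$ generated by $x$ and $y$ is contained in the nilpotent cone, hence is nilpotent and therefore contained in (the nilradical of) a Borel subalgebra --- this quietly invokes the classical fact that a subalgebra of ${\goth g}$ consisting of nilpotent elements is conjugate into ${\goth u}$. You instead conjugate the destabilizing one-parameter subgroup into ${\bf H}$, read off from the weight decomposition that $x$ and $y$ lie in the nilradical ${\goth n}_{H}$ of the associated parabolic, and then enlarge ${\goth n}_{H}$ to the nilradical of a Borel by adjoining the nilradical of a Borel of the Levi factor ${\goth m}_{H}$. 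Your route is more explicit and self-contained, replacing the appeal to the conjugacy theorem for nil subalgebras by a direct construction of the required Borel, at the cost of the extra step upgrading the parabolic to a Borel; the paper's version is shorter but leans on that structural theorem. The converse directions are identical: contract $(x,y)$ to the origin with the one-parameter subgroup attached to a regular dominant coweight.
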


The following proposition has already been noticed in \cite{LMP}(Theorem 3.16). We provide
here a shorter proof.

\begin{proposition}\label{pi3} We have: 
$${\rm polind}({\goth g}) =\left\{
\begin{array}{ll}
\infty\mbox{ , }&\hbox{\textrm{if }}{\goth g}\hbox{\textrm{ is isomorphic to }}{\goth {sl}}_2({\Bb C});\\
1\mbox{ , }&\hbox{otherwise.}
\end{array}
\right.$$
\end{proposition}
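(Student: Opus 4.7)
The plan is to treat the cases ${\goth g} \simeq {\goth {sl}}_2({\Bb C})$ and ${\goth g} \not\simeq {\goth {sl}}_2({\Bb C})$ separately. The inequality ${\rm polind}({\goth g}) \geq 1$ is automatic, since ${\cali V}_{\goth g}^{(1)} = {\cali N}_{\goth g}^{(1)} = {\goth N}_{\goth g}$; so for the second case it suffices to show ${\cali N}_{\goth g}^{(2)} \neq {\cali V}_{\goth g}^{(2)}$, which forces ${\rm polind}({\goth g}) < 2$ and hence equals $1$.

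For ${\goth g} \simeq {\goth {sl}}_2({\Bb C})$, the key observation is that the nilpotent cone ${\goth N}_{\goth g}$ is a nondegenerate quadric cone in the three-dimensional space ${\goth g}$, and consequently contains no two-dimensional linear subspace through the origin. Therefore any $k$-tuple $(\poi x1{,\ldots,}{k}{}{}{}) \in {\cali N}_{\goth g}^{(k)}$, whose ${\Bb C}$-span is by definition contained in ${\goth N}_{\goth g}$, has span of dimension at most one; the nonzero $x_i$ all lie on a single nilpotent line, which is the nilradical of some Borel subalgebra of ${\goth g}$. The $k$-tuple analog of Lemma \ref{l2i3}, which is a direct application of the Hilbert--Mumford criterion to the diagonal $G$-action on ${\goth g}^k$, then gives that the tuple belongs to ${\cali V}_{\goth g}^{(k)}$. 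The reverse inclusion is immediate, so ${\rm polind}({\goth {sl}}_2({\Bb C})) = \infty$.

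For ${\goth g} \not\simeq {\goth {sl}}_2({\Bb C})$, the strategy is to exhibit a pair $(e,y) \in {\cali N}_{\goth g} \setminus {\cali V}_{\goth g}$ by applying Lemma \ref{li3} to the lowest root vector $x := e_{-\theta}$ (where $\theta$ is the highest root), setting $y := [x,e]$. Writing $e = \sum_{\alpha \in \Pi} c_\alpha e_\alpha$ with each $c_\alpha \neq 0$, I need to verify two properties. First, $(\ad x)^{2} e = 0$: each iterated bracket $[e_{-\theta},[e_{-\theta},e_\alpha]]$ lies in the weight space of weight $-2\theta + \alpha$, whose height is at most $-2\,\mathrm{ht}(\theta) + 1$, strictly smaller than $-\mathrm{ht}(\theta)$ as soon as $\mathrm{ht}(\theta) \geq 2$ (i.e., precisely when ${\goth g} \not\simeq {\goth {sl}}_2({\Bb C})$); hence the weight is not a root and the bracket vanishes. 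Lemma \ref{li3} then gives $(e,y) \in {\cali N}_{\goth g}$. Second, $y \notin {\goth u}$: each summand $c_\alpha [e_{-\theta}, e_\alpha]$ lies in the negative root space ${\goth g}^{-\theta+\alpha}$ or is zero, and the standard fact that every positive root of height $\geq 2$ admits some simple root whose subtraction yields a positive root ensures that at least one summand is nonzero. Hence $y \in {\goth u}_{-} \setminus \{0\}$, which is disjoint from ${\goth u}$.

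Finally, $(e,y) \notin {\cali V}_{\goth g}$ follows from Lemma \ref{l2i3}: since $e$ is regular nilpotent, ${\goth b}$ is the unique Borel subalgebra containing $e$, and $y \notin {\goth u}$ excludes the pair from the nullcone. The main obstacle is the choice of $x$: the element $e_{-\theta}$ works because of the rigid height inequality $-2\,\mathrm{ht}(\theta) + 1 < -\mathrm{ht}(\theta)$, which is the unique point where the hypothesis ${\goth g} \not\simeq {\goth {sl}}_2({\Bb C})$ enters and which faithfully reflects the two branches of the proposition.
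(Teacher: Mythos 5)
Your proof is correct in substance and follows essentially the same route as the paper: for ${\goth {sl}}_2({\Bb C})$ one observes that the nilpotent cone contains no plane, and otherwise one applies Lemma \ref{li3} to the lowest root vector (your $e_{-\theta}$ is exactly the paper's eigenvector $v$ of $\ad h$ for the smallest eigenvalue $-2(d_{\ran g{}}-1)$, and your height inequality $-2\,{\rm ht}(\theta)+1<-{\rm ht}(\theta)$ is the paper's $-4(d_{\ran g{}}-1)+2<-2(d_{\ran g{}}-1)$), then excludes the resulting pair from the nullcone via Lemma \ref{l2i3}.

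One small step is missing. Since ${\rm polind}({\goth g})$ is defined as the supremum of the $k$ with ${\cali V}_{{\goth g}}^{(k)}={\cali N}_{{\goth g}}^{(k)}$, knowing ${\cali V}_{{\goth g}}^{(2)}\neq {\cali N}_{{\goth g}}^{(2)}$ does not by itself ``force ${\rm polind}({\goth g})<2$'': a priori equality could still hold for some $k\geq 3$. You need the observation (which the paper makes explicitly) that ${\cali V}_{{\goth g}}\times \{0_{{\goth g}^{k-2}}\}$ and ${\cali N}_{{\goth g}}\times \{0_{{\goth g}^{k-2}}\}$ are the respective intersections of ${\cali V}_{{\goth g}}^{(k)}$ and ${\cali N}_{{\goth g}}^{(k)}$ with $\gtg g{}\times \{0_{{\goth g}^{k-2}}\}$, so that strict inclusion at $k=2$ propagates to all $k\geq 2$. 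This is a one-line fix, but as written the deduction is a non sequitur.
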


\begin{proof} Suppose first $\dim {\goth g}=3$. The elements $x$
in ${\goth g}$ such that the subspace generated by $x$ and $e$ is contained in the
nilpotent cone of ${\goth g}$ are colinear to $e$. Hence
${\cali V}_{\goth g}^{(2)}={\cali N}_{\goth g}$, and ${\cali N}_{{\goth g}}$ is irreducible.
Then for $k \geq 2$ and $\poi x1{,\ldots,}{k}{}{}{}$ in ${\goth g}$, the
subspace generated by $\poi x1{,\ldots,}{k}{}{}{}$ is contained in
${\goth N}_{{\goth g}}$ if and only if its dimension is smaller than $1$. So
${\cali V}_{\goth g}^{(k)}$ is equal to ${\cali N}_{\goth g}^{(k)}$ for any $k$.

Suppose now $\dim {\goth g} > 3$. As $\poi d1{,\ldots,}{\ran g{}}{}{}{}$ is a weakly
increasing sequence, $-2(d_{\ran g{}}-1)$ is the smallest eigenvalue of $\ad h$. Let $v$ be
a non zero eigenvector of $\ad h$ of eigenvalue $-2(d_{\ran g{}}-1)$ and set
$v_{0}=[v,e]$. Then $(\ad v)^2(e)$ is an eigenvector of $\ad h$ of eigenvalue
$-4(d_{\ran g{}}-1)+2$. As ${\goth g}$ is simple and $\dim \mathfrak{g} > 3$, we
have $d_{\ran g{}} > 2$ and $-4(d_{\ran g{}}-1)+2 < -2(d_{\ran g{}}-1)$. So $(\ad v)^{2}(e)$
is equal to $0$. Hence, by Lemma \ref{li3}, $(e,v_{0})$ belongs to
${\cali N}_{{\goth g}}$. As $v_{0}$ is an eigenvector of $\ad h$ of negative eigenvalue,
${\goth b}$ does not contain $v_{0}$. Since ${\goth b}$ is the unique Borel subalgebra
which contains $e$, we deduce from Lemma \ref{l2i3} that $(e,v_{0})$ does not belong to
${\cali V}_{{\goth g}}$. So ${\cali V}_{{\goth g}}$ is strictly contained in
${\cali N}_{{\goth g}}$. As a result, for $k \geq 2$,
${\cali V}_{\goth g}^{(k)}$ is strictly contained in ${\cali N}_{\goth g}^{(k)} $ since
${\cali V}_{{\goth g}}\times \{0_{{\goth g}^{k-2}}\}$ and
${\cali N}_{{\goth g}}\times \{0_{{\goth g}^{k-2}}\}$ are the intersections of
${\cali V}_{\goth g}^{(k)}$ and ${\cali N}_{\goth g}^{(k)}$  with
$\gtg g{}\times \{0_{{\goth g}^{k-2}}\}$ respectively.
\end{proof}

\begin{remark}
The nullcone has a natural structure of scheme. This scheme is irreducible. 
Furthermore, for $\dim {\goth g}>3$, it is not reduced. 
If this scheme were reduced, the 
extension $\cg{g}$ of $(\cg{g})^{{\goth g}}$ would be flat by arguments used in
\cite{Kos2}. But this extension is not equidimensional since the $G$-orbit in $\gtg{g}$ in
general position has dimension $2 \bor g{}-\rk\mathfrak{g}$ and $2 \bor g{}-\rk\mathfrak{g}>3(\bor g{} -\rk\mathfrak{g})$.
\end{remark}

\section{Additional properties of the nilpotent bicone}\label{p}
In this section, we give additional results about the nilpotent bicone
${\cali N}_{{\goth g}}$. We assert in the first subsection that ${\cali N}_{{\goth g}}$ is
not reduced. Next, we discuss in the second subsection the number of its irreducible components. As before, we fix a principal
$\mathfrak{sl}_2$-triple $(e,h,f)$ in ${\goth g}$ and we use the notations of the Introduction, \S\ref{notation}.

\subsection{Nonreducibility of ${\cali N}_{{\goth g}}$} According to \cite{Ma}(\S 23), a Noetherian ring $A$ is reduced if and
only if $A$ satisfies Conditions $(R_0)$ and $(S_1)$. By Theorem \ref{td}, the
subscheme ${\cali N}_{{\goth g}}$ of $\gtg g{}$ defined by the ideal
${\rm pol}_{2}\e Sg_{+}^{{\goth g}}$ is a complete intersection in $\gtg g{}$. Therefore
the quotient ring $(\cg g{})/{\rm pol}_{2}\e Sg_{+}^{{\goth g}}$ is Cohen-Macaulay,
according to \cite{Ma}(Ch.\ 8, Theorem 21.3). It is natural to ask if
$(\cg g{})/{\rm pol}_{2}\e Sg_{+}^{{\goth g}}$ is reduced. The following theorem answers
this question negatively.

\begin{theorem}\label{tp1} The scheme ${\cali N}_{{\goth g}}$ is not reduced. \end{theorem}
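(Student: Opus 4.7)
The plan is to exhibit an irreducible component of $\mathcal{N}_{\mathfrak{g}}$ that contains no smooth point of the scheme. Since $\mathcal{N}_{\mathfrak{g}}$ is a complete intersection in $\mathfrak{g}\times\mathfrak{g}$ by Theorem \ref{td}, (iv), it is Cohen--Macaulay and therefore satisfies Serre's condition $(S_1)$; combined with the failure of $(R_0)$ at such a component, this will force $\mathcal{N}_{\mathfrak{g}}$ to be non-reduced. By Remark \ref{rsc4}, the smooth locus of $\mathcal{N}_{\mathfrak{g}}$ is exactly $\mathcal{N}_{\mathfrak{g}} \cap \Omega_{\mathfrak{g}}$, so it will suffice to produce one irreducible component of $\mathcal{N}_{\mathfrak{g}}$ which does not meet $\Omega_{\mathfrak{g}}$.

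Using the product decomposition of $\mathcal{N}_{\mathfrak{g}}$ recalled in \S\ref{notation}, I would first reduce to the case of $\mathfrak{g}$ simple, since non-reducedness of any one factor forces non-reducedness of the product. For simple $\mathfrak{g}$ the natural candidate is the nullcone $\mathcal{V}_{\mathfrak{g}}$, which by Theorem \ref{ti2} is an irreducible component of $\mathcal{N}_{\mathfrak{g}}$, and the core of the plan is then to prove $\mathcal{V}_{\mathfrak{g}} \cap \Omega_{\mathfrak{g}} = \varnothing$.

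The argument would go by contradiction: assume $(x,y) \in \mathcal{V}_{\mathfrak{g}} \cap \Omega_{\mathfrak{g}}$. By Lemma \ref{l2i3}, $x$ and $y$ lie in the nilpotent radical $\mathfrak{u}'$ of some common Borel subalgebra of $\mathfrak{g}$. Since $(x,y) \in \Omega_{\mathfrak{g}}$, every $ax+by$ with $(a,b)\neq (0,0)$ is regular; as it also lies in the nilpotent subalgebra $\mathfrak{u}'$, it is in fact regular nilpotent. Applying Lemma \ref{l4sc1}, (ii), to a Borel with nilradical $\mathfrak{u}'$ then yields $\mathfrak{g}(ax+by) \subset \mathfrak{u}'$ for every such $(a,b)$, and hence $\mathfrak{V}'(x,y) \subset \mathfrak{u}'$. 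On the other hand, Lemma \ref{l5sc1}, (i) and (ii), asserts that $\mathfrak{V}(x,y) = \mathfrak{V}'(x,y)$ has dimension $\bor g{}$, while $\dim \mathfrak{u}' = \bor g{} - \rk \mathfrak{g} < \bor g{}$; this contradiction closes the argument.

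All the substantial conceptual inputs --- the identification of the smooth locus of $\mathcal{N}_{\mathfrak{g}}$ with $\mathcal{N}_{\mathfrak{g}}\cap\Omega_{\mathfrak{g}}$, the tight control of $\mathfrak{V}(x,y)$ coming from the characteristic submodule, and the fact that $\mathcal{V}_{\mathfrak{g}}$ is a genuine irreducible component of $\mathcal{N}_{\mathfrak{g}}$ --- are already available from the preceding sections, so I do not anticipate a serious obstacle; once the correct component is singled out, the proof reduces to the brief dimension count above.
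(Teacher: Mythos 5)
Your proposal is correct and follows essentially the same route as the paper: identify the smooth locus with $\mathcal{N}_{\mathfrak{g}}\cap\Omega_{\mathfrak{g}}$ via Proposition \ref{psc4}, invoke Theorem \ref{ti2} to single out the nullcone $\mathcal{V}_{\mathfrak{g}}$ as an irreducible component, and derive the contradiction $\mathrm{b}_{\mathfrak{g}}=\dim\mathfrak{V}(x,y)=\dim\mathfrak{V}'(x,y)\leq\dim\mathfrak{u}$ from Lemmas \ref{l2i3}, \ref{l4sc1} and \ref{l5sc1}. The only cosmetic difference is that the paper uses $G$-invariance to place $x,y$ in the fixed nilradical $\mathfrak{u}$ rather than an arbitrary one.
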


\begin{proof} In view of the preceding remarks, it suffices to find an irreducible
component of ${\cali N}_{{\goth g}}$ which does not contain any smooth point of
${\cali N}_{{\goth g}}$. Hence, according to Proposition \ref{psc4}, Lemma \ref{l2su3},
(ii) and Theorem \ref{ti2}, it is enough to prove that the nullcone
${\cali V}_{{\goth g}}$ has an empty intersection with the open subset
$\Omega_{{\goth g}}$. We suppose that this intersection is not empty and
we expect a contradiction. Let $(x,y)$ be in the intersection of ${\cali V}_{{\goth g}}$
and $\Omega _{{\goth g}}$. By Lemma \ref{l2i3}, we can suppose that $x$ and $y$ belongs
to ${\goth u}$. By Lemma \ref{l4sc1}, (ii), for any $(a,b)$ in
${\Bb C}^2 \setminus \{(0,0)\}$, the
centralizer of $ax+by$ is contained in ${\goth u}$. Therefore, it results from Lemma
\ref{lsc1} that the subspace ${\goth V}'(x,y)$ is contained in ${\goth u}$.
Moreover, by Lemma \ref{l5sc1}, (iii), ${\goth V}'(x,y)$ is equal to ${\goth V}(x,y)$.
But by Lemma \ref{l5sc1}, (i), ${\goth V}(x,y)$ has dimension $\bor g{} > \dim {\goth u}$,
since $(x,y)$ belongs to $\Omega _{{\goth g}}$, whence the expected contradiction.
\end{proof}

The proof of Theorem \ref{tint} is now completed by Theorem \ref{tp1}.

\subsection{Irreducible components of ${\cali N}_{{\goth g}}$} Denote by $\Phi $ the map
$$G\times \gtg{g} \longrightarrow \gtg{g} \mbox{ , }(g,x,y) \mapsto (g(x),g(y)) \mbox{ .}$$
For a subset $X$ of $\gtg{g}$, the closure in $\gtg{g}$ of the image of \sloppy \hbox{
$G\times X$} by $\Phi $ is denoted by $X^{\Phi }$. For $Y$ subset of
$\gtg g{}$ and $x$ in ${\goth g}$, we denote by $Y_{x}$ the subset of elements $y$ in
${\goth g}$ such that $(x,y)$ belongs to $Y$.

\begin{lemma}\label{lp2} Let $Y$ be a $G$-invariant closed bicone of $\gtg{g}$ and let
$x_{0}$ be a nilpotent element of ${\goth g}$. Suppose that the image of any irreducible
component of $Y$ by $\varpi _{1}$ is equal to the closure in ${\goth g}$ of
$G. x_{0}$. Suppose besides that there exists a subvariety $T$ of $G$ such
that the map $g\mapsto g(x_{0})$ is an isomorphism from $T$ to an open subset of
$G. x_{0}$ containing $x_{0}$.

{\rm i)} The subset $Y$ is equal to $(\{x_{0}\}\times Y_{x_{0}})^{\Phi }$.

{\rm ii)} If $Z$ is an irreducible component of $Y$, then $Z_{x_{0}}$ is irreducible.

{\rm iii)} The map $X\mapsto (\{x_{0}\}\times X)^{\Phi }$ is a bijection from the set
of irreducible components of $Y_{x_{0}}$ to the set of irreducible components of $Y$.
\end{lemma}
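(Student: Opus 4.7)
The plan is to exploit a local trivialization of the projection $\varpi_{1}|_{Y} : Y \to \varpi_{1}(Y)$ over the open subset $U := \{g(x_{0}) \mid g \in T\}$ of $G.x_{0}$ provided by $T$. Throughout I shall use that $G$ is connected, so each irreducible component of the $G$-invariant set $Y$ is itself $G$-invariant; likewise, the torus $({\Bb C}^{*})^{2}$ acting by bi-scaling is connected, so every irreducible component of the closed bicone $Y$ is a closed $G$-invariant bicone, with $\varpi_{1}$-image equal to $\overline{G.x_{0}}$ by hypothesis. In particular every irreducible component of $Y$ satisfies the hypotheses of the lemma; this observation will be used repeatedly.

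For (i), the inclusion $(\{x_{0}\}\times Y_{x_{0}})^{\Phi} \subseteq Y$ is immediate from the $G$-invariance and closedness of $Y$. For the reverse inclusion, I would observe that if $(x,y) \in Y$ with $x = g(x_{0}) \in G.x_{0}$, then $(x_{0},g^{-1}(y)) \in Y$ by $G$-invariance, so $g^{-1}(y) \in Y_{x_{0}}$ and $(x,y) = \Phi(g,x_{0},g^{-1}(y))$. Since $G.x_{0}$ is open and dense in $\overline{G.x_{0}} = \varpi_{1}(Z)$ for every irreducible component $Z$ of $Y$, the locus $Y \cap \varpi_{1}^{-1}(G.x_{0})$ is dense in $Y$, and the result follows by taking closures.

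For (ii), fix an irreducible component $Z$ of $Y$. The key step is to exhibit an isomorphism of varieties
$$\psi : T\times Z_{x_{0}} \longrightarrow Z\cap (U\times {\goth g}), \quad (g,y) \mapsto (g(x_{0}),g(y)),$$
with regular inverse $(x,z) \mapsto (\alpha(x),\alpha(x)^{-1}(z))$, where $\alpha : U \to T$ is the inverse of $g \mapsto g(x_{0})$. The surjectivity of $\psi$ is a direct consequence of the $G$-invariance of $Z$: for any $(x,z) \in Z$ with $x \in U$, one has $\alpha(x)^{-1}\cdot (x,z) \in Z$. The target of $\psi$ is a nonempty open subset of the irreducible $Z$, hence is irreducible, and since $T$ is irreducible (being isomorphic to the open subset $U$ of the irreducible orbit $G.x_{0}$), $Z_{x_{0}}$ must be irreducible.

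Finally, for (iii), I would show that $X \mapsto (\{x_{0}\} \times X)^{\Phi}$ and $Z \mapsto Z_{x_{0}}$ are mutually inverse bijections. If $X$ is an irreducible component of $Y_{x_{0}}$, then $(\{x_{0}\}\times X)^{\Phi}$ is irreducible and contained in $Y$, so it lies in some irreducible component $Z$ of $Y$; by (ii), $X \subseteq Z_{x_{0}}$ forces $X = Z_{x_{0}}$, and (i) applied to $Z$ then yields $(\{x_{0}\}\times X)^{\Phi} = Z$. Conversely, given $Z$ an irreducible component of $Y$, if $Z_{x_{0}} \subseteq X'$ with $X'$ an irreducible component of $Y_{x_{0}}$, the forward direction applied to $X'$ produces an irreducible component $Z' = (\{x_{0}\}\times X')^{\Phi}$ of $Y$ containing $Z$, hence $Z' = Z$ and $X' = Z_{x_{0}}$, showing that $Z_{x_{0}}$ is an irreducible component of $Y_{x_{0}}$. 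The principal subtlety, needed in several places, is ensuring that every irreducible component of $Y$ inherits the hypotheses of the lemma so that (i) and (ii) may be reapplied; this is exactly what the first paragraph settles.
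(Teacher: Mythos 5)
Your proof is correct and follows essentially the same route as the paper's: density of the fibre of $\varpi_{1}$ over the open orbit $G.x_{0}$ for (i), the local trivialization $T\times Z_{x_{0}}\simeq Z\cap\varpi_{1}^{-1}(U)$ for (ii), and the two mutually inverse maps for (iii). The only (harmless) difference is that in (iii) you invoke (i) applied to the component $Z$ where the paper argues directly that $\Phi(G\times\{x_{0}\}\times X)$ is open in $Z$; your preliminary observation that each irreducible component of $Y$ inherits the hypotheses is exactly the point the paper also uses implicitly.
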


\begin{proof}
i) By our hypothesis, for any irreducible component $Z$ of $Y$, $\varpi _{1}(Z)$ is the
closure of $G. x_{0}$. So
$G. x_{0}$ is a dense open subset in $\varpi _{1}(Z)$. Then the subset $Y'$ of
elements of $Y$ whose first component is in $G. x_{0}$ is a dense open subset in
$Y$. As $Y$ is a $G$-invariant bicone, $Y_{x_{0}}$ is a cone and $Y'$ is the image by
$\Phi $ of $G\times \{x_{0}\}\times Y_{x_{0}}$. So $Y$ is equal to
$(\{x_{0}\}\times Y_{x_{0}})^{\Phi }$.

ii) Let $Z$ be an irreducible component of $Y$. Then $Z$ is a $G$-invariant closed bicone.
We denote by $\Omega $ the image of $T$ by the map $g\mapsto g(x_{0})$ and we denote by
$\widetilde{Z}$ the subset of elements $(x,y)$ in $Z$ such that $x$ belongs to $\Omega $.
By our hypothesis, $\widetilde{Z}$ is a nonempty open subset of $Z$. In particular, it
is irreducible. Let $\tau $ be the inverse map of the map $g\mapsto g(x_{0})$ from $T$ to
$\Omega $. Then the map
$$ \widetilde{Z} \longrightarrow T\times Z_{x_{0}} \mbox{ , }
(x,y) \mapsto (\tau (x),\tau (x)^{-1}(y) )\mbox{ ,}$$
is an isomorphism. So $Z_{x_{0}}$ is irreducible.

iii) Let $X$ be an irreducible component of $Y_{x_{0}}$. Then
$(\{x_{0}\}\times X)^{\Phi }$ is irreducible as the closure of the image of an
irreducible variety by a regular map. If $Z$ is an irreducible component of $Y$ which
contains $(\{x_{0}\}\times X)^{\Phi }$ then $Z_{x_{0}}$ contains $X$. So by (ii), $X$ is
equal to $Z_{x_{0}}$ and $Z$ is equal to $(\{x_{0}\}\times X)^{\Phi }$ since
$\Phi (G\times \{x_{0}\}\times X)$ is open in $Z$. We then deduce that the map
$Z\mapsto Z_{x_{0}}$ is the inverse of the map $X\mapsto (\{x_{0}\}\times X)^{\Phi }$.
\end{proof}

Let us recall that ${\bf B}_{-}$ is the normalizer in $G$ of the Borel subalgebra
\sloppy \hbox{${\goth b}_-={\goth h} \oplus {\goth u}_-$}.

\begin{lemma}\label{l2p2}
Let ${\bf U}$ be the unipotent radical of the normalizer of ${\goth b}$ in $G$. Then
there exists a closed subset $Z$ of ${\bf U}$ such that the map
$${\bf B}_{-}\times Z \rightarrow G. e \mbox{ , } (g,k) \mapsto gk(e) \mbox{ ,}$$
is an isomorphism onto an open subset $V $ of $G. e$ containing $e$.
\end{lemma}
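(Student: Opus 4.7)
The plan is to construct $Z$ as the image under the exponential map of a vector space complement to $\mathfrak{g}(e)$ inside $\mathfrak{u}$. Since $e$ is regular nilpotent and belongs to $\mathfrak{u}$, Lemma \ref{l4sc1}, (ii), gives $\mathfrak{g}(e) \subset \mathfrak{u}$, so one can choose a vector subspace $\mathfrak{m}$ of $\mathfrak{u}$ such that $\mathfrak{u} = \mathfrak{g}(e) \oplus \mathfrak{m}$. Since $\mathbf{U}$ is unipotent, $\exp : \mathfrak{u} \to \mathbf{U}$ is an isomorphism of algebraic varieties, so $Z := \exp(\mathfrak{m})$ is a closed subvariety of $\mathbf{U}$ of dimension $\bor g{} - 2\rk\mathfrak{g}$.

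The first step is to check that the morphism $\varphi : \mathbf{B}_{-} \times Z \to G.e$, $(g,k) \mapsto gk(e)$, is étale at $(1,1)$. Its differential at this point is the map
$$\mathfrak{b}_{-} \oplus \mathfrak{m} \longrightarrow T_{e}(G.e) = [\mathfrak{g},e] \mbox{ , } \qquad (X,Y) \longmapsto [X+Y,e] \mbox{ . }$$
An element of the kernel is $(X,Y)$ with $X+Y \in \mathfrak{g}(e) \subset \mathfrak{u}$; writing $X = (X+Y) - Y \in \mathfrak{b}_{-} \cap \mathfrak{u} = \{0\}$ gives $X=0$, and then $Y \in \mathfrak{m} \cap \mathfrak{g}(e) = \{0\}$. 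The source has dimension $\bor g{} + (\bor g{} - 2\rk\mathfrak{g}) = 2\bor g{} - 2\rk\mathfrak{g}$, equal to $\dim G.e$, so the differential is an isomorphism.

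Next I would establish injectivity of $\varphi$. If $g_{1}k_{1}(e) = g_{2}k_{2}(e)$ with $g_{i} \in \mathbf{B}_{-}$ and $k_{i} \in Z$, then there exists $z \in G(e)$ with $g_{1}k_{1} = g_{2}k_{2}z$, and since $G(e) \subset \mathbf{U}$ by Lemma \ref{l4sc1}, (ii), this gives $g_{2}^{-1}g_{1} = k_{2}zk_{1}^{-1} \in \mathbf{B}_{-} \cap \mathbf{U} = \{1\}$; hence $g_{1}=g_{2}$, and $k_{1} \in k_{2}G(e)$. Combined with étaleness and the equality of dimensions, this will imply that $\varphi$ is an open immersion onto an open subset $V$ of $G.e$ containing $e$.

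The main obstacle is to upgrade the étale/injective behavior to a \emph{global} isomorphism onto an open subset $V$ of $G.e$ using the $Z$ produced above. The clean way is to observe that the right action of $G(e)$ on $\mathbf{U}$ is free, the quotient $\mathbf{U} \to \mathbf{U}/G(e)$ is a Zariski-locally trivial $G(e)$-torsor (since $G(e)$ is a connected unipotent group acting on an affine variety in characteristic zero), and the composition $Z \hookrightarrow \mathbf{U} \to \mathbf{U}/G(e)$ is étale at $1$ with matching dimensions, so that after possibly replacing $\mathfrak{m}$ by a suitable open subvariety (or by exploiting a global section of the torsor provided by the triviality above) the map $Z \times G(e) \to \mathbf{U}$ becomes an open immersion onto an open subset of $\mathbf{U}$ containing $1$. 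Factoring $\varphi$ as this open immersion (crossed with the identity on $\mathbf{B}_{-}$) composed with the big cell isomorphism $\mathbf{B}_{-} \times \mathbf{U} \xrightarrow{\sim} \mathbf{B}_{-}\mathbf{U} \subset G$ and the smooth quotient $G \to G.e$ then yields the required isomorphism $\mathbf{B}_{-} \times Z \xrightarrow{\sim} V$.
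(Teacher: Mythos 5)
Your reduction is correct as far as it goes (the differential computation at $(1,1)$ and the reduction of injectivity to $g_{1}=g_{2}$ and $k_{1}\in k_{2}G(e)$ are fine), but the argument stops exactly where the real content of the lemma lies, and neither of the two fixes you propose at the end closes the gap. For an \emph{arbitrary} vector space complement ${\goth m}$ of ${\goth g}(e)$ in ${\goth u}$, the set $Z=\exp({\goth m})$ need not meet each coset $kG(e)$ in at most one point, nor need $Z\cdot G(e)$ cover a neighbourhood of every point of $Z$: already in the Heisenberg group one can pick a complement ${\goth m}$ of a one-dimensional subalgebra $H$ for which $\exp({\goth m})\to {\bf U}/H$ is neither injective nor surjective (with ${\goth u}=\langle x,y,z\rangle$, $[x,y]=z$, $H=\exp({\Bb C}x)$ and ${\goth m}={\Bb C}y\oplus{\Bb C}(x+z)$, the fibre over the coset of $\exp(\alpha x-2y+\alpha z)$ is one-dimensional and cosets with $\beta=-2$, $\gamma\neq\alpha$ are missed). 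So from $k_{1}\in k_{2}G(e)$ you cannot conclude $k_{1}=k_{2}$, and \'etaleness at the single point $(1,1)$ gives no global information. Your first repair --- shrinking ${\goth m}$ to an open subvariety --- destroys the closedness of $Z$ in ${\bf U}$ that the statement requires; your second --- taking the image of a global section of ${\bf U}\to{\bf U}/G(e)$ --- abandons $Z=\exp({\goth m})$ altogether and is left unverified (you would still have to prove that the torsor is globally trivial over ${\bf U}. e$ and that the section's image is closed).

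This missing step is precisely what the paper imports from Pukanszky \cite{Pu}(Ch. I, Part. II, \S 3): there is a complement ${\goth m}$ of ${\goth g}(e)$ in ${\goth u}$ with a basis $v_{1},\ldots,v_{m}$ (a coexponential basis, adapted to a flag of subalgebras) such that $(t_{1},\ldots,t_{m})\mapsto \exp(t_{1}\ad v_{1})\cdots\exp(t_{m}\ad v_{m})(e)$ is an isomorphism from ${\Bb C}^{m}$ onto ${\bf U}. e$. The corresponding map ${\Bb C}^{m}\to{\bf U}$ is then proper, so its image $Z$ is closed in ${\bf U}$ and $k\mapsto k(e)$ is an isomorphism from $Z$ onto ${\bf U}. e$; combined with the big-cell isomorphism ${\bf B}_{-}\times{\bf U}. e\to V$ (which uses only $G(e)\subset{\bf U}$ and ${\bf B}_{-}\cap{\bf U}=\{1\}$, as in your coset computation), this gives the lemma. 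If you wish to avoid Pukanszky, the torsor route can be made rigorous, but the global triviality and the closedness of the section's image must actually be proved rather than asserted.
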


\begin{proof}
As ${\bf B}_{-}{\bf U}$ is an open subset of $G$, its image by the map $g\mapsto g(e)$
from $G$ to $G. e$ is an open subset $V$ containing $e$. Moreover, the map
$${\bf B}_{-}\times {\bf U}.e \rightarrow V \mbox{ , } (g,x) \mapsto g(x)$$
is an isomorphism since ${\bf U}$ contains the stabilizer of $e$ in $G$ and
${\bf B}_{-} \cap {\bf U}= \{{\bf 1}_{{\goth g}}\}$. By
\cite{Pu}(Ch. I, Part. II, \S 3), there exists a complement ${\goth m}$ of
${\goth g}(e)$ in ${\goth u}$ and a basis $\poi v1{,\ldots,}{m}{}{}{}$ of ${\goth m}$ such
that the map
$${\Bb C}^{m} \rightarrow {\bf U}.e \mbox{ , }
(\poi t1{,\ldots,}{m}{}{}{}) \mapsto \exp(t_{1}\ad v_{1})\cdots \exp(t_{m}\ad v_{m})(e)$$
is an isomorphism. Then the map
$${\Bb C}^{m} \rightarrow {\bf U} \mbox{ , }
(\poi t1{,\ldots,}{m}{}{}{}) \mapsto \exp(t_{1}\ad v_{1})\cdots \exp(t_{m}\ad v_{m}) $$
is proper and its image $Z$ is closed in ${\bf U}$. Moreover, the map
$${\bf B}_{-}\times Z \rightarrow V \mbox{ , } (g,k)\mapsto gk(e) $$
is an isomorphism.
\end{proof}

Let ${\cali N}_{{\goth g},e}$ be the subset of elements $y$ of ${\goth g}$ such that
$(e,y)$ is in ${\cali N}_{{\goth g}}$. From Theorem \ref{tint}, Lemma \ref{lp2}, (iii)
and Lemma \ref{l2p2}, we deduce the following result:

\begin{corollary}\label{cp2}
The map $X\mapsto (\{e\}\times X)^{\Phi }$ is a bijection from the set of irreducible
components of ${\cali N}_{{\goth g},e}$ onto the set of irreducible components of
${\cali N}_{{\goth g}}$.
\end{corollary}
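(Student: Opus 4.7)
The plan is to apply Lemma \ref{lp2}, (iii), directly with $Y={\cali N}_{{\goth g}}$ and $x_{0}=e$. The work reduces to verifying the two hypotheses of that lemma, after which the conclusion is immediate.

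First I would check that ${\cali N}_{{\goth g}}$ is a $G$-invariant closed bicone of $\gtg g{}$, which is immediate from its definition. Since $e$ is a regular nilpotent element of ${\goth g}$, the closure of the orbit $G.e$ in ${\goth g}$ coincides with ${\goth N}_{{\goth g}}$. By Theorem \ref{tint}, the image of any irreducible component of ${\cali N}_{{\goth g}}$ by $\varpi _{1}$ is equal to ${\goth N}_{{\goth g}}$, i.e.\ to the closure of $G.e$. This gives the first hypothesis of Lemma \ref{lp2}.

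Second, I would produce the subvariety $T$ of $G$ required by Lemma \ref{lp2}. Lemma \ref{l2p2} provides a closed subset $Z$ of the unipotent radical ${\bf U}$ such that the map $(g,k)\mapsto gk(e)$ is an isomorphism from ${\bf B}_{-}\times Z$ onto an open neighborhood $V$ of $e$ in $G.e$. Since the product morphism ${\bf B}_{-}\times {\bf U}\to G$, $(g,k)\mapsto gk$, is an open immersion onto the big cell ${\bf B}_{-}{\bf U}$ (Bruhat decomposition, using ${\bf B}_{-}\cap {\bf U}=\{{\bf 1}_{{\goth g}}\}$), its restriction identifies ${\bf B}_{-}\times Z$ with the locally closed subvariety $T:={\bf B}_{-}\cdot Z$ of $G$. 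Under this identification, the map $g\mapsto g(e)$ on $T$ becomes precisely the isomorphism of Lemma \ref{l2p2} between ${\bf B}_{-}\times Z$ and $V$, and $V$ is an open subset of $G.e$ containing $e$. This establishes the second hypothesis.

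With both hypotheses satisfied, Lemma \ref{lp2}, (iii), yields that $X\mapsto (\{e\}\times X)^{\Phi }$ is a bijection from the set of irreducible components of ${\cali N}_{{\goth g},e}={\cali N}_{{\goth g}}\cap (\{e\}\times {\goth g})$ onto the set of irreducible components of ${\cali N}_{{\goth g}}$, which is the desired statement. There is no serious obstacle here; the only subtlety is the verification that the map $(g,k)\mapsto gk$ identifies ${\bf B}_{-}\times Z$ with a subvariety of $G$, which follows from the injectivity of the big-cell multiplication map.
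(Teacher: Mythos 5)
Your proof is correct and follows exactly the paper's route: the authors also deduce the corollary by applying Lemma \ref{lp2}, (iii), with $Y={\cali N}_{{\goth g}}$ and $x_{0}=e$, using Theorem \ref{tint} for the condition on $\varpi_{1}$ of the irreducible components and Lemma \ref{l2p2} to furnish the subvariety $T$. Your explicit identification of ${\bf B}_{-}\times Z$ with ${\bf B}_{-}\cdot Z$ via the big cell is a worthwhile clarification of a point the paper leaves implicit, but it is not a different argument.
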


According to Section \ref{su}, \S\ref{su3}, ${\cali N}_{{\goth g},e}$ has a natural structure of
scheme. The following result is also a corollary of Theorem \ref{tint}.

\begin{corollary}\label{c2p2} The subset ${\cali N}_{{\goth g},e}$ is a complete intersection of
dimension $\bor g{}-\rk\mathfrak{g}$.
\end{corollary}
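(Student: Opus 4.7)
The plan is to show that ${\cali N}_{{\goth g},e}$ is the zero locus in ${\goth g}$ of exactly $\bor g{}$ nontrivial regular functions and that each of its irreducible components has the minimal possible dimension $\bor g{}-\rk\mathfrak{g}$; together these force it to be a complete intersection of the announced dimension.

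For the equation count, I would observe that as the fiber of the scheme ${\cali N}_{{\goth g}}\subset\gtg{g}$ over $e\in{\goth g}$, the subscheme ${\cali N}_{{\goth g},e}\subset{\goth g}$ is cut out by the $\bor g{}+\rk\mathfrak{g}$ functions $y\mapsto p_{i,m,n}(e,y)$ for $i=1,\dots,\rk\mathfrak{g}$ and $m+n=d_i$. Specializing Relation (\ref{pi}) to $b=0$ and using $p_i(e)=0$ (since $e\in{\goth N}_{{\goth g}}$) shows that the $\rk\mathfrak{g}$ equations indexed by $(m,n)=(d_i,0)$ vanish identically in $y$, while all the remaining ones are honest polynomials of degree $n\geq 1$ in $y$. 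Only $\bor g{}$ defining equations thus survive, and Krull's height theorem yields the lower bound $2\bor g{}-\rk\mathfrak{g}-\bor g{}=\bor g{}-\rk\mathfrak{g}$ for the dimension of every irreducible component of ${\cali N}_{{\goth g},e}$.

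For the matching upper bound, I would invoke Corollary \ref{cp2}: every irreducible component of ${\cali N}_{{\goth g},e}$ is of the form $Z_e$ for some irreducible component $Z$ of the nilpotent bicone. The hypotheses of Lemma \ref{lp2} are met here because Theorem \ref{tint} guarantees $\varpi_1(Z)={\goth N}_{{\goth g}}=\overline{G.e}$, and Lemma \ref{l2p2} supplies the requested transversal $T$ to $G.e$ at $e$. From the isomorphism $\widetilde{Z}\simeq T\times Z_e$ constructed in the proof of Lemma \ref{lp2}(ii), I read off $\dim Z=\dim G.e+\dim Z_e$, whence $\dim Z_e=3(\bor g{}-\rk\mathfrak{g})-2(\bor g{}-\rk\mathfrak{g})=\bor g{}-\rk\mathfrak{g}$, matching the lower bound exactly. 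Hence ${\cali N}_{{\goth g},e}$ is equidimensional of dimension $\bor g{}-\rk\mathfrak{g}$ and, being defined by precisely that many codimension-worth of equations, it is a complete intersection.

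Since Theorem \ref{tint}, Corollary \ref{cp2} and Lemma \ref{l2p2} are already in place, no substantial obstacle remains; the one point calling for care is isolating precisely which specializations $p_{i,m,n}(e,\cdot)$ become identically zero, which Relation (\ref{pi}) settles at once.
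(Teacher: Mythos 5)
Your proof is correct and follows essentially the same route as the paper's: the lower bound comes from observing that only $\bor g{}$ of the specialized equations $p_{i,m,n}(e,\cdot)$ survive (the $(d_i,0)$ ones reduce to $p_i(e)=0$), and the upper bound comes from Corollary \ref{cp2} together with the fact that each component of ${\cali N}_{{\goth g}}$ has dimension $3(\bor g{}-\rk\mathfrak{g})$ and dominates ${\goth N}_{{\goth g}}$, so its fiber over the dense orbit $G.e$ has dimension $\bor g{}-\rk\mathfrak{g}$. Your explicit appeal to the product structure $\widetilde{Z}\simeq T\times Z_e$ from the proof of Lemma \ref{lp2} is just a more detailed version of the paper's fiber-dimension argument.
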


\begin{proof} As ${\cali N}_{{\goth g},e}$ is the nullvariety in ${\goth g}$ of
$\bor g{}$ polynomial functions, the dimension of any irreducible component of
${\cali N}_{{\goth g},e}$ is at least $\bor g{}-\rk\mathfrak{g}$. Let $Y$ be an irreducible
component of ${\cali N}_{{\goth g}}$. By Theorem \ref{tint}, since $G.e$ is an open dense
subset of ${\goth N}_{{\goth g}}$, $Y_e$ has dimension $\bor g{}-\rk\mathfrak{g}$. On the other
hand, by Corollary \ref{cp2}, $Y_e$ is an irreducible component of
${\cali N}_{{\goth g},e}$ and any irreducible component of
${\cali N}_{{\goth g},e}$ is obtained in this way, whence the corollary.
\end{proof}

Let ${\goth p}={\goth l}_{{\goth p}} \oplus {\goth u}_{{\goth p}}$ be a parabolic
subalgebra of ${\goth g}$ containing ${\goth b}$, where ${\goth l}_{{\goth p}}$ is the
${\goth h}$-stable Levi subalgebra of ${\goth p}$ and where ${\goth u}_{{\goth p}}$ is the
nilpotent radical of ${\goth p}$. We denote by $\varpi_{{\goth l}_{{\goth p}}}$ and
$\varpi_{{\goth u}_{{\goth p}}}$ the projections from ${\goth p}$ to
${\goth l}_{{\goth p}}$ and ${\goth u}_{{\goth p}}$ respectively.

\begin{lemma}\label{l3p2}
The map $Y\mapsto Y + {\goth u}_{{\goth p}}$ is a one-to-one correspondence between the
set of irreducible components of
${\cali N}_{{\goth l}_{{\goth p}},\varpi _{{\goth l}_{{\goth p}}}(e)}$ and
the set of irreducible components of ${\cali N}_{{\goth g},e}$ contained in ${\goth p}$.
\end{lemma}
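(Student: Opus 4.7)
The plan is to establish the set-theoretic identity
\begin{eqnarray*}
{\cali N}_{{\goth g},e}\cap{\goth p} & = & {\cali N}_{{\goth l}_{{\goth p}},\varpi_{{\goth l}_{{\goth p}}}(e)}+{\goth u}_{{\goth p}}
\end{eqnarray*}
and then to combine it with a dimension count derived from Corollary \ref{c2p2} to transfer irreducible components. The crucial preliminary observation is that an element $z\in{\goth p}$ is nilpotent in ${\goth g}$ if and only if $\varpi_{{\goth l}_{{\goth p}}}(z)$ is nilpotent in ${\goth l}_{{\goth p}}$. The ``only if'' direction follows from the fact that ${\goth u}_{{\goth p}}$ is an ideal of ${\goth p}$ with quotient ${\goth l}_{{\goth p}}$. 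For the converse, the Jordan decomposition $z=z_{s}+z_{n}$ in ${\goth g}$ takes place in ${\goth p}$, its image under $\varpi_{{\goth l}_{{\goth p}}}$ is the Jordan decomposition of $\varpi_{{\goth l}_{{\goth p}}}(z)$ in ${\goth l}_{{\goth p}}$, and if the latter is nilpotent then $z_{s}\in{\goth u}_{{\goth p}}$; since every element of ${\goth u}_{{\goth p}}$ is ad-nilpotent in ${\goth g}$, $\ad z_{s}$ is both diagonalizable and nilpotent, hence zero, forcing $z_{s}=0$ by simplicity of ${\goth g}$. Applying this criterion to $z=ae+by$ for all $(a,b)\in{\Bb C}^{2}$ then yields the stated set identity.

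The next step is to match dimensions. By Corollary \ref{c2p2}, ${\cali N}_{{\goth g},e}$ is equidimensional of dimension $\bor g{}-\rk\mathfrak{g}$. Applied to the reductive Lie algebra ${\goth l}_{{\goth p}}$ via the product decomposition of the nilpotent bicone recalled in Section \ref{notation} (and noting that $\varpi_{{\goth l}_{{\goth p}}}(e)$ is a sum of simple root vectors for the roots of ${\goth l}_{{\goth p}}$, hence is a principal nilpotent element in each simple factor of $[{\goth l}_{{\goth p}},{\goth l}_{{\goth p}}]$), the same corollary yields that ${\cali N}_{{\goth l}_{{\goth p}},\varpi_{{\goth l}_{{\goth p}}}(e)}$ is equidimensional of dimension $\bor l{{\goth p}}-\rk{\goth l}_{{\goth p}}$. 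Combining with the classical identities $\bor g{}=\bor l{{\goth p}}+\dim{\goth u}_{{\goth p}}$ and $\rk{\goth l}_{{\goth p}}=\rk\mathfrak{g}$, the right-hand side of the set identity is therefore equidimensional of dimension $\bor g{}-\rk\mathfrak{g}$, matching that of ${\cali N}_{{\goth g},e}$.

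With the set identity and dimension count in place, the bijection follows by formal arguments. For $Y$ an irreducible component of ${\cali N}_{{\goth l}_{{\goth p}},\varpi_{{\goth l}_{{\goth p}}}(e)}$, the set $Y+{\goth u}_{{\goth p}}$ is closed irreducible of dimension $\bor g{}-\rk\mathfrak{g}$, contained in ${\cali N}_{{\goth g},e}\cap{\goth p}$, hence is an irreducible component of ${\cali N}_{{\goth g},e}$ by equidimensionality. Conversely, if $Z$ is a component of ${\cali N}_{{\goth g},e}$ contained in ${\goth p}$, the identity writes $Z$ as an irreducible subset of $\bigcup_{i}(Y_{i}+{\goth u}_{{\goth p}})$, where the $Y_{i}$ enumerate the components of ${\cali N}_{{\goth l}_{{\goth p}},\varpi_{{\goth l}_{{\goth p}}}(e)}$; irreducibility of $Z$ forces $Z\subseteq Y_{i}+{\goth u}_{{\goth p}}$ for some $i$, and equality of dimensions then gives $Z=Y_{i}+{\goth u}_{{\goth p}}$. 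Injectivity of $Y\mapsto Y+{\goth u}_{{\goth p}}$ is immediate since $\varpi_{{\goth l}_{{\goth p}}}(Y+{\goth u}_{{\goth p}})=Y$. I expect the main technical point of the argument to be the nilpotency-transfer observation and its parametric application to $z=ae+by$; once that step is in place, the remainder is dimension-theoretic bookkeeping.
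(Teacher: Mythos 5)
Your proof is correct and follows essentially the same route as the paper: the containments $Y+{\goth u}_{{\goth p}}\subset {\cali N}_{{\goth g},e}\cap {\goth p}$ and $\varpi_{{\goth l}_{{\goth p}}}(Z)\subset {\cali N}_{{\goth l}_{{\goth p}},\varpi_{{\goth l}_{{\goth p}}}(e)}$ combined with the dimension count $\bor l{{\goth p}}-\rk\mathfrak{g}+\dim{\goth u}_{{\goth p}}=\bor g{}-\rk\mathfrak{g}$ from Corollary \ref{c2p2} and equidimensionality. The only difference is presentational: you make explicit, via the Jordan decomposition, the nilpotency-transfer fact that an element of ${\goth p}$ is nilpotent if and only if its ${\goth l}_{{\goth p}}$-component is, which the paper uses implicitly.
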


\begin{proof} Let $Y$ be an irreducible component of
${\cali N}_{{\goth l}_{{\goth p}},\varpi _{{\goth l}_{{\goth p}}}(e)}$. Then the set
\sloppy \hbox{$Y+{\goth u}_{{\goth p}}$} is contained in the intersection of
${\cali N}_{{\goth g},e}$ and ${\goth p}$. As $e$ is a
regular nilpotent element of ${\goth g}$, $\varpi _{{\goth l}_{{\goth p}}}(e)$ is a
regular nilpotent element of ${\goth l}_{{\goth p}}$. So by Corollary \ref{c2p2}, $Y$ has
dimension $\bor l{{\goth p}}-\ran g{}$. Then $Y+{\goth u}_{{\goth p} }$ has dimension
$$\bor l{{\goth p}}-\rk\mathfrak{g}+\dim {\goth u}_{{\goth p}}= \bor g{}-\rk\mathfrak{g} \mbox{ .}$$
According to Corollary \ref{c2p2}, we deduce that $Y+{\goth u}_{{\goth p}}$ is an
irreducible component of ${\cali N}_{{\goth g},e}$. Thus, the above map is well-defined.
It is clearly injective. Prove now that it is surjective.

Let $Z$ be an irreducible component of ${\cali N}_{{\goth g},e}$ contained in ${\goth p}$.
Then $\varpi _{{\goth l}_{{\goth p}}}(Z)$ is contained in an irreducible component of
${\cali N}_{{\goth l}_{{\goth g}},\varpi _{{\goth l}_{{\goth p}}}(e)}$. Let $Y$ be such
an irreducible component. Then $Y+{\goth u}_{{\goth p}}$ is an irreducible subset
of ${\cali N}_{{\goth g},e}$ which contains $Z$. So, $Z$ is equal to
$Y+{\goth u}_{{\goth p}}$.
\end{proof}

Let $N_{{\goth g}}$ be the set of irreducible components of $\mathcal{N}_{{\goth g},e}$.
We denote by $\Upsilon_{{\goth g}}$ the set of proper parabolic subalgebras strictly
containing ${\goth b}$. Let $N_{{\goth g}}^{'}$ be the subset of irreducible components
of ${\cali N}_{{\goth g},e}$ which are not contained in any element of
$\Upsilon_{{\goth g}}$.

\begin{lemma}\label{l4p2} If $\dim {\goth g}=3$, then
$\vert N_{{\goth g}}' \vert=0$. Otherwise, $\vert N_{{\goth g}}' \vert \geq 1$.
\end{lemma}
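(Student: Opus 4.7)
The plan is to treat the two cases separately.

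For $\dim{\goth g}=3$, so ${\goth g}\cong{\goth {sl}}_{2}({\Bb C})$, I would compute ${\cali N}_{{\goth g},e}$ directly. The only generator of $\ai g{}{}$ is the Casimir $p_{1}(x)=\dv xx$, so $y\in{\cali N}_{{\goth g},e}$ iff $\dv{e+ty}{e+ty}=0$ for all $t\in{\Bb C}$, forcing $\dv ey=0$ and $\dv yy=0$. Writing $y=ae+bh+cf$ and using $\dv ef,\dv hh\neq 0$, a direct substitution yields $b=c=0$, hence ${\cali N}_{{\goth {sl}}_{2},e}={\Bb C}e\subset{\goth u}\subset{\goth b}$. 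This unique irreducible component is contained in ${\goth b}\in\Upsilon_{{\goth g}}$, so $\vert N_{{\goth g}}'\vert=0$.

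For $\dim{\goth g}>3$, I would exhibit an explicit $y\in{\cali N}_{{\goth g},e}$ lying in no proper parabolic containing ${\goth b}$. Let $\theta$ be the highest root of $({\goth g},{\goth h})$, pick a nonzero $x\in{\goth g}^{-\theta}$, and set $y:=[x,e]$. Writing $e=\sum_{\alpha\in\Pi}c_{\alpha}e_{\alpha}$ with all $c_{\alpha}\neq 0$, each bracket $[x,e_{\alpha}]$ is a nonzero scalar multiple of a root vector in ${\goth g}^{\alpha-\theta}$ when $\theta-\alpha\in{\cali R}$ and vanishes otherwise (the case $\theta=\alpha$ being excluded since $\theta$ is not simple for ${\goth g}\not\cong{\goth {sl}}_{2}$). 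Iterating, $[x,e_{\alpha-\theta}]$ lies in the weight space $\alpha-2\theta$, which is not a root because $2\theta-\alpha$ has height strictly greater than that of $\theta$. Hence $(\ad x)^{2}(e)=0$, and Lemma~\ref{li3} gives $(e,y)\in{\cali N}_{{\goth g}}$, i.e.\ $y\in{\cali N}_{{\goth g},e}$.

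It then suffices to show that $y$ lies in no maximal proper parabolic ${\goth p}_{\Pi\setminus\{\alpha_{0}\}}$ containing ${\goth b}$, since the other elements of $\Upsilon_{{\goth g}}$ are dominated by these. The element $y$ is a nonzero linear combination of negative root vectors $e_{-(\theta-\alpha)}$ for those $\alpha\in\Pi$ with $\theta-\alpha\in{\cali R}$, so one must verify that the union of the supports of these $\theta-\alpha$ in $\Pi$ equals $\Pi$. Since $\theta$ has full support $\Pi$, the support of $\theta-\alpha$ equals $\Pi$ if the coefficient of $\alpha$ in $\theta$ is at least $2$, and $\Pi\setminus\{\alpha\}$ if it equals $1$. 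A case-by-case inspection (equivalently, a glance at the marks of the extended Dynkin diagram) yields: in type $A_{n}$ with $n\geq 2$ the two extremal simple roots $\alpha_{1}$ and $\alpha_{n}$ both satisfy $\theta-\alpha\in{\cali R}$ with coefficient $1$, whose supports $\Pi\setminus\{\alpha_{1}\}$ and $\Pi\setminus\{\alpha_{n}\}$ cover $\Pi$; in every other simple type of rank $\geq 2$ there is a unique such $\alpha$, but its coefficient in $\theta$ is $\geq 2$, so its support is already $\Pi$. In every case the union is $\Pi$, so $y\notin{\goth p}_{\Pi\setminus\{\alpha_{0}\}}$ for every $\alpha_{0}\in\Pi$, and the irreducible component of ${\cali N}_{{\goth g},e}$ containing $y$ belongs to $N_{{\goth g}}'$, yielding $\vert N_{{\goth g}}'\vert\geq 1$.

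The main obstacle is this final support check: it is routine in each individual type, but producing a uniform argument requires invoking the characterisation of the simple roots adjacent to the affine node of the extended Dynkin diagram together with the numerical behaviour of their marks.
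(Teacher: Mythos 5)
Your proof is correct and follows essentially the same route as the paper's: for $\dim {\goth g}>3$ it exhibits the line $[e,{\goth g}^{-\alpha^{\#}}]\subset {\cali N}_{{\goth g},e}$ via Lemma \ref{li3} (with the same height argument giving $(\ad x)^{2}(e)=0$) and then runs the same extended-Dynkin-diagram check --- marks of the simple roots adjacent to the affine node --- to see that this line lies in no maximal parabolic of $\Upsilon_{{\goth g}}$. The only slip is in the rank-one case: ${\goth b}$ is \emph{not} an element of $\Upsilon_{{\goth g}}$ as defined (it does not strictly contain itself), so $\Upsilon_{{\goth {sl}}_{2}}=\emptyset$ and the conclusion $\vert N_{{\goth g}}'\vert=0$ really rests on the implicit convention that the component ${\goth u}={\Bb C}e$ is always counted separately (cf. the disjoint union in Proposition \ref{pp2}); this looseness is present in the paper's own one-line treatment of that case, so it is not a gap specific to your argument.
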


\begin{proof} The set $\Upsilon_{{\goth g}}$ is nonempty if and only if
$\dim {\goth g} >3$. Suppose now $\dim {\goth g}> 3$. Let $\alpha^{\#}$ be the highest
root of ${\cali R}_{+}$. By Lemma \ref{li3}, ${\cali N}_{{\goth g},e}$ contains
$[e,{\goth g}^{-\alpha^{\#}}]$. So it suffices to prove that
$[e,{\goth g}^{-\alpha^{\#}}]$ is not contained in ${\goth p}$ for any ${\goth p}$ in
$\Upsilon_{{\goth g}}$. For $\alpha$ in ${\cali R}$, we fix a nonzero element
$x_{\alpha}$ in ${\goth g}^{\alpha}$. The elements $x_{\alpha}$ may be chosen so
that $e=\sum\limits_{\alpha \in \Pi} x_{\alpha}$. Suppose that there is ${\goth p}$ in
$\Upsilon_{{\goth g}}$ such that $[x_{-\alpha^{\#}},e]$ belongs to ${\goth p}$. We can
suppose that ${\goth p}$ is a maximal element of $\Upsilon_{{\goth g}}$ corresponding to
a simple root $\beta$. A short discussion shows that $\beta$ satisfies the two following
conditions:
\begin{list}{}{}
\item 1) for all $\alpha $ in $\Pi \backslash \{\beta \}$, $\alpha^{\#} - \alpha $ is
not a root,
\item 2) $\alpha^{\#} -\beta $ is a root and $n_\beta=1$, where $n_{\beta}$ is the
coordinate at $\beta $ of $\alpha^{\#}$ in the basis $\Pi$.
\end{list}

A quick look on the classification of root systems shows up that there is no simple root
satisfying both Conditions (1) and (2). As we obtain a contradiction, the result follows.
\end{proof}

\begin{proposition}\label{pp2} The number of irreducible components of
${\cali N}_{{\goth g}}$ is equal to $\vert N_{{\goth g}} \vert$ and
$\vert N_{{\goth g}} \vert$ satisfies the following recursive relation:
$$\vert N_{{\goth g}} \vert = \vert N_{{\goth g}}^{'} \vert +
\left( \sum\limits_{{\goth p} \in \Upsilon_{{\goth g}}}
\prod \limits_{{\goth l} \textrm{ simple factor}
\atop \textrm{ of } [{\goth l}_{{\goth p}},{\goth l}_{{\goth p}}]}
\vert N_{{\goth l}}^{'} \vert \right) + 1 \mbox{ . }$$
\end{proposition}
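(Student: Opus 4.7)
The first assertion follows immediately from Corollary \ref{cp2}. For the recursive formula, I plan to partition $N_{\goth g}$ according to the minimal parabolic subalgebra of ${\goth g}$ containing each irreducible component, and then invoke Lemma \ref{l3p2} together with a product decomposition of the nilpotent bicone of a Levi subalgebra.

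Since the parabolics of ${\goth g}$ containing ${\goth b}$ are in bijection with the subsets of $\Pi$ and form a lattice under intersection, for every $Y\in N_{\goth g}$ there is a unique minimal parabolic ${\goth p}_Y\supseteq{\goth b}$ containing $Y$. This partitions $N_{\goth g}$ into the cases ${\goth p}_Y={\goth g}$, ${\goth p}_Y={\goth b}$, and ${\goth p}_Y\in\Upsilon_{\goth g}$. The first case contributes $|N'_{\goth g}|$ components. The second case contributes exactly one component: applying Lemma \ref{l3p2} with ${\goth p}={\goth b}$ gives a bijection between the components of ${\cali N}_{{\goth g},e}$ contained in ${\goth b}$ and those of ${\cali N}_{{\goth h},0}$, and since ${\goth h}$ is commutative this last scheme reduces to $\{0\}$, so the sole component is $Y={\goth u}$. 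This accounts for the $+1$ in the formula.

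Fix now ${\goth p}\in\Upsilon_{\goth g}$. By Lemma \ref{l3p2}, the map $Y_{\goth l}\mapsto Y_{\goth l}+{\goth u}_{\goth p}$ is a bijection between the irreducible components of ${\cali N}_{{\goth l}_{\goth p},\varpi_{{\goth l}_{\goth p}}(e)}$ and those of ${\cali N}_{{\goth g},e}$ contained in ${\goth p}$. The parabolics of ${\goth g}$ containing ${\goth b}$ and contained in ${\goth p}$ correspond, via intersection with ${\goth l}_{\goth p}$, to the parabolics of ${\goth l}_{\goth p}$ containing the Borel ${\goth b}\cap{\goth l}_{\goth p}$; using the inclusion ${\goth u}_{\goth p}\subseteq{\goth u}_{{\goth p}'}$ whenever ${\goth b}\subseteq{\goth p}'\subseteq{\goth p}$, one verifies that $Y_{\goth l}+{\goth u}_{\goth p}\subseteq{\goth p}'$ iff $Y_{\goth l}\subseteq{\goth p}'\cap{\goth l}_{\goth p}$. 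Therefore the condition ${\goth p}_Y={\goth p}$ is equivalent to $Y_{\goth l}$ not being contained in any proper parabolic of ${\goth l}_{\goth p}$ containing its Borel.

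Finally, decompose the Levi as ${\goth l}_{\goth p}=Z({\goth l}_{\goth p})\oplus[{\goth l}_{\goth p},{\goth l}_{\goth p}]$ with $[{\goth l}_{\goth p},{\goth l}_{\goth p}]=\bigoplus_i{\goth l}_i$ a sum of simple algebras. Since nilpotent elements of ${\goth l}_{\goth p}$ lie in its derived subalgebra and $\varpi_{{\goth l}_{\goth p}}(e)$ is regular nilpotent (as $e$ is principal in ${\goth g}$), the nilpotent bicone ${\cali N}_{{\goth l}_{\goth p},\varpi_{{\goth l}_{\goth p}}(e)}$ factorises as $\prod_i{\cali N}_{{\goth l}_i,e_i}$ and its components correspond to tuples $(Y_1,\ldots,Y_m)$. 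Since the parabolics of ${\goth l}_{\goth p}$ containing its Borel are exactly products of such parabolics of the ${\goth l}_i$, and since the minimal parabolic containing a product is the product of the minimal ones, the condition of the previous paragraph factorises into $Y_i\in N'_{{\goth l}_i}$ for each $i$, yielding $|\{Y:{\goth p}_Y={\goth p}\}|=\prod_i|N'_{{\goth l}_i}|$. Summing over ${\goth p}\in\Upsilon_{\goth g}$ and combining with the previous two cases gives the formula. The delicate point is this factorisation of the minimal-parabolic condition across the simple factors, which relies simultaneously on the product structure of the nilpotent bicone of a semisimple Lie algebra at a regular nilpotent element and on the bijection between parabolics of a semisimple algebra and products of parabolics of its simple factors.
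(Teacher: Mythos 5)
Your proof is correct and follows essentially the same route as the paper: the decomposition of $N_{{\goth g}}$ by parabolic subalgebras containing ${\goth b}$, the bijection of Lemma \ref{l3p2}, and the factorisation over the simple factors of $[{\goth l}_{{\goth p}},{\goth l}_{{\goth p}}]$. Your bookkeeping via the \emph{minimal} parabolic ${\goth p}_Y$ (and the treatment of the $+1$ term as the case ${\goth p}_Y={\goth b}$) is a cosmetic variant of the paper's partition into $N'_{{\goth g}}$, the sets $N_{{\goth g}}^{({\goth p})}$ and $\{{\goth u}\}$, and if anything makes the pairwise disjointness of the pieces more transparent.
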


\begin{proof} By Corollary \ref{cp2}, the number of irreducible components of
${\cali N}_{{\goth g}}$ is $\vert N_{{\goth g}} \vert$. If $\dim {\goth g}=3$,
then ${\cali N}_{{\goth g},e}$ is the line generated by $e$, $\Upsilon_{{\goth g}}=\emptyset$, $|N_{{\goth g}}^{'}|=0$ and $\vert N_{{\goth g}} \vert=1$, whence the expected relation. Suppose now $\dim {\goth g}>3$. We can write
\begin{eqnarray*}
N_{{\goth g}} & = & N_{{\goth g}}^{'} \cup \left( \bigcup\limits_{{\goth p} \in
\Upsilon_{{\goth g}}} N_{{\goth g}}^{({\goth p})}\right) \cup \{\goth u\}
\mbox{ , }
\end{eqnarray*}
where, for ${\goth p}$ in $\Upsilon_{{\goth g}}$, $N_{{\goth g}}^{({\goth p})}$ is the
set of irreducible components of ${\cali N}_{{\goth g},e}$ contained in ${\goth p}$ and
not contained in any element of $\Upsilon _{{\goth g}}$ strictly
contained in ${\goth p}$. Thus the terms of the above union are pairwise disjoint and we
have
\begin{eqnarray*}
\vert N_{{\goth g}}\vert & = & \vert N_{{\goth g}}^{'} \vert  +
\sum_{{\goth p}\in \Upsilon _{{\goth g}}} \vert N_{{\goth g}}^{({\goth p})} \vert +1
\mbox{ .}
\end{eqnarray*}
It remains to prove the equality
\begin{eqnarray}\label{r2}
\vert N_{{\goth g}}^{({\goth p})} \vert \ = \
\prod \limits_{{\goth l} \textrm{ simple factor}
\atop \textrm{ of } [{\goth l}_{{\goth p}},{\goth l}_{{\goth p}}]}
\vert N_{{\goth l}}^{'} \vert \mbox{ ,}
\end{eqnarray}
for any ${\goth p}$ in $\Upsilon _{{\goth g}}$. Let ${\goth p}$ be in
$\Upsilon _{{\goth g}}$. By Lemma \ref{l3p2}, the map $Y\mapsto Y+{\goth u}_{{\goth p}}$
is a bijection from $N_{{\goth l}_{{\goth p}}}$ to the subset of irreducible components of
${\cali N}_{{\goth g},e}$ contained in ${\goth p}$. Moreover, $Y+{\goth u}_{{\goth p}}$
belongs to $N_{{\goth g}}^{({\goth p})}$ if and only if $Y$ belongs to
$N'_{{\goth l}_{{\goth p}}}$. So
$\vert N'_{{\goth l}_{{\goth p}}} \vert = \vert N_{{\goth g}}^{({\goth p})} \vert$. Let
$\poi {{\goth l}}1{,\ldots,}{m}{}{}{}$ be the simple factors of
${\goth l}_{{\goth p}}$ and let $Y$ be in $N_{{\goth l}_{{\goth p}}}$. As
remarked in Subsection \ref{notation}, $Y=Y_1 \times \cdots \times Y_m$ where $Y_{i}$ is
an irreducible component of ${\cali N}_{{\goth l}_{i},e_i}$, if $e_i$ is the component of
$\varpi_{{\goth l}_{{\goth p}}}(e)$ on ${\goth l}_i$, for $i=1,\ldots,m$. Then,
$Y \in N'_{{\goth l}_{{\goth p}}}$ if and only if, for any $i\in \{1,\ldots,m\}$, $Y_{i}$
is not contained in any element of $\Upsilon_{{\goth l}_i}$, that is $Y_i \in N'_{{\goth l}_{\goth p}}$. Hence
$$ \vert N'_{{\goth l}_{\goth p}} \vert \ = \
\prod_{i=1}^{m} \vert N'_{{\goth l}_{i}} \vert \mbox{ ,}$$
and the proposition follows, by Relation (\ref{r2}).
\end{proof}

\begin{remark} Considering the formula established in Proposition \ref{pp2}, we see that the
number $\vert N_{{\goth g}}\vert$ becomes considerably big while the dimension of
${\goth g}$ grows up. For example we have:
$$\vert N_{{\mathfrak{sl}}_2} \vert =1 \mbox{ , } \vert N_{{\mathfrak{sl}}_3}\vert  \geq 2
\mbox{ , } \vert N_{{\mathfrak{sl}}_4}\vert  \geq 4 \mbox{ , } \vert N_{{\mathfrak{sl}}_5} \vert
\geq 7 \mbox{ , } \vert N_{{\mathfrak{sl}}_6}\vert  \geq 12 \mbox{ ,} \ldots \mbox{ . }$$
\end{remark}


\end{document}